\newtheorem{example}{Example}
\newtheorem{corollary}{Corollary}
\newtheorem{question}{Question}
\newtheorem{theorem}{Theorem}
\newtheorem{lemma}{Lemma}
\newtheorem{definition}{Definition}
\newtheorem{remark}{Remark}
\newtheorem{algorithm}{Algorithm}
\tikzstyle{dec} = [trapezium, trapezium left angle = 80,trapezium right angle=100,minimum width=3cm,minimum height=0.7cm,text centered,draw=black,fill=green!30]
\tikzstyle{inp} = [rectangle,rounded corners, minimum width=3cm,minimum height=0.7cm,text centered, draw=black,fill=red!30]
\tikzstyle{pro} = [rectangle,minimum width=2cm,minimum height=0.7cm,text centered,draw=black,fill=blue!30]
\tikzstyle{arrow} = [thick,->,>=stealth]
\tikzstyle{arrow1} = [thick,-,>=stealth]
\tikzstyle{point}=[coordinate,on grid]
\def\Q{{\mathbb{Q}}}
\newcommand{\defword}{\textit}
\newcolumntype{+}{!{\vrule width 2pt}}
\newlength\savedwidth
\renewcommand{\@biblabel}[1]{\quad#1.}
\begin{document}
\vspace*{0.2in}

% Title must be 250 characters or less.
\begin{flushleft}
{\Large
\textbf\newline{Multistability of small zero-one reaction networks} % Please use "sentence case" for title and headings (capitalize only the first word in a title (or heading), the first word in a subtitle (or subheading), and any proper nouns).
}
\newline
% Insert author names, affiliations and corresponding author email (do not include titles, positions, or degrees).
\\
Yue Jiao\textsuperscript{}, %\textsuperscript{1}, 
Xiaoxian Tang\textsuperscript{*},
Xiaowei Zeng\textsuperscript{}%\textsuperscript{1}
\\
\bigskip School of Mathematical Sciences, Beihang University, Beijing, China
\\
\bigskip

% Insert additional author notes using the symbols described below. Insert symbol callouts after author names as necessary.
% 
% Remove or comment out the author notes below if they aren't used.
%
% Primary Equal Contribution Note
%\Yinyang These authors contributed equally to this work.

% Additional Equal Contribution Note
% Also use this double-dagger symbol for special authorship notes, such as senior authorship.
%\ddag  All authors contributed equally to this work and are listed alphabetically.

% Current address notes
%\textcurrency Current Address: Dept/Program/Center, Institution Name, City, State, Country % change symbol to "\textcurrency a" if more than one current address note
% \textcurrency b Insert second current address 
% \textcurrency c Insert third current address

% Deceased author note
%\dag Deceased

% Group/Consortium Author Note
%\textpilcrow Membership list can be found in the Acknowledgments section.

% Use the asterisk to denote corresponding authorship and provide email address in note below.
* xiaoxian@buaa.edu.cn

\end{flushleft}
% Please keep the abstract below 300 words
\section*{Abstract}
Zero-one biochemical reaction networks play key roles in cell signalling such as signalling pathways regulated by protein phosphorylation.
Multistability of reaction networks is a crucial dynamics feature enabling decision-making in cells. It is well known that  multistability  can be lifted from a  ``subnetwork" (a  network with less species and fewer reactions) to  large networks. So, we aim to  explore the multistability problem of small zero-one networks. In this work,
we prove the following main results: 1. 
any
zero-one network with a  one-dimensional stoichiometric subspace admits at most one positive steady state (it must be stable), and  all the one-dimensional zero-one networks can be classified  according to if they indeed admit a stable positive steady state or not; 2. any two-dimensional zero-one network with up to three species either admits only degenerate positive steady states, or admits at most one  positive steady state (it must be stable); 3. 
the smallest zero-one networks  (here, by ``smallest", we mean these networks contain species as few as possible) that admit nondegenerate multistationarity/multistability  contain three species and five/six reactions, and they are three-dimensional. 
In these proofs, we use the theorems based on the Brouwer degree theory and the theory of real algebraic geometry. 
Moreover, applying the tools of computational real algebraic geometry, we 
provide a systematical way for detecting the networks that admit nondegenerate multistationarity/multistability.

% Please keep the Author Summary between 150 and 200 words
% Use first person. PLOS ONE authors please skip this step. 
% Author Summary not valid for PLOS ONE submissions.   
\section*{Author summary}
This work addresses the challenging problem of detecting the smallest reaction networks that admit more than one  nondegenerate/stable positive steady state,
% set of parameters for which a system of ordinary differential equations has more than one steady state, 
a property termed nondegenerate multistationarity/multistability. 
In particular, we are interested in the networks arising from the
study of cell signalling, say zero-one networks.  
The main contribution of this work is to show that the smallest zero-one networks that admit nondegenerate multistationarity/multistability contain three species and five/six reactions, and they are three-dimensional.  
  This work also gives insights on the other interesting dynamical features for the small zero-one networks such as dissipativity, degeneracy and absolute concentration  robustness. Also, we provide a computational procedure to detect a multistable network, and by this method we successfully check through over sixty thousands  networks.  
  
%\linenumbers

% Use "Eq" instead of "Equation" for equation citations.
\section{Introduction}
For the dynamical systems that arise from biochemical reaction networks, the following question has attracted widespread attention. 
\textcolor{black}{
\begin{question}\label{question}
What is the smallest reaction network that admits at least two stable positive steady states (i.e., multistability)?
\end{question}
}

The multistability problem of biochemical reaction systems \cite{FM1998, BF2001, XF2003, CTF2006}
is a key dynamical feature  linked to switch-like behavior and decision-making process in cellular signalling.
We say a reaction network admits multistability if there exist
rate constants such that the corresponding dynamical system arising under mass-action kinetics
exhibits  at least two stable positive steady states in the same stoichiometric compatibility class.

Deciding the existence of multistability is a challenging  problem in general. 
So far, a typical method is first finding multistationarity (i.e., finding rate constants and total constants such that a given network exhibits at least two positive steady states), and then numerically checking the stability of those steady states (e.g., \cite{OSTT2019}). 
Many typical methods for deciding multistationarity are to check if the determinant of a certain Jacobian matrix changes the sign such as checking injectivity (e.g., \cite{BP, CFMW, CF2005, EF2014}) and the methods based on degree theory (e.g., \cite{CFMW, CG2008, EG2013}). Besides, once multistationarity is found,  we usually need to obtain witness (e.g., \cite{MD2016, PD2012}) and  characterize open regions in the parameters' space for multistationarity (e.g., \cite{BD2020, CFMW, GB2019, HF2013}), for which checking the connectedness of the multistationarity region becomes a crucial problem (e.g., \cite{CFMW, AA2024}).  
Symbolic methods based on computational real algebraic geometry are also successfully applied to  a list of biochemical reaction networks  for detecting multistability \cite{AE2021}. 
However, more explicit  criteria for multistability are still needed because the standard tools based on the classical criteria (e.g.,  Routh-Hurwitz criterion, or alternatively Li\'enard-Chipart criterion  \cite{datta1978}) are computationally difficult. Since there has been a list of nice criteria (e.g., \cite{ ShinarFeinberg2012,CF2012, WiufFeliu_powerlaw, signs, DMST2019}) for determining multistationarity, it is natural to study the relation between the numbers of stable positive steady states and positive steady states. One piece of recent work toward this direction is \cite{TZ2022}, where the one-dimensional case is  explored. But these results can not be easily  extended to the higher dimensional networks. 

Since determining  multistability is not easy, it is a common idea to   study the   small networks since nondegenerate multistationarity and multistability can be lifted from a small ``subnetwork" to the related large networks \cite{JA2013, BP16}.
An ambitious goal is to give explicit descriptions for the multistable networks with small sizes.  
As the first step toward the big goal,  Joshi and Shiu \cite{Joshi:Shiu:Multistationary} provided explicit  criteria  for determining the
multistationarity of the networks with only one species or up to two reactions (possibly reversible).
Later,   these results were extended to nondegenerate multistationarity for two-species networks with two reactions  \cite{shiu-dewolff}.
Especially, Joshi and Shiu \cite{Joshi:Shiu:Multistationary} completely characterized one-species networks by “arrow diagrams”, and these results were also extended to more general one-dimensional networks \cite{PV2022, TL2022}.   
After that, Tang and Xu completely described the smallest multistable bi-reaction networks \cite{TX2021}, where they proved that 
for bi-reaction networks with up to four reactants and up to three species, there are  only two
kinds  of networks that are multistable, and for the networks with
one irreversible and one reversible reaction, if there are at most  three reactants and at most two species, then
only four kinds of networks are multistable \cite{TX2021}. 
Recently, Kaihnsa, Nguyen and Shiu \cite{KN2024} proved that an at-most-bimolecular network admitting both multistationarity and  absolute concentration robustness (ACR, e.g., \cite{PG2024}) has at least three species and three reactions, and  it is at least two-dimensional. \\

\begin{figure}[ht]
    \centering
    \begin{tikzpicture}[scale=2]
        % Background rectangle
        \fill[top color=blue!30, bottom color=blue!40, rounded corners] (-3,-2.04) rectangle (2.8,1.4);
        % \draw[blue!20!white, very thick, rounded corners] (-3,-2) rectangle (3,2);
        
        % % Reaction networks
        % \draw[red, thick] (-1,0) circle (0.5) node[fill=white] {Network 1};
        % \draw[blue, thick] (1,0) circle (0.5) node[fill=white] {Network 2};
        % \draw[green, thick] (0,-1) circle (0.5) node[fill=white] {Network 3};
        % \draw[orange, thick] (0,1) circle (0.5) node[fill=white] {Network 4};
        
        % Text boxes
        \node[fill=white, align=center, text width=0.65cm, minimum height=0.5cm] at (-2.7,1.2) {};
        \node[fill=white, align=center, text width=4.6cm, minimum height=0.5cm] at (-1.4,1.2) {Network};
        \node[fill=white, align=center, text width=1cm, minimum height=0.5cm] at (0,1.2) {};
        \node[fill=white, align=center,  text width=4.55cm, minimum height=0.5cm] at (1.5,1.2) {Reduced Network};
        
        \node[fill=green!30, align=center, minimum width=0.5cm, minimum height=2.695cm] at (-2.75,-1.223){$(b)$};
        \node[fill=white, align=left, minimum width=4.8cm, minimum height=2.2cm] at (-1.4,-1.1) {$S_0+E\rightleftharpoons S_0E\xrightarrow{} S_1+E$\\$S_1+E\rightleftharpoons S_1E\xrightarrow{}S_2+E$\\$
        S_2+F\rightleftharpoons S_2F\xrightarrow{} S_1+F$\\$S_1+F\rightleftharpoons S_1F\xrightarrow{} S_0+F
        $};   
        \node[fill=white, align=center, minimum width=1.5cm, minimum height=2.2cm] at (0.05,-1.1){};
        \draw[line width=1.5pt,->] (-0.1,-1.1) -- (0.1,-1.1);
        \node[fill=white, align=left,  minimum width=4.8cm, minimum height=2.2cm] at (1.5,-1.1) {$S_0+E\xrightarrow{} S_0E\xrightarrow{} S_1+E$\\$S_1+E\xrightarrow{}S_2+E$\\$
        S_2+F\xrightarrow{} S_1+F$\\$S_1+F\xrightarrow{} S_0+F
        $};
        \node[fill=white, align=left,  minimum width=10.6cm, minimum height=0.5cm] at (0.05,-1.75){Mitogenactivated protein kinase (MAPK) cascades \cite{FE2012, AE2021}};

        \node[fill=green!30, align=center, minimum width=0.5cm, minimum height=2.87cm] at (-2.75,0.291){$(a)$};
        \node[fill=white, align=left,  minimum width=4.8cm, minimum height=2.35cm] at (1.5,0.42) {$S_0+E_1\xrightarrow{} S_0E_1\xrightarrow{} S_1+E_1$\\$S_0+E_2\xrightarrow{}S_1+E_2$\\$S_0+E_2\xrightarrow{}S_0E_1
        $\\$
        E_1\xrightarrow{}E_2$\\$S_1\xrightarrow{} S_0
        $
        };
        \node[fill=white, align=center, minimum width=1.5cm, minimum height=2.35cm] at (0.05,0.42){};
        \draw[line width=1.5pt,->] (-0.1,0.42) -- (0.1,0.42);
        \node[fill=white, align=left,  minimum width=4.8cm, minimum height=2.35cm] at (-1.4,0.42) {$S_0+E_1\rightleftharpoons S_0E_1\xrightarrow{} S_1+E_1$\\$S_0+E_2\rightleftharpoons S_0E_2\xrightarrow{}S_1+E_2$\\$S_0E_1\rightleftharpoons S_0E_2
        $\\$
        E_1\rightleftharpoons E_2$\\
        $S_1\xrightarrow{} S_0
        $};
        \node[fill=white, align=left,  minimum width=10.6cm, minimum height=0.5cm] at (0.05,-0.28){ Futile cycle with a two-state kinase \cite{FS2016, AE2021}};
    \end{tikzpicture}
     \vspace{1em}
     \captionsetup{justification=centering} 
    \caption{Three-dimensional zero-one networks admitting multistability.}
    \label{fig:reduced_network}
\end{figure}
In this paper, 
we focus on the reaction networks with
 stoichiometric coefficients zero or one (i.e., zero-one networks).
Our interests in zero-one networks are motivated by the fact that many important biochemical reaction networks in cell signalling are zero-one such as phosphorylation-dephosphorylation cycle \cite{HSP2008, HGB2013, CMS2019}, cell cycle \cite{STJ2007, TJN2022, NTJ2022}, hybrid histidine kinase \cite{JCW2005, KFC2015, SRS2023}, and so on (see more in \cite[Figure 2]{TF2023}, where eleven zero-one networks arising in cell signalling are
listed).
In a related recent work \cite{TW2022},   Tang and Wang  found that the smallest zero-one networks that admit Hopf bifurcations are four-dimensional (they contain four species and five reactions). 

In this work, our main goal is to answer Question \ref{question}  for the zero-one networks, and  
we prove the following main results.
\begin{itemize}
\item[{\bf (1)}]A one-dimensional 
zero-one network admits at most one (stable) positive steady state (this steady state is also called a structural attractor \cite{AA2020}), and 
all the parameters (the total constants) can be completely classified according to if 
the network indeed has a stable positive steady state or not (see Theorem \ref{thm:one-dimensional}). 
\item[{\bf (2)}]A two-dimensional zero-one network with up to  three species either admits no multistationarity, or  admits only degenerate positive steady states (see Theorem \ref{thm:twospecies}). 
\item[{\bf (3)}]For a three-dimensional zero-one network with three species, if it admits nondegenerate multistationarity, then it has at least five reactions, and if it admits multistability, then it has at least six reactions (see Theorem \ref{thm:multistab}). 
\end{itemize}
These results imply that the smallest  zero-one networks that admit nondegenerate multistationarity/multistability contain three species and five/six reactions, and they are three-dimensional (here, by ``smallest", we prioritarily consider the network contains species as few as possible, and we put the other factors such as the number of reactions and the dimension later). 

The main result (1) shows that a one-dimensional zero-one network is structural attractive (e.g., \cite{AA2020}) and it naturally has absolute concentration robustness  if it is full dimensional since it is nondegenerate monostationary.  
From the proof of the first result, we can also see that a one-dimensional zero-one reaction network  is dissipative if it admits a positive steady state, and any positive steady state is nondegenerate.  However, the main result (2) and its proof  show that it is more complicated to understand 
a two-dimensional zero-one network. The main difference is that it is possible for a two dimensional network to only admit degenerate steady states.  We conclude and compare the difference/similarity of the one-dimensional and two-dimensional zero-one networks in Table \ref{tab:my_label}. 

\begin{table}[htbp]
    \centering
    \begin{tabular}{|c|c|c|c|c|}
    \hline
         & \cellcolor{blue!30}Multistability  & \cellcolor{blue!30}\makecell[c]{ACR\\for full-dimension} &  \cellcolor{blue!30}Dissipativity& \cellcolor{blue!30}Degeneracy\\
        \hline
         \cellcolor{green!30}one-dim&  \makecell[c]{No\\ (Theorem \ref{thm:one-dimensional})}&  \makecell[c]{Yes\\ (Remark \ref{rmk:twospecies})}&  \makecell[c]{Yes\\ (Lemma \ref{lm:1 dim J2=emptyset dissipative})}& \makecell[c]{No\\ (Lemma \ref{lm:sum_partical<0})}\\
         \hline
         \cellcolor{green!30}two-dim&  \makecell[c]{No\\ (Theorem \ref{thm:twospecies})}&  \makecell[c]{Uncertain\\ (Remark \ref{rmk:twospecies})}&  \makecell[c]{Uncertain\\ (Remark \ref{re:g1g2 not dissipative})}& \makecell[c]{Yes\\ (Remark \ref{rmk:twospecies}) }\\
         \hline
    \end{tabular}
    \captionsetup{justification=centering} 
    \caption{Comparing  one-dimensional and two-dimensional zero-one networks.}
    \label{tab:my_label}
\end{table}

From the main result (3), it is seen that the smallest multistable zero-one network we have found is three-dimensional, and indeed,  
there  exist biologically meaningful three-dimensional zero-one  networks that admit multistability in applications. For instance, for the futile cycle and the MAPK cascades, one can get three-dimensional ``subnetworks"  by removing some inverse reactions and intermediates,  see Fig \ref{fig:reduced_network}. 
And, it is well-known that the multistability can be lifted from these three-dimensional subnetworks to the original large networks \cite{FS2016, FE2012, AE2021}.  
% \iffalse
% From the proof of Theorem \ref{thm:bdm}, we also see that if a choice of parameters yields the maximum number of positive steady states, then it also yields the maximum number of stable positive steady states (Corollary \ref{cry:bdm}). Besides the main results above, based on the lemmas for proving Theorem \ref{thm:bdm},  we provide a self-contained proof for the nondegeneracy conjecture (Theorem \ref{thm:nc}). \textcolor{black}{We also provide Corollary \ref{cry:bd} for the networks with two reactions}.
% \fi

As what has been said,  the main goal of this work is  to find the smallest zero-one networks that admit multistability. Below, we give a framework of this study and briefly introduce the roadmap of the proofs. 
First, for any one-dimensional zero-one network, 
we prove the following nice properties: 
% (a) one can determine whether a network admits positive steady states by reading off the non-zero rows of the stoichiometric matrix (Theorem \ref{thm:one-dimensional} (I)); 
(a) one can explicitly describle all the total constants such that the corresponding positive stoichiometric compatibility class is non-empty, which form an open connected region (Lemma \ref{lm:Pc+ ne emptyset}); 
(b) if a positive stoichiometric compatibility class is nonempty, then there are no boundary steady states  (Lemma \ref{lm:1 dim no boundary});
(c) a network admitting positive steady states is dissipative (Lemma \ref{lm:1 dim no boundary}); 
(d) a positive steady state is always nondegenerate and stable (Lemma \ref{lm:sum_partical<0}). 
Based on the above properties, we can prove the main result (1) (Theorem \ref{thm:one-dimensional})  by applying Theorem \ref{thm:deter_sign} \cite[Theorem 1]{CFMW}, which is based on the Brouwer degree theory. 

Second, we prove the nondegenerate  monostationarity  for the two-dimensional zero-one networks with up to three species (Theorem \ref{thm:twospecies}). 
For a  two-species network, we prove the nondegenerate   monostationarity   by reducing   the steady-state  system  to a quadratic equation (Lemma \ref{lm:twospecies}).
For the three-species case,
the main idea is to first prove the monostationarity for a class of special networks, say the maximum networks (by ``maximum" we mean the network has a maximum number of reactions when the number of species is fixed as three and the dimension is fixed as two,  see Definition \ref{def:maximum network}), and then extend the result to all networks by the inheritance of nondegenerate  multistationarity \cite[Theorem 1]{BP16}.  
In order to systematically study the maximum networks, we classify them into three classes (later, see \eqref{eq:condition2}--\eqref{eq:condition3}) by unifying  the conservation laws (Lemma \ref{lm:3s get I03}). After that, we find that for two of the three classes, we can not apply Theorem \ref{thm:deter_sign}  since the dissipativity can not be determined by the known criteria  (see Remark \ref{re:g1g2 not dissipative}). 
So, we  apply the theory of real algebraic geometry (Lemma \ref{lm:same number a section}) to complete the proof.  After that,  we
need to check the nondegeneracy of the steady states for all the  three-species networks since we hope to apply the inheritance. Indeed, we figure out by a novel computational method that any two-dimensional three-species zero-one network either only admits degenerate  positive steady states, or if the network admits a positive steady state, then the steady state is nondegenerate (Lemma \ref{lm:jac g23 sub}). 

Third, we pursue the smallest zero-one networks that admit nondegenerate multistationarity/multistability.
From the main results (1) and (2), we know that such networks should be at least three-species, and also, the main result (2) implies that if a  three-species network admits nondegenerate multistationarity, then it   should be at least three-dimensional. Then, by linear algebra and by a known result \cite{BB2023}, it is not difficult to conclude that a nondegenerate multistationary  three-species network has at least five reactions (Theorem \ref{thm:multistab} (I)). Notice that anyone can enumerate 
all three-dimensional three-species  zero-one networks with five reactions that admit nondegerenate positive steady states (there are $65440$ such networks). 
Hence, we get the idea of carrying out a computational procedure to search a multistable network (see Fig \ref{fig:horizontal_flowchart}), where we apply the software  {\tt realrootclassification} \cite{CX2013}. Fortunately, we find all ($429$) three-species five-reaction zero-one networks that  admit nondegenerate  multistationarity (we show one of them in  Example \ref{ex:non_multistability}), which takes about $20$ hours to complete the computation. However, the computation also shows that 
none of these $429$  multistationary  networks admits multistability. 
After that, we  enumerate all ($1367698$)  three-dimensional three-species zero-one networks with  six reactions that admit nondegenerate positive steady states. 
Nevertheless, we can not go over all of these networks by this procedure in a reasonable time.  
However,  we still successfully find a multistable network (see Example \ref{ex:smallest}) during the computation (it takes about $30$ hours), which   confirms that the smallest multistable three-species  zero-one networks have six reactions (see Theorem \ref{thm:multistab} (II)). 

We remark that   in the  proofs of the above main results, we need to face some fundamental/challenging problems in computational real algebraic geometry such as checking  the positivity of a multivariate polynomial and classifying the real solutions of a semi-algebraic system (e.g., \cite{SR06, GP12}). 
More specifically, in Section \ref{subsubsection g1g2g3}, when we prove the monostationarity for the  two-dimensional three-species zero-one networks, a key step is to check if the determinants of the Jacobian matrices of steady-state systems change signs at the positive steady states (i.e., checking the positivity). 
And in Section \ref{sec:results}, when we prove the monostability for the three-dimensional three-species zero-one networks with five reactions,  we need to confirm that each network admits at most one stable positive steady state (i.e., real root classification).  
Since it is difficult to pursue a mathematical proof, 
one can try to apply standard  algebraic tools (for instance, the tools based on the methods of quantifier elimination \cite{AT1998, GE1975, GH1991, CW2001, AT1997})  for tackling these problems. In our case, we need to deal with substantial networks, and these networks may contain lots of parameters (rate constants). So, applying the standard tools might not be realistic due to the huge computational expense \cite{JJ1988}. 
For the first problem here, by studying the special structures of the zero-one networks, we prove a sufficient condition for 
determining  the positivity of the determinant of Jacobian matrix evaluated at a positive steady state  (see Lemma 
\ref{lm:jac B jac}), and accordingly, we develop an efficient algorithm (see Algorithm \ref{alg:checksign}) to finish the computation. 
Based on these computations, we are able to conclude the determinant of Jacobian matrix does not change the sign at a nondegenerate positive steady state for all two-dimensional zero-one networks with three species (see Lemma \ref{lm:jac g23 sub}).  For the second problem, we apply the tool of real root classification, which is originally called    {\tt DISCOVERER} and is developed for a special form of real quantifier elimination \cite{LX2016}.  Later, the tool is renamed as {\tt realrootclassification} and integrated into a package for solving a parametric polynomial system called  {\tt RegularChain} \cite{CX2013}  in the comprehensive algebraic system {\tt Maple} \cite{maple}.

The rest of this paper is organized as follows. In Section \ref{sec:back}, we review the basic notions and the definitions of multistationarity/multistability for the mass-action kinetics systems arising from reaction networks. 
In Section \ref{sec:results}, 
we present the main results (Theorems \ref{thm:one-dimensional}--\ref{thm:multistab}) with several illustrated examples. Since the proof of the last main result (Theorem \ref{thm:multistab}) is based on the first two main results and a computational  procedure for detecting a multistable small network, we also present the proof with the implementation details.      
In Section \ref{sec:methods},  we develop the theories for proving Theorem \ref{thm:one-dimensional} and Theorem \ref{thm:twospecies}. Especially, we present a list of results (including a novel algorithm) on determining the sign of determinant of Jacobian matrix for a zero-one network  in a separate subsection.  
% \textcolor{red}{In Section \ref{sec:trans}, we review a standard transformation of the Jacobian matrix using extreme rays, and we present a result (Lemma \ref{lm:jach}) on the sign of determinant of Jacobian matrix for a zero-one network. 
% In Section \ref{sec:one dim}, we prove the main result (1)  (Theorem \ref{thm:one-dimensional}),  and we illustrate by small examples how to efficiently determine the parameter region over which a  network indeed exhibits  a stable positive steady state. 
% In Section \ref{sec:2d 3s}, we prove the main result (2)  (Theorem \ref{thm:twospecies}), and we explain  the differences on the degeneracy and the dissipativity between the one-dimensional and the two-dimensional networks.
% In Section \ref{sec:results}, 
%  we prove the main result (3)  (Theorem \ref{thm:multistab}).  We also give the implementation details, and we present the computational time for completing the proof in Table \ref{tab:time}.} 
Besides, in Appendix \ref{sec:appendix}, we provide a list of useful supporting materials  including the 
classical criteria  for checking stability (Appendix \ref{S2_Appendix}), the theory of real algebraic geometry (Appendix \ref{subsec:real}), and the enumeration of the maximum zero-one networks (Appendix \ref{S1_Appendix}).
\section{Background}\label{sec:back}

In Section \ref{sec:pre}, we briefly recall the standard notions and definitions for reaction networks, see \cite{CFMW} for more details. In Section \ref{sec:mm}, we present the definitions of multistationarity and multistability.  Also, we review a useful criterion proposed in \cite{CFMW} for multistationarity (see  Theorem \ref{thm:deter_sign}),  which will be used in the proofs of the main results in Section \ref{sec:one dim} and Section \ref{sec:2d 3s}. 
\subsection{Chemical reaction networks}\label{sec:pre}
A \defword{reaction network} $G$  (or \defword{network} for short) consists of a set of $s$ species $\{X_1, X_2, \dots, X_s\}$ and a set of $m$ reactions:
\begin{align}\label{eq:network}
\alpha_{1j}X_1 +
 \dots +
\alpha_{sj}X_s
~ \xrightarrow{\kappa_j} ~
%~ \xrightarrow{} ~
\beta_{1j}X_1 +
 \dots +
\beta_{sj}X_s,
 \;
    {\rm for}~
	j=1,2, \ldots, m,
\end{align}
where all \defword{stoichiometric coefficients}  $\alpha_{ij}$ and $\beta_{ij}$ are non-negative integers, and we  assume that
$(\alpha_{1j},\ldots,\alpha_{sj})\neq (\beta_{1j},\ldots,\beta_{sj})$.
Each $\kappa_j \in \mathbb R_{>0}$ is called a \defword{rate constant} corresponding to the
$j$-th reaction in \eqref{eq:network}.
We say a reaction is a \defword{zero-one reaction}, if the stoichiometric coefficients $\alpha_{ij}$ and $\beta_{ij}$ in \eqref{eq:network}
belong to $\{0,1\}$. 
We say a network \eqref{eq:network} is a \defword{zero-one network} if it only has zero-one reactions. We call the $s\times m$ matrix with
$(i, j)$-entry equal to $\beta_{ij}-\alpha_{ij}$ the
\defword{stoichiometric matrix} of
$G$, denoted by ${\mathcal N}$. We call the $s\times m$ matrix the \defword{reactant matrix} of $G$ with $(i,j)$-entry equal to $\alpha_{ij}$, denoted by ${\mathcal Y}$.
%Let $d=s-{\rm rank}(N)$. %, and
We call the image of ${\mathcal N}$
the \defword{stoichiometric subspace}, denoted by $S$.
%let $\mathrm{im}(N)^{\perp}$ denote the orthogonal complement of the image of $N$.
%$S^{\perp}$.
%$\mathrm{im}(N)^{\perp}$.
%For any $j\in\{1,\dots,m\}$,  the reactant  $\alpha_{1j}X_1 +\dots+\alpha_{sj}X_s$ and the product $\beta_{1j}X_1 +\dots +\beta_{sj}X_s$  are called the \defword{complexes} of $G$.

We denote by $x_1, \ldots, x_s$ the concentrations of the species $X_1, \ldots, X_s$, respectively.
Under the assumption of mass-action kinetics, we describe how these concentrations change in time by the following system of ODEs:
\begin{align}\label{eq:sys}
%~(f_1(\kappa; x), \ldots, f_s(\kappa; x))^{\top}~=
\dot{x}\;=\;f(\kappa, x)\;:=\;{\mathcal N}v(\kappa, x)\;=\;{\mathcal N} \begin{pmatrix}
\kappa_1\prod \limits_{i=1}^sx_i^{\alpha_{i1}} \\
\kappa_2\prod \limits_{i=1}^sx_i^{\alpha_{i2}} \\
		\vdots \\
\kappa_m\prod \limits_{i=1}^sx_i^{\alpha_{im}} \\
\end{pmatrix},
\end{align}
where $x=(x_1, x_2, \ldots, x_s)^\top$, $v(\kappa, x)=(v_1(\kappa, x),\dots,v_m(\kappa, x))^\top$ and 
\begin{align}
    v_j(\kappa, x)\;:=\;\kappa_j\prod \limits_{i=1}^sx_i^{\alpha_{ij}}. \nonumber
\end{align}
\iffalse
We further assume that
 \begin{align}\label{eq:vj}
     v_j(x)=0 \;\text{if and only if}\; x_j=0 \;\text{for some}\; i\; \text{such that}\; \alpha_{ij}=0
 \end{align}
 that is, the reaction only takes place in the presence of all reactant species.
 \fi
 By considering the rate constants as a vector of parameters $\kappa:=(\kappa_1, \kappa_2, \dots, \kappa_m)^\top$, we have polynomials $f_{i}(\kappa,x) \in \mathbb Q[\kappa, x]$, for $i\in\{1,\dots, s\}$.

Let $d:=s-{\rm rank}({\mathcal N})$. A \defword{conservation-law matrix} of $G$, denoted by $W$, is any row-reduced $d\times s$ matrix whose rows form a basis of $S^{\perp}$ (note here, ${\rm rank}(W)=d$). Notice that the system~\eqref{eq:sys} satisfies $W \dot x =0$. So, any trajectory $x(t)$ beginning at a non-negative vector $x(0) \in \mathbb{R}^s_{\ge 0}$ remains, for all positive time,
 in the following \defword{stoichiometric compatibility class} w.r.t. (with respect to) the  \defword{total-constant vector} $c:= W x(0) \in {\mathbb R}^d$:
\begin{align}\label{eq:pc}
{\mathcal P}_c\;:=\; \{x\in {\mathbb R}_{\geq 0}^s \mid Wx=c\}.
\end{align}
The \defword{positive stoichiometric compatibility classes} are defined as the relative interior of ${\mathcal P}_c$: 
\begin{align*}%\label{eq:pc}
{\mathcal P}_c^+\;:=\; \{x\in {\mathbb R}_{> 0}^s \mid Wx=c\}={\mathcal P}_c\cap \mathbb{R}^s_{> 0}.
\end{align*}
Recall that the conservation-law matrix $W$ is row-reduced. 
Let $I=\{i_1,\dots,i_d\}$ be the indices of the first nonzero coordinates of the rows of $W$, and we assume that $i_1<i_2<\cdots<i_d$. 
Define 
\begin{equation}
h_i\;:=\;
\begin{cases}
f_{i}& \text{ if $ i \notin I $ }, \\
(Wx-c)_{k}& \text{ if $ i=i_k\in I $ },
\end{cases} \nonumber
\end{equation}
where $f_1,\dots,f_s$ are the polynomials defined in \eqref{eq:sys}.
Thus, define the system $h(\kappa,c,x)$ (abbreviated as $h$) as 
\begin{align}\label{eq:h}
    h\;:=\;(h_1,\cdots,h_s),
\end{align}
and we call the system \eqref{eq:h}  \defword{the steady-state system augmented by conservation laws}.
Note that for any $\kappa^*\in \mathbb{R}^m_{>0}$ and for any $c^*\in\mathbb{R}^d$, if $x^*\in \mathbb{R}^s_{\geq 0}$ is a common solution of $h_1=\cdots=h_s=0$, then $x^* $ is a steady state of the network $G$ \eqref{eq:network} in ${\mathcal P}_{c^*}$. Such a steady state
$x^*$ is nondegenerate if the matrix ${\rm Jac}_h(\kappa^*, x^*)$ has full rank, where ${\rm Jac}_h$ denotes the Jacobian matrix of $h$ w.r.t. $x$.\\
We say a reaction network is \defword{dissipative} if for all stoichiometric compatibility classes $\mathcal{P}_c$, there exists a compact set where the trajectories of $\mathcal{P}_c$ eventually enter. 
% A reaction network is \defword{conservative}if and only if the stoichiometric compatibility classes $\mathcal{P}_c$ are
% compact subsets of $\mathbb{R}^n_{\ge0}$.
We say a reaction network is \defword{conservative} if all the stoichiometric compatibility classes are
compact subsets of $\mathbb{R}^n_{\ge0}$.
\begin{lemma}\label{lm:conservative-dissipative}\cite{CFMW}
    A conservative reaction network is dissipative.
\end{lemma}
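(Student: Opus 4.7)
The plan is to prove this essentially by observing that the compact set required by the definition of dissipativity can be taken to be the stoichiometric compatibility class $\mathcal{P}_c$ itself.

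First, I would recall from the discussion following \eqref{eq:sys} in the excerpt that for any initial condition $x(0) = x^0 \in \mathbb{R}^s_{\geq 0}$, the trajectory $x(t)$ remains in $\mathcal{P}_c$ for all $t \geq 0$, where $c = Wx^0$. That is, $\mathcal{P}_c$ is forward-invariant with respect to the dynamics \eqref{eq:sys}. This is a consequence of $W\dot x = 0$ together with non-negativity preservation by mass-action kinetics, and is already stated in the paper.

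Next, I would invoke the hypothesis that the network is conservative: by definition, every stoichiometric compatibility class $\mathcal{P}_c$ is a compact subset of $\mathbb{R}^s_{\geq 0}$. Combining these two facts, given any $\mathcal{P}_c$ and any trajectory starting inside it, the trajectory lies in $\mathcal{P}_c$ for all $t \geq 0$. Therefore $\mathcal{P}_c$ itself serves as a compact set which the trajectories eventually enter (in fact, they never leave it), fulfilling the definition of dissipativity.

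There is no real obstacle here; the statement is almost a tautology once forward-invariance of $\mathcal{P}_c$ is noted. The only subtle point to mention is why the stated forward-invariance is valid: $\mathbb{R}^s_{\geq 0}$ is forward-invariant because each coordinate $x_i$ has $\dot x_i \geq 0$ whenever $x_i = 0$ (reactions consuming $X_i$ require $x_i$ as a factor via \eqref{eq:vk}), and the affine subspace $\{x : Wx = c\}$ is preserved because $W \dot x = W \mathcal{N} v(\kappa,x) = 0$ by the definition of $W$ as a basis of $S^\perp$. Hence the intersection $\mathcal{P}_c$ is forward-invariant, and compactness plus forward-invariance immediately gives dissipativity.
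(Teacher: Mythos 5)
Your proposal is correct. The paper states this lemma as a cited result from the reference and gives no proof of its own, but your argument is the standard one: forward-invariance of $\mathcal{P}_c$ (which the paper already records after \eqref{eq:sys}) together with compactness of $\mathcal{P}_c$ from the definition of conservativity means $\mathcal{P}_c$ itself is the required compact set, so the claim follows essentially tautologically, exactly as you say.
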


We define the \defword{infinite norm} of a vector $x\in \mathbb{R}^s$ as
\begin{equation}\label{eq:Vert}
\Vert x\Vert_{\infty}\;:=\;\max\{\vert x_1\vert,\vert x_2\vert,\dots,\vert x_s\vert\}. \nonumber
\end{equation}
% \begin{remark}
% \textcolor{red}{XT..}
% \end{remark}
\begin{lemma}\label{lm:Dissipative}\cite[Proposition 1]{CFMW}
%Let $||\cdot||$ be a norm in $\mathbb{R}^s$.
Consider a network $G$ defined as in \eqref{eq:network}.
Let $f$ be the steady-state system defined as in \eqref{eq:sys}. 
 If for each $c$ with $\mathcal{P}_c^+\neq\emptyset$, there exist a vector $\omega\in\mathbb{R}^s_{>0}$ and a real number $M>0$ such that $\omega\cdot f(x)<0$ (``$\cdot$" means inner product of two vectors) for all $x\in \mathcal{P}_c$ with $\Vert x\Vert_{\infty}>M$, then the network $G$ is dissipative.
\end{lemma}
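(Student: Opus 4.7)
The plan is to use $V(x) := \omega \cdot x$ as a Lyapunov-type function on $\mathcal{P}_c$, where $\omega \in \mathbb{R}^s_{>0}$ and $M>0$ are as provided by the hypothesis for a fixed $c$ with $\mathcal{P}_c^+ \neq \emptyset$. Since $\omega_i > 0$ for every $i$ and $x \in \mathcal{P}_c \subseteq \mathbb{R}^s_{\geq 0}$, the function $V$ is nonnegative on $\mathcal{P}_c$, and along any solution of \eqref{eq:sys} we have $\tfrac{d}{dt}V(x(t)) = \omega \cdot f(x(t))$.

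First I would construct the candidate compact absorbing set. The intersection $\mathcal{P}_c \cap \{x : \Vert x\Vert_\infty \leq M\}$ is closed and bounded, hence compact, so $L := \sup\{\omega \cdot y : y \in \mathcal{P}_c,\; \Vert y\Vert_\infty \leq M\}$ is finite (by convention $L=0$ if the supremum is over the empty set). Pick any $L_c > L$ and set $K_c := \{x \in \mathcal{P}_c : \omega \cdot x \leq L_c\}$. Then $K_c$ is closed, and the componentwise positivity of $\omega$ forces $x_i \leq L_c / \omega_i$ for every $x \in K_c$, so $K_c$ is bounded and therefore compact.

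Next I would show that every forward trajectory in $\mathcal{P}_c$ enters $K_c$ in finite time. For $x \in \mathcal{P}_c \setminus K_c$ we have $\omega \cdot x > L_c > L$, which by the definition of $L$ rules out $\Vert x\Vert_\infty \leq M$; hence $\Vert x\Vert_\infty > M$ and the hypothesis yields $\omega \cdot f(x) < 0$, so $V(x(t))$ is strictly decreasing while the trajectory sits outside $K_c$. Suppose for contradiction that a trajectory $x(t)$ stays in $\mathcal{P}_c \setminus K_c$ for all $t \geq 0$. Then $V(x(t)) \in [L_c, V(x(0))]$, so the trajectory lies in the compact set $\{x \in \mathcal{P}_c : L_c \leq \omega \cdot x \leq V(x(0))\}$ (again using $\omega > 0$ componentwise for boundedness). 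On this compact set $\omega \cdot f$ is continuous and strictly negative, hence attains a maximum value $-\delta < 0$. Integrating gives $V(x(t)) \leq V(x(0)) - \delta\, t$, which falls below $L_c$ once $t > (V(x(0)) - L_c)/\delta$, a contradiction. Letting $c$ vary then gives dissipativity.

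The main subtlety I expect is pinning down the dual role of $\omega$: it must simultaneously make the sublevel sets $\{\omega \cdot x \leq L_c\} \cap \mathbb{R}^s_{\geq 0}$ bounded (so that $K_c$ is compact) and certify the drop $\omega \cdot f(x) < 0$ outside a large ball. Choosing $L_c$ strictly greater than $L$ is what reconciles these two roles and upgrades the pointwise hypothesis $\omega \cdot f(x) < 0$ for $\Vert x \Vert_\infty > M$ into the uniform decrease estimate $\tfrac{d}{dt}V \leq -\delta$ along any trajectory that has not yet entered $K_c$.
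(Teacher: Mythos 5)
The paper offers no proof of this lemma at all: it is imported verbatim by citation from \cite[Proposition 1]{CFMW}, so there is no in-paper argument to compare against. Your Lyapunov-type argument is correct and is essentially the standard proof of that cited proposition: the linear function $V(x)=\omega\cdot x$ is proper on $\mathcal{P}_c\subseteq\mathbb{R}^s_{\ge 0}$ precisely because $\omega$ has strictly positive entries, the sublevel set $K_c=\{x\in\mathcal{P}_c:\omega\cdot x\le L_c\}$ with $L_c>L$ is compact and contains $\mathcal{P}_c\cap\{\Vert x\Vert_\infty\le M\}$, and the compactness of $\{x\in\mathcal{P}_c: L_c\le \omega\cdot x\le V(x(0))\}$ correctly upgrades the pointwise inequality $\omega\cdot f(x)<0$ to a uniform decay rate $-\delta$ that forces entry into $K_c$ in finite time (and, as a byproduct, rules out finite-time blow-up). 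The one point worth adding: dissipativity is normally read as requiring trajectories to enter \emph{and remain in} a compact set, whereas you only prove entry. The missing half is immediate from what you already have: any $x\in\mathcal{P}_c$ with $\omega\cdot x=L_c>L$ satisfies $\Vert x\Vert_\infty>M$, hence $\tfrac{d}{dt}(\omega\cdot x)<0$ on the level set $\{V=L_c\}$, so no trajectory can cross it outward and $K_c$ is forward-invariant. With that one sentence appended, your argument is a complete and self-contained proof of the statement the paper merely cites.
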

\subsection{Multistationarity and multistability}\label{sec:mm}
For any given rate-constant vector $\kappa^*\in {\mathbb R}^m_{>0}$,  a \defword{steady state} %(or, simply \defword{steady state})\footnote{Usually, a steady state is defined as a non-negative vector $x\in {\mathbb R}_{\geq 0}^s$.
%In our setting, we do not
%consider boundary steady states (i.e., steady states with zero coordinates). So all steady states in our context are positive.}
of~\eqref{eq:sys} is a  vector of concentration
$x^* \in \mathbb{R}_{\geq 0}^s$ such that $f(\kappa^*, x^*)=0$, where  $f(\kappa, x)$ is on the
right-hand side of the
ODEs~\eqref{eq:sys}.
If all coordinates of a steady state $x^*$ are strictly positive (i.e., $x^*\in \mathbb{R}_{> 0}^s$), then we call $x^*$ a \defword{positive steady state}.
If a steady state $x^*$ has zero coordinates (i.e., $x^*\in \mathbb{R}_{\geq 0}^s\backslash \mathbb{R}_{> 0}^s$), then we call $x^*$ a \defword{boundary steady state}.
%A steady state $x^*$ is {\em nondegenerate} if ${\rm Im}\left( df_{\kappa} (x^*)|_{S} \right) = \St$.
%(Here, $df_{\kappa}(x^*)$ is the Jacobian matrix of $f_{\kappa}$ at $x^*$.)
%A nondegenerate steady state is
%{\em hyperbolic} if each of the $\sigma:= \dim (\St)$ nonzero eigenvalues of $df_{\kappa}(x^*)$ has nonzero real part and is
%{\em exponentially stable} if each of the $\sigma:= \dim(\St)$ nonzero eigenvalues of $df_{\kappa}(x^*)$ has negative real part.
%We say a steady state $x^*$ is a \defword{positive steady states} if $x ^* \in \mathbb{R}^s_{> 0}$. % and \defword{boundary steady states}
%$x^*\in {\mathbb R}_{\geq 0}^s\backslash {\mathbb R}_{>0}^s$.
We say a steady state $x^*$ is \defword{nondegenerate} if
${\rm im}\left({\rm Jac}_f (
\kappa^*, x^*)|_{S}\right)=S$,
% = \St$.
where ${\rm Jac}_f$ denotes the Jacobian matrix of $f$ w.r.t. $x$.
%A nondegenerate steady state $x^*$ is \defword{Liapunov stable} if for any $\epsilon>0$ and for any $t_0>0$, there exists $\delta>0$ such that
%$\parallel x(t_0)-x^*\parallel<\delta$ implies $\parallel x(t)-x^*\parallel<\epsilon$ for any $t\geq t_0$.
%A Liapunov stable steady state $x^*$ is  \defword{locally asymptotically stable}  if  there exists $\delta>0$  such that
%$\parallel x(t_0)-x^*\parallel<\delta$ implies $\lim_{t\rightarrow \infty} x(t)=x^*$.
A steady state $x^*$ is 
\defword{exponentially stable} (or, simply \defword{stable} in this paper)
if the steady state $x^*$ is nondegenerate, and  all non-zero eigenvalues of ${\rm Jac}_f(\kappa^*, x^*)$ have negative real parts.
%if all non-zero eigenvalues of ${\rm Jac}_f(x^*)$ have negative real parts.
Note that if a steady state is exponentially stable, then it is locally asymptotically stable \cite{P2001}.
%Notice that when the stoichiometric matrix $N$ is full rank, a steady state $x^*$ is nondegenerate if
%  ${\rm Jac}_f(x^*)$ is full rank.
%A nondegenerate steady state is
%{\em hyperbolic} if each of the $\sigma:= \dim (\St)$ nonzero eigenvalues of $df_{\kappa}(x^*)$ has nonzero real part and is
%\defword{exponentially stable} if every nonzero eigenvalue of ${\rm Jac}_f(x^*)$ has a negative real part.

%A nondegenerate steady state $x^*$ is said to be \defword{Liapunov stable} if for any $\epsilon>0$ and for any $t_0>0$, there exists $\delta>0$ such that
%$\parallel x(t_0)-x^*\parallel<\delta$ implies $\parallel x(t)-x^*\parallel<\epsilon$ for any $t\geq t_0$.
%A Liapunov stable steady state $x^*$ is said to be \defword{locally asymptotically stable}  if  there exists $\delta>0$  such that
%$\parallel x(t_0)-x^*\parallel<\delta$ implies $\lim_{t\rightarrow \infty} x(t)=x^*$.

%It is well-known that if a nondegenerate steady state $x^*$ is exponentially stable, then
%it is locally asymptotically stable.

We suppose $N\in {\mathbb Z}_{\geq 0}$. 
%A  network \defword{admits $N$ nondegenerate positive steady states}  if for some rate-constant
%vector $\kappa$ and for some total-constant vector $c$,  it has $N$ nondegenerate positive steady states  in the same stoichiometric compatibility class ${\mathcal P}_c$.
We say a  network  \defword{admits $N$ nondegenerate/stable positive steady states}  if there exists a rate-constant
vector $\kappa$ and a total-constant vector $c$ such that  the network has $N$ nondegenerate/stable positive steady states  in ${\mathcal P}_c$.
%It is obvious that if $\hat{G}$ has the form of $G$,
%then $cap_{pos}(\hat{G})=cap_{pos}(G)$, $cap_{nondeg}(\hat{G})=cap_{nondeg}(G)$, and
 %$cap_{stab}(\hat{G})=cap_{stab}(G)$.
We say a network  admits \defword{(nondegenerate) multistationarity} if the network  admits at least two (nondegenerate) positive steady states.
%We say a network admits \defword{nondegenerate multistationarity} if $cap_{nondeg}(G)\geq 2$.
We say a network  admits \defword{multistability} if the network admits at least two stable positive steady states. The theorem below is a nice criterion for determining multistationarity, see more details in \cite{CFMW}.
\begin{theorem}\label{thm:deter_sign}\cite[Theorem 1]{CFMW} 
Consider a network $G$ defined as in \eqref{eq:network} with a rank-$r$ stoichiometric matrix. 
Let $h$ be the steady-state system augmented by conservation laws defined as in \eqref{eq:h}. Let $\mathcal{P}_c$ be a stoichiometric compatibility class such that $\mathcal{P}_c^+\neq\emptyset$, where $c\in\mathbb{R}^d$. Suppose a rate-constant vector $\kappa\in {\mathbb R}_{>0}^m$ is given.  Further, assume that
\begin{itemize}
    \item[(i)] the network is dissipative, and 
    \item[(ii)] there are no boundary steady states in $\mathcal{P}_c$. 
\end{itemize}
Then, if 
\begin{align*}
    {\rm sign}({\rm det}({\rm Jac}_h(\kappa, x)))=(-1)^r \quad \text{for all positive steady states}\ x\in \mathcal{P}_c^+,
\end{align*}
then there is exactly one positive steady state in $\mathcal{P}_c$. Moreover, this steady state is nondegenerate.
\end{theorem}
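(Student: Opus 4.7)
The plan is to apply Brouwer degree theory to the augmented map $h(\kappa, c, \cdot)$, viewed as a continuous map from $\mathcal{P}_c$ into $\mathbb{R}^s$ whose zeros in $\mathcal{P}_c^+$ are exactly the positive steady states of $G$ in $\mathcal{P}_c$. Before invoking degree theory, I would first dispose of nondegeneracy: the hypothesis ${\rm sign}({\rm det}({\rm Jac}_h(x))) = (-1)^r \neq 0$ forces ${\rm Jac}_h(x)$ to be nonsingular, and by the block structure of ${\rm Jac}_h$ (whose $d$ rows indexed by $I$ come from the conservation-law matrix $W$ while the remaining $r$ rows come from ${\rm Jac}_f$), a standard linear-algebra computation shows that the restriction of ${\rm Jac}_f(x)$ to $S$ has full rank $r$. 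Hence every positive steady state is automatically nondegenerate, and the remaining task is to prove uniqueness.

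To set up the degree, I would use hypothesis (i) to produce a compact set $K \subset \mathcal{P}_c$ that attracts every trajectory starting in $\mathcal{P}_c$, and enlarge it to a bounded open set $U \subset \mathcal{P}_c$ whose outer boundary (in the sense of large $\Vert x\Vert_\infty$) avoids the zero set of $h$; the witness $\omega \in \mathbb{R}^s_{>0}$ and constant $M > 0$ supplied by Lemma \ref{lm:Dissipative} give the estimate $\omega \cdot f(x) < 0$ on the far part of $\partial U$. Hypothesis (ii) simultaneously guarantees $h \neq 0$ on the coordinate-hyperplane part of $\partial \mathcal{P}_c \cap \overline{U}$, so the Brouwer degree ${\rm deg}(h, U, 0)$ is well-defined. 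Since all zeros in $U$ are nondegenerate, the sum-of-signs formula yields
\[
{\rm deg}(h, U, 0) \;=\; \sum_{x^* \in h^{-1}(0)\cap U} {\rm sign}({\rm det}({\rm Jac}_h(x^*))) \;=\; (-1)^r \cdot N,
\]
where $N$ is the number of positive steady states in $\mathcal{P}_c^+$.

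To pin down $N$, I would compute the same degree via a homotopy argument: deform $h$ along a one-parameter family $h_t$ to a simpler map whose unique zero is explicit, for instance by replacing the $r$ polynomial steady-state rows with linear rows that, together with the $d$ conservation constraints $Wx=c$, cut out a single distinguished reference point of $\mathcal{P}_c^+$. Along this homotopy, uniform versions of hypotheses (i) and (ii) keep the zero set inside $U$, so homotopy invariance of the degree gives ${\rm deg}(h, U, 0) = (-1)^r$ at the endpoint. Comparing with the sum-of-signs formula forces $N = 1$, proving uniqueness.

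The main obstacle will be ensuring that the homotopy stays admissible: zeros must not escape through $\partial U$ or be absorbed into the coordinate hyperplanes as $t$ varies. This is where dissipativity must be used uniformly in $t$, typically by choosing the homotopy within a class of polynomial vector fields for which a single witness $(\omega, M)$ works for every intermediate map, and by preserving the sign structure of $f$ on the coordinate hyperplanes so that no boundary steady state appears along the path. Once this uniform admissibility is established, the rest — the identification of ${\rm det}({\rm Jac}_h)$ with a signed $r \times r$ minor of ${\rm Jac}_f|_S$, and the standard excision/homotopy properties of the Brouwer degree — is routine.
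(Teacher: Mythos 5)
First, a point of reference: the paper does not prove this statement. Theorem \ref{thm:deter_sign} is quoted verbatim from \cite{CFMW} and used as a black box, so there is no in-paper proof to compare against; the meaningful comparison is with the proof in that reference. Your overall architecture — nondegeneracy from $\det({\rm Jac}_h)\neq 0$, a well-defined Brouwer degree on a bounded set supplied by hypothesis (i) with boundary zeros excluded by hypothesis (ii), the sum-of-signs formula $\deg(h,U,0)=(-1)^r N$, and the identity $\deg(h,U,0)=(-1)^r$ forcing $N=1$ — is exactly the architecture of the proof in \cite{CFMW}, and those parts are fine (modulo the small technical point that $\mathcal{P}_c$ is not open in $\mathbb{R}^s$, so the degree should be set up on an open set $U\subset\mathbb{R}^s$, with the conservation-law rows of $h$ confining its zeros to $\mathcal{P}_c$).

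The genuine gap is in the only non-routine step, the computation $\deg(h,U,0)=(-1)^r$. You propose to deform the $r$ polynomial rows of $h$ to linear rows cutting out a single reference point of $\mathcal{P}_c^+$, and you yourself identify admissibility of this homotopy as ``the main obstacle''; but you do not resolve it, and as described it is not justified. Dissipativity and the absence of boundary steady states are properties of the given mass-action vector field only: an arbitrary interpolation $h_t$ between the polynomial rows $f_i$ and linear rows need not be dissipative, need not keep its zeros off the coordinate hyperplanes, and need not admit a single witness pair $(\omega,M)$ uniformly in $t$ — hypothesis (ii) says nothing about the intermediate maps. Since producing an admissible deformation is precisely where all the work lies, the argument is incomplete at its central point. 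The cited reference avoids this entirely by computing the degree dynamically rather than algebraically: dissipativity makes the time-$\tau$ flow map $\phi_\tau$ a self-map of a large convex compact subset of $\mathcal{P}_c$, so $\deg(\mathrm{id}-\phi_\tau,U,0)=1$ by the standard degree theory of self-maps of convex sets, and a classical homotopy relating $\mathrm{id}-\phi_\tau$ to the vector field $-f$ restricted to $S$ — admissible because, by (i) and (ii), no zeros reach $\partial U$ along the way — transfers this to $\deg(h,U,0)=(-1)^r$. If you wish to keep your homotopy-to-a-linear-map route, you would need to construct the deformation inside a family for which a uniform $(\omega,M)$ and the absence of boundary zeros can actually be verified; otherwise the flow-based computation is the way to close the argument.
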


\section{Results}\label{sec:results}
In this section, we present the three main results (Theorems \ref{thm:one-dimensional}--\ref{thm:multistab}) stated in the introduction. For each theorem, we provide some  illustrated examples. Since for proving Theorem \ref{thm:one-dimensional} and Theorem \ref{thm:twospecies}, we need to develop more theories, we will give the details in Section \ref{sec:methods}. However, based on Theorem \ref{thm:one-dimensional} and Theorem \ref{thm:twospecies}, we can prove Theorem \ref{thm:multistab} by a list of known results and tools. Here, we present a computational proof, and we also provide the implementation details of the computational procedure. 

In order to present the first main result, we prepare some notions. Consider a zero-one network $G$ with a rank-one stoichiometric matrix $\mathcal{N}$. 
Without loss of generality, suppose that all row vectors of 
$\mathcal{N}$ are generated by the last row vector
$\mathcal{N}_s$, i.e., 
for any $i \in \{1,\dots,s-1\}$, there exists $a_i\in \mathbb{R}$ such that 
\begin{align}\label{eq:1 dim Ni ai Ns}
    \mathcal{N}_i=a_i\mathcal{N}_s.
\end{align} 
%Notice that by \eqref{eq:1 dim Ni ai Ns}, for any $i \in \{1,\dots,s-1\}$ and for any $j \in \{1,\dots,m\}$, 
%\begin{align}\label{eq:1 dim Nij Nsj}
 %   \mathcal{N}_{ij}=a_i\mathcal{N}_{sj}.
%\end{align}
For any $i \in \{1,\dots,s\}$, and for any $j \in \{1,\dots,m\}$, we denote by $\mathcal{N}_{ij}$  the $(i,j)$-th entry of $\mathcal{N}$.
By the definition of zero-one network, we have 
\begin{align}\label{eq:1 dim Nij 01-1}
    \mathcal{N}_{ij}\in\{0,1,-1\}.
\end{align}  
Then, by \eqref{eq:1 dim Ni ai Ns} and \eqref{eq:1 dim Nij 01-1}, for any $i \in \{1,\dots,s-1\}$, 
\begin{align}\label{eq:1 dim ai}
    a_i\in\{0,1,-1\}.
\end{align}
By \eqref{eq:sys} and \eqref{eq:1 dim Ni ai Ns}, for any $i \in \{1,\dots,s-1\}$, we get $f_i=a_if_s$. 
Hence,  the conservation law according to $x_i$ can be written as  $x_i=a_ix_s+c_i$, where $c_i \in {\mathbb R}$. 
Thus, by \eqref{eq:1 dim ai}, 
% the conservation laws of the network have the following three forms: 
% \begin{align}
%     \label{eq:con1}x_i&= x_s+c_i,\\
%     \label{eq:con2}x_i&= -x_s+c_i,\\
%     \label{eq:con3}x_i&= c_i.
% \end{align}  
% By \eqref{eq:con1}--\eqref{eq:con3}, 
we can classify the indices of the species by defining three sets as follows: 
\begin{align}
\label{eq:J1}
      \mathcal{J}_1&\;:=\;\{i \mid x_i=x_s+c_i,\;i\in\{1,\dots,s-1\}\}, \\
     \label{eq:J2} \mathcal{J}_2&\;:=\;\{i\mid x_i=-x_s+c_i,\;i\in\{1,\dots,s-1\}\},  \\
      \label{eq:J3}\mathcal{J}_3&\;:=\;\{i\mid x_i=c_i,\;i\in\{1,\dots,s-1\}\}.
\end{align}
\begin{definition}
    A row of a matrix is called a \defword{non-zero row} if there exists a non-zero element in this row. If a row has both positive and negative elements, we say \defword{the row changes signs}.
\end{definition}
\begin{theorem}\label{thm:one-dimensional}
     Consider a zero-one network $G$ with a rank-one stoichiometric matrix $\mathcal{N}$. 
    % Let the conservation laws be defined as in \eqref{eq:con1}--\eqref{eq:con3}. 
     Let $\mathcal{J}_1$, $\mathcal{J}_2$, and $\mathcal{J}_3$ be defined as in \eqref{eq:J1}--\eqref{eq:J3}. We have the following statements.
    \begin{enumerate}[(I)]
	\item If any non-zero row of the matrix $\mathcal{N}$ does not change the sign, then for any $c\in\mathbb{R}^{s-1}$, and for any $\kappa\in \mathbb{R}^{m}_{>0}$, the network $G$ has no positive steady states  in $\mathcal{P}_{c}$.
	\item If all non-zero rows of the matrix $\mathcal{N}$ change signs, then for any $c\in\mathbb{R}^{s-1} $, we have the following statements.
\begin{itemize}
    \item[(i)]  If there exists $k\in \mathcal{J}_2\cup \mathcal{J}_3$ such that $c_k\leq 0$ or there exists $(i,j)\in\mathcal{J}_1\times\mathcal{J}_2$ such that $c_i+c_j\le 0$, then for any $\kappa\in \mathbb{R}^{m}_{>0}$, the network $G$ has no positive steady states in $\mathcal{P}_{c}$.
    \item[(ii)] If $c_k>0$ for any $k\in \mathcal{J}_2\cup \mathcal{J}_3$, and $c_i+ c_j>0$ for any $(i,j)\in\mathcal{J}_1\times\mathcal{J}_2$, then for any $\kappa\in \mathbb{R}^{m}_{>0}$, the network $G$ has exactly one positive steady state  in $\mathcal{P}_{c}$, and the positive steady state is stable. 
\end{itemize}
\end{enumerate}
\end{theorem}
\begin{remark}
   The hypothesis ``all non-zero rows of $\mathcal{N}$ change signs (or, any non-zero row does not change the sign)" in Theorem \ref{thm:one-dimensional}  is equivalent to 
   ``at least one non-zero row of $\mathcal{N}$  changes signs (or, at least one non-zero row does not change the sign)" since the network $G$ is one-dimensional. 
\end{remark}
\begin{remark} 
 Suppose that a one-dimensional zero-one network $G$ satisfies the hypothesis of Theorem \ref{thm:one-dimensional} (II).
We remark that if $\mathcal{J}_2 \neq \emptyset$ or $\mathcal{J}_3\neq \emptyset$, then both of the two cases (i) and (ii) indeed happen. And if $\mathcal{J}_2 =\mathcal{J}_3=\emptyset$, then the case (i) can not happen, and the case (ii) must happen.
\end{remark}
\begin{remark}
All the total-constant vectors satisfying  the hypothesis of Theorem \ref{thm:one-dimensional} (II) (ii) form  the following 
   region $$\{c\in \mathbb{R}^{s-1}\mid c_k>0, \text{ for any }k\in \mathcal{J}_2\cup \mathcal{J}_3, \text{ and } c_i+ c_j>0, \text{ for any }(i,j)\in\mathcal{J}_1\times\mathcal{J}_2\}.$$
   Notice that this region is connected in $\mathbb{R}^{s-1}$. Later, we will see from  Lemma \ref{lm:Pc+ ne emptyset} that a total-constant vector $c$ belongs to the above region is equivalent to ${\mathcal P}^+_c\neq \emptyset$. 
\end{remark}
% \begin{corollary}
%  Consider a zero-one network $G$ with a rank-one stoichiometric matrix $\mathcal{N}$.  Suppose that $G$ satisfies the hypothesis of Theorem \ref{thm:one-dimensional} (II). If $\mathcal{J}_2=\mathcal{J}_3= \emptyset$, then for any $c\in\mathbb{R}^{s-1} $ and for any $\kappa\in \mathbb{R}^{m}_{>0}$, $G$ has exactly one positive steady state in $\mathcal{P}_{c}$, and the positive steady state is stable.
% \end{corollary}
% \begin{remark}
%  According to the definition of absolute concentration robustness (ACR) (see \cite[Definition 2.27, Definition 2.30]{NS2022}), the network $G$ has stable ACR.
% \end{remark}
\begin{example}
Consider the following network $G$:
\begin{equation*}
    X_1\xrightarrow{\kappa_1} X_2.
\end{equation*}
The stoichiometric matrix $\mathcal{N}$ is 
\begin{equation*}
    \begin{pmatrix}
        -1\\
        1\
    \end{pmatrix}.
\end{equation*}
Notice that any non-zero row of $\mathcal{N}$ does not change the sign since there is a single column. By Theorem \ref{thm:one-dimensional} (I), for any $c\in\mathbb{R}$ and for any $\kappa_1 \in \mathbb{R}_{>0}$, the network $G$ has no positive steady states in $\mathcal{P}_{c}$.
\end{example}

\begin{example}\label{ex:13}
Consider the following network $G$:
  \begin{equation}\label{eq:network13}
    X_1\xrightarrow{\kappa_1} X_2+X_3,\quad X_2+X_3\xrightarrow{\kappa_2} X_1. \nonumber
\end{equation}
The stoichiometric matrix $\mathcal{N}$ is 
\begin{equation*}
    \begin{pmatrix}
        -1&1\\
        1&-1\\
        1&-1
    \end{pmatrix}. 
\end{equation*}
The conservation laws are
\begin{equation*}
    x_1=-x_3+c_1,\text{ and }\; x_2=x_3+c_2.
\end{equation*}
Notice that all the non-zero rows  of $\mathcal{N}$ change signs.
By Theorem \ref{thm:one-dimensional} (II), we know that if $c_1\leq 0$ or $c_1+c_2\le 0$, then for any $\kappa\in\mathbb{R}^2_{>0}$, the network $G$ has no positive steady states in $\mathcal{P}_{c}$. If $c_1>0$ and $c_1+ c_2>0$, then for any $\kappa\in\mathbb{R}^2_{>0}$, the network $G$ has exactly one  positive steady state in $\mathcal{P}_{c}$, and the steady state is stable (see Fig \ref{fig:th2}).
\end{example}

\begin{figure}[htbp]
\centering
\subfigure[The parameter region]
{
    \begin{minipage}[b]{.45\linewidth}
        \centering
        \includegraphics[scale=0.43]{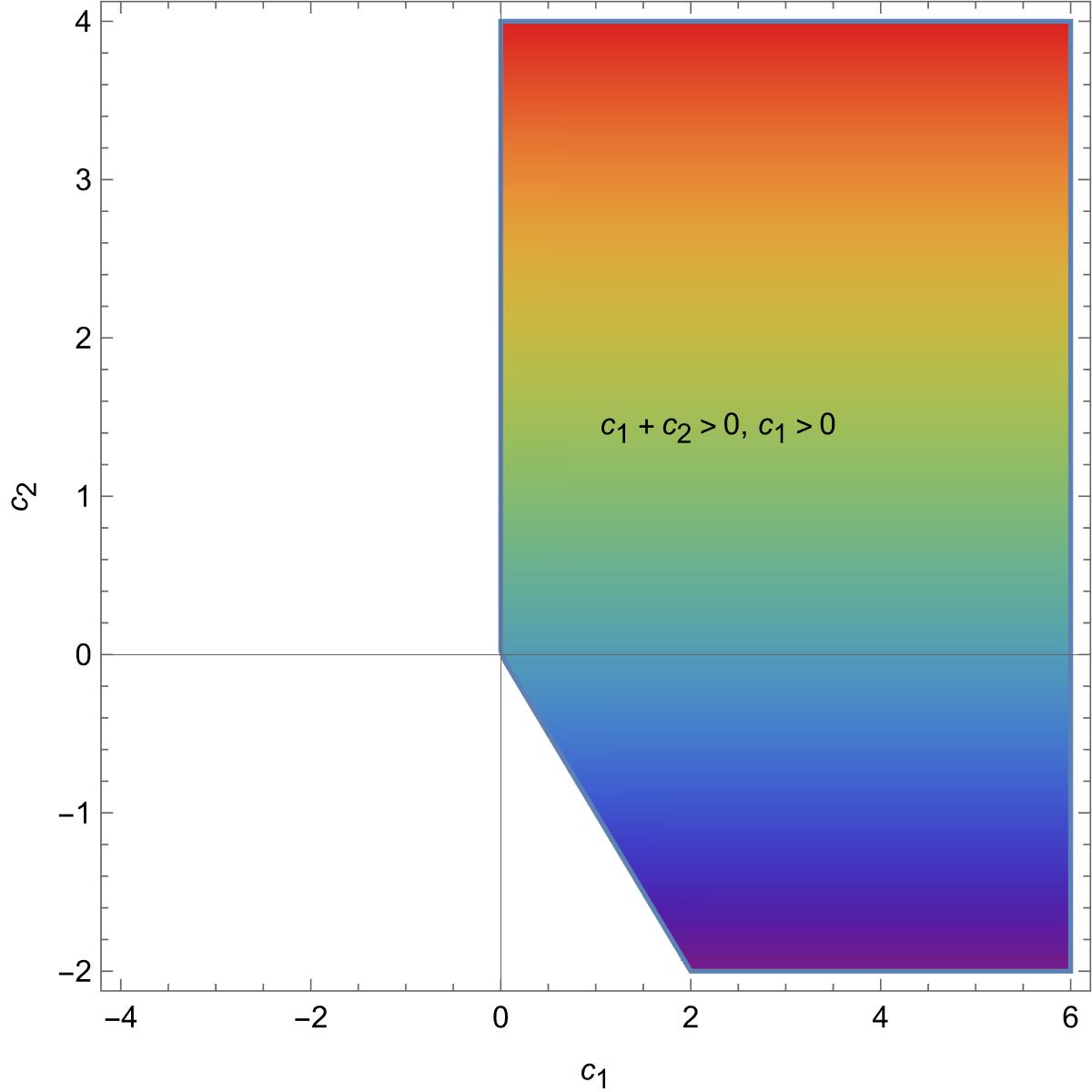}
    \end{minipage}
}
%\hspace{.20in}
\subfigure[The steady state]
{
 	\begin{minipage}[b]{.45\linewidth}
        \centering
        \includegraphics[scale=0.271]{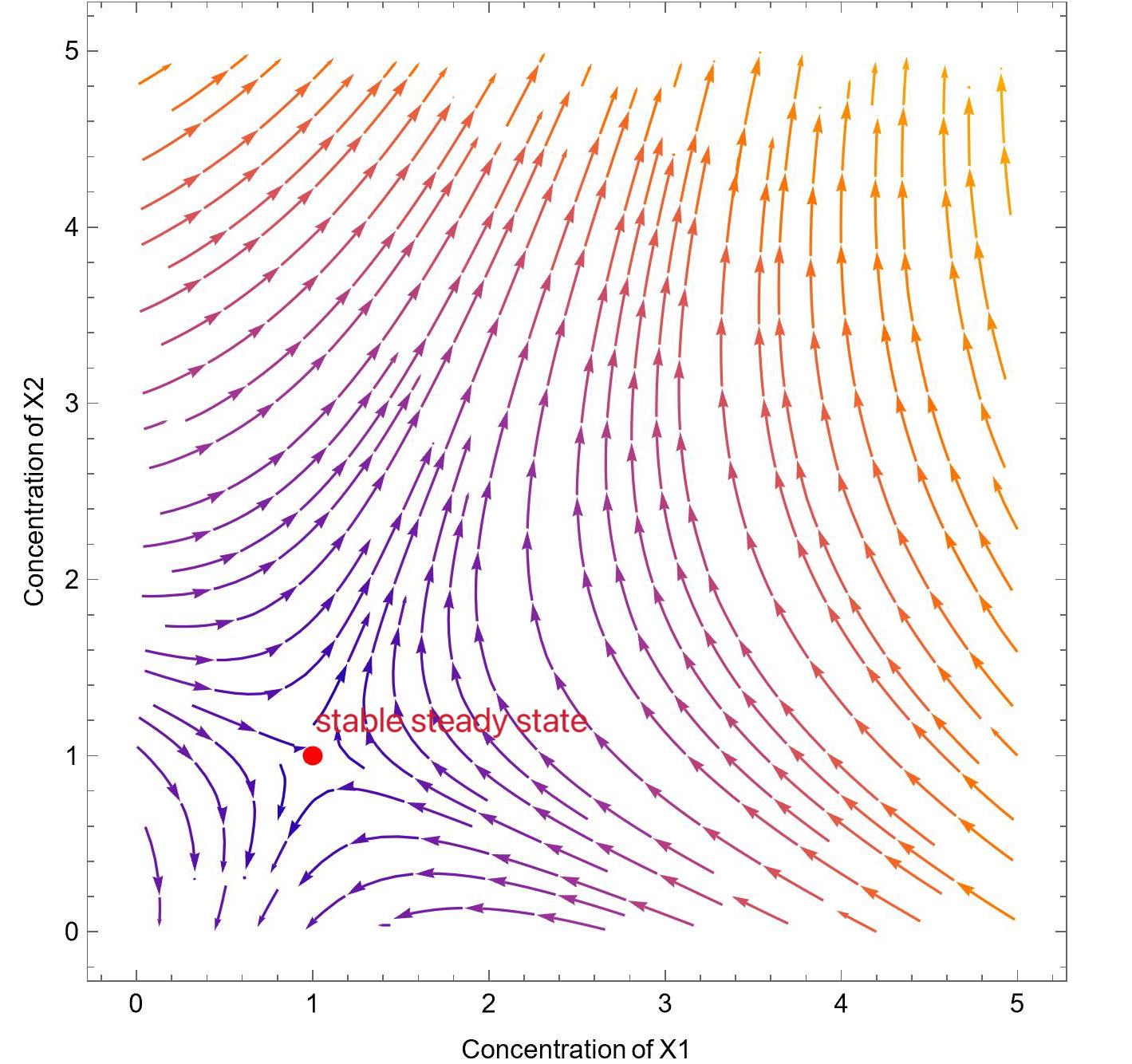}
    \end{minipage}
}
\caption{\textbf{Illustrating Theorem \ref{thm:one-dimensional} by Example \ref{ex:13}.} (a) The parameter region admits a (stable) positive steady state. (b) Let $\kappa_1=\kappa_2=1,\;c_1=2,\;c_2=0$. We get a positive steady state $x = (1,1,1)^\top$, and it is stable.}\label{fig:th2}
\end{figure}
\begin{theorem}\label{thm:twospecies} 
A two-dimensional zero-one network  with up to  three species either admits no multistationarity or  only admits degenerate positive steady states.
\end{theorem}
\begin{corollary}\label{cor:two dim}
    A two-dimensional zero-one network $G$ with up to  three species admits no nondegenerate multistationarity.  And if the network $G$ admits a nondegenerate positive steady state, then the steady state is stable. 
\end{corollary}
\begin{corollary}\label{cor:three dim}
    If a zero-one reaction network $G$ with three species admits nondegenerate multistationarity, then the network $G$ must be three-dimensional.
\end{corollary}
\begin{remark}\label{rmk:twospecies}
Theorem \ref{thm:twospecies} shows that it is possible for a two-dimensional zero-one reaction network to  admit only degenerate positive steady states (and there are infinitely many degenerate positive steady states), e.g., Example  \ref{ex:d2degenerate}.  This is a main difference from the one-dimensional case (recall here, Theorem \ref{thm:one-dimensional} shows that a one-dimensional zero-one network either admits no positive steady states, or admits exactly one stable positive steady state). 
For this reason, a one-dimensional zero-one network has ACR when it is full-dimensional, while we can not see whether a two-dimensional zero-one network has ACR even if it is full-dimensional. 
\end{remark}

\begin{example}\label{ex:d2degenerate}
Consider the following network:
\begin{align}
% \label{eq:ex_degenerate_net}
    &0 \overset{\kappa_1}{\longrightarrow} X_1,&&0 \overset{\kappa_2}{\longrightarrow} X_2,&&0\overset{\kappa_3}{\longrightarrow} X_1+X_2,\notag\\
    &X_1 +X_2\overset{\kappa_4}{\longrightarrow}0,&&X_1+X_2 \overset{\kappa_5}{\longrightarrow} X_1,&&X_1+X_2 \overset{\kappa_6}{\longrightarrow} X_2.\nonumber
\end{align}
The corresponding steady-state system $f$ defined in \eqref{eq:sys} is 
\begin{align}\label{eq:ex_degenerate_f}
    f_1&=-\kappa_4 x_1 x_2-\kappa_6 x_1 x_2+\kappa_1+\kappa_3,\nonumber \\
    f_2&=-\kappa_4 x_1 x_2-\kappa_5 x_1 x_2+\kappa_2+\kappa_3.
\end{align}
Obviously, the network admits positive steady states. For instance, if $\kappa_i=1$ for all $i\in \{1, 2, 3, 4, 5, 6\}$, then any  $x=(x_1, x_2)\in {\mathbb R}_{>0}^2$ satisfying $x_1x_2=1$ is a positive steady state. 
By \eqref{eq:ex_degenerate_f}, it is straightforward to check that %for any $\kappa \in \mathbb{R}_{>0}^6$ and for any corresponding positive steady state $x\in \mathbb{R}_{>0}^2$, we have
\begin{equation}
    det({\rm Jac}_f(\kappa,x))\equiv 0.\nonumber
\end{equation}
So, the network only admits degenerate positive steady states. 
\end{example}
Below, we present the last main result Theorem \ref{thm:multistab}. We also present its proof since it is carried out by  a computational procedure, see  the flow diagram in Fig \ref{fig:horizontal_flowchart}.
We give some details for the computations implemented in the proof of Theorem \ref{thm:multistab}.
And we provide one of the smallest zero-one networks admitting nondegenerate multistationarity/multistability in Example \ref{ex:non_multistability}/Example \ref{ex:smallest}.
 % \subsection{Main result of three-dimensional zero-one networks}\label{subsec:small 1}
\begin{theorem}\label{thm:multistab}
   For any three-dimensional zero-one network $G$ with three species, we have the following statements.
   \begin{enumerate}[(I)]
       \item If $G$ admits nondegenerate multistationarity, then $G$ has at least five reactions. Moreover, there exists a three-dimensional zero-one network with three species and five reactions such that the network admits nondegenerate multistationarity.
       \item If $G$ admits multistability, then $G$ has at least six reactions. Moreover, there exists a three-dimensional zero-one network with three species and six reactions such that the network admits multistability.
   \end{enumerate}
\end{theorem}
% Below, we prove Theorem \ref{thm:multistab}.\\
% \textbf{Proof of Theorem \ref{thm:multistab}}
\begin{proof}\label{pro:small}
\begin{enumerate}[(I)]
    \item First for any three-dimensional  network with three species and three reactions, the stoichiometric matrix  is a full-rank square matrix. 
Therefore, the network admits no positive steady states. 
By \cite[Lemma 3.1]{BB2023}, we know that a three-dimensional network with three species and four reactions admits no nondegenerate multistationarity.
Thus, if a three-dimensional zero-one network  with three species admits nondegenerate multistationarity, then the network has at least five reactions. And there exists a three-dimensional zero-one network with three species and five reactions such that the network admits nondegenerate multistationarity, see Example \ref{ex:non_multistability} (we will explain how to find this example in the part (II)).
\item By (I), if $G$ admits multistability, then $G$ has at least five reactions. Below, we prove by a computational way that if $G$ has five reactions, then the network $G$ admits no multistability.  
 Notice that we  can enumerate all  three-dimensional zero-one networks with three species and five reactions that admit nondegenerate positive steady states, and there are $65440$ such networks. We apply the following comprehensive procedure for detecting the multistable networks (one can apply the procedure for any set of networks). Also,  see the flow diagram of the procedure in Fig \ref{fig:horizontal_flowchart}.
%\footnote{Professor 
% Murad Banaji generated these networks with considering 
% isomorphism and some other equivalences by his own efficient procedure, and provided them to the authors.}.  
\begin{enumerate}
\item[{\bf (Step 1).}]First, we  check  the injectivity. 

{\bf Method of Step 1. }Recall that if a reaction network is injective (i.e., the determinant $\det({\rm Jac}_f(\kappa, x))$ does not change the sign for any $\kappa\in {\mathbb R}_{>0}^m$ and for any $x\in {\mathbb R}_{>0}^s$), then the network admits no multistationarity (e.g., \cite{CF2005}).  
% By the following criterion, we can remove the networks that must admit no multistability.
% We denote by $\mathcal{N}$ the stoichiometric matrix of the network $G$.
 % \begin{enumerate}[]
     % \item Criterion I: If $\mathcal{F}(\mathcal{N})=\emptyset$, then the network admits no positive steady state. 
 % \item Criterion I: If $G$ is injective, then the network admits no multistationarity \cite{CF2005}. 
 % \end{enumerate}
 % \begin{itemize}
 % \item[Criterion I:]If $\mathcal{F}(\mathcal{N})=\emptyset$, then the network admits no positive steady state.  
 % \item[Criterion II:]If the network is injective, then the network admits no multistationarity \cite{CF2005}. 
 % \end{itemize}
   In practice, one can check the injectivity by the following two simple criteria:
  (i) if the polynomial $\det({\rm Jac}_f(\kappa, x))$  contains only positive or only negative terms, then the network is injective; (ii)  if the polynomial $\det({\rm Jac}_{x-f}(\kappa, x))$  contains only positive or only negative terms, then the reaction network is injective  \cite[Theorem 3.1]{CF2005}.

  {\bf Result of Step 1. }We find that the injectivity of  $39233$  networks from the original   $65440$ networks can not be determined by the the above two criteria, which might admit multistationarity.  
 \item[{\bf (Step 2).}] 
 Second, we check the nondegenerate multistationarity.

 {\bf Method of Step 2. } Carrying out  {\tt RealRootClassification} in {\tt Maple}.
 
  {\bf Result of Step 2.} All the above $39233$ reaction networks admit at most two nondegenerate positive steady states, and there exists $429$  networks admitting  exactly two nondegenerate positive steady states. 
 \item[{\bf (Step 3).}]Third, we check the multistability. 
 
{\bf Method of Step 3.} For any network admitting exactly two nondegenerate positive steady states, the command {\tt RealRootClassification} gives at least one witness 
$\kappa^*\in \mathbb{R}^{5}_{>0}$ over each open connected component ${\mathcal O}$ of the complement of the discriminant variety (see Remark \ref{re:discriminant}) such that the network has two nondegenerate positive steady states $x^{(1)}$,  $x^{(2)}\in \mathbb{R}^3_{>0}$. And it is straightforward to check whether  ${\rm det}({\rm Jac}_f(\kappa^*,x^{(1)}))$ and ${\rm det}({\rm Jac}_f(\kappa^*,x^{(2)}))$ have different signs for these two steady states. If so, only one of $x^{(1)}$ and $x^{(2)}$ is stable since 
by Lemma \ref{lm:33stable} (see Appendix \ref{S2_Appendix}), for any $\kappa^*\in \mathbb{R}^{5}_{>0}$ and for any corresponding positive steady state $x^* \in \mathbb{R}^3_{>0}$, if the positive steady state $x^*$ is stable, then ${\rm det}({\rm Jac}_f(\kappa^*,x^*))<0$.  Notice that by the theory \cite[Section 6.1]{LX2016} of real root classification, for any rate-constant vector $\kappa$ located in $\mathcal{O}$, there will be two nondegenerate steady states, and if for one particular $\kappa\in \mathcal{O}$, the determinant of the Jacobian matrix ${\rm det}({\rm Jac}_f)$ will have different signs at the two steady states, then it happens for any $\kappa\in \mathcal{O}$ (that means only one of the two steady states will be stable).  

{\bf Result of Step 3. } We conclude that all the $429$  networks admit at most one stable steady state.
\end{enumerate}
Hence, if $G$ admits multistability, then $G$ has at least six reactions.
Moreover, there exists a three-dimensional zero-one network with three species and six reactions such that the reaction network admits multistability, see Example \ref{ex:smallest}. The supporting codes are available online (\href{https://github.com/YueJ13/network/blob/main/smallest}{https://github.com/YueJ13/network/blob/main/smallest}).
\end{enumerate}
\end{proof}
% There are $2796$ three-dimensional zero-one networks involving three species and five reactions.  
% We devise an algorithm to find the smallest zero-one network admitting multistability. The input to the algorithm is $v(\kappa, x)$ and the stoichiometric matrix $\mathcal{N}$ of the $2796$ networks.\\
% \textbf{Algorithm 3 (Find the smallest zero-one network admitting multistability)}\\
% \textbf{Input:} $v(\kappa, x)$ and the stoichiometric matrix $\mathcal{N}$ of the \textcolor{red}{500} networks.\\
% \textbf{1.} Check that the flux cone $F(\mathcal{N})$ is non-empty.\\
% \textbf{2.} Check that the network is not injective.\\
% \textbf{3.} Check that the terms of ${\rm det}({\rm Jac}_f(\kappa,x))$ have both positive and negative signs.\\
% \textbf{4.} Transform the Jacobian matrix ${\rm Jac}_f(\kappa,x)$ to $J(p,\lambda)$ (see \eqref{eq:defjhl}). And compute ${\rm det}(J(p,\lambda))$.\\
% \textbf{5.} Check that the terms of ${\rm det}(J(p,\lambda))$ have both positive and negative signs.\\
% \textbf{6.} Analyze the sign of ${\rm det}(J(p,\lambda))$. Find the conditions on the parameters $p,\lambda$ such that ${\rm sign}({\rm det}(J(p,\lambda)))=-1$ and ${\rm sign}({\rm det}(J(p,\lambda)))=1$ respectively.
% By Algorithm 3, 
\begin{figure}[H]
    \centering
    \begin{tikzpicture}[node distance=1.5cm, auto]
        % Define nodes
        \node [inp, align=center,font=\small] (start) {Enumerate all ``non-trivial" $3$-species zero-one networks with $5$ reactions};
        % \node [pro, align=center, below of=start,font=\small] (process1) {Delete the networks satisfying  $\mathcal{F}(\mathcal{N})=\emptyset$};
        \node [pro,align=center, below of=start,font=\small] (process2) {Check the   injectivity};
        \node [pro,align=center, below of=process2,font=\small] (process4) {Delete the networks admitting no nondegenerate \\multistationarity by 
        using {\tt RealRootClassification} in {\tt Maple}};
        \node [pro,align=center, below of=process4,font=\small] (process5) {Delete the networks admitting no multistability \\by Lemma \ref{lm:33stable} (deduced by Hurwitz criterion)};
        % \node [pro,align=center, below of=process5,font=\small] (process6) {All $3$-dimensional zero-one networks with $5$ reactions have no multistability};
             \node [inp,align=center, below of=process5,font=\small] (process7) {Apply a similar procedure to find a $3$-species \\zero-one network with $6$ reactions admitting  multistability};
% \node[point,right  of=process4,node distance=7cm](point1){};
        % Draw arrows
        \draw [arrow] (start) --node[right,align=center,font=\small] {65440 networks} (process2);
        % \draw [arrow] (process1) --node[right,align=center,font=\small] {117256 networks} (process2);
        \draw [arrow] (process2) -- node[right,align=center,font=\small] {39233 networks}(process4);
         \draw [arrow] (process4) --node[right,align=center,font=\small] {429  networks}(process5);
         \draw [arrow] (process5) --node[right,align=center,font=\small]{0 networks}(process7);
%          \draw [arrow1] (process4) -- node[above,align=center,font=\small] {No}(point1);
% \draw [arrow] (point1) |-(process7);
    \end{tikzpicture}
    \vspace{1em}
    \captionsetup{justification=centering} 
    \caption{The flow diagram of proof of Theorem \ref{thm:multistab} (II) for the three-dimensional networks.}
    \label{fig:horizontal_flowchart}
    {\bf Note:} By a ``non-trivial" network, we mean the network admits nondegenerate positive steady states. 
\end{figure}
% \subsection{Implementations and examples}\label{subsec:small 2}

We have implemented the procedure in Fig \ref{fig:horizontal_flowchart} in {\tt Maple}. 
And we run the procedure by a 3.60 GHz Inter Core i9 processor (64GB total memory) under Windows 10. 
We have checked all the ``non-trivial" three-dimensional zero-one networks with three species and five reactions through the procedure in Fig \ref{fig:horizontal_flowchart}. 
We record the timings for carrying out these steps in Table \ref{tab:time}. 
\begin{table}[H]
    \centering
\caption{The computational time (h: hours) for running the procedure in Fig \ref{fig:horizontal_flowchart}.}
    \begin{tabular}{ccc}
    \hline
    \scriptsize{STEP}&\scriptsize{TIME}&\scriptsize{NUMBER OF NETWORKS}  \\
    \hline
         % Preprocess&3.5h&29141080  \\
         % Delete the networks satisfying  $\mathcal{F}(\mathcal{N})=\emptyset$&0.1h  &291960  \\
         Check  the injectivity&0.1h &65440  \\
         Check the nondegenerate multistationarity&18h  &39233  \\
         Check the multistability&0.1h &429\\ 
    \hline
    \end{tabular}
    
    {\bf Notes:} (i) The column ``STEP" lists the names of the blue steps in the procedure shown in Fig \ref{fig:horizontal_flowchart}. (ii) The column ``TIME" records the computational time for computing each step. (iii) The last column records the number of networks we dealt with. 
    \label{tab:time}
\end{table}

As what has been shown in Fig \ref{fig:horizontal_flowchart} (or Table \ref{tab:time}), we have found $429$  three-species, five-reaction networks that admit nondegenerate multistationarity, and
we present one of them in Example \ref{ex:non_multistability}. 
All theses $429$  reaction networks are listed in a supporting file (\href{https://github.com/YueJ13/network/blob/main/smallest/networks_two_states.txt}{https://github.com/YueJ13/network/blob/main/smallest/networks\_two\_states.txt}).

\begin{example}\label{ex:non_multistability}
Consider the following network $G$:
    \begin{equation}\label{eq:3s5rnetwork}
\begin{aligned}
    &X_1+X_2 \overset{\kappa_1}\longrightarrow X_3,&&X_3 \overset{\kappa_2}\longrightarrow0,&&0\overset{\kappa_3}\longrightarrow X_3,\\
    &X_1+X_3 \overset{\kappa_4}\longrightarrow X_1+X_2+X_3,&&X_2+X_3 \overset{\kappa_5}\longrightarrow X_1+X_2+X_3.
\end{aligned}
\end{equation}
\noindent The steady-state system $f$ defined as in \eqref{eq:sys} is as follows.
\begin{equation}\label{eq:3s5r_f}
    \begin{aligned}
        f_1&=-\kappa_1x_1x_2+\kappa_5x_2x_3, \\
        f_2&=-\kappa_1x_1x_2+\kappa_4x_1x_3,\\
        f_3&=\kappa_1x_1x_2-\kappa_2x_3+\kappa_3.
    \end{aligned}
\end{equation}
Let $\kappa^*=(1,3,2,1,1)$. Substituting $\kappa^*\in\mathbb{R}^5_{>0}$ into \eqref{eq:3s5r_f}, we get two distinct positive steady states.
\begin{equation}
    \begin{aligned}
        x^{(1)}&=(1,1,1)^\top,\\
        x^{(2)}&=(2,2,2)^\top,
    \end{aligned} \nonumber
\end{equation}
where $x^{(1)}$ is stable,  and $x^{(2)}$ is unstable. 
\end{example}
\begin{figure}[H]
    \centering
    \includegraphics[scale=0.2]{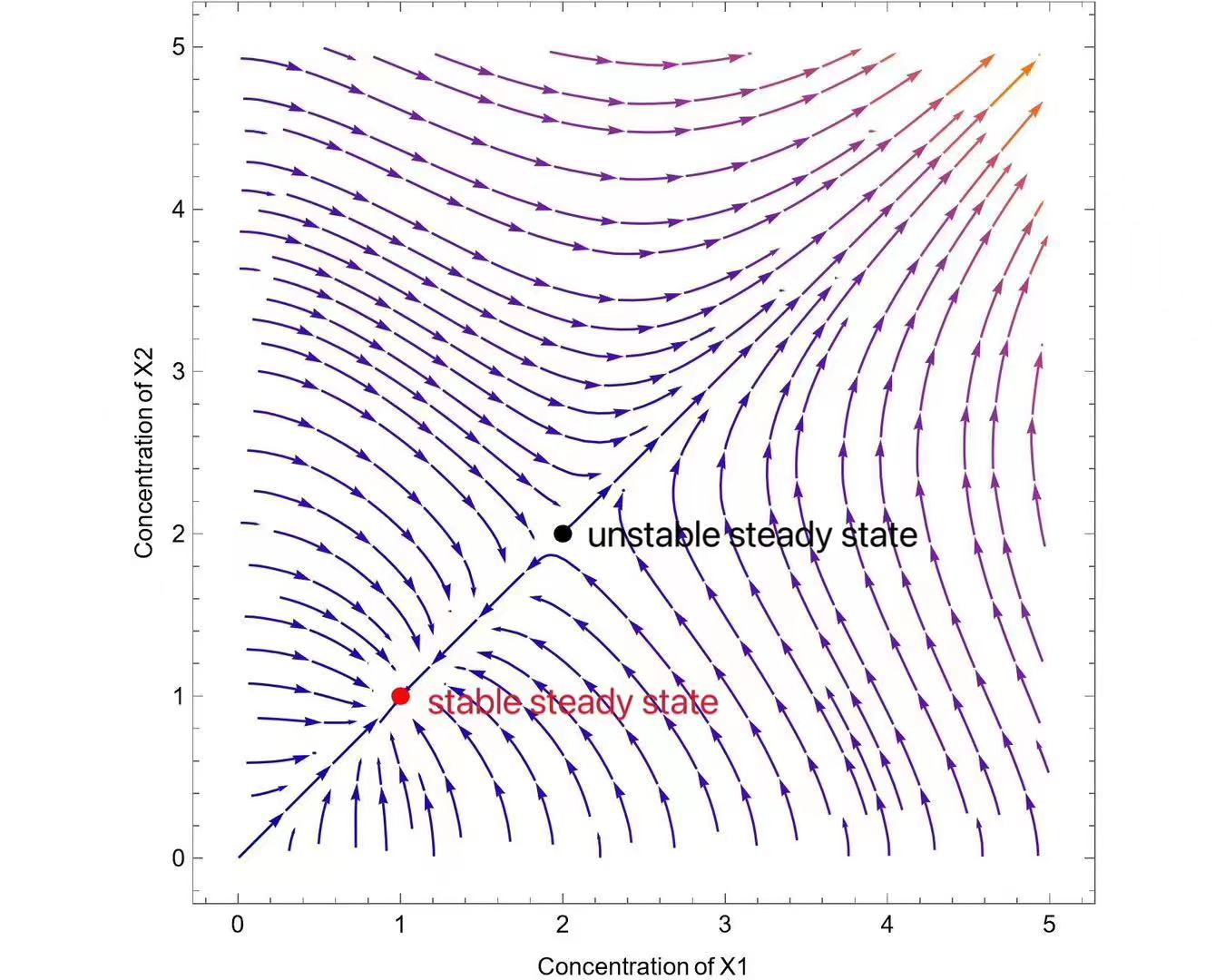}
    \caption{\textbf{The steady states of the network \eqref{eq:3s5rnetwork}.} Let $\kappa^*=(1,3,2,1,1)$. We get two positive nondegenerate steady states $x^{(1)}=(1,1,1)^\top$ and $x^{(2)}=(2,2,2)^\top$.}
    \end{figure}

\begin{remark}   
When we enumerate the non-trivial three-dimensional zero-one networks with three species and six reactions, there are  $1367698$ networks in total. 
% \footnote{Again, Professor Murad Banaji provided these networks to the authors.}. 
We estimate that  it will take about $380$ hours to complete the procedure (the blue steps) in Fig \ref{fig:horizontal_flowchart}.
Hence, it is not very realistic to complete the whole computation. 
However, we  still successfully find a multistable network by applying the procedure to the six-reaction networks, see Example \ref{ex:smallest}.
\end{remark}
\begin{example}\label{ex:smallest}
Consider the following network $G$:
\begin{equation}\label{eq:3-3-6network}
    \begin{aligned}
        &X_1+X_2+X_3\xrightarrow{\kappa_1}0,
&&0\xrightarrow{\kappa_2}X_3,&&X_3\xrightarrow{\kappa_3}0,\\
&X_1\xrightarrow{\kappa_4}X_1+X_2,&&X_2\xrightarrow{\kappa_5}X_1,&&X_1+X_2\xrightarrow{\kappa_6}X_1+X_3.
    \end{aligned}
\end{equation}
\noindent The steady-state system $f$ defined as in \eqref{eq:sys} is as follows.
\begin{equation}\label{eq:3-3-6sys}
\begin{aligned}
    f_1&=-\kappa_1x_1x_2x_3 +\kappa_5x_2,\\
    f_2&=-\kappa_1x_1x_2x_3-\kappa_6x_1x_2+\kappa_4x_1-\kappa_5x_2,\\
    f_3&=-\kappa_1x_1x_2x_3+\kappa_6x_1x_2-\kappa_3x_3+\kappa_2.
\end{aligned}   
\end{equation}
Let $\kappa^*=(\frac{5765}{16},\frac{1655}{65536},\frac{1}{2},\frac{1}{2},\frac{1}{2},\frac{1}{2})$. Substituting  $\kappa^*\in\mathbb{R}^6_{>0}$ into \eqref{eq:3-3-6sys}, we get three distinct positive steady states.
\begin{equation}
    \begin{aligned}
        x^{(1)}&=(0.8340329166, 0.2942918947, 0.001663824382)^\top,\\
        x^{(2)}&=( 0.05999575106, 0.02912421107, 0.02312970964)^\top,\\
        x^{(3)}&=(0.05546474050, 0.02698403889, 0.02501921562)^\top.
    \end{aligned} \nonumber
\end{equation}
It is straightforward to check by Lemma \ref{lm:33stable} that  $x^{(1)}$ and $x^{(3)}$ are stable. 
\end{example}

\begin{figure}[htbp]
    \centering
    \includegraphics[scale=0.2]{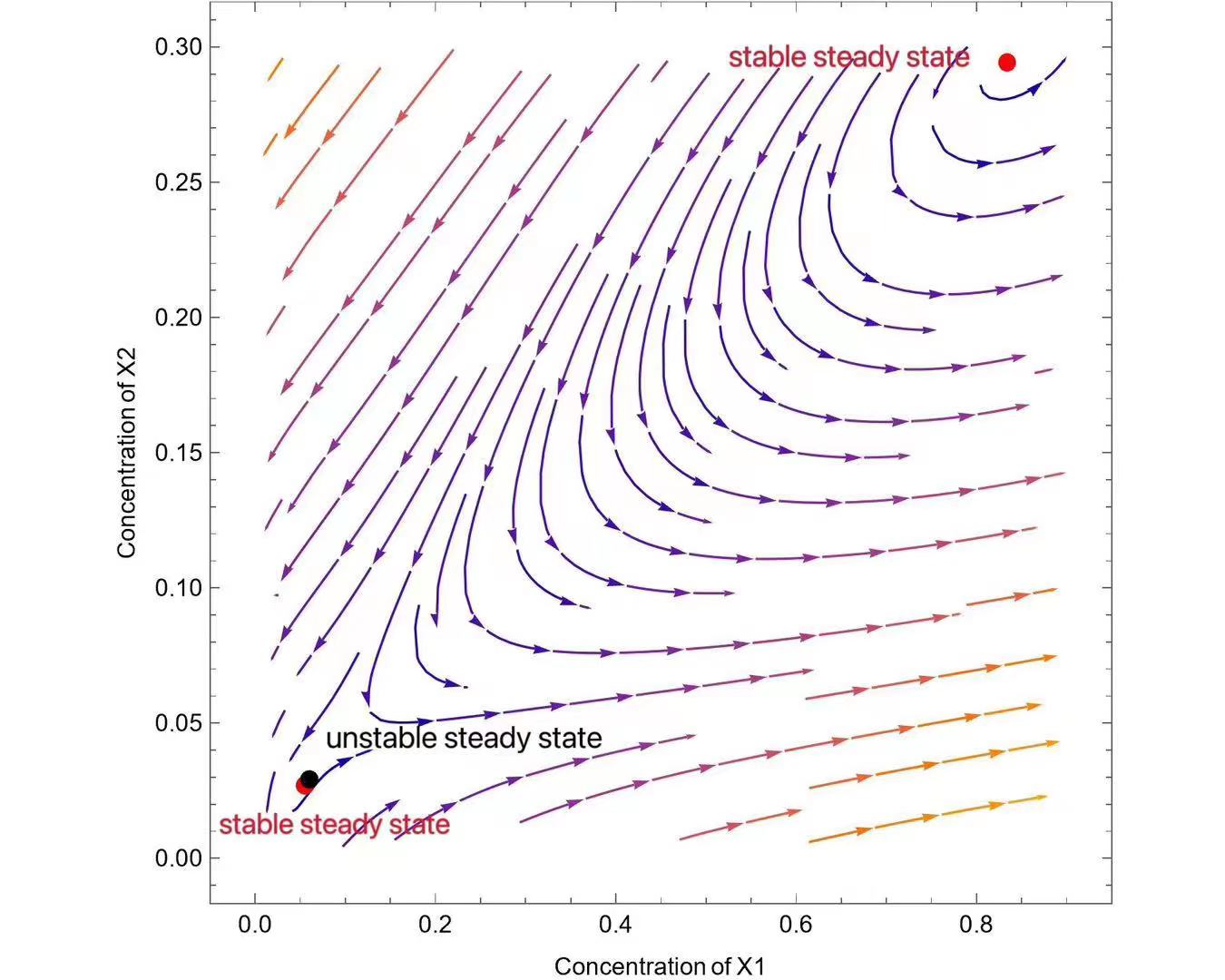}
    \caption{\textbf{The steady states of the network \eqref{eq:3-3-6network}.}  Let $\kappa^*=(\frac{5765}{16},\frac{1655}{65536},\frac{1}{2},\frac{1}{2},\frac{1}{2},\frac{1}{2})$. We get three steady states $ x^{(1)}=(0.8340329166, 0.2942918947,$\\ $0.001663824382)^\top$, $
        x^{(2)}=( 0.05999575106, 0.02912421107, 0.02312970964)^\top$, and $
        x^{(3)}=(0.05546474050, 0.02698403889, 0.02501921562)^\top$, where $x^{(1)}$ and $x^{(3)}$ are stable.}
    \end{figure} 

\section{Methods}\label{sec:methods}
\subsection{Jacobian matrices of zero-one networks}\label{sec:trans}
% According to Theorem \ref{thm:deter_sign},  %\ref{def:Dickenstein2019},
% in order to determine whether a network admits multistationarity, we need to analyze
% the sign of Jacobian matrix of the steady-state system for all possible values of rate constants and their corresponding positive steady states.
In this work, a crucial problem is to determine the sign of ${\rm det}({\rm Jac}_h)$ evaluated at a positive steady state.   In this section, we review a standard transformation of the Jacobian matrix using extreme rays (e.g., \cite[Section 4]{TW2022}) since usually it is easier to analyze the sign of the transformed Jacobian matrix. 
We also review two lemmas proved in \cite{TW2022}, which reveal the special properties of the transformed Jacobian matrices for zero-one networks (see Lemmas \ref{lm:kxtohl}--\ref{lm:trace}). 
%The idea is to reparameterize the Jacobian matrix using extreme rays. 
Then, for the zero-one networks with low dimensions,  we study the sign of ${\rm det}({\rm Jac}_h)$ evaluating at the nondegenerate positive steady states in Lemma \ref{lm:jach}. Based on Lemma \ref{lm:jach} and a known result (Lemma \ref{lm:stable}),  for any zero-one network with dimension one or two, we prove that any nondegenerate positive steady state is stable in Lemma \ref{lm:zero-one stable}. We also develop a criterion (Lemma \ref{lm:jac B jac}) and a corresponding algorithm (Algorithm \ref{alg:checksign}) for checking the sign of ${\rm det}({\rm Jac}_h)$ evaluated at a positive steady state. 
We will use these results to determine  multistationarity  later in Section \ref{sec:one dim} and Section \ref{sec:2d 3s}. 

Consider a network $G$ \eqref{eq:network} with a stoichiometric matrix $\mathcal{N}$ and a reactant matrix $\mathcal{Y}$. Recall the system $\dot{x} = f(\kappa, x) = \mathcal{N}v(\kappa, x)$ defined as in \eqref{eq:sys}. %Note that for any
%$k \in\{1, \dots , s\} $, and %for any $l \in\{1, \dots , m\}$, the $(k,l)$-entry of the Jacobian matrix of
%$v(\kappa, x)$ with respect to $x$ is
%$$
%\frac{\partial v_k(\kappa, x)}{\partial x_l}=v_k(\kappa, x)\alpha_{lk}\frac{1}{x_l}.
%$$
For any column vector $y := (y_1, \dots , y_s)^\top \in \mathbb{R}^s$, we denote by ${\rm diag}(y)$ the $s\times s$ diagonal matrix with $y_i$ on the diagonal. Then, the Jacobian matrix  ${\rm Jac}_f(\kappa,x)$  can be written as 
\begin{align}
  {\rm Jac}_f(\kappa,x)=\mathcal{N}{\rm diag}(v(\kappa,x))\mathcal{Y}^\top{\rm diag}(\frac{1}{x}), \nonumber
\end{align}
where $\frac{1}{x}:=(\frac{1}{x_1},\ldots,\frac{1}{x_s})^\top$.\\

Next, we consider a transformation of the Jacobian matrix ${\rm Jac}_f(\kappa,x)$ evaluated at the positive steady states.
For the stoichiometric matrix ${\mathcal N} \in \mathbb{R}^{s \times m}$, the \defword{flux cone} of ${\mathcal N}$ is defined as
\begin{align}\label{eq:fluxcone}
\mathcal{F}({\mathcal N})\; := \;\{\gamma \in {\mathbb R}_{\geq 0}^m:{\mathcal N} \gamma = \textbf{0} \},
\end{align}
where $\bf0$ denotes the vector whose coordinates are all zero.
%For any $\gamma \in F({\mathcal N})$, 
%define an equivalence class
%$$[\gamma]:=\{\gamma^{\prime}:\text{if there exists a constant}\; \lambda>0\; \text{such that}\;  \gamma^{\prime}=\lambda \gamma\},$$
%and if $\gamma\neq \bf0$, then We call $[\gamma]$ a \defword{ray}.  
%We call a ray $[\gamma]$ an \defword{extreme ray}, if for any $\gamma^{(1)},\gamma^{(2)} \in F({\mathcal N})\backslash [\gamma]$, we have $\{\lambda \gamma^{(1)} + (1-\lambda)\gamma^{(2)}:0<\lambda<1\} \cap [\gamma] = \emptyset$.
Let $R^{(1)},\ldots,R^{(t)} \in \mathbb{R}^m_{\geq 0}$ be a fixed choice of representatives for the extreme rays of the flux cone $\mathcal{F}(\mathcal{N})$ (note here, it is known that 
the number of extreme rays is finite \cite{Conradi2019}).
Then, any $\gamma \in \mathcal{F}({\mathcal N})$ can be written as 
\begin{align}\label{eq:jac gamma}
\gamma=\sum_{i=1}^{t} \lambda_{i} R^{(i)}, \;\text{where}\; \lambda_{i} \geq 0\; \text{for any}\; i\in \{1, \ldots, t\}. 
\end{align}
We introduce the new variables $p_1,\ldots,p_s,\lambda_1,\ldots,\lambda_t$.
Let $p:=(p_1,\ldots,p_s)^\top=\frac{1}{x}$, and let $\lambda:=(\lambda_1,\ldots,\lambda_t)^\top$.
We define a new matrix in terms of $p$ and $\lambda$:
\begin{align}\label{eq:defjhl}
   J(p,\lambda)\;:=\; \mathcal{N}{\rm diag}  (\sum_{i=1}^{t} \lambda_{i} R^{(i)})\mathcal{Y}^\top {\rm diag} (p). 
\end{align}

\begin{lemma}\label{lm:kxtohl}\cite[Lemma 4.1]{TW2022}
%Let $G$ be a network \eqref{eq:network} with a stoichiometric matrix $\mathcal{N}$. Denote the system of ODEs by $\dot{x} = f(\kappa, x) = \mathcal{N}v(\kappa, x)$ as in \eqref{eq:sys}. 
Consider a network $G$ defined as in \eqref{eq:network}. 
Let $f$ be the steady-state system defined as in \eqref{eq:sys}. 
% Let ${\rm Jac}_f(\kappa,x)$ be the Jacobian matrix of $f(\kappa,x)$ with respect to $x$. 
Let $J(p,\lambda) \in \Q[p,\lambda]^{s \times s}$ be the matrix defined as in \eqref{eq:defjhl}.
For any $\kappa \in \mathbb{R}^m_{>0}$ and for any corresponding positive steady state $x \in \mathbb{R}^s_{>0}$, there exist $p \in \mathbb{R}_{> 0}^s$ and $\lambda \in \mathbb{R}^t_{\geq 0}$ such that $J(p,\lambda)={\rm Jac}_f(\kappa,x)$.
\end{lemma}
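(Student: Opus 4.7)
The plan is to exploit two facts simultaneously: that a positive steady state forces the flux vector $v(\kappa,x)$ to lie in the flux cone of $\mathcal{N}$, and that the Jacobian formula in \eqref{eq:tf} is linear in that flux vector. First I would observe that since $x \in \mathbb{R}^s_{>0}$ is a steady state corresponding to $\kappa$, we have $\mathcal{N}v(\kappa,x) = f(\kappa,x) = 0$, while each component $v_j(\kappa,x) = \kappa_j\prod_{i=1}^s x_i^{\alpha_{ij}}$ is strictly positive because $\kappa_j>0$ and $x_i>0$. Consequently $v(\kappa,x)$ belongs to the flux cone $\mathcal{F}(\mathcal{N})$ defined in \eqref{eq:fluxcone}.

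Next I would invoke the extreme-ray decomposition: since $R^{(1)},\dots,R^{(t)}$ are representatives of the extreme rays of $\mathcal{F}(\mathcal{N})$, there exist nonnegative scalars $\lambda_1,\dots,\lambda_t \ge 0$ such that
\[
v(\kappa,x) \;=\; \sum_{i=1}^{t}\lambda_i R^{(i)}.
\]
Setting $\lambda := (\lambda_1,\dots,\lambda_t)^\top \in \mathbb{R}^t_{\ge 0}$ and $p := 1/x = (1/x_1,\dots,1/x_s)^\top \in \mathbb{R}^s_{>0}$, I would substitute these choices into the defining formula \eqref{eq:defjhl}:
\[
J(p,\lambda) \;=\; \mathcal{N}\,{\rm diag}\Big(\sum_{i=1}^{t}\lambda_i R^{(i)}\Big)\mathcal{Y}^\top\,{\rm diag}(p) \;=\; \mathcal{N}\,{\rm diag}(v(\kappa,x))\,\mathcal{Y}^\top\,{\rm diag}(1/x).
\]
Comparison with the expression \eqref{eq:tf} for ${\rm Jac}_f(\kappa,x)$ then yields $J(p,\lambda) = {\rm Jac}_f(\kappa,x)$, as required.

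The only nontrivial ingredient is the existence of a nonnegative extreme-ray decomposition inside the cone $\mathcal{F}(\mathcal{N})$. This is a standard consequence of the Minkowski--Weyl theorem for rational polyhedral cones, and it is already implicitly used in the excerpt when the $R^{(i)}$ are introduced (the finiteness $t<\infty$ is cited explicitly). So the whole argument is essentially a three-line chain of substitutions, and I do not anticipate any real obstacle; the lemma is a bookkeeping statement that re-parametrizes the Jacobian at any positive steady state in terms of the \emph{new} variables $(p,\lambda)$, which decouples the reciprocal concentrations from the cone coefficients and will be what allows the subsequent sign analysis of $\det({\rm Jac}_h)$ to be carried out uniformly over all positive steady states.
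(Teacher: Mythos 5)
Your proposal is correct and is precisely the intended argument: at a positive steady state the flux vector $v(\kappa,x)$ is a strictly positive element of the kernel of $\mathcal{N}$, hence lies in the flux cone $\mathcal{F}(\mathcal{N})$ and admits a nonnegative extreme-ray decomposition, and substituting that decomposition together with $p=1/x$ into \eqref{eq:defjhl} reproduces the factored Jacobian \eqref{eq:tf}. The paper does not reprove this lemma but imports it from \cite[Lemma 4.1]{TW2022}, whose proof follows the same three-step substitution, so there is nothing to add.
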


Given $\mathcal{M}\in \mathbb{R}^{s\times s}$ and  $I \subseteq\{1,\dots,s\}$, we denote by $\mathcal{M}[I,I]$  the submatrix of $\mathcal{M}$ with entries of $\mathcal{M}$ with indices $(i,j)$ in $I\times I$.
 
\begin{lemma}\label{lm:trace}\cite[Corollary 5.2, Lemma 5.4]{TW2022}
Consider a zero-one network $G$ defined as in \eqref{eq:network}. Let $J(p, \lambda) \in\mathbb{Q}[p, \lambda]^{s\times s}$ be the matrix defined as in \eqref{eq:defjhl}. 
Then, we have the following statements. 
\begin{enumerate}[(I)]
    \item \label{lm:trace 1} For any $I\subseteq\{1,\dots,s\}$ satisfying  $\vert I \vert\;=1$, ${\rm det}(J[I,I])$ is either a zero polynomial or a sum of terms
with negative coefficients.
    \item \label{lm:trace 2} For any $I\subseteq\{1,\dots,s\}$ satisfying  $\vert I \vert\;=2$, ${\rm det}(J[I,I])$ is either a zero polynomial or a sum of terms
with positive coefficients.
\end{enumerate}
\end{lemma}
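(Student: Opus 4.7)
I would work directly from the definition \eqref{eq:defjhl} of $J(p,\lambda)$, whose $(i,j)$-entry can be written as
\[ J_{ij}(p,\lambda)\;=\;p_j\sum_{k=1}^{m}(\beta_{ik}-\alpha_{ik})\,\alpha_{jk}\Bigl(\sum_{r=1}^{t}\lambda_{r}R^{(r)}_{k}\Bigr), \]
using $\mathcal{N}_{ik}=\beta_{ik}-\alpha_{ik}$ and $\mathcal{Y}_{jk}=\alpha_{jk}$. Both claims of the lemma then reduce to statements about the signs of the coefficients of a polynomial in $(p,\lambda)$.

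For part~(I), a $1\times 1$ principal minor is simply $J_{ii}$. Under the zero-one hypothesis the scalar $(\beta_{ik}-\alpha_{ik})\alpha_{ik}$ vanishes if $\alpha_{ik}=0$ and equals $\beta_{ik}-1\in\{-1,0\}$ if $\alpha_{ik}=1$; in either case it is nonpositive. Since every $R^{(r)}_{k}\ge 0$, every coefficient of $J_{ii}$ in the monomials $\lambda_{r}p_{i}$ is nonpositive, which gives~(I).

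For part~(II), I would apply the Cauchy--Binet formula to the factorisation $J[I,I]=\mathcal{N}[I,:]\,{\rm diag}(v)\,\mathcal{Y}^\top[:,I]\,{\rm diag}(p)[I,I]$ to obtain
\[ \frac{{\rm det}(J[I,I])}{p_{i}p_{j}}\;=\;\sum_{|K|=2}{\rm det}(\mathcal{N}[I,K])\,{\rm det}(\mathcal{Y}[I,K])\prod_{k\in K}v_{k},\qquad v_{k}=\sum_{r}\lambda_{r}R^{(r)}_{k}. \]
Substituting for $v_{k}$ and collecting monomials in $\lambda$, it is enough to show that for each pair of extreme rays $(r,r')$ the bilinear coefficient
\[ M(r,r')\;:=\;\sum_{k_{1},k_{2}}{\rm det}(\mathcal{N}[I,\{k_{1},k_{2}\}])\,{\rm det}(\mathcal{Y}[I,\{k_{1},k_{2}\}])\,R^{(r)}_{k_{1}}R^{(r')}_{k_{2}} \]
is nonnegative. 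Writing $M_{r}:=\mathcal{N}[I,:]\,{\rm diag}(R^{(r)})\,\mathcal{Y}^\top[:,I]$, one checks that $M(r,r')={\rm det}(M_{r}+M_{r'})-{\rm det}(M_{r})-{\rm det}(M_{r'})$; hence it suffices to establish ${\rm det}(M_{r}+M_{r'})\ge 0$ for every nonnegative combination of extreme rays.

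The main obstacle is that individual summands ${\rm det}(\mathcal{N}[I,K])\,{\rm det}(\mathcal{Y}[I,K])$ are not sign-definite: an ``autocatalytic'' pair such as $X_{i}+X_{3}\to X_{i}+X_{j}$ together with $X_{j}+X_{4}\to X_{i}+X_{j}$ gives the product $-1$. To absorb such negative contributions I would invoke the flux-cone identity $\mathcal{N}R^{(r)}=0$, which for each $\ell\in I$ yields
\[ A(r,\ell)\;:=\;\sum_{k}\mathcal{N}_{\ell k}\alpha_{\ell k}R^{(r)}_{k}\;=\;-\sum_{\alpha_{\ell k}=0,\,\beta_{\ell k}=1}R^{(r)}_{k}\;=\;-\sum_{\alpha_{\ell k}=1,\,\beta_{\ell k}=0}R^{(r)}_{k}, \]
together with an analogous rewriting of $B(r,i,j):=\sum_{k}\mathcal{N}_{ik}\alpha_{jk}R^{(r)}_{k}$ as a signed sum over reactions that create or destroy $X_{j}$. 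Plugging these rewrites into the expansion of ${\rm det}(M_{r}+M_{r'})$ pairs each negative autocatalytic contribution with a matching positive ``degradative'' contribution forced by mass balance; for example, in the toy network $X_{1}+X_{3}\to X_{1}+X_{2}$, $X_{2}+X_{4}\to X_{1}+X_{2}$, $X_{1}\to X_{3}$, $X_{2}\to X_{4}$, the $-1$ contributed by the first two reactions is cancelled exactly by the $+1$ from the last two. A finite case analysis on the zero-one patterns of $(\alpha_{ik},\alpha_{jk},\beta_{ik},\beta_{jk})$ then shows that every remaining contribution to $M(r,r')$ is a nonnegative multiple of products $R^{(r)}_{k_{1}}R^{(r')}_{k_{2}}$. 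This combinatorial bookkeeping, rather than any deep algebra, is the step where the zero-one hypothesis is used decisively, and is where I expect the real work to lie.
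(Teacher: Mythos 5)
First, note that the paper does not prove this lemma at all: it is imported verbatim from \cite[Corollary 5.2, Lemma 5.4]{TW2022}, so there is no internal argument to compare against. Judged on its own terms, your treatment of part (I) is complete and correct: $J_{ii}=p_i\sum_k(\beta_{ik}-\alpha_{ik})\alpha_{ik}v_k$ with $v_k=\sum_r\lambda_rR^{(r)}_k\ge 0$, and the zero-one hypothesis forces $(\beta_{ik}-\alpha_{ik})\alpha_{ik}\in\{-1,0\}$, so every coefficient is nonpositive. Your setup for part (II) is also the right one — Cauchy--Binet reduces the claim to the nonnegativity of the coefficients $\det(M_r)$ of $\lambda_r^2$ and of the mixed coefficients $M(r,r')$ of $\lambda_r\lambda_{r'}$, and you correctly observe that the individual summands $\det(\mathcal N[I,K])\det(\mathcal Y[I,K])$ are not sign-definite, so the flux-cone relation $\mathcal N R^{(r)}=0$ must enter. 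This is a genuine insight that many readers would miss.

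However, part (II) is not proved. Two concrete gaps. First, the reduction ``hence it suffices to establish $\det(M_r+M_{r'})\ge 0$'' is a non sequitur: $M(r,r')=\det(M_r+M_{r'})-\det(M_r)-\det(M_{r'})$, and nonnegativity of all three determinants does not imply $M(r,r')\ge 0$ (e.g.\ $\det M_r=\det M_{r'}=1$ with $\det(M_r+M_{r'})=1$ would give $M(r,r')=-1$); moreover the diagonal case $\det(M_r)\ge 0$ is itself one of the coefficients to be proved and is nowhere established. Second, the decisive step — that after rewriting the diagonal entries via the mass-balance identities $A(r,\ell)=-\sum_{\alpha_{\ell k}=1,\beta_{\ell k}=0}R^{(r)}_k$, every negative ``autocatalytic'' contribution to $M(r,r')$ is dominated by matching positive contributions — is asserted, illustrated on a single four-reaction example where the cancellation happens to be exact ($-1+1=0$), and then deferred to ``a finite case analysis'' that is never carried out. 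That case analysis is precisely the content of the cited Lemma 5.4 of \cite{TW2022}; without it the claim that all remaining contributions are nonnegative multiples of $R^{(r)}_{k_1}R^{(r')}_{k_2}$ is unsubstantiated, and the example alone does not rule out a strictly negative coefficient in a larger network. As it stands the proposal is a correct plan with the hard step missing, not a proof.
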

% \textcolor{red}{
% \begin{lemma}{\cite[Proposition 5.3]{WiufFeliu_powerlaw}}\label{lm:WF}
%     Let $M$ be an $s\times s$ matrix. 
%     Let ${\mathcal S}$ be an $r$-dimensional vector space that contains the space generated by the columns of $M$.  Let ${\mathcal S}^{\bot}$ be
% the space orthogonal to ${\mathcal S}$, which has dimension $d = s- r$. Let $\{w^{(1)},\dots,w^{(d)}\}$ $\subset {\mathbb R}^s$ be a reduced basis of ${\mathcal S}^{\bot}$ (here, we assume that $w^{(i)}$ is a row vector). Let $\widetilde{M}$ be the $s\times s$ matrix whose top $d$ rows are $w^{(1)},\dots,w^{(d)}$ and whose bottom $r$
% rows agree with the bottom $r$ rows of $M$. Then, we have
% \begin{equation}
%     {\rm det}(\widetilde{M})=\sum_{I\subseteq\{1,\dots,s\},\vert I\vert =r}{\rm det}(M[I,I]).
% \end{equation}
% %where $M[I,I]$ denote the submatrix of $M$ with entries of $M$ with indices $(i,j)$ in $(I,I)$.
% \end{lemma}
% }
\begin{lemma}\label{lm:jach}
Consider an $r$-dimensional ($r\in\{1,2\}$) zero-one network $G$ \eqref{eq:network}.
Let $h$ be the steady-state system augmented by conservation laws defined as in \eqref{eq:h}.
For any $\kappa\in \mathbb{R}^{m}_{>0}$, for any $c\in \mathbb{R}^{s-r}$, and for any  corresponding nondegenerate positive steady state $x$ in ${\mathcal P}^{+}_{c}$, we have 
\begin{equation}
    {\rm sign}({\rm det}({\rm Jac}_h(\kappa, x)))=(-1)^r. \nonumber
    %\text{or}\;{\rm det}({\rm Jac}_h(\kappa^*, x^*))=0.
\end{equation}
\end{lemma}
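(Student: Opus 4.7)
The plan is to show that at any positive steady state $\det({\rm Jac}_h)$ equals $\det({\rm Jac}_f|_{S})$---the determinant of the restriction of ${\rm Jac}_f$ to the stoichiometric subspace $S$---and then to expand the latter as a signed sum of $r\times r$ principal minors of ${\rm Jac}_f$ whose individual signs are controlled by Lemma~\ref{lm:trace}. Throughout, let $\bar I:=\{1,\ldots,s\}\setminus I$, so $|\bar I|=r$.

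First I would invoke Lemma~\ref{lm:kxtohl} to write ${\rm Jac}_f=J(p,\lambda)=\mathcal{N}\,{\rm diag}(\gamma)\,\mathcal{Y}^{\top}\,{\rm diag}(p)$ at the steady state, where $\gamma=\sum_i\lambda_iR^{(i)}$. Since every column of ${\rm Jac}_f$ lies in $S$, one can factor ${\rm Jac}_f=VU^{\top}$ with $V\in\mathbb{R}^{s\times r}$ whose columns form a basis of $S$ and some $U\in\mathbb{R}^{s\times r}$. Because $W$ is row-reduced with pivot set $I$ (so $W_{*,I}=I_d$), the projection $S\to\mathbb{R}^{\bar I}$ given by $v\mapsto v|_{\bar I}$ is a linear bijection; I choose $V$ so that $V_{\bar I,*}=I_r$, and then $WV=0$ forces $V_{I,*}=-W_{*,\bar I}$. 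Let $\sigma$ denote the row permutation that moves rows indexed by $\bar I$ to the top; then a Laplace expansion along the top $r$ rows yields
\[
\det({\rm Jac}_h)\;=\;(-1)^{\sigma}\det(V_{\bar I,*})\,\det\!\begin{pmatrix}U^{\top}\\ W\end{pmatrix}\;=\;(-1)^{\sigma}\det\!\begin{pmatrix}U^{\top}\\ W\end{pmatrix}.
\]
Multiplying $\bigl(\begin{smallmatrix}U^{\top}\\ W\end{smallmatrix}\bigr)$ on the right by the invertible matrix $[V,W^{\top}]$ and using $WV=0$ produces a block-triangular matrix with diagonal blocks $U^{\top}V$ and $WW^{\top}$, so $\det\!\bigl(\begin{smallmatrix}U^{\top}\\ W\end{smallmatrix}\bigr)=\det(U^{\top}V)\det(WW^{\top})/\det([V,W^{\top}])$. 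A direct block computation using $V_{I,*}=-W_{*,\bar I}$ shows $\det([V,W^{\top}])=(-1)^{\sigma}\det(I_d+W_{*,\bar I}W_{*,\bar I}^{\top})$, and since $WW^{\top}=W_{*,I}W_{*,I}^{\top}+W_{*,\bar I}W_{*,\bar I}^{\top}=I_d+W_{*,\bar I}W_{*,\bar I}^{\top}$, the two $(-1)^{\sigma}$ signs and the $\det(WW^{\top})$ factors cancel, giving the clean identity $\det({\rm Jac}_h)=\det(U^{\top}V)=\det({\rm Jac}_f|_{S})$.

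To finish, since ${\rm im}({\rm Jac}_f)\subseteq S$ one has ${\rm rank}({\rm Jac}_f)\le r$, so by the standard identification of elementary symmetric functions of eigenvalues with sums of principal minors,
\[\det({\rm Jac}_f|_{S})\;=\;\sum_{|I'|=r}\det({\rm Jac}_f[I',I']).\]
For $r=1$, Lemma~\ref{lm:trace}(I) shows each summand is a polynomial in $(p,\lambda)$ with only nonpositive coefficients, so the sum is $\le 0$ on the positive orthant of $(p,\lambda)$; nondegeneracy excludes zero, giving ${\rm sign}(\det({\rm Jac}_h))=-1=(-1)^{1}$. For $r=2$, Lemma~\ref{lm:trace}(II) shows each summand has only nonnegative coefficients, so the sum is $\ge 0$ and strictly positive at a nondegenerate positive steady state, giving ${\rm sign}(\det({\rm Jac}_h))=+1=(-1)^{2}$. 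The main technical obstacle I anticipate is the careful sign bookkeeping in the middle step: verifying that the permutation used to bring $\bar I$-rows to the top of ${\rm Jac}_h$ is the same one governing the sign in the block evaluation of $\det([V,W^{\top}])$, so that the two $(-1)^{\sigma}$ factors cancel exactly, leaving no residual sign.
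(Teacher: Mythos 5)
Your proof is correct and follows essentially the same route as the paper: reduce $\det({\rm Jac}_h)$ to the sum of $r\times r$ principal minors of ${\rm Jac}_f$, control the sign of each minor via Lemma~\ref{lm:kxtohl} and Lemma~\ref{lm:trace}, and use nondegeneracy to exclude the value zero. The only difference is that you re-derive the identity $\det({\rm Jac}_h)=\sum_{|I|=r}\det({\rm Jac}_f[I,I])$ from scratch (your factorization and sign bookkeeping check out), whereas the paper simply cites it as \cite[Proposition 5.3]{WiufFeliu_powerlaw}.
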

\begin{proof}
%Let ${\mathcal N}$ be the stoichiometric matrix of $G$.
%Without loss of generality, we assume that the $(s-1)$-th and $s$-th rows of ${\mathcal N}$ are linearly independent. 
Let $f$ be the steady-state system defined as in \eqref{eq:sys}. Since $G$ is $r$-dimensional, by \cite[Proposition 5.3]{WiufFeliu_powerlaw}, we have
\begin{equation}\label{eq:det_h_f}
\begin{aligned}
 {\rm det}({\rm Jac}_h)&=\sum_{I\subseteq\{1,\dots,s\},\;\vert I\vert=r}{\rm det}({\rm Jac}_f[I,I]).    
\end{aligned}
\end{equation}
Let $J(p, \lambda) \in \mathbb{Q}[p, \lambda]^{s\times s}$ be the matrix
corresponding to $G$ defined in \eqref{eq:defjhl}.
By Lemma \ref{lm:kxtohl}, for any $\kappa \in \mathbb{R}^m_{>0}$ and for any corresponding positive steady state $x \in \mathbb{R}^s_{>0}$, there exist $p \in \mathbb{R}_{> 0}^s$ and $\lambda \in \mathbb{R}^t_{\geq 0}$ such that $J(p,\lambda)={\rm Jac}_f(\kappa,x)$. 
So, we have $$J(p,\lambda)[I,I]={\rm Jac}_f(\kappa,x)[I,I],$$ where $I\subseteq\{1,\dots,s\}$. Note that $G$ is a zero-one network. 
If $\vert I \vert\;=1$, then, by Lemma \ref{lm:trace} (\ref{lm:trace 1}), we get ${\rm det}({\rm Jac}_f[I,I])\leq0$. 
If $\vert I \vert\;=2$, then, by Lemma \ref{lm:trace} (\ref{lm:trace 2}), ${\rm det}({\rm Jac}_f[I,I])\geq0$. Thus,  by \eqref{eq:det_h_f},  for any $\kappa\in \mathbb{R}^{m}_{>0}$, for any $c\in \mathbb{R}^{s-r}$ and for any corresponding steady state $x$ in ${\mathcal P}^{+}_{c}$, we have ${\rm sign}({\rm det}({\rm Jac}_h(\kappa, x)))=(-1)^r$ or ${\rm det}({\rm Jac}_h(\kappa, x))=0$, where $r\in\{1,2\}$.
\end{proof}
The following lemma is elementary (e.g., one can combine two results \cite[Lemma 3.4]{TX2021} and \cite[Proposition 5.3]{WiufFeliu_powerlaw}). 
\begin{lemma}\label{lm:stable}
Consider a one-dimensional  network $G$. 
Let $h$ be the steady-state system augmented by conservation laws defined as in \eqref{eq:h}.  
A nondegenerate steady state $x$ is stable if and only if ${\rm det}({\rm Jac}_h(\kappa,x))<0$.
\end{lemma}
\begin{lemma}\label{lm:zero-one stable}
    Consider an $r$-dimensional ($r\in \{1,2\}$) zero-one network $G$. 
    For any rate-constant vector $\kappa\in \mathbb{R}^{m}_{>0}$, if $x$ is a nondegenerate positive steady state, then $x$ is stable. 
\end{lemma}
\begin{proof}
If $r=1$, then by Lemma \ref{lm:jach} and Lemma \ref{lm:stable}, the conclusion holds. Below, we prove the conclusion for $r=2$. By Lemma \ref{lm:Hurwitz} (see Appendix \ref{S2_Appendix}), we only need to prove that 
for any $\kappa^*\in\mathbb{R}^{m}_{>0}$ and for any corresponding nondegenerate positive steady state $x^* \in \mathbb{R}_{>0}^s$, ${\rm det}({\rm Jac}_h(\kappa^*,x^*))>0$ and $\sum^s_{i=1}\frac{\partial f_i}{\partial x_i}(\kappa^*,x^*)<0$.
Since $x^*$ is nondegenerate, by Lemma \ref{lm:jach}, we have ${\rm det}({\rm Jac}_h(\kappa^*,x^*))>0$. 
Below, we prove $\sum^s_{i=1}\frac{\partial f_i}{\partial x_i}(\kappa^*,x^*)<0$.
Since $x^*$ is a positive steady state, for any $i \in \{1,\dots,s\}$,  $x^*$ is a positive solution to $f_i(\kappa^*,x)=0$. 
Hence,  $f_i(\kappa^*,x)$ is a zero polynomial or $f_i(\kappa^*,x)$ has terms with both negative and positive coefficients. 
Since $G$ has dimension two, there exists at least one polynomial $f_j(\kappa^*,x)$ that has terms with both negative and positive coefficients. 
Hence, by the definition of zero-one network, for any term containing $x_j$ in $f_j$, its coefficient is $-1$.  
So, $\frac{\partial f_j}{\partial x_j}(\kappa^*,x^*)<0$.
By Lemma \ref{lm:kxtohl} and Lemma \ref{lm:trace} (\ref{lm:trace 1}), for any $i \in \{1,\dots,s\} \setminus\{j\}$, we have $\frac{\partial f_i}{\partial x_i}(\kappa^*,x^*) \le 0$. 
Therefore, we have $\sum^s_{i=1}\frac{\partial f_i}{\partial x_i}(\kappa^*,x^*)<0$. 
\end{proof}
%\textcolor{red}{
%So far, we have proved the conclusion of Lemma \ref{lm:jac>0} for any two-dimensional subnetworks of a   network  in $\mathcal{G}_1$. 
%In order to prove the same conclusion for the networks  in $\mathcal{G}_2\cup \mathcal{G}_3$, we need to give the following notions, and we will present a criterion  (Lemma \ref{lm:jac B jac}) for checking the sign of ${\rm det}({\rm Jac}_{h})$ at a positive steady state. 
%Suppose $G$ is a network in $\mathcal{G}$.} 
%Recall that $\mathcal{F}(\mathcal{N})$ \eqref{eq:fluxcone} denotes the flux cone of its 
 %stoichiometric matrix $\mathcal{N}$.  For any $\gamma \in \mathcal{F}(\mathcal{N})$, we have 
%\begin{align}\label{eq:jac gamma}
%\gamma=\sum\limits_{k=1}^t\lambda_k R^{(k)},
%\end{align} 
%where $R^{(1)}, \cdots, R^{(t)} \in \mathbb{R}_{\ge 0}^m$ represent the extreme rays of  $\mathcal{F}(\mathcal{N})$ and $\lambda_1, \cdots, \lambda_t \in \mathbb{R}_{\ge 0}$. 
In fact, for one-dimensional zero-one networks, we can even prove that ${\rm det}({\rm Jac}_h)<0$ at any positive steady state (notice here, Lemma \ref{lm:jach} only guarantees ${\rm det}({\rm Jac}_h)\leq 0$), see Lemma \ref{lm:sum_partical<0} in Section \ref{sec:one dim}. And there is a ``generically" similar result for the  zero-one reaction networks with dimension two, see more details in Section \ref{subsubsec:sign jach}. 
Below, we prove a criterion for determining if ${\rm det}({\rm Jac}_h)<0$ and develop Algorithm \ref{alg:checksign} for the two-dimensional case (while there exists a more straightforward mathematical proof for the one-dimensional case),  this algorithm will play a key role later in the proof presented in Section \ref{subsubsec:sign jach}.  
Recall the formula \eqref{eq:jac gamma} presented before. 
We denote by  $\gamma_r$ and $R_r^{(k)}$ the $r$-th coordinates of $\gamma$ and $R^{(k)}$, respectively. 
Then, by \eqref{eq:jac gamma}, we have 
\begin{align}\label{eq:jac gamma r}
    \gamma_r=\sum\limits_{k=1}^t \lambda_k R_r^{(k)}.
\end{align}
Let $J(p,\lambda)$ be the transformed Jacobian matrix defined in \eqref{eq:defjhl}.
We define %a polynomial $B(p,\lambda)\in \mathbb{Q}[p,\lambda]$ as follows.
\begin{align}\label{eq:jac B}
B(p,\lambda)\;:=\;\sum\limits_{I \subset \{1,\ldots,s\},\;\vert I \vert=2} {\rm det}(J(p,\lambda)[I,I]).
\end{align}
By \eqref{eq:det_h_f}, if a network $G$ is two-dimensional, then the polynomial $B(p,\lambda)$ can be view as the transformed version of ${\rm det}({\rm Jac}_h(\kappa, x))$. 
Note that by \eqref{eq:defjhl}, the degree of each entry in $J(p,\lambda)$ w.r.t. $\lambda$ is at most $1$. Thus, by \eqref{eq:jac B}, the degree of each term in $B(p,\lambda)$ w.r.t. $\lambda$ is at most $2$.
We define
\begin{equation}\label{eq:detjac_h}
    \begin{aligned}
         \Theta\;:=\;\{k \mid & \exists \text{ a term in } B(p,\lambda) \;\text{s.t. the degree of $\lambda_k$ in the term is}\; 2,\;
    k\in\{1,\dots,t\}\},
    \end{aligned}
\end{equation}
and we define 
\begin{align}\label{eq:jac tilde B}
    \tilde{B}(p,\lambda)\;:=\;B(p,\lambda)\vert_{\lambda_k=0 , \;\text{for any }k\in \Theta}.
\end{align}
For any $i\in\{1,\dots,m\}$, we define 
\begin{equation}
\begin{aligned}\label{eq:qi}
    q_i\;:=\;\{k\mid R_i^{(k)}\ne 0,\; k\in\{1,\dots,t\}\}, 
\end{aligned}
\end{equation}
and for any $i,j \in \{1,\dots,m\}$, we define
\begin{align}\label{eq:jac qi.qj}
    \Lambda_{ij}\;:=\;\{\lambda_k \lambda_{\ell} \mid k\in {q}_i,\;{\ell}\in {q}_j\}.
\end{align}
Finally, we define 
\begin{align}\label{eq:tilde_q}
    \tilde{q}_i\;:=\;q_i\setminus\Theta,
\end{align}
and for any $i,j \in \{1,\dots,m\}$, we define
\begin{align}\label{eq:jac tilde qi.qj}
    \tilde{\Lambda}_{ij}\;:=\;\{\lambda_k \lambda_{\ell} \mid k\in \tilde{q}_i,\;{\ell}\in \tilde{q}_j\}.
\end{align}
\begin{lemma}\label{lm:jac B jac}
Consider a two-dimensional zero-one  network. Let $h$ be the steady-state system augmented by conservation laws defined as in \eqref{eq:h}. Let $\tilde{B}(p,\lambda)$ be defined as in \eqref{eq:jac tilde B}. For any integers $i, j\in\{1,\dots,m\}$, let $\tilde{\Lambda}_{ij}$ be defined as in \eqref{eq:jac tilde qi.qj}. If there exist $i, j\in\{1,\dots,m\}$ such that for any element $\lambda_k \lambda_{\ell}$ in $\tilde{\Lambda}_{ij}$, there exists a term of $\tilde{B}(p,\lambda)$ such that this term can be divided by $\lambda_k \lambda_{\ell}$, then for any rate-constant vector $\kappa \in \mathbb{R}_{>0}^m$ and for any corresponding positive steady state $x\in\mathbb{R}^s_{>0}$, we have ${\rm det}({\rm Jac}_h(\kappa,x))>0$.
\end{lemma}
\begin{proof}
 By Lemma \ref{lm:jach}, we have ${\rm det}({\rm Jac}_h(\kappa,x))\ge0$, for any $\kappa \in \mathbb{R}_{>0}^m$ and for any corresponding positive steady state $x\in\mathbb{R}^s_{>0}$. We only need to show that the equality can not be reached. 
 We  prove the conclusion by deducing a  contradiction. We assume that there exist $\kappa^*\in\mathbb{R}_{>0}^m$ and a corresponding positive steady state ${x^*\in \mathbb{R}^s_{>0}}$ such that ${\rm det}({\rm Jac}_h(\kappa^*,x^*))=0$.  
 Note that the network is two-dimensional. Hence, by Lemma \ref{lm:kxtohl}, 
by \cite[Proposition 5.3]{WiufFeliu_powerlaw} and by \eqref{eq:jac B}, there exist $p^*\in \mathbb{R}_{>0}^3$ and $\lambda^*\in\mathbb{R}_{\ge0}^t$ such that $$B(p^*,\lambda^*)={\rm det}({\rm Jac}_h(\kappa^*,x^*))=0.$$ 
Then, by Lemma \ref{lm:trace} (\ref{lm:trace 2}), every term of $B(p,\lambda)$ evaluated at $(p^*,\lambda^*)$ is $0$. 
Note that by \eqref{eq:jac tilde B}, any term of $\tilde{B}(p,\lambda)$ is also a term of $B(p,\lambda)$. So, every term of $\tilde{B}(p,\lambda)$ evaluated at $(p^*,\lambda^*)$ is $0$.
Hence, $\tilde{B}(p^*,\lambda^*)=0$.\\
With the aim of showing the contradiction, below we prove that if there exist indices $i, j\in\{1,\dots,m\}$ such that for any element $\lambda_k\lambda_{\ell}$  in $\tilde{\Lambda}_{ij}$, there exists a term of $\tilde{B}(p,\lambda)$ such that the term can be divided by $\lambda_k\lambda_{\ell}$, then $\tilde{B}(p^*,\lambda^*)>0$. 
 %Then, it will be contrary to $\tilde{B}(p^*,\lambda^*)=0$. Therefore, for any $\kappa \in \mathbb{R}_{>0}^m$ and for any corresponding positive steady state $x\in \mathbb{R}^3_{>0}$, we have ${\rm det}({\rm Jac}_h(\kappa,x))>0$.    
% Note that for any $k \in \{1,\dots,t\}$, $R^{(k)} \in \mathbb{R}_{\ge0}^m$. 
Notice that by \eqref{eq:sys}, $\mathcal{N}v(\kappa^*,x^*)=0$, where $v(\kappa^*,x^*)\in \mathbb{R}_{>0}^m$. 
Let $\gamma^*=v(\kappa^*,x^*)$. 
Then, by \eqref{eq:fluxcone}, $\gamma^*\in \mathcal{F}(\mathcal{N})$. 
Note that by \eqref{eq:jac gamma}, we can write $\gamma^*=\sum\limits_{k=1}^t\lambda_k^* R^{(k)}$ for some $\lambda_k^*$'s. 
Hence, by \eqref{eq:jac gamma r}, for any $ r\in \{1,\dots,m\}$, there exists index $k\in \{1,\dots,t\}$ such that $\lambda_k^*R_r^{(k)}>0$ (i.e., $\lambda_k^*>0$ and $R_r^{(k)}>0$). So, by \eqref{eq:qi}, $k \in q_r$. 
Below, we first prove that $k\in\tilde{q_r}$ (i.e., $k \notin \Theta$). 
We prove it by deducing a contradiction. Assume that $k \in \Theta$, see \eqref{eq:detjac_h}. 
By \eqref{eq:defjhl}, the degree of each entry of $J(p,\lambda)$ w.r.t. $\lambda$ is at most one. 
% Recall that for any $k\in \{1,\dots,$ $t\}$, the degree of $\lambda_k$ in each entry of $J(p,\lambda)$ is at most $1$.
Hence, by \eqref{eq:jac B} and \eqref{eq:detjac_h}, there exists a term in $B(p,\lambda)$ such that the term has the form $u(p)\lambda_k^2$, where $u(p) \in \mathbb{Q}[p]$.
% Note that there exist two indices $\ell_1$ and $\ell_2$ at most such that $\lambda_{\ell_1}\lambda_{\ell_2}$ can be divided by the terms of $B(p,\lambda)$. 
% Hence, by \eqref{eq:}, the term divide $\lambda_k^2$ in $B(p,\lambda)$ has the form $u_1(p)\lambda_k^2$, where $u_1(p)$ is a polynomial of $p$.
Recall that every term of $B(p,\lambda)$ evaluated at $(p^*,\lambda^*)$ is $0$. 
So, $u(p^*){\lambda_{k}^*}^2=0$. 
Note that $p^* \in \mathbb{R}_{>0}^s$.
Note also that, by Lemma \ref{lm:trace} (\ref{lm:trace 2}), the coefficient of every term of $B(p,\lambda)$ is positive. 
Hence, $\lambda_{k}^*=0$. It is contrary to $\lambda_{k}^*>0$. 
So, $k\in \tilde{q}_r$.
Above all, for any $r \in \{1,\dots,m\}$, there exists $k \in \tilde{q}_r$ such that $\lambda_k^*>0$. 
Then, there exist ${k_1} \in \tilde{q}_i$ and ${k_2} \in \tilde{q}_j$ such that $\lambda_{k_1}^*\lambda_{k_2}^*>0$. 
Note that  for any element in $\tilde{\Lambda}_{ij}$, there exists a term in $\tilde{B}(p,\lambda)$ such that this term can be divided by this element. 
We recall that every term of $\tilde{B}(p,\lambda)$ is also a term of $B(p,\lambda)$. 
Hence, the degree of each entry of $\tilde{B}(p,\lambda)$ w.r.t. $\lambda$ is at most two. 
% Note that ${k_1}\in\tilde{q}_i$ and ${k_2}\in\tilde{q}_j$. 
So, we have 
\begin{align}\label{eq:jac B uv}
    \tilde{B}(p,\lambda)=u_1(p)\lambda_{k_1}\lambda_{k_2}+u_2(p,\lambda),
\end{align} 
where $u_1(p)\in \mathbb{Q}[p]$ and $u_2(p,\lambda)\in \mathbb{Q}[p,\lambda]$. 
Note that $p^* \in \mathbb{R}_{>0}^s$ and $\lambda^* \in \mathbb{R}_{\ge0}^t$. 
Hence, by Lemma \ref{lm:trace} (\ref{lm:trace 2}), we have $u_1(p^*)>0$ and $u_2(p^*,\lambda^*)\ge0$.  
Since $\lambda_{k_1}^*\lambda_{k_2}^*>0$,
we can get $u_1(p^*)\lambda_{k_1}^*\lambda_{k_2}^*>0$.  
So, by \eqref{eq:jac B uv}, we have $\tilde{B}(p^*,\lambda^*)>0$. 
% It is contrary to $\tilde{B}(p^*,\lambda^*)=0$. Therefore, for any $\kappa \in \mathbb{R}_{>0}^m$ and for any corresponding positive steady state $x\in \mathbb{R}^3_{>0}$, we have ${\rm det}({\rm Jac}_h(\kappa,x))>0$.
\end{proof}
\begin{remark}\label{lm:jac B_remark}
Note that if we replace $\tilde{B}(p,\lambda)$ \eqref{eq:jac tilde B} and $\tilde{\Lambda}_{ij}$ \eqref{eq:jac tilde qi.qj} respectively with ${B}(p,\lambda)$ \eqref{eq:jac B} and $\Lambda_{ij}$ \eqref{eq:jac qi.qj} in Lemma \ref{lm:jac B jac}, then the conclusion also holds. In practice, we find  that $\tilde{B}(p,\lambda)$ usually has fewer terms than ${B}(p,\lambda)$. For instance, for the network \eqref{eq:g_35} listed in  Appendix \ref{S1_Appendix}, ${B}(p,\lambda)$  has $972$ terms, and $\tilde{B}(p,\lambda)$   has $114$ terms (see \eqref{eq:jh_p} and \eqref{eq:jh_p1} later). 
Therefore, in the  proofs of Lemmas \ref{lm:jac_g23}--\ref{lm:jac g23 sub} presented in the future sections, we will apply a computational method to check whether the hypothesis of Lemma \ref{lm:jac B jac} holds for $\tilde{B}(p,\lambda)$. We remark that if one hopes to complete those proofs by checking 
$B(p,\lambda)$, it might be 
 computationally infeasible.
\end{remark}
 Lemma \ref{lm:jac B jac} gives a theoretical criterion to decide upon the sign of ${\rm det}({\rm Jac}_h(\kappa,x))$, from which one can establish an algorithm with the following steps.
\begin{algorithm}\label{alg:checksign}
    Algorithm for checking the sign of ${\rm det}({\rm Jac}_h(\kappa,x))$ at any positive steady state.
    \begin{description}  
        \item [(Step 1).] 
        \parbox[t]{\dimexpr\linewidth-1cm} {For a given two-dimensional zero-one network $G$, calculate the extreme rays of ${\mathcal F}({\mathcal N})$.}
        \item [(Step 2).] Calculate $\tilde{B}(p,\lambda)$ and $\tilde{\Lambda}_{ij}$ according to \eqref{eq:jac tilde B} and \eqref{eq:jac tilde qi.qj}.
        \item [(Step 3).] \parbox[t]{\dimexpr\linewidth-1cm}  {Check that if there exist $i, j\in\{1,\dots,m\}$ such that for any element $\lambda_k \lambda_{\ell}$ in $\tilde{\Lambda}_{ij}$, there exists a term of $\tilde{B}(p,\lambda)$ such that this term can be divided by $\lambda_k \lambda_{\ell}$. If so, conclude that ${\rm det}({\rm Jac}_h(\kappa,x))>0$ for any $\kappa\in {\mathbb R}^m_{>0}$ and for any corresponding  positive steady state $x\in {\mathbb R}^s_{>0}$.}  
        \end{description}
\end{algorithm}
\begin{example}\label{ex:jac_g23}
This example illustrate how Algorithm \ref{alg:checksign} works. All the computational steps presented can be checked in a supporting file (\href{https://github.com/YueJ13/network/blob/main/sign/detailed_check_jach.nb}{https://github.com/YueJ13/network/\\ \noindent blob/main/sign/detailed\_check\_jach.nb}).
Consider the following network \eqref{eq:g_35} listed in Appendix \ref{S1_Appendix}:\\
   \begin{table}[H]
    \centering
    \begin{tabular}{lll}
        $X_3\xrightleftharpoons[\kappa_2]{\kappa_1}X_1+X_2+X_3$, &  $X_1+X_2\xrightleftharpoons[\kappa_6]{\kappa_5} X_3+X_2$,
         & $X_1\xrightleftharpoons[\kappa_{10}]{\kappa_{9}} X_2+X_3+X_1$,\\
         $0\xrightleftharpoons[\kappa_4]{\kappa_3}X_1+X_2$,&
         $X_1\xrightleftharpoons[\kappa_8]{\kappa_7} X_3$,&
         $0\xrightleftharpoons[\kappa_{12}]{\kappa_{11}} X_2+X_3$.       
    \end{tabular}
\end{table}
% \noindent The stoichiometric matrix $\mathcal{N}$ is 
% \setcounter{MaxMatrixCols}{20}
% \begin{align}
% \begin{pmatrix}
%     1&-1&1&-1&-1&1 &-1&1 &0&0 &0& 0\\
%     1&-1&1&-1&0 &0 &0 &0 &1&-1&1&-1\\
%     0&0 &0&0 & 1&-1&1 &-1&1&-1&1&-1
%     \end{pmatrix}.
% \end{align}
%The conservation law is
%\begin{equation}
%    x_1=x_2-x_3+c.
%\end{equation}
%Note that the network $G$ is maximum.
%Recall that the definition of $\mathcal{G}_3$. Thus, we have $G\in\mathcal{G}_3$. 
First, we calculate the extreme rays $R^{(1)}, \dots, R^{(t)}$ of the flux cone $\mathcal{F}(\mathcal{N})$. For this example, $t=29$.
Then, by \eqref{eq:jac B}, we have 
\begin{equation}\label{eq:jh_p}
    \begin{aligned}
        B(p,\lambda)= &\frac{1}{9}\lambda_2^2p_1p_3+\frac{1}{9}\lambda_{3}^2p_1p_2+\frac{1}{9} \lambda_{4}^2p_1p_3+\frac{1}{9} \lambda_{5}^2p_1p_3+\frac{1}{9} \lambda_{10}^2p_1p_2+\frac{1}{9} \lambda_{11}^2p_1p_2\\
        &+\frac{1}{9} \lambda_{13}^2p_1p_2+\frac{1}{9} \lambda_{16}^2p_1p_2+\frac{1}{9} \lambda_{17}^2p_1p_2+\frac{1}{4} \lambda_{18}^2p_1p_3+\frac{1}{4} \lambda_{19}^2p_1p_3+\frac{1}{9} \lambda_{20}^2p_1p_2\\
        &+\frac{1}{4} \lambda_{24}^2p_1p_2+\frac{1}{4} \lambda_{25}^2p_1p_2+\lambda_{28}^2 p_1p_3+\lambda_{29}^2 p_1p_2+\lambda_{14}\lambda_{23}p_1p_2+\lambda_{21}\lambda_{23}p_1p_2\\       &+\lambda_{14}\lambda_{22}p_1p_2+\lambda_{21}\lambda_{22}p_1p_2+\cdots,
    \end{aligned}
\end{equation}
where we omit $952$ terms in the above polynomial.
Recall that $\Theta$ is defined in \eqref{eq:detjac_h}. Thus, by \eqref{eq:jh_p}, we have
\begin{equation}\label{eq:lambda_2}
    \begin{aligned}
    \Theta=\{2,3, 4, 5,{10},{11},{13}, {16}, {17}, {18}, {19}, {20}, {24}, {25}, {28}, {29} \}.
    \end{aligned}
\end{equation}
\noindent By \eqref{eq:jh_p} and \eqref{eq:lambda_2}, the polynomial  $\tilde{B}(p,\lambda)$ defined in \eqref{eq:jac tilde B} is 
\begin{equation}\label{eq:jh_p1}
    \begin{aligned}
        \tilde{B}(p,\lambda)=\lambda_{14}\lambda_{23}p_1p_2+\lambda_{21}\lambda_{23}p_1p_2+\lambda_{14}\lambda_{22}p_1p_2+\lambda_{21}\lambda_{22}p_1p_2+\cdots,
 \end{aligned}
\end{equation}
where we omit $110$ terms in the above polynomial. 
By \eqref{eq:qi}, we have  
\begin{equation}\label{eq:q1_12}
    \begin{aligned}
        q_1&=\{14, {21}, {24}, {25}, {28}, {29}\},\;\;\\
        q_3&=\{{12}, {15}, {16}, {17}, {18}, {19}\},\;\;\\
        q_5&=\{6, {18}, {19}, {23}, {28}, {29}\},\;\;\\
        q_7&=\{7, {16}, {17}, {22}, {24}, {25}\},\;\;\\
        q_{9}&=\{2, 3, {13}, {20}, {26}, {27}\},\;\;\\
        q_{11}&=\{ 4, 5, 8, 9, {10}, {11}\},\;\; 
    \end{aligned}
    \begin{aligned}
        q_2&=\{2,4, {11}, {12}, {13}, 14\},\;\;\\
        q_4&=\{3,5, {10}, {15}, {20}, {21}\}, \;\;\\
        q_6&=\{{10}, {11}, {13}, {20}, {22}, {23}\},\;\; \\ 
        q_8&=\{2, 3, 4, 5, 6, 7\},\;\; \\ 
q_{10}&=\{9, {16}, {18}, {24}, {26}, {28}\}, \;\;\\
q_{12}&=\{8, {17}, {19}, {25}, {27}, {29}\}.\;\;
    \end{aligned}
\end{equation}
By \eqref{eq:lambda_2} and \eqref{eq:q1_12}, the set of $\tilde{q}_i$'s defined in  \eqref{eq:tilde_q} are 
\begin{equation}\label{eq:tilde q ex}
    \begin{aligned}
         \tilde{q}_1&=\{{{14}}, {21}\},\;\;\\ 
         \tilde{q}_5&=\{6,  {23}\},\;\;\\
         \tilde{q}_9&=\{{26}, {27}\},\;\;
    \end{aligned}
    \begin{aligned}
        \tilde{q}_2&=\{ {12}, {14}\},\;\;\\
        \tilde{q}_6&=\{{22}, {23}\},\;\;\\
        \tilde{q}_{10}&=\{ 9, {26}\},\;\;
    \end{aligned}
    \begin{aligned}
        \tilde{q}_3&=\{{12}, {15}\},\;\;\\
        \tilde{q}_7&=\{7, {22}\},\;\;\\
        \tilde{q}_{11}&=\{ 8, 9\},\;\;
    \end{aligned}
    \begin{aligned}
        \tilde{q}_4&=\{{15}, {21}\},\;\;\\
        \tilde{q}_8&=\{ 6, 7\},\;\;\\
        \tilde{q}_{12}&=\{ 8, {27}\}.\;\;
    \end{aligned}
\end{equation}
By \eqref{eq:jh_p1}, note that $\lambda_{14}\lambda_{23}$, $\lambda_{21}\lambda_{23}$, $\lambda_{14}\lambda_{22}$, and $\lambda_{21}\lambda_{22}$ appear in the terms of $\tilde{B}(p,\lambda)$. 
Also, by \eqref{eq:jac tilde qi.qj} and \eqref{eq:tilde q ex}, we have $$\tilde{\Lambda}_{16}=\{\lambda_{14}\lambda_{23},\lambda_{21}\lambda_{23},\lambda_{14}\lambda_{22},\lambda_{21}\lambda_{22}\}.$$ 
Then, by Lemma \ref{lm:jac B jac}, for any $\kappa \in \mathbb{R}_{>0}^m$ and for any corresponding positive steady state $x\in \mathbb{R}^3_{>0}$, we have ${\rm det}({\rm Jac}_h(\kappa,x))>0$. 
\end{example}
\subsection{One-dimensional zero-one networks}\label{sec:one dim}
% \textcolor{red}{In Section \ref{subsec:1d main}, we present the main result for one-dimensional zero-one networks, see Theorem \ref{thm:one-dimensional}, and we illustrate by two examples how to apply the theorem to determine if a one-dimensional zero-one network admits a stable positive steady state.  
% In Section \ref{subsec:1d pro}, using Theorem \ref{thm:deter_sign},  we prove Theorem \ref{thm:one-dimensional}.} 
% \subsection{Proof of  Theorem \ref{thm:one-dimensional}}\label{subsec:1d pro}

In this section, the goal is to prove Theorem \ref{thm:one-dimensional}.
Since the proof for the part (I) is straightforward, the main task is to prove the part (II) where  all non-zero rows of the stoichiometric matrix ${\mathcal N}$ change signs, and 
the main idea is to apply Theorem \ref{thm:deter_sign}. 
In order  to do that, we need to  prepare some ingredients. First, we give a comprehensive form for a one-dimensional zero-one network, see Lemma \ref{lm:change sign fsx}, and we describe all the total constants such that ${\mathcal P}_{c}^+\ne \emptyset$, which is a connected region, see Lemma \ref{lm:Pc+ ne emptyset}. 
Second, 
we show that if $\mathcal{P}_c^+ \ne \emptyset$, then 
a one-dimensional  zero-one network admits  
 no boundary steady states in ${\mathcal P}_{c}$, see Lemma \ref{lm:1 dim no boundary}, and we also show that the network is dissipative, see Lemma \ref{lm:1 dim J2=emptyset dissipative}.
 Third, we prove that ${\rm det}({\rm Jac}_h(\kappa,x))$ evaluated at any positive steady state does not change the sign, 
 see Lemma \ref{lm:sum_partical<0}. 
Finally, based on these results, we complete the proof by applying Theorem \ref{thm:deter_sign}. 
% \textcolor{red}{
% \begin{lemma}\label{lm:trace_l}
% Consider a zero-one network $G$ with a rank-one stoichiometric matrix $\mathcal{N}$. Let $f_1,\dots,f_s$ be the polynomials defined in \eqref{eq:sys}. 
% For any $\kappa^* \in \mathbb{R}^m_{>0}$ and a corresponding nondegenerate steady state $x^* \in \mathbb{R}^s_{>0}$, we have $\sum^s_{i=1}\frac{\partial f_i}{\partial x_i}(\kappa^*,x^*)<0$. 
% \end{lemma}}
% \begin{proof}
%    Let $J(p,\lambda)$ be the matrix corresponding to $G$ defined in \eqref{eq:defjhl}.
% By Lemma \ref{lm:kxtohl}, for $\kappa^* \in \mathbb{R}^m_{>0}$ and $x^* \in \mathbb{R}^s_{>0}$, there exist $p^* \in \mathbb{R}_{> 0}^s$ and $\lambda^* \in \mathbb{R}^t_{\geq 0}$ such that $J(p^*,\lambda^*)={\rm Jac}_f(\kappa^*,x^*)$. 
% Hence, we have $\sum^s_{i=1}\frac{\partial f_i}{\partial x_i}(\kappa^*,x^*)=\sum^s_{k=1}J(p^*,\lambda^*)_{kk}$. By Lemma \ref{lm:trace} (\ref{lm:trace 1}), we have $\sum^s_{i=1}\frac{\partial f_i}{\partial x_i}(\kappa^*,x^*)<0$.
% \end{proof}
\begin{lemma}\label{lm:change sign fsx}
    Consider a zero-one network $G$ with a rank-one  stoichiometric matrix $\mathcal{N}$. 
    Let $f_1,\dots, f_s$ be the polynomials defined as in \eqref{eq:sys}. Let ${\mathcal J}_i$ $(i=1, 2, 3)$ be defined as in \eqref{eq:J1}--\eqref{eq:J3}. 
    If all non-zero rows of $\mathcal{N}$ change signs, then there exist two non-empty subsets $\tau_1$ and $\tau_2$ of $2^{\mathcal{J}_3}$ (here, $2^{\mathcal{J}_3}$ denotes the set consisting of all subsets of $\mathcal{J}_3$) such that the network has the following form 
    \begin{align}\label{eq:form2}
 \sum_{t\in\mathcal{J}_2}  X_{t}+\sum_{t\in\sigma}  X_{t}&\xrightarrow{\kappa_{\sigma}} \sum_{t\in\{s\}\cup\mathcal{J}_1}X_{t}+\sum_{t\in\sigma}  X_{t},\; \text{for each}\; \sigma\in\tau_1, \\
\label{eq:form1}
 \sum_{t\in\{s\}\cup\mathcal{J}_1}X_{t}+\sum_{t\in\Lambda}  X_{t}&\xrightarrow{\kappa_{\Lambda}} \sum_{t\in\mathcal{J}_2}  X_{t}+\sum_{t\in\Lambda} X_{t}, \; \text{for each}\; \Lambda\in \tau_2,
\end{align}
 where $\kappa_{\sigma}$ and $\kappa_{\Lambda}$ denote the rate constants, 
 and we have 
    \begin{align}\label{eq:f1}
       f_s=-\sum_{\Lambda\in \tau_2}\kappa_{\Lambda}\prod\limits_{t\in \Lambda\cup \{s\}\cup\mathcal{J}_1}x_t+\sum_{\sigma\in \tau_1}\kappa_{\sigma}\prod\limits _{t\in \sigma\cup\mathcal{J}_2}x_t.
    \end{align}
\end{lemma}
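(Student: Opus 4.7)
The plan is to exploit the rank-one structure of $\mathcal{N}$, which by \eqref{eq:1 dim Ni ai Ns} forces every row to be a scalar multiple of $\mathcal{N}_s$. Combined with the zero-one restriction $\mathcal{N}_{ij}\in\{-1,0,1\}$, this determines each reaction column by column up to the freedom of which $\mathcal{J}_3$-species appear as spectators. Formula \eqref{eq:f1} then follows from a one-line mass-action computation. The only genuine step is to rule out zero columns of $\mathcal{N}$, which is what converts the rank-one hypothesis into the dichotomy $\mathcal{N}_{sj}\in\{1,-1\}$ and thereby pins down the shape of every reaction.

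First I would observe that if $\mathcal{N}_{sj}=0$, then by \eqref{eq:1 dim Ni ai Ns} we have $\mathcal{N}_{ij}=a_i\mathcal{N}_{sj}=0$ for every $i<s$, so the entire $j$-th column vanishes; this would make reactant and product of the $j$-th reaction coincide, contradicting the definition \eqref{eq:network}. Hence $\mathcal{N}_{sj}\in\{1,-1\}$ for every $j$, and I would partition the reaction indices as $\mathcal{J}^+:=\{j:\mathcal{N}_{sj}=1\}$ and $\mathcal{J}^-:=\{j:\mathcal{N}_{sj}=-1\}$; both are non-empty because $\mathcal{N}_s$ changes sign by hypothesis. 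For each $j\in\mathcal{J}^+$, the identity $\mathcal{N}_{ij}=a_i$ forces $\alpha_{ij}=0,\beta_{ij}=1$ whenever $i\in\{s\}\cup\mathcal{J}_1$ (species appears only as product), $\alpha_{ij}=1,\beta_{ij}=0$ whenever $i\in\mathcal{J}_2$ (only as reactant), and $\alpha_{ij}=\beta_{ij}\in\{0,1\}$ whenever $i\in\mathcal{J}_3$, leaving a free subset $\sigma_j:=\{i\in\mathcal{J}_3:\alpha_{ij}=1\}\subseteq\mathcal{J}_3$ of spectator species. This is precisely the template \eqref{eq:form2}. A symmetric analysis for $j\in\mathcal{J}^-$ produces reactions of the shape \eqref{eq:form1} with some $\Lambda_j\subseteq\mathcal{J}_3$. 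Since distinct reactions have distinct reactant-product pairs, I would set $\tau_1:=\{\sigma_j:j\in\mathcal{J}^+\}$ and $\tau_2:=\{\Lambda_j:j\in\mathcal{J}^-\}$, non-empty subsets of $2^{\mathcal{J}_3}$, and re-index the rate constants by $\sigma$ and $\Lambda$.

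Finally, for \eqref{eq:f1} I would compute directly from \eqref{eq:sys}:
\[
f_s\;=\;\sum_{j=1}^m \mathcal{N}_{sj}\,v_j(\kappa,x)\;=\;\sum_{j\in\mathcal{J}^+} v_j(\kappa,x)\;-\;\sum_{j\in\mathcal{J}^-} v_j(\kappa,x).
\]
By the first step, the reactant of the reaction indexed by $\sigma\in\tau_1$ is $\{X_t:t\in\mathcal{J}_2\cup\sigma\}$, so \eqref{eq:vk} yields $v_j=\kappa_\sigma\prod_{t\in\sigma\cup\mathcal{J}_2}x_t$; the reactant of the reaction indexed by $\Lambda\in\tau_2$ is $\{X_t:t\in\{s\}\cup\mathcal{J}_1\cup\Lambda\}$, giving $v_j=\kappa_\Lambda\prod_{t\in\Lambda\cup\{s\}\cup\mathcal{J}_1}x_t$. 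Substituting these two expressions into the displayed sum produces \eqref{eq:f1} exactly. No step is technically hard; the entire argument is structural bookkeeping, with the no-zero-column observation as the sole nontrivial ingredient.
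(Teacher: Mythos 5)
Your proposal is correct and follows essentially the same route as the paper's proof: rule out zero columns to force $\mathcal{N}_{sj}\in\{1,-1\}$, use the rank-one relation $\mathcal{N}_{ij}=a_i\mathcal{N}_{sj}$ together with the zero-one constraint to read off the reactant/product structure on $\{s\}\cup\mathcal{J}_1$, $\mathcal{J}_2$, $\mathcal{J}_3$ for each sign of $\mathcal{N}_{sj}$, and then obtain \eqref{eq:f1} by the mass-action formula. Your explicit remark that distinct reactions in $\mathcal{J}^+$ must carry distinct spectator sets $\sigma_j$ (justifying the indexing of rate constants by $\sigma\in\tau_1$) is a small point the paper leaves implicit, but the argument is otherwise the same.
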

\begin{proof}
    % Every reaction in $G$ has the form as \eqref{eq:AB}. By the definition of $\mathcal{J}_1$ (see \eqref{eq:J1}), for any $i\in\mathcal{J}_1$, $\mathcal{N}_i$ is a non-zero row. So, $\mathcal{N}_i$ changes sign for any $i\in\mathcal{J}_1$. Thus, there exists $j\in\{2,\dots,m\}$ such that $\mathcal{N}_{ij}=1$. So, in some reactions, $X_i$ only appears on the left. In some reactions, $X_i$ only appears on the right. Therefore, $G$ has the following form.
    First, we can write down all reactions in $G$.  
    For any index $j \in\{1,\dots,m\}$, by \eqref{eq:1 dim Nij 01-1}, $\mathcal{N}_{sj}\in \{0,1,-1\}$. 
    Recall that by \eqref{eq:1 dim Ni ai Ns}, all row vectors of ${\mathcal N}$ can be generated by $\mathcal{N}_s$. 
    For any $j \in \{1,\dots,m\}$, we notice that 
    $\mathcal{N}_{sj}$ can not be $0$ 
    (if $\mathcal{N}_{sj}=0$, then by \eqref{eq:1 dim Ni ai Ns}, for any $i \in \{1,\dots,s-1\}$, $\mathcal{N}_{ij}=0$, which is contrary to the definition of reaction network). 
    Since all non-zero rows of $\mathcal{N}$ change signs, we know that $\mathcal{N}_s$ changes signs.
    Hence, $1$ and $-1$ indeed appear in the coordinates of $\mathcal{N}_s$. 
    For any $j \in \{1,\dots,m\}$, if  $\mathcal{N}_{sj}=1$, 
    then for any $i \in 
    \mathcal{J}_1$, by \eqref{eq:1 dim Ni ai Ns} and \eqref{eq:J1}, we get $\mathcal{N}_{ij}=1$, and 
   for any $i \in 
    \mathcal{J}_2$, by \eqref{eq:1 dim Ni ai Ns} and \eqref{eq:J2}, $\mathcal{N}_{ij}=-1$. 
   Notice that for any $i \in 
    \mathcal{J}_3$, by \eqref{eq:J3}, $\mathcal{N}_{ij}=0$. 
    Hence, if  $\mathcal{N}_{sj}=1$, then the $j$-th reaction has the form \eqref{eq:form2}.
% \begin{equation}\label{eq:form2}
%  \sum_{t\in\mathcal{J}_2}  X_{t}+\sum_{t\in\sigma}  X_{t}\xrightarrow{\kappa_{\sigma}} \sum_{t\in\{s\}\cup\mathcal{J}_1}X_{t}+\sum_{t\in\sigma}  X_{t},
% \end{equation}
%  where $\sigma$ is a subset of $\mathcal{J}_3$. \\
Similarly, if  $\mathcal{N}_{sj}=-1$,  
    then for any $i \in 
    \mathcal{J}_1$, by \eqref{eq:1 dim Ni ai Ns} and \eqref{eq:J1}, $\mathcal{N}_{ij}=-1$, 
     and for any $i \in 
    \mathcal{J}_2$, by \eqref{eq:1 dim Ni ai Ns} and \eqref{eq:J2}, $\mathcal{N}_{ij}=1$. 
    Note again that for any $i \in 
    \mathcal{J}_3$, by \eqref{eq:J3}, $\mathcal{N}_{ij}=0$. 
    Hence,  if  $\mathcal{N}_{sj}=-1$, then the $j$-th reaction has the form \eqref{eq:form1}. 
% \begin{equation}\label{eq:form1}
%  \sum_{t\in\{s\}\cup\mathcal{J}_1}X_{t}+\sum_{t\in\Lambda}  X_{t}\xrightarrow{\kappa_{\Lambda}} \sum_{t\in\mathcal{J}_2}  X_{t}+\sum_{t\in\Lambda} X_{t},
% \end{equation}
% where $\Lambda$ is a subset of $\mathcal{J}_3$. \\
% Below we prove that $\mathcal{N}_s$ changes sign. 
% Since there exists a non-zero row of $\mathcal{N}$ changes sign, we suppose that $\mathcal{N}_i$ changes sign, where $i \in \{1,\dots,s\}$. 
% If $i\ne s$, then by \eqref{eq:1 dim Nij Nsj} and \eqref{eq:1 dim ai}, $\mathcal{N}_s=\mathcal{N}_i$ or $\mathcal{N}_s=-\mathcal{N}_i$. 
% Since all non-zero rows of $\mathcal{N}$ change sign, we get $\mathcal{N}_s$ changes sign.
% Hence, both the reactions \eqref{eq:form2} and \eqref{eq:form1} indeed appear in $G$. 
Thus, by \eqref{eq:form2} and \eqref{eq:form1},  the polynomial $f_s$ defined in \eqref{eq:sys} has the  form \eqref{eq:f1}.
\end{proof}
\begin{lemma}\label{lm:Pc+ ne emptyset}
 Consider a zero-one network $G$ with a rank-one stoichiometric matrix $\mathcal{N}$. Let ${\mathcal J}_i$ $(i=1, 2, 3)$ be defined as in \eqref{eq:J1}--\eqref{eq:J3}.
 For any $c\in\mathbb{R}^{s-1}$, 
 $\mathcal{P}_c^+\ne \emptyset$ if and only if for any $k\in \mathcal{J}_2\cup \mathcal{J}_3$, $c_k>0$,  and for any $(i,j)\in\mathcal{J}_1\times\mathcal{J}_2$, $c_i+ c_j>0$.  
\end{lemma}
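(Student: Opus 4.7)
The plan is to unpack the definition $\mathcal{P}_c^+ = \{x \in \mathbb{R}_{>0}^s : Wx = c\}$ and use the explicit form of the conservation laws recorded in \eqref{eq:con1}--\eqref{eq:con3} to reduce everything to a one-dimensional feasibility question in the free variable $x_s$. Since for $i \in \mathcal{J}_1$ we have $x_i = x_s + c_i$, for $j \in \mathcal{J}_2$ we have $x_j = -x_s + c_j$, and for $k \in \mathcal{J}_3$ we have $x_k = c_k$, positivity of $x$ is equivalent to the single system
\begin{equation*}
x_s > 0, \qquad x_s > -c_i \;\;\forall i \in \mathcal{J}_1, \qquad x_s < c_j \;\;\forall j \in \mathcal{J}_2, \qquad c_k > 0 \;\;\forall k \in \mathcal{J}_3.
\end{equation*}

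For the forward direction ($\mathcal{P}_c^+ \ne \emptyset \Rightarrow$ the stated inequalities), I would pick any $x \in \mathcal{P}_c^+$ and simply read off: $c_k = x_k > 0$ for $k \in \mathcal{J}_3$; $c_j = x_j + x_s > 0$ for $j \in \mathcal{J}_2$ since $x_j, x_s > 0$; and, for $(i,j) \in \mathcal{J}_1 \times \mathcal{J}_2$,
\begin{equation*}
c_i + c_j = (x_i - x_s) + (x_j + x_s) = x_i + x_j > 0.
\end{equation*}

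For the reverse direction, I would explicitly construct a positive point. Define
\begin{equation*}
L := \max\bigl(\{0\} \cup \{-c_i : i \in \mathcal{J}_1\}\bigr), \qquad U := \min\bigl(\{c_j : j \in \mathcal{J}_2\}\bigr),
\end{equation*}
with the convention $U = +\infty$ when $\mathcal{J}_2 = \emptyset$. The hypotheses $c_j > 0$ for $j \in \mathcal{J}_2$ and $c_i + c_j > 0$ for $(i,j) \in \mathcal{J}_1 \times \mathcal{J}_2$ give $L < U$ (checking case-by-case when the two $\max/\min$ arguments come from $\{0\}$ vs.\ $\{-c_i\}$). Pick $x_s$ strictly between $L$ and $U$, then define $x_i$ via the three conservation laws; each coordinate is positive by construction, and $c_k > 0$ for $k \in \mathcal{J}_3$ finishes the check.

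There is no genuine obstacle here; the only subtle point is keeping the degenerate cases $\mathcal{J}_1 = \emptyset$, $\mathcal{J}_2 = \emptyset$, or $\mathcal{J}_3 = \emptyset$ straight, in which case either the quantifier ``$(i,j) \in \mathcal{J}_1 \times \mathcal{J}_2$'' is vacuous, or $U = +\infty$ and one may take $x_s$ arbitrarily large. I would handle these by adopting the usual convention ($\min \emptyset = +\infty$) so a single short argument covers all cases uniformly.
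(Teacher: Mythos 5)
Your proposal is correct and follows essentially the same route as the paper: the forward direction reads the inequalities off directly from a positive point, and the reverse direction constructs a witness by choosing $x_s$ in the admissible interval determined by the conservation laws (the paper picks the explicit midpoint $\tfrac{1}{2}\bigl(\min_{k\in\mathcal{J}_2}c_k-\min_{k\in\mathcal{J}_1}c_k\bigr)$ or $\tfrac{1}{2}\min_{k\in\mathcal{J}_2}c_k$, which is precisely a point of your interval $(L,U)$). Your interval formulation with the convention $\min\emptyset=+\infty$ handles the degenerate cases $\mathcal{J}_1=\emptyset$ or $\mathcal{J}_2=\emptyset$ a bit more uniformly than the paper's two-case construction, but the underlying argument is the same.
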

\begin{proof}
   `` $\Rightarrow$":
For any $k \in \mathcal{J}_2\cup \mathcal{J}_3$, by $\mathcal{P}_c^+ \ne \emptyset$ and by \eqref{eq:J2}--\eqref{eq:J3}, we have $c_k >0$.
% \begin{align}\label{eq:ck>0}
%     c_k >0.
% \end{align}
%For any $k \in \mathcal{J}_3$, by $\mathcal{P}_c^+ \ne \emptyset$ and \eqref{eq:J3}, we have
%\begin{align}\label{eq:ck>0}
 %   c_k >0.
%\end{align}
For any $i \in \mathcal{J}_1$ and  for any $j \in \mathcal{J}_2$, 
by \eqref{eq:J1} and \eqref{eq:J2}, we have 
 \begin{equation}\label{eq:con2-con1}
     x_i+x_j=c_i+c_j. %\quad\text{for any}\; (i,j) \in \mathcal{J}_1\times\mathcal{J}_2.
 \end{equation}
So,  by $\mathcal{P}_c^+ \ne \emptyset$  and  by \eqref{eq:con2-con1}, we have $ c_i + c_j>0$. \\
% \begin{align}\label{eq:ci<cj}
%     c_i + c_j>0.
% \end{align}
``$\Leftarrow$":
Below, we construct a point $x^*$ such that $x^*\in \mathcal{P}_c^+$. 
Let 
\begin{numcases}{x_s^*\;:=\;}
 \frac{1}{2}(\min\limits_{k\in \mathcal{J}_2}c_k-\min\limits_{k\in \mathcal{J}_1}c_k),& if $\min\limits_{k\in \mathcal{J}_1}c_k<0$, \label{eq:1 dim x_1^* 1} \\
  \frac{1}{2}\min\limits_{k\in \mathcal{J}_2}c_k,& if  $\min\limits_{k\in \mathcal{J}_1}c_k\ge 0$, \label{eq:1 dim x_1^* 2}
\end{numcases}
\begin{align}
\label{eq:1 dim x_i^*}x_i^*&\;:=\;x_s^*+c_i, \text{ for any }i \in \mathcal{J}_1,\\
\label{eq:1 dim x_j^*}x_j^*&\;:=\;-x_s^*+c_j, \text{ for any }j \in \mathcal{J}_2,\\   
\label{eq:1 dim x_k^*}x_k^*&\;:=\;c_k, \text{ for any }k \in \mathcal{J}_3.
\end{align}
% Then,  
% the coordinates of $x^*$ satisfy the conservation laws 
%  \eqref{eq:con1}--\eqref{eq:con3}. 
Below, we prove that 
$x^*\in {\mathbb R}_{>0}^s$. 
 Notice that for any $k \in \mathcal{J}_3$, since $c_k>0$, by \eqref{eq:1 dim x_k^*}, we get $x_k^*>0$.
So, we only need to prove that $x_s^*>0$ and for any $i \in \mathcal{J}_1\cup \mathcal{J}_2$,  $x_i^*>0$.  
\begin{enumerate}[(I)]
    \item Assume that $\min\limits_{k\in \mathcal{J}_1}c_k<0$. 
Note that  $c_j>0$ for any $j \in \mathcal{J}_2$. Then, by \eqref{eq:1 dim x_1^* 1}, we have $x_s^*>0$. 
For any $i \in \mathcal{J}_1$, substitute \eqref{eq:1 dim x_1^* 1} into \eqref{eq:1 dim x_i^*}, we have
    \begin{align}
      x_i^*&=\frac{1}{2}(\min\limits_{k\in \mathcal{J}_2}c_k-\min\limits_{k\in \mathcal{J}_1}c_k)+c_i\nonumber \\
      \label{eq:1 dim x_i^* x_1^*}&=\frac{1}{2}(\min\limits_{k\in \mathcal{J}_2}c_k+c_i)+\frac{1}{2}(c_i-\min\limits_{k\in \mathcal{J}_1}c_k).
    \end{align}
Since for any $(i,j)\in\mathcal{J}_1 \times\mathcal{J}_2$, $c_i+ c_j>0$, we have $\min\limits_{k\in \mathcal{J}_2}c_k+c_i>0$. 
Note that for any $i \in \mathcal{J}_1$, we have $c_i-\min\limits_{k\in \mathcal{J}_1}c_k\geq0$. 
Hence, by \eqref{eq:1 dim x_i^* x_1^*}, for any $i \in \mathcal{J}_1$, $x_i^*>0$.
For any $j \in \mathcal{J}_2$, by \eqref{eq:1 dim x_1^* 1} and \eqref{eq:1 dim x_j^*},
\begin{align}\label{eq:1 dim x_j^* x_1^*}
    x_j^*&=c_j-\frac{1}{2}(\min\limits_{k\in \mathcal{J}_2}c_k-\min\limits_{k\in \mathcal{J}_1}c_k)\nonumber\\
    &=\frac{1}{2}(c_j-\min\limits_{k\in \mathcal{J}_2}c_k)+\frac{1}{2}(c_j+\min\limits_{k\in \mathcal{J}_1}c_k). 
\end{align}
Note that for any $j\in \mathcal{J}_2$, we have $c_j-\min\limits_{k\in \mathcal{J}_2}c_k\ge0$.
Since for any $(i,j)\in\mathcal{J}_1 \times\mathcal{J}_2$, $c_i+c_j>0$, we have $c_j+\min\limits_{k\in \mathcal{J}_1}c_k>0$. 
Then, by \eqref{eq:1 dim x_j^* x_1^*}, for any $j \in \mathcal{J}_2$, $x_j^*>0$.
\item Assume that $\min\limits_{k\in \mathcal{J}_1}c_k\ge0$. 
Since for any $j \in \mathcal{J}_2$, $c_j>0$, by \eqref{eq:1 dim x_1^* 2}, we have $x_s^*>0$. 
By $\min\limits_{k\in \mathcal{J}_1}c_k\ge0$, for any $ i \in \mathcal{J}_1$, $c_i\ge0$. Hence, for any $ i \in \mathcal{J}_1$, by \eqref{eq:1 dim x_i^*}, we have $x_i^*>0$. 
%Note that for any $j \in \mathcal{J}_2$, $c_j>0$. 
By \eqref{eq:1 dim x_1^* 2} and \eqref{eq:1 dim x_j^*}, we have $x_j^*=c_j-\frac{1}{2}\min\limits_{k \in \mathcal{J}_2}c_k>0$ for any $j \in \mathcal{J}_2$. 
\end{enumerate} 
%Hence, $x^*\in \mathcal{P}_c^+$. 
\end{proof}

\begin{lemma}\label{lm:1 dim no boundary}
    Consider a zero-one network $G$ with a rank-one  stoichiometric matrix $\mathcal{N}$. Suppose that all non-zero rows of $\mathcal{N}$ change signs. For any $c\in\mathbb{R}^{s-1}$, if $\mathcal{P}_c^+ \ne \emptyset$, then for any $\kappa\in \mathbb{R}^{m}_{>0}$, the network $G$ has no boundary steady states in $\mathcal{P}_c$. 
\end{lemma}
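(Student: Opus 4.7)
The plan is to argue directly that every boundary point of $\mathcal{P}_c$ fails the only non-trivial steady-state equation $f_s=0$. Recall from \eqref{eq:1 dim Ni ai Ns} that $f_i=a_if_s$ for every $i\in\{1,\dots,s-1\}$, so $x^*\in\mathcal{P}_c$ is a steady state if and only if $f_s(x^*)=0$. Lemma \ref{lm:change sign fsx} supplies the explicit expansion \eqref{eq:f1} of $f_s$ as a difference of two signed sums indexed by non-empty families $\tau_1,\tau_2\subseteq 2^{\mathcal{J}_3}$. The other key input is Lemma \ref{lm:Pc+ ne emptyset}: under the standing hypothesis $\mathcal{P}_c^+\neq\emptyset$, we may use $c_k>0$ for every $k\in\mathcal{J}_2\cup\mathcal{J}_3$ and $c_i+c_j>0$ for every $(i,j)\in\mathcal{J}_1\times\mathcal{J}_2$. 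I would then take a hypothetical boundary point $x^*\in\mathcal{P}_c\setminus\mathbb{R}_{>0}^s$ and split into four cases according to which coordinate vanishes.

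First, if $x_k^*=0$ for some $k\in\mathcal{J}_3$, the conservation law \eqref{eq:con3} gives $c_k=0$, contradicting Lemma \ref{lm:Pc+ ne emptyset}. Second, if $x_s^*=0$, every monomial in the $\tau_2$-sum of \eqref{eq:f1} contains the factor $x_s$ and vanishes; in the $\tau_1$-sum each product $\prod_{t\in\sigma\cup\mathcal{J}_2}x_t^*$ is a product of the positive numbers $c_t$ (for $t\in\sigma\subseteq\mathcal{J}_3$) and $c_j$ (for $j\in\mathcal{J}_2$), so $f_s(x^*)>0$ since $\tau_1\neq\emptyset$. Third, if $x_i^*=0$ with $i\in\mathcal{J}_1$, then $x_s^*=-c_i\ge 0$ and the $\tau_2$-sum vanishes because each of its monomials carries the factor $x_i$; in the $\tau_1$-sum each $j\in\mathcal{J}_2$ has $x_j^*=c_i+c_j>0$ and each $t\in\sigma\subseteq\mathcal{J}_3$ has $x_t^*=c_t>0$, giving $f_s(x^*)>0$. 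Fourth, if $x_j^*=0$ with $j\in\mathcal{J}_2$, then $x_s^*=c_j>0$ and $x_i^*=c_i+c_j>0$ for every $i\in\mathcal{J}_1$, while the $\tau_1$-sum vanishes (factor $x_j$) and every factor in the non-empty $\tau_2$-sum is strictly positive, so $f_s(x^*)<0$.

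In each case $f_s(x^*)\neq 0$, contradicting the assumption that $x^*$ is a steady state, which closes the argument. The one subtle point is the uniform bookkeeping of which of the two sums in \eqref{eq:f1} survives under each vanishing coordinate: one must observe that every $\tau_2$-monomial carries $x_s$ and every $x_i$ with $i\in\mathcal{J}_1$, while every $\tau_1$-monomial carries every $x_j$ with $j\in\mathcal{J}_2$. Once this is noted, positivity of the surviving factors follows immediately from the inequalities furnished by Lemma \ref{lm:Pc+ ne emptyset}, and the non-emptiness of both $\tau_1$ and $\tau_2$ in Lemma \ref{lm:change sign fsx} is precisely what guarantees that the surviving sum is non-trivial.
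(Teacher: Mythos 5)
Your proposal is correct and follows essentially the same route as the paper's proof: invoke Lemma \ref{lm:Pc+ ne emptyset} for the sign conditions on $c$, use the explicit form \eqref{eq:f1} of $f_s$ from Lemma \ref{lm:change sign fsx}, and show by a case split on which coordinate vanishes that one of the two signed sums survives with a strict sign, so $f_s(x^*)\neq 0$. The only cosmetic difference is that you split $\{s\}\cup\mathcal{J}_1$ into two separate cases where the paper treats them together.
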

\begin{proof}
Let ${\mathcal J}_i$ $(i=1, 2, 3)$ be defined as in \eqref{eq:J1}--\eqref{eq:J3}. 
Since $\mathcal{P}_c^+ \ne \emptyset$, by Lemma \ref{lm:Pc+ ne emptyset}, 
%Below we prove that 
    \begin{align}
    \label{eq:ck>0}c_k>0,\text{ for any }k\in \mathcal{J}_2\cup \mathcal{J}_3,\text{ and }\\
    \label{eq:ci<cj}c_i+ c_j>0,\text{ for any } (i,j)\in\mathcal{J}_1 \times\mathcal{J}_2.
\end{align}
\noindent Below we prove the conclusion by deducing a contradiction.
%We prove that if $G$ has boundary steady states, then $c$ does not satisfy \eqref{eq:ck>0} or \eqref{eq:ci<cj}. 
Assume that there exists $\kappa \in \mathbb{R}_{>0}^m$ such that $G$ has a boundary steady state $x$ in $\mathcal{P}_{c}$.
By \eqref{eq:J3} and \eqref{eq:ck>0}, for any $k \in \mathcal{J}_3$, $x_k>0$.
% \begin{align}\label{eq:xk'>0}
%     \tilde{x}_k>0.
% \end{align} 
\begin{enumerate}[(I)]
    \item Assume that there exists $i \in \{s\}\cup\mathcal{J}_1$ such that $x_i=0$. 
    By \eqref{eq:J2} and \eqref{eq:ck>0}, for any $j \in \mathcal{J}_2$, we get $x_s$ and $x_j$ can not be $0$ simultaneously. 
By \eqref{eq:J1}, \eqref{eq:J2} and \eqref{eq:ci<cj}, for any $i \in \mathcal{J}_1$ and for any $j \in \mathcal{J}_2$, $x_i$ and $x_j$ can not be $0$ simultaneously.
   So, for any $j \in \mathcal{J}_2$, $x_j>0$. 
    % By $s\in \mathcal{J}_1$ and \eqref{eq:}, $\mathcal{N}_s$ is a non-zero row. 
    % Since $G$ is one dimension, for any $i \in\{1,\dots,s-1\}$, $\mathcal{N}_{i}$ is linearly dependent with $\mathcal{N}_s$.  
    Note that since all non-zero rows of $\mathcal{N}$ change signs, by Lemma \ref{lm:change sign fsx}, $f_s$ has the form \eqref{eq:f1}.
    % all non-zero rows of $\mathcal{N}$ change sign. 
    % Hence, $\mathcal{N}_s$ changes sign.     Hence, $f_s(x)$ has both positive and negative terms.
    Hence, we have 
    \begin{align}
       f_s(\kappa, x)=\sum\limits_{\sigma\in \tau_1}\kappa_{\sigma}\prod\limits_{t \in \sigma\cup \mathcal{J}_2}x_t>0, \nonumber
    \end{align}
    where $\tau_1 \subset 2^{\mathcal{J}_3}$ and $\kappa_{\sigma}$ are the rate constants (recall Lemma \ref{lm:change sign fsx}). 
    \item Assume that there exists $j \in \mathcal{J}_2$ such that $x_j=0$. By a similar argument with the proof of case (I), we have $f_s(\kappa, x)<0$.
\end{enumerate}
Overall, we have $f_s(\kappa, x) \ne 0$, which is contrary to the fact that $x$ is a boundary steady state in $\mathcal{P}_c$. 
% \item Assume that $\mathcal{J}_2 = \emptyset$.
%     Then, by \eqref{eq:f1}, we have 
%     \begin{align}\label{eq:f1J2}
%         \begin{split}
%     f_1(x)=-\sum_{\Lambda\in 2^{\mathcal{J}_3}}\kappa_{\Lambda}\prod_{t\in \Lambda\cup \mathcal{J}_1}x_t+\sum_{\sigma\in 2^{\mathcal{J}_3}}\kappa_{\sigma}\prod_{i\in \sigma}x_i.
% \end{split}
%     \end{align}
%     We prove by contradiction. Assume that there exists a boundary steady state $x^'$. 
%     Then, by $\mathcal{J}_2=\emptyset$ and \eqref{eq:xk'>0}, there exists $i \in \mathcal{J}_1$ such that 
%      \begin{align}\label{eq:xi'=0}
%          x_i^'=0.
%      \end{align}
%      Because there exists a non-zero row of $\mathcal{N}$ changes sign, we assume that $\mathcal{N}_1$ changes sign. Hence, $f_1(x)$ has both positive and negative terms.
%      Then, by \eqref{eq:xk'>0}, \eqref{eq:f1J2}, and \eqref{eq:xi'=0}, we have 
%      $f_1(x^')>0$. 
%      It is contrary to $x^'$ is a boundary steady state. Hence, $G$ has no boundary steady state.
%\end{enumerate}
\end{proof}
\begin{lemma}\label{lm: 1 dim J2 ne emptyset Pc+ compact}
     Consider a zero-one network $G$ with a rank-one  stoichiometric matrix $\mathcal{N}$. 
    % For any $\kappa^* \in \mathbb{R}_{>0}^m$ and for any $c^*\in \mathbb{R}^{s-2}$, we assume that $x^*$ is a positive steady state in $\mathcal{P}_{c^*}$.
     Let $\mathcal{J}_i \;(i=1,2,3)$ be defined as in \eqref{eq:J1}--\eqref{eq:J3}. For any $c\in {\mathbb R}^{s-1}$, if $\mathcal{J}_2 \ne \emptyset$, then $\mathcal{P}_c$ is compact.
\end{lemma}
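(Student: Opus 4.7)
The plan is to show that $\mathcal{P}_c$ is both closed and bounded in $\mathbb{R}^s$, which gives compactness. Closedness is immediate: $\mathcal{P}_c = \{x \in \mathbb{R}_{\geq 0}^s \mid Wx = c\}$ is the intersection of the closed orthant $\mathbb{R}_{\geq 0}^s$ with the affine subspace defined by $Wx = c$, hence closed. The whole work is to show boundedness using the hypothesis $\mathcal{J}_2 \neq \emptyset$.

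First I would use the assumption $\mathcal{J}_2 \neq \emptyset$ to pick some $j_0 \in \mathcal{J}_2$. By the conservation law \eqref{eq:con2}, any $x \in \mathcal{P}_c$ satisfies $x_{j_0} = -x_s + c_{j_0}$, and the non-negativity constraint $x_{j_0} \geq 0$ forces $x_s \leq c_{j_0}$. Combined with $x_s \geq 0$ (since $x \in \mathbb{R}_{\geq 0}^s$), this shows $x_s \in [0, c_{j_0}]$, so $x_s$ is bounded.

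Next I would propagate this bound to every other coordinate by walking through the three classes $\mathcal{J}_1$, $\mathcal{J}_2$, $\mathcal{J}_3$ defined in \eqref{eq:J1}--\eqref{eq:J3}. For $i \in \mathcal{J}_1$, the conservation law \eqref{eq:con1} gives $x_i = x_s + c_i \leq c_{j_0} + c_i$, and $x_i \geq 0$; for $j \in \mathcal{J}_2$, the law \eqref{eq:con2} gives $0 \leq x_j = -x_s + c_j \leq c_j$; for $k \in \mathcal{J}_3$, the law \eqref{eq:con3} gives $x_k = c_k$ directly. Hence every coordinate of any $x \in \mathcal{P}_c$ lies in a bounded interval, so $\mathcal{P}_c$ is bounded.

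There is no serious obstacle: the proof is essentially a one-line observation that the presence of at least one index in $\mathcal{J}_2$ is exactly what pins $x_s$ from above (the remaining conservation laws alone, without such a $j_0$, could leave $x_s$ free to run to infinity, in which case $x_i$ for $i \in \mathcal{J}_1$ would be unbounded). Combining closedness with boundedness via the Heine--Borel theorem yields that $\mathcal{P}_c$ is compact.
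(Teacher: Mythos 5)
Your proof is correct and follows essentially the same route as the paper's: use an index $j_0\in\mathcal{J}_2$ together with non-negativity to bound $x_s$ above by $c_{j_0}$, propagate that bound through the conservation laws \eqref{eq:con1}--\eqref{eq:con3} to every coordinate, and conclude compactness from closedness plus boundedness. No issues.
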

\begin{proof}
Suppose $x\in \mathcal{P}_c$. Notice that by \eqref{eq:pc}, 
$x\in {\mathbb R}_{\geq 0}^s$.
By \eqref{eq:J2}, since $\mathcal{J}_2 \ne \emptyset$, there exists at least one index $j \in \mathcal{J}_2$ such that  $x_s+x_{j}=c_{j}$. So, by \eqref{eq:pc}, we have $c_{j}\ge0$ and hence, we have $x_s\le c_{j}$.
By \eqref{eq:J1}, for any $i \in \mathcal{J}_1$, $x_i=x_s+c_i$. 
Hence,  we have $x_i\le c_{j}+c_i$.
By \eqref{eq:J2}, for any $i\in \mathcal{J}_2$, we have $x_s+x_i=c_i$. Then, we have $x_i\le c_i$. 
By \eqref{eq:J3}, for any $i \in \mathcal{J}_3$, $x_i=c_i$.
Therefore, $\mathcal{P}_c$ is bounded. Clearly, $\mathcal{P}_c$ is closed, and so, it is compact. 
\end{proof}
\begin{lemma}\label{lm:1 dim J2=emptyset dissipative}
 Consider a zero-one network $G$ with a rank-one stoichiometric matrix $\mathcal{N}$. 
 If all non-zero rows of $\mathcal{N}$ change signs, 
then the network $G$ is dissipative.
\end{lemma}
\begin{proof}
 Let $\mathcal{J}_i$ $(i=1, 2, 3)$ be defined as in \eqref{eq:J1}--\eqref{eq:J3}. 
By Lemma \ref{lm: 1 dim J2 ne emptyset Pc+ compact}, 
if ${\mathcal J}_2\neq \emptyset$, 
then ${\mathcal P}_c$ is compact, and hence the network $G$ is conservative. 
Hence, by Lemma \ref{lm:conservative-dissipative},  it is dissipative. Below, we prove the conclusion by assuming  that ${\mathcal J}_2=\emptyset$.
Since all non-zero rows of $\mathcal{N}$ change signs, by Lemma \ref{lm:change sign fsx}, we have 
    \begin{align}\label{eq:f1J2}
        \begin{split}
    f_s=-\sum_{\Lambda\in \tau_2}\kappa_{\Lambda}\prod_{t\in \Lambda\cup \mathcal{J}_1 \cup \{s\}}x_t+\sum_{\sigma\in \tau_1}\kappa_{\sigma}\prod_{i\in \sigma}x_i,
\end{split}
    \end{align}
    where $\tau_1$ and $\tau_2$ are non-empty subsets of $2^{\mathcal{J}_3}$, and $\kappa_{\Lambda}$ and $\kappa_{\sigma}$ are the rate constants of the reactions \eqref{eq:form2} and \eqref{eq:form1}. 

First, we will prove that for each $c\in {\mathbb R}^{s-1}$ such that $\mathcal{P}_c^+ \ne \emptyset$, there exists $M>0$ such that for any $x^* \in \mathcal{P}_{c}$ satisfying $\Vert x^* \Vert_{\infty}>M$, we have $f_s(x^*)<0$. Since $\mathcal{P}_c^+ \ne \emptyset$, by Lemma \ref{lm:Pc+ ne emptyset}, for any $k \in \mathcal{J}_3$, $c_k>0$. 
Hence, for any $x^* \in \mathcal{P}_{c}$ and for any $k \in \mathcal{J}_3$, by \eqref{eq:J3}, we have $x_k^*=c_k>0$.
%By $\Lambda\in 2^{\mathcal{J}_3}$, for any $k \in \Lambda$, $x_k^*$ is bounded. 
%By $\sigma\in 2^{\mathcal{J}_3}$, for any $k \in \sigma$, $x_k^*$ is bounded.\\
Note that $\mathcal{J}_2 = \emptyset$. 
So,
if there exists a large enough real number  $M$ such that $\Vert x^* \Vert_{\infty} >M$, then there exists $i \in \mathcal{J}_1\cup\{s\}$ such that $x_i^*=\Vert x^* \Vert_{\infty}>M$. 
If $i=s$, then for any $j \in \mathcal{J}_1$, by \eqref{eq:J1}, 
$x_j^*>M+c_j$. 
If $i\ne s$, then by \eqref{eq:J1}, 
$x_s^*>M-c_i$. 
So, for any $j \in \mathcal{J}_1\setminus\{i\}$, by \eqref{eq:J1}, 
$x_j^*>M-c_i+c_j$. 
Hence,  by \eqref{eq:f1J2}, if $M$ is large enough, for any $x^* \in \mathcal{P}_{c}$ satisfying $\Vert x^* \Vert_{\infty}>M$, we have  $f_s(x^*)<0$.
%\begin{align}\label{eq:one dim f_1(x)<0}
%    $f_s(x^*)<0$.
%\end{align}

Note that for any $i \in \mathcal{J}_1$, by 
% \eqref{eq:sys}, \eqref{eq:1 dim Ni ai Ns} and 
\eqref{eq:J1}, we have
\begin{align}\label{eq:fi=f1}
    f_i=f_s,
\end{align}
and 
for any $k \in \mathcal{J}_3$, by \eqref{eq:J3}, we have
\begin{align}\label{eq:fk=0}
    f_k=0.
\end{align}
Note again that $\mathcal{J}_2 = \emptyset$.
Let $\omega:=(1,\ldots, 1)^\top\in\mathbb{R}^s_{>0}$ (here, all coordinates of $\omega$ are $1$). 
  Then, by \eqref{eq:fi=f1} and \eqref{eq:fk=0}, we have $\omega \cdot f(x)=(\vert \mathcal{J}_1 \vert +1)f_s(x)$. 
Hence,  there exists a vector $\omega\in\mathbb{R}^s_{>0}$ and a number $M>0$ such that $\omega\cdot f(x^*)<0$ for all $x^*\in \mathcal{P}_c$ with $\Vert x^*\Vert_{\infty}>M$.
So, by Lemma \ref{lm:Dissipative}, the network is dissipative.
\end{proof}
\begin{lemma}\label{lm:sum_partical<0}
     Consider a zero-one network $G$ with a rank-one stoichiometric matrix $\mathcal{N}$. 
     Let $h$ be the steady-state system augmented by conservation laws defined as in \eqref{eq:h}. 
   If all non-zero rows of $\mathcal{N}$ change signs, then for any $\kappa\in \mathbb{R}^{m}_{>0}$, for any $c\in \mathbb{R}^{s-1}$, and for any corresponding positive steady state $x$ in ${\mathcal P}^{+}_{c}$, we have ${\rm det}({\rm Jac}_h(\kappa,x))<0$. 
\end{lemma}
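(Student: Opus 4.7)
The plan is to combine Lemma \ref{lm:jach} (which already gives the correct sign, up to possibly being zero) with the explicit form of $f_s$ from Lemma \ref{lm:change sign fsx} to upgrade the sign to a strict inequality. Since $G$ is one-dimensional ($r=1$), the formula $\det({\rm Jac}_h)=\sum_{I\subseteq\{1,\dots,s\},\,|I|=r}\det({\rm Jac}_f[I,I])$ used in the proof of Lemma \ref{lm:jach} collapses to the trace identity
\[
\det({\rm Jac}_h(\kappa,x))\;=\;\sum_{i=1}^{s}\frac{\partial f_i}{\partial x_i}(\kappa,x).
\]
By Lemma \ref{lm:trace}(\ref{lm:trace 1}), each summand $\partial f_i/\partial x_i$ is either the zero polynomial or a polynomial with only negative coefficients, so evaluated at any positive $x\in\mathcal{P}_c^+$ each term is $\le 0$. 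Hence it suffices to exhibit a single index $i$ for which $\partial f_i/\partial x_i$ is strictly negative at $x$.

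The index I would single out is $i=s$. Since all non-zero rows of $\mathcal{N}$ change sign, Lemma \ref{lm:change sign fsx} applies and gives
\[
f_s\;=\;-\sum_{\Lambda\in \tau_2}\kappa_{\Lambda}\prod_{t\in \Lambda\cup \{s\}\cup\mathcal{J}_1}x_t\;+\;\sum_{\sigma\in \tau_1}\kappa_{\sigma}\prod_{t\in \sigma\cup\mathcal{J}_2}x_t,
\]
with $\tau_2$ non-empty. Differentiating with respect to $x_s$ only the first sum contributes (the second sum does not involve $x_s$ because its index set is $\sigma\cup\mathcal{J}_2$ with $\sigma\subseteq\mathcal{J}_3$), yielding
\[
\frac{\partial f_s}{\partial x_s}\;=\;-\sum_{\Lambda\in \tau_2}\kappa_{\Lambda}\prod_{t\in \Lambda\cup\mathcal{J}_1}x_t.
\]
For $x\in\mathcal{P}_c^+\subseteq\mathbb{R}^s_{>0}$ every factor is strictly positive, and since $\tau_2\neq\emptyset$ and each $\kappa_\Lambda>0$, this quantity is strictly negative.

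Combining the two observations, $\det({\rm Jac}_h(\kappa,x))$ is a sum of non-positive terms at least one of which (the $s$-th) is strictly negative, so $\det({\rm Jac}_h(\kappa,x))<0$. The only subtle point to be careful about is verifying that the term $\sum_{\sigma\in\tau_1}\kappa_\sigma\prod_{t\in\sigma\cup\mathcal{J}_2}x_t$ genuinely contributes nothing to $\partial f_s/\partial x_s$; this is immediate once one observes that $\sigma\cup\mathcal{J}_2$ never contains the index $s$ (by the definitions of $\mathcal{J}_2$ and $\mathcal{J}_3\supseteq\sigma$ in \eqref{eq:J2}--\eqref{eq:J3}). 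Everything else is a direct application of results already established in Lemmas \ref{lm:trace}, \ref{lm:jach}, and \ref{lm:change sign fsx}, so I expect no serious obstacle.
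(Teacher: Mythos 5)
Your proposal is correct and follows essentially the same route as the paper: both reduce $\det({\rm Jac}_h)$ to the trace $\sum_{i=1}^s \partial f_i/\partial x_i$ via \cite[Proposition 5.3]{WiufFeliu_powerlaw} and then use the explicit form of $f_s$ from Lemma \ref{lm:change sign fsx} (with $\tau_2\neq\emptyset$) to produce a strictly negative contribution. The only cosmetic difference is that you bound the remaining diagonal terms by $0$ via Lemma \ref{lm:trace}(\ref{lm:trace 1}) (through Lemma \ref{lm:kxtohl}), whereas the paper computes each of them explicitly from $f_i=\pm f_s$ or $f_i=0$; both are valid.
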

\begin{proof}
 Let $f_1,\dots, f_s$ be the polynomials defined as in \eqref{eq:sys}. 
    Let ${\mathcal J}_i$ $(i=1, 2, 3)$ be the sets defined as in \eqref{eq:J1}--\eqref{eq:J3}. By Lemma \ref{lm:change sign fsx}, we have 
    \begin{align}\label{eq:fs}
       f_s=-\sum_{\Lambda\in \tau_2}\kappa_{\Lambda}\prod\limits_{t\in \Lambda\cup \{s\}\cup\mathcal{J}_1}x_t+\sum_{\sigma\in \tau_1}\kappa_{\sigma}\prod\limits _{t\in \sigma\cup\mathcal{J}_2}x_t,
    \end{align}
    where $\tau_1$ and $\tau_2$ are non-empty subsets of $2^{\mathcal{J}_3}$ and $\kappa_{\Lambda}$ and $\kappa_{\sigma}$ are the rate constants of the reactions \eqref{eq:form2} and \eqref{eq:form1}. By \eqref{eq:fs}, we have \begin{align}\label{eq:fs_partial}
        \frac{\partial f_s}{\partial x_s}(\kappa,x)=-\sum_{\Lambda\in \tau_2}\kappa_{\Lambda}\prod\limits_{t\in \Lambda\cup\mathcal{J}_1}x_t.
    \end{align} Note that for any $i\in\mathcal{J}_1$, we have $f_i=f_s$. Then, we have 
    \begin{align}\label{eq:J1_partial}
        \frac{\partial f_i}{\partial x_i}(\kappa,x)=-\sum_{\Lambda\in \tau_2}\kappa_{\Lambda}\prod\limits_{t\in \Lambda\cup\{s\}\cup\mathcal{J}_1\setminus\{i\}}x_t,\; \text{for any}\; i\in\mathcal{J}_1.
    \end{align}
    Note that for any $i\in\mathcal{J}_2$, we have $f_i=-f_s$. So, we have 
    \begin{align}\label{eq:J2_partial}
        \frac{\partial f_i}{\partial x_i}(\kappa,x)=-\sum_{\sigma\in \tau_1}\kappa_{\sigma}\prod\limits _{t\in \sigma\cup\mathcal{J}_2\setminus\{i\}}x_t,\; \text{for any}\; i\in\mathcal{J}_2.
    \end{align}
    Note that for any $i\in\mathcal{J}_3$, we have $f_i=0$. Hence, we have 
    \begin{align}\label{eq:J3_partial}
        \frac{\partial f_i}{\partial x_i}(\kappa,x)=0,\; \text{for any}\; i\in\mathcal{J}_3.
    \end{align}
    By \eqref{eq:fs_partial}--\eqref{eq:J3_partial}, we have 
    \begin{equation}\label{eq:sum_partial f}
    \begin{aligned}
        \sum^s_{i=1}\frac{\partial f_i}{\partial x_i}(\kappa,x)=&-\sum_{\Lambda\in \tau_2}\kappa_{\Lambda}\prod\limits_{t\in \Lambda\cup\mathcal{J}_1}x_t-\sum_{i\in\mathcal{J}_1}\sum_{\Lambda\in \tau_2}\kappa_{\Lambda}\prod\limits_{t\in \Lambda\cup\{s\}\cup\mathcal{J}_1\setminus\{i\}}x_t\\
        &-\sum_{i\in\mathcal{J}_2}\sum_{\sigma\in \tau_1}\kappa_{\sigma}\prod\limits _{t\in \sigma\cup\mathcal{J}_2\setminus\{i\}}x_t.
    \end{aligned}
    \end{equation}
    By \eqref{eq:sum_partial f}, for any $\kappa \in \mathbb{R}^m_{>0}$ and for any   corresponding positive steady state $x \in \mathbb{R}^s_{>0}$, we have $\sum^s_{i=1}\frac{\partial f_i}{\partial x_i}(\kappa,x)<0$. 
    Since the network $G$ is one-dimensional, by \cite[Proposition 5.3]{WiufFeliu_powerlaw}, 
 ${\rm det}({\rm Jac}_h)=\sum\limits^s_{i=1}\frac{\partial f_i}{\partial x_i}$. 
 Therefore, for any $\kappa\in \mathbb{R}^{m}_{>0}$, for any $c\in \mathbb{R}^{s-1}$, and for any corresponding positive steady state $x$ in ${\mathcal P}^{+}_{c}$, we have ${\rm det}({\rm Jac}_h(\kappa,x))<0$. 
\end{proof}

\noindent\textbf{proof of Theorem \ref{thm:one-dimensional}}
\begin{enumerate}[(I)]
    \item Let $f_1,\dots, f_s$ be the polynomials defined as in \eqref{eq:sys}. We assume that there exists $i\in\{1,\dots,s\}$ such that $\mathcal{N}_i$ does not change the sign. 
So, by \eqref{eq:sys}, the coefficients of terms in $f_i(x)$ are all negative or all positive. 
    Thus, for any $\kappa\in \mathbb{R}^{m}_{>0}$, $f_i(x)=0$ has no positive solutions. Therefore, $G$ admits no positive steady states.
    \item 
    \begin{enumerate}[(i)]
        \item By Lemma \ref{lm:Pc+ ne emptyset}, $\mathcal{P}_c^+=\emptyset$. 
So, of course, $G$ has no positive steady states in $\mathcal{P}_c^+$.
\item By Lemma \ref{lm:Pc+ ne emptyset}, 
 $\mathcal{P}_c^+\ne \emptyset$.  
By Lemma \ref{lm:1 dim no boundary}, there are no boundary steady states in $\mathcal{P}_c$. 
By  Lemma \ref{lm:1 dim J2=emptyset dissipative}, the network $G$ is dissipative.  
By Lemma \ref{lm:sum_partical<0}, for any $\kappa\in \mathbb{R}^{m}_{>0}$ and for any corresponding positive steady state $x$ in ${\mathcal P}^{+}_{c}$, we have  ${\rm det}({\rm Jac}_h(\kappa, x))<0.$
Hence, by Theorem \ref{thm:deter_sign}, for any $\kappa\in \mathbb{R}^{m}_{>0}$, the network $G$ has exactly one positive steady state in $\mathcal{P}_c$, and this positive steady state is nondegenerate. 
 By Lemma \ref{lm:stable}, the nondegenerate positive steady state is stable.
\end{enumerate}
\end{enumerate}

\subsection{Two-dimensional zero-one networks}\label{sec:2d 3s}

In this section, we prove the main result Theorem \ref{thm:twospecies} as follows. Notice that a two-dimensional network has at least two species, and  Theorem \ref{thm:twospecies} states a conclusion for the two-dimensional networks with up to three species. 
First, we show that the conclusion of Theorem \ref{thm:twospecies} holds for the two-dimensional zero-one networks with two species in Lemma \ref{lm:twospecies}. 
Second, we show that the conclusion of Theorem \ref{thm:twospecies} holds for the two-dimensional zero-one networks with three species in Lemma \ref{lm:3-species_2-dimensional}. 
Later, we can naturally complete the proofs of Theorem \ref{thm:twospecies} and its corollaries.

% \subsection{Two-dimensional zero-one networks with two species}\label{subsec:2d 2s}
% \textcolor{red}{In this section, we prove Theorem \ref{thm:twospecies} for the two-dimensional zero-one networks with two species, and we write a lemma as follows.} 
\begin{lemma}\label{lm:twospecies} 
A two-species zero-one network with a rank-two stoichiometric matrix either admits no multistationarity or  only admits degenerate positive steady states.
\end{lemma}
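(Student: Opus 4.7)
The plan is to reduce the two-variable steady-state system to a single Möbius fixed-point equation in one variable and exploit the strong sign constraints that the zero-one hypothesis imposes on its coefficients. Since $s=2$ and the stoichiometric matrix has rank $2$, there are no conservation laws, and the positive steady states are exactly the common positive roots of $f_1$ and $f_2$. The only reactant complexes available in a two-species zero-one network are $\emptyset$, $X_1$, $X_2$, and $X_1+X_2$, so each $f_i$ takes the bilinear form $f_i=A_i+B_ix_1+C_ix_2+D_ix_1x_2$, with coefficients linear in $\kappa$. Reading off the contributions $\beta_{ij}-\alpha_{ij}\in\{0,\pm 1\}$ for a zero-one reaction pins down their signs: $A_1,C_1,A_2,B_2\ge 0$ and $B_1,D_1,C_2,D_2\le 0$.

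Using these signs, I would rewrite $f_1=0$ as $x_1=g_1(x_2):=(A_1+C_1x_2)/(|B_1|+|D_1|x_2)$ and $f_2=0$ as $x_2=g_2(x_1):=(A_2+B_2x_1)/(|C_2|+|D_2|x_1)$, treating the boundary cases ($|B_1|+|D_1|=0$, $|C_2|+|D_2|=0$, or an entire row of ${\rm Jac}_f$ vanishing) separately; in each such case either no positive steady state exists or the steady-state set is a coordinate-parallel continuum on which ${\rm Jac}_f$ is automatically singular. Positive steady states then correspond to positive fixed points of $g_1\circ g_2$, which a direct calculation shows is again a Möbius transformation $(a+bx_1)/(c+dx_1)$ whose four coefficients $a,b,c,d$ are sums of products of the nonnegative quantities $A_i,B_2,C_1,|B_1|,|C_2|,|D_i|$, and hence nonnegative.

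The fixed-point equation reduces to the quadratic $dx_1^2+(c-b)x_1-a=0$ with $d,a\ge 0$. An elementary sign analysis (or Descartes' rule; note that the product of the roots of a genuine quadratic equals $-a/d\le 0$) shows that this has at most one positive root unless it is identically zero, i.e.\ $a=d=0$ and $b=c$. In the generic subcase there is at most one positive steady state, so the network admits no multistationarity. In the remaining subcase every $x_1>0$ yields a steady state, so the positive steady-state set is a one-dimensional continuum in $\mathbb{R}^2_{>0}$; along such a curve ${\rm Jac}_f$ must annihilate a tangent vector and is therefore singular, so every positive steady state is degenerate. I expect the main obstacle to be careful bookkeeping for the boundary subcases in which some of the $g_i$ fail to be defined on all of $\mathbb{R}_{>0}$; once the sign pattern is established, the core Möbius computation is immediate.
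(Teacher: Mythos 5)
Your proposal is correct and follows essentially the same route as the paper: both arguments eliminate one variable from the bilinear steady-state equations (whose coefficients the zero-one hypothesis forces into exactly the sign pattern you describe), reduce to a quadratic whose leading coefficient is nonnegative and whose constant term is nonpositive, and conclude that the product of the roots is nonpositive, so there is at most one positive root unless the equation vanishes identically, in which case the positive steady states form a curve and are all degenerate. Packaging the elimination as a composition of two M\"obius maps rather than a direct substitution, and justifying degeneracy on the continuum via annihilation of a tangent vector, are only cosmetic differences from the paper's treatment (which rules out the vanishing-denominator cases by observing they force the network to be one-dimensional, just as your boundary bookkeeping does).
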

% \begin{lemma}\label{lm: 2s2d one}
% If a maximum two-species network is two-dimensional, then it admits no multistationarity.
% \end{lemma}
\begin{proof}
Notice that all possible zero-one reactions with two species  are listed below:
\begin{align}
&0\xrightarrow{\kappa_1}X_1,
&&X_1\xrightarrow{\kappa_4}0,
&&X_2\xrightarrow{\kappa_7}0,&&X_1+X_2\xrightarrow{\kappa_{10}}0,\notag\\
&0\xrightarrow{\kappa_2}X_2,
&&X_1\xrightarrow{\kappa_5}X_2,
&&X_2\xrightarrow{\kappa_8}X_1,
&&X_1+X_2\xrightarrow{\kappa_{11}}X_1,\label{eq:2s2r_network} \\
&0\xrightarrow{\kappa_3}X_1+X_2,
&&X_1\xrightarrow{\kappa_6}X_1+X_2,
&&X_2\xrightarrow{\kappa_9}X_1+X_2,
&&X_1+X_2\xrightarrow{\kappa_{12}}X_2.\notag
\end{align}
% Suppose that a network $G$ consists of the above twelve reactions. Note that  $G$ is two-dimensional, and $G$ is the only two-dimensional maximum two-species network. 
Note that any two-dimensional zero-one network $G$ consists of some of the above twelve reactions. 
The steady-state system $f$ corresponding to the network $G$ defined in \eqref{eq:sys} is:
\begin{equation}
\begin{aligned}
f_1&=-\kappa_{10}x_1x_2-\kappa_{12}x_1x_2-\kappa_4x_1-\kappa_5x_1+\kappa_8x_2+\kappa_9x_2+\kappa_1+\kappa_3,\\
f_2&=-\kappa_{10}x_1x_2-\kappa_{11}x_1x_2+\kappa_5x_1+\kappa_6x_1-\kappa_7x_2-\kappa_8x_2+\kappa_2+\kappa_3.\label{eq:ode 12}
\end{aligned}
\end{equation}
Notice that $\kappa\in \mathbb{R}^{12}_{\ge0}$ in \eqref{eq:ode 12} (if any reaction in \eqref{eq:2s2r_network} does not appear in the network $G$, then the corresponding $\kappa_i$ can be considered as $0$).  
Solving $x_1$ from $f_1(x)=0$, we get
\begin{equation}\label{eq:two two x_1}
x_1=\frac{\kappa_1+\kappa_3+\kappa_8x_2+\kappa_9x_2}{\kappa_4+\kappa_5+\kappa_{10}x_2+\kappa_{12}x_2}.
\end{equation}
First, we clarify that the denominator   of the right-hand side of \eqref{eq:two two x_1} can not be zero for positive steady states  by deducing a contradiction. 
If  $\kappa_4+\kappa_5+\kappa_{10}x_2+\kappa_{12}x_2=0$, then by $\kappa\in \mathbb{R}^{12}_{\ge0}$ and by $x\in \mathbb{R}^{2}_{>0}$, we must have $\kappa_4=\kappa_5=\kappa_{10}=\kappa_{12}=0$. 
So, by $f_1(x)=0$, we have $\kappa_1+\kappa_3+\kappa_8x_2+\kappa_9x_2=0$. 
Hence, similarly, we have  $\kappa_1=\kappa_3=\kappa_8=\kappa_9=0$. 
Thus, the network has four reactions at most as follows.\\ 
\begin{table}[!htbp]
    \centering
    \begin{tabular}{cccc}      $0\xrightarrow{\kappa_2}X_2,$ & $X_1\xrightarrow{\kappa_6}X_1+X_2,$ &$X_2\xrightarrow{\kappa_7}0,$ &$X_1+X_2\xrightarrow{\kappa_{11}}X_1$.
    \end{tabular}
\end{table}
\noindent Notice that the network is one-dimensional. It is contrary to the hypothesis that the network is two-dimensional. Hence, \eqref{eq:two two x_1} is well-defined. 

We substitute \eqref{eq:two two x_1} into $f_2(x)=0$, and we get 
\begin{align}\label{eq:quadratic function of x_2^*}
&((\kappa_{10}+\kappa_{11})(\kappa_8 + \kappa_9)+(\kappa_7+\kappa_8)(\kappa_{10}+\kappa_{12})){x_2}^2
+((\kappa_1+\kappa_3)(\kappa_{10}+\kappa_{11}) \nonumber\\
&-(\kappa_5+\kappa_6)(\kappa_8+\kappa_9)
-(\kappa_2+\kappa_3)(\kappa_{10}+\kappa_{12})+(\kappa_7+\kappa_8)(\kappa_4+\kappa_5))x_2 \nonumber \\
&-(\kappa_5+\kappa_6)(\kappa_1+\kappa_3)-(\kappa_2+\kappa_3)(\kappa_4+\kappa_5)=0. 
\end{align}
We denote by $C_1$, $C_2$ and $C_3$ the coefficients of the quadratic term, linear term and constant term of \eqref{eq:quadratic function of x_2^*} w.r.t. $x_2$, respectively.  
Note that the network $G$ consists of some of  the twelve reactions in \eqref{eq:2s2r_network}. 
So, we can classify $\{\kappa_1,\dots,\kappa_{12}\}$ into the two following sets:
\begin{align}
    K_1&\;:=\;\{i\in\{1,\dots,12\}\mid \text{the } i \text{-th reaction appears in } G\}, \nonumber\\
    K_2&\;:=\;\{i\in\{1,\dots,12\}\mid  \text{the } i \text{-th reaction does not appear in } G\}. \nonumber
\end{align}
 For any $i \in\{1,2,3\}$, we define
\begin{align}
\tilde{C}_i\;:=\;C_i|_{\kappa_i=0,\text{ for any }i \in K_2}. \nonumber
\end{align}
We prove the conclusion by discussing the following two cases. 
\begin{enumerate}[(I)]
    \item Assume that  $\tilde{C}_1$ is not a zero polynomial. 
    Notice that for any $i \in K_1$, $\kappa_i \in\mathbb{R}_{>0}$. Thus,  by \eqref{eq:quadratic function of x_2^*},  we have $\tilde{C}_1>0$ and $\tilde{C}_3\le0$. 
Hence, we have $\tilde{C}_2^2-4\tilde{C}_1\tilde{C}_3\ge0$. 
If $\tilde{C}_2^2-4\tilde{C}_1\tilde{C}_3>0$, then by the Vieta's formulas, the quadratic equation \eqref{eq:quadratic function of x_2^*} has one positive solution. 
Hence, the network $G$ has only one nondegenerate positive steady state.
If $\tilde{C}_2^2-4\tilde{C}_1\tilde{C}_3=0$, then $\tilde{C}_2=\tilde{C}_3=0$. 
Thus, by \eqref{eq:quadratic function of x_2^*}, the network $G$ has no positive steady states. 
So, the network $G$ admits no multistationarity. 
    \item Assume that  $\tilde{C}_1$ is a zero polynomial. 
    \begin{enumerate}[(i)]
    \item Assume that  $\tilde{C}_3$ is not a zero polynomial.
    Note again for any index $i \in K_1$,  $\kappa_i \in\mathbb{R}_{>0}$. 
    Thus, by \eqref{eq:quadratic function of x_2^*}, $\tilde{C}_3<0$. 
     For any rate-constant vector $\kappa$,  if $\tilde{C}_2\le0$, then by \eqref{eq:quadratic function of x_2^*}, the network $G$ has no positive steady states. 
    If $\tilde{C}_2>0$,  
    then by \eqref{eq:quadratic function of x_2^*},  the network $G$ admits one nondegenerate positive steady state. 
    Hence, the network $G$ admits no multistationarity. 
    \item Assume that  $\tilde{C}_3$ is a zero polynomial. 
    For any rate-constant vector $\kappa$, if $\tilde{C}_2=0$, then by \eqref{eq:quadratic function of x_2^*}, the network $G$ only has degenerate positive steady states. 
    If $\tilde{C}_2\ne0$, then by \eqref{eq:quadratic function of x_2^*}, the network $G$ has no positive steady states. 
    Therefore, the network $G$ only admits degenerate positive steady states. 
    \end{enumerate}  
\end{enumerate} 
\end{proof}
\begin{remark}
We remark that from the proof of   Lemma \ref{lm:twospecies}, we can see that it is indeed possible for a two-dimensional two-species zero-one network  to admit only degenerate positive steady states when both $\tilde{C}_1$ and  $\tilde{C}_3$ are zero polynomials. At this time, there will be always infinitely many degenerate positive steady states if the values of  rate constants make $\tilde{C}_2$ vanish. 
  For instance, if the $4,\;5,\;6,\;7,\;8,\;9$-th reactions in \eqref{eq:2s2r_network} do not appear in the network, then by \eqref{eq:ode 12}, it is straightforward to see that the network has infinitely many degenerate positive steady states  if $\kappa_1=\kappa_2$ and $
  \kappa_{11}=\kappa_{12}$.  
\end{remark}
% \textbf{Proof of Theorem \ref{thm:twospecies}}
% \begin{proof}
% Note that a two-species zero-one network is at most two-dimensional.
% By Theorem \ref{thm:one-dimensional},
% if a zero-one network is one-dimensional, then it admits no multistationarity. By Lemma \ref{lm: 2s2d one}, a two-dimensional maximum two-species network admits no multistationarity. So, by the inheritance of the \textcolor{red}{nondegenerate} multistationarity/multistability \cite[Theorem 1]{BP16}, any two-dimensional zero-one network admits no \textcolor{red}{nondegenerate} multistationarity/multistability.   
% \end{proof}
% \subsection{Two-dimensional zero-one networks with three species}\label{subsec:2d 3s}
%\textcolor{red}{
%In this section, we consider the two-dimensional zero-one networks with three species.  
%We present the main result.}
\begin{lemma}\label{lm:3-species_2-dimensional}
A two-dimensional zero-one network  with three species either admits no multistationarity or  only admits degenerate positive steady states.
\end{lemma}
\begin{remark}
Notice that all two-dimensional zero-one networks with three species can be enumerated. 
By checking all these networks by {\tt Maple}, we find  that if  a two-dimensional zero-one network with three species only admits degenerate positive steady states, then the network admits infinitely many degenerate positive steady states. The supporting codes are available online (\href{https://github.com/YueJ13/network/blob/main/degenerate}{https://github.com/YueJ13/network/blob/main/degenerate}).
\end{remark}
% By Theorem \ref{thm:3-species_2-dimensional} and Lemma \ref{lm:zero-one stable}, we have the following corollary. 
% \begin{corollary}
%     Consider a three-species zero-one network $G$ with a rank-two stoichio-\\ \noindent metric matrix. Then, the network $G$ admits no nondegenerate multistationarity/\\ \noindent multistability.  And if the network $G$ admits one nondegenerate positive steady state $x$, then $x$ is stable. 
% \end{corollary}

 Unfortunately, the proof of   Lemma \ref{lm:3-species_2-dimensional}   is more technical than that of  Lemma \ref{lm:twospecies}. 
So, we will do it  in the following several subsections. 
First, in Section \ref{subsubsection claw}, we define the ``maximum network"  (see Definition \ref{def:maximum network}), and we show that  there are only five types of conservation laws for a maximum three-species network  with a rank-two stoichiometric matrix,  see Lemma \ref{lm:3s aibi}. 
%Let $h$ be the steady-state system augmented by conservation laws defined as in \eqref{eq:h}. 
Second, in Section \ref{subsubsec:sign jach}, we show that  a three-species zero-one network with dimension two either only admits degenerate positive steady states, or if the network admits a nondegenerate positive steady state, then the sign of  ${\rm det}({\rm {Jac}}_h)$ at the steady state will be always positive,  see Lemma \ref{lm:jac>0}. 
Third, in Section \ref{subsubsection g1g2g3}, we prove that any maximum  three-species network with a rank-two stoichiometric matrix admits at most one nondegenerate positive steady state, see Lemma \ref{lm:3-species_maximum}. 
Derived from the above results, we complete the proof of Lemma \ref{lm:3-species_2-dimensional} in Section \ref{subsec:2d 3s main pro} by using the inheritance of nondegenerate multistationarity/multistability. 

%\subsection{Proof of Theorem \ref{thm:twospecies}}\label{subsec:2d pro}
Notice that once Lemma \ref{lm:3-species_2-dimensional} is proved, Theorem \ref{thm:twospecies} and its corollaries will naturally be proved as follows. 

\noindent
\textbf{proof of Theorem \ref{thm:twospecies}}\;\;
The conclusion follows from  Lemma \ref{lm:twospecies} and Lemma \ref{lm:3-species_2-dimensional}. 

\noindent\textbf{proof of Corollary \ref{cor:two dim}}\;\;
The conclusion follows from Theorem \ref{thm:twospecies} and Lemma \ref{lm:zero-one stable}. 

\noindent\textbf{proof of Corollary \ref{cor:three dim}}\;\;
By Theorem \ref{thm:one-dimensional}, any one-dimensional zero-one network admits no multistationarity. By Corollary \ref{cor:two dim},  any two-dimensional zero-one network with up to three species admits no nondegenerate multistationarity/multistability.

\subsubsection{Compatibility classes of maximum networks}\label{subsubsection claw}
For any three-species zero-one network $G$ with a rank-two stoichiometric matrix $\mathcal{N}$,
assume that the second and the third rows of  $\mathcal{N}$ are linearly independent. Recall that for any $i \in \{1,2,3\}$, we denote by $\mathcal{N}_i$ the $i$-th row of $\mathcal{N}$. Thus, we have 
\begin{equation}\label{eq:3s Ni_Ns_Ns-1}
    \mathcal{N}_{1}=a\mathcal{N}_{2}+b\mathcal{N}_3,
\end{equation}
where $a,\;b\in \mathbb{R}$. Then, the steady-state polynomial $f_1$ defined in \eqref{eq:sys} can be written as 
\begin{equation}\label{eq:3s f1_fs}
    f_1=af_{2}+bf_3. \nonumber
\end{equation}
So, the conservation law of the network can be written as
\begin{align}\label{eq:3s 0110cl}
 x_1&=a x_{2}+b x_{3}+c,
\end{align}
where $c \in \mathbb{R}$. 

\begin{definition}\label{def:maximum network}
    Consider an $r$-dimensional zero-one network $G$ with $s$ species  (denoted by $X_1, \dots, X_s$). If the dimension becomes $r+1$ when we add any zero-one reaction with at most $s$ species (these species belong to $\{X_1, \ldots, X_s\}$) into $G$, then we say the network $G$ is a \defword{maximum $s$-species network} (or, simply a \defword{maximum network} if the number of species is clear from the context).
\end{definition}

The main goal of this section is to prove that  there are five types of conservation laws for a maximum three-species network (Lemma \ref{lm:3s aibi}). 
\begin{lemma}\label{lm:3s aibi}
Consider a maximum three-species reaction network $G$ with a rank-two stoichiometric matrix $\mathcal{N}$. Suppose that the conservation law $x_1=ax_{2}+bx_3+c$ is defined as in \eqref{eq:3s 0110cl}.
We can always get another conservation law by relabeling the species 
 as $\Bar{X}_1,\Bar{X}_2,\Bar{X}_3$ (the corresponding concentration variables are $\Bar{x}_1,\Bar{x}_2,\Bar{x}_3$) such that the conservation law after relabeling is $\Bar{x}_1=\Bar{a}\Bar{x}_{2}+\Bar{b}\Bar{x}_3+\Bar{c}$, where 
 \begin{align}
     (\vert \Bar{a} \vert, \vert \Bar{b} \vert)\in \{(1,0),(0,1),(0,0),(\frac{1}{2},\frac{1}{2}),(1,1)\}. \nonumber
 \end{align}
 % one of the following conclusions. 
 % \begin{enumerate}[(1)]
 %     \item $(\vert \Bar{a}_i \vert, \vert \Bar{b}_i \vert)\in \{(0,1),(1,0),(0,0)\} \cup \{(\frac{1}{2},\frac{1}{2})\}$. 
 %     \item $(\vert \Bar{a}_i \vert, \vert \Bar{b}_i \vert)\in \{(0,1),(1,0),(0,0)\} \cup \{(1,1)\}$.
 % \end{enumerate}}
 % such that $\{1,\dots,s-2\}=I_0 \cup I_3$, or $\{1,\dots,s-2\}=I_0 \cup I_4$.}
 \end{lemma}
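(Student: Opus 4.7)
My plan is to argue via the conservation-law vector $w \in \mathbb{R}^3 \setminus \{0\}$ spanning the orthogonal complement of the stoichiometric subspace, which is unique up to nonzero scalar since ${\rm rank}(\mathcal{N})=2$. Since every column $r$ of $\mathcal{N}$ has entries in $\{-1, 0, 1\}$, the rank-$2$ hypothesis yields two linearly independent columns $r, r' \in \{-1,0,1\}^3 \setminus \{0\}$, and $w$ must be parallel to the cross product $r \times r'$. Each component of $r \times r'$ is a $2 \times 2$ determinant whose entries lie in $\{-1, 0, 1\}$, hence lies in $\{-2, -1, 0, 1, 2\}$. Rescaling $w$ so that $\max_i |w_i|=1$ therefore forces $|w_i| \in \{0, \tfrac{1}{2}, 1\}$ for every $i$.

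Next, I would enumerate the possible multisets $\{|w_1|, |w_2|, |w_3|\}$, which must contain a $1$. The six a priori candidates are $\{1, 0, 0\}$, $\{1, 1, 0\}$, $\{1, 1, 1\}$, $\{1, \tfrac{1}{2}, 0\}$, $\{1, 1, \tfrac{1}{2}\}$, and $\{1, \tfrac{1}{2}, \tfrac{1}{2}\}$. I would then rule out the two ``bad'' candidates $\{1, \tfrac{1}{2}, 0\}$ and $\{1, 1, \tfrac{1}{2}\}$ by direct enumeration: after clearing denominators these correspond, up to coordinate permutation and sign changes, to $w$ proportional to $(2, 1, 0)$ or $(2, 2, 1)$, and solving $2r_1+r_2=0$ or $2r_1+2r_2+r_3=0$ over $r \in \{-1,0,1\}^3$ shows that in either case the zero-one vectors orthogonal to $w$ lie in a single one-dimensional subspace of $\mathbb{R}^3$, contradicting ${\rm rank}(\mathcal{N})=2$.

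Having eliminated those two cases, I would relabel the species so that $\bar{x}_1$ corresponds to a coordinate $i$ with $|w_i|=1$; this is always possible because $\max_i |w_i| = 1$. Rewriting the conservation law $w \cdot x = \text{const}$ in the form $\bar{x}_1 = \bar{a}\bar{x}_2 + \bar{b}\bar{x}_3 + \bar{c}$, the pair $(|\bar{a}|, |\bar{b}|)$ simply reads off the other two entries of the multiset, yielding exactly the claimed list $\{(0,0), (1,0), (0,1), (1,1), (\tfrac{1}{2}, \tfrac{1}{2})\}$.

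The main obstacle is the finite case check ruling out the two bad multisets; once that is verified, the remainder is bookkeeping. I do not expect to use the \emph{maximum} network hypothesis explicitly here, since the argument relies only on the zero-one structure of the columns of $\mathcal{N}$ together with ${\rm rank}(\mathcal{N})=2$, so the conclusion holds for any such network; the hypothesis is presumably invoked to simplify the reaction enumerations in subsequent lemmas.
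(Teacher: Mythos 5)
Your proof is correct, but it takes a genuinely different route from the paper's. The paper derives the admissible pairs $(\vert a\vert,\vert b\vert)$ through a long case analysis (Lemma \ref{lm:3s no column in N*}) on which columns of $\mathcal{N}^*$ can occur, repeatedly invoking the maximality hypothesis to force certain columns to appear, and arrives at the larger list $\{(1,0),(0,1),(0,0),(2,1),(1,2),(\tfrac{1}{2},\tfrac{1}{2}),(1,1)\}$; a separate relabeling step (Lemma \ref{lm:3s get I03}) then converts $(2,1)$ and $(1,2)$ into $(\tfrac{1}{2},\tfrac{1}{2})$. You instead work directly with the left-kernel vector $w$, observe that it is proportional to a cross product of two linearly independent zero-one columns and hence has integer entries of absolute value at most $2$, normalize so that $\max_i\vert w_i\vert=1$, and exclude the multisets $\{1,\tfrac{1}{2},0\}$ and $\{1,1,\tfrac{1}{2}\}$ by the finite checks $\pm 2r_1\pm r_2=0$ and $\pm 2r_1\pm 2r_2\pm r_3=0$ over $r\in\{-1,0,1\}^3$; in each case a parity argument forces the solution set to span only a one-dimensional subspace, contradicting ${\rm rank}(\mathcal{N})=2$, so the reduction to $(2,1,0)$ and $(2,2,1)$ up to permutations and coordinate sign changes is legitimate. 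This is shorter, and your observation that maximality is not needed is correct: the conclusion holds for every three-species zero-one network of rank two, whereas the paper's argument uses maximality (e.g., in parts (3) and (6) of Lemma \ref{lm:3s no column in N*}). One small point worth making explicit in your write-up: when you relabel so that $\bar{x}_1$ is a coordinate $i$ with $\vert w_i\vert=1$, the remaining two rows of $\mathcal{N}$ are automatically linearly independent (otherwise the one-dimensional left kernel would be spanned by a vector whose $i$-th coordinate vanishes), so the rewritten equation genuinely has the form \eqref{eq:3s 0110cl} required by the lemma.
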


% s
In the rest of this subsection, we prove Lemma \ref{lm:3s aibi}. For any three-species zero-one network $G$ with a rank-two stoichiometric matrix $\mathcal{N}$, we define a $2 \times m$ submatrix formed by the last two linearly independent rows of $\mathcal{N}$
\begin{equation}\label{eq:3s N^*}
    \mathcal{N}^*\;:=\;
    \begin{pmatrix}
        \mathcal{N}_{2}\\
        \mathcal{N}_3
    \end{pmatrix}.
\end{equation}
Since we have assumed that $\mathcal{N}_{2}$ and $\mathcal{N}_3$ are linearly independent, the above matrix $\mathcal{N}^*$ is  rank-two. 
For any index $i \in \{1,\dots,m\}$, we denote by $col_i(\mathcal{N}^*)$ and  $col_i(\mathcal{N})$ the $i$-th column of $\mathcal{N}^*$ and the  $i$-th column of $\mathcal{N}$, respectively. 
Then, by \eqref{eq:3s Ni_Ns_Ns-1} and \eqref{eq:3s N^*}, for any $i \in \{1,\dots,m\}$, we have 
    \begin{equation}\label{eq:3s F N^* N}
        F col_i(\mathcal{N}^*)=col_i(\mathcal{N}),
    \end{equation}
    where
    \begin{equation}
        F\;:=\;
        \begin{pmatrix}
            a & b\\
            1 & 0\\
            0 & 1
        \end{pmatrix}. \nonumber
    \end{equation}
\begin{lemma}\label{lm:3s mathscrCN}
    Consider a three-species zero-one reaction network $G$ with a rank-two stoichiometric matrix $\mathcal{N}$. Let $\mathcal{N}^*$ be the submatrix formed by the last two  rows of $\mathcal{N}$, see \eqref{eq:3s N^*}. Then, all the columns of $\mathcal{N}^*$ are in the following set 
\begin{equation}\label{eq:3s N^* 8}
    \mathscr{C}(\mathcal{N}^*)
    \;:=\;\{(0,1)^\top, (0,-1)^\top,(1,0)^\top, (-1,0)^\top, (1,1)^\top, (-1,-1)^\top,(1,-1)^\top, (-1,1)^\top\}.
\end{equation}
\end{lemma}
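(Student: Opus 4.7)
The plan is to observe that the eight vectors listed in $\mathscr{C}(\mathcal{N}^*)$ are exactly the nine vectors in $\{-1,0,1\}^2$ with the single exception of the zero vector $(0,0)^\top$. So I would split the argument into two short steps: first, every column of $\mathcal{N}^*$ has entries in $\{-1,0,1\}$; second, no column of $\mathcal{N}^*$ is the zero vector.

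For the first step, since $G$ is a zero-one network, every reactant coefficient $\alpha_{ij}$ and product coefficient $\beta_{ij}$ lies in $\{0,1\}$, so every entry $\mathcal{N}_{ij}=\beta_{ij}-\alpha_{ij}$ of the stoichiometric matrix lies in $\{-1,0,1\}$. In particular, the last two rows $\mathcal{N}_2$ and $\mathcal{N}_3$ have all entries in $\{-1,0,1\}$, and hence so does every column of the submatrix $\mathcal{N}^*$.

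For the second step, I would argue by contradiction using the relation \eqref{eq:3s F N^* N}. Suppose that for some $i\in\{1,\dots,m\}$ we have $col_i(\mathcal{N}^*)=(0,0)^\top$. Then by \eqref{eq:3s F N^* N},
\begin{equation*}
col_i(\mathcal{N})\;=\;F\, col_i(\mathcal{N}^*)\;=\;F\begin{pmatrix}0\\ 0\end{pmatrix}\;=\;(0,0,0)^\top.
\end{equation*}
But $col_i(\mathcal{N})=(\beta_{1i}-\alpha_{1i},\beta_{2i}-\alpha_{2i},\beta_{3i}-\alpha_{3i})^\top$, so this would force $(\alpha_{1i},\alpha_{2i},\alpha_{3i})=(\beta_{1i},\beta_{2i},\beta_{3i})$, contradicting the definition of a reaction network in \eqref{eq:network}, which requires the reactant and product complexes of every reaction to be distinct.

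There is no substantive obstacle here; the lemma is essentially a bookkeeping statement about the $3^2-1=8$ admissible columns, and the only non-trivial ingredient is the simple observation that a zero column in $\mathcal{N}^*$ would lift (via the matrix $F$) to a zero column in $\mathcal{N}$, which is forbidden. I expect the proof to take only a few lines.
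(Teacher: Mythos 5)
Your proposal is correct and follows essentially the same route as the paper's own proof: both reduce the claim to showing the entries of $\mathcal{N}^*$ lie in $\{-1,0,1\}$ (immediate from the zero-one hypothesis) and that no column of $\mathcal{N}^*$ is zero, the latter by observing via \eqref{eq:3s F N^* N} that a zero column of $\mathcal{N}^*$ would force a zero column of $\mathcal{N}$, contradicting the requirement that reactant and product complexes differ. No gaps.
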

\begin{proof}
Note that $\mathscr{C}(\mathcal{N}^*)=\{0, 1, -1\}^2\backslash\{(0,0)^\top\}$.
Since the network is zero-one, all the elements of $\mathcal{N}^*$ are in $\{0,1,-1\}$. 
% Therefore, every column vector of $\mathcal{N}^*$ is in $\{0,1,-1\}^2$. 
Hence, we only need to prove that $(0,0)^\top$ can not be a column vector of $\mathcal{N}^*$. 
In fact, if there exists index $j \in \{1,\dots,m\}$ such that $col_j(\mathcal{N}^*)$ is $(0,0)^\top$, then, by \eqref{eq:3s F N^* N}, we get $col_j(\mathcal{N})$ is $(0,0,0)^\top$. It is contrary to the definition of the reaction network.
\end{proof}
\begin{lemma}\label{lm:3s no column in N*}
Consider a maximum three-species reaction network $G$ with a rank-two stoichiometric matrix $\mathcal{N}$. Let $\mathcal{N}^*$ be the submatrix formed by the last two rows of $\mathcal{N}$, see \eqref{eq:3s N^*}. Suppose that the conservation law $x_1=ax_{2}+bx_3+c$ is defined as in \eqref{eq:3s 0110cl}. Then, we have the following statements. 
    \begin{enumerate}[(I)]
       % \item If $a=0$ and $b \ne 0$, then $\vert b \vert\;= 1$.\label{lm:3s a0b1}
       % \item If  $a \ne 0$ and $b=0$, then $\vert a \vert\;=1$. \label{lm:3s a1b0}
        \item For any $\ell:=(\ell_1, \ell_2)^\top\in \mathscr{C}(\mathcal{N}^*)$ (see \eqref{eq:3s N^* 8}), it is a  column vector of $\mathcal{N}^*$ if and only if         $a\ell_1+b\ell_2 \in \{-1,0,1\}$. \label{lm:3s exist column N^*}
       % \item If $\vert a \vert \;\not\in \{0, 1\}$, then  $(1,0)^\top$ or $(-1,0)^\top$ is not a column vector of $\mathcal{N}^*$. \label{lm:3s ab1}
        %\item If $\vert b \vert \;\not\in \{0, 1\}$, then  $(0,1)^\top$ or $(0,-1)^\top$ is not a column vector of $\mathcal{N}^*$. \label{lm:3s ab2}
       % \item If $\vert a \vert \;\not\in \{0, 1\}$ and $\vert b \vert \;\not\in \{0, 1\}$, then $a+b\in \{0, 1, -1\}$ and $a-b\in \{0,1,-1\}$.\label{lm:3s ab3}
       % \item If $\vert a \vert\; >1$ and $ b  \neq 0$, then $\vert b \vert\; = 1$.\label{lm:3s ab4}
      %  \item If  $ a  \neq 0$ and $\vert b \vert\; >1$, then $\vert a \vert \;= 1$. \label{lm:3s Q2 ai bi no}
      %  \item  If $0<\vert a \vert\;<1$ and $ b  \neq 0$, then $0< \vert b \vert \;< 1$. \label{lm:3s ab6}
      %  \item If $ a  \neq 0$ and  $0<\vert b \vert\;<1$, then $0< \vert a \vert\; < 1$. \label{lm:3s ab7}
       %  \item If $0<\vert a \vert\; <1$ and $0<\vert b \vert\;<1$, then $\vert a \vert\;=\vert b \vert\;=\frac{1}{2}$. \label{lm:3s dis cl para}   
        %  \item If $\vert a \vert \;>1$ and $ \vert b \vert\;=1$, then  $\vert a \vert\;=2$.\label{lm:3s Delta_3 2}
      %  \item If $ \vert a \vert\; =1$ and  $\vert b \vert \;>1$, then  $\vert b \vert\;=2$. \label{lm:3s Delta_3 21}
        \item 
        $(\vert a\vert,\vert b\vert)\in\{(1,0),(0,1),(0,0),(2,1),(1,2),(\frac{1}{2},\frac{1}{2}),(1,1)\}$.\label{lm:3s ab9}
    \end{enumerate}
\end{lemma}
\begin{proof}
\begin{enumerate}[(I)]
\item 
For any $\ell=(\ell_1, \ell_2)^\top\in \mathscr{C}(\mathcal{N}^*)$, by \eqref{eq:3s F N^* N}, it is a column vector of $\mathcal{N}^*$ if and only if $F\ell$ is a column vector of $\mathcal{N}$. 
Hence,  we only need to prove that $F\ell$ is a column vector of $\mathcal{N}$ if and only if 
        $a\ell_1+b\ell_2 \in \{-1,0,1\}$. 
Since ${\rm rank}(\mathcal{N}^*)=2$, we assume that $col_1(\mathcal{N}^*)$ and $col_2(\mathcal{N}^*)$ are linearly independent.
Thus, by \eqref{eq:3s F N^* N}, we know that  $col_1(\mathcal{N})$ and $col_2(\mathcal{N})$ are linearly independent. 
Note that the network $G$ is a maximum network. 
Therefore, $F\ell$ is a column vector of $\mathcal{N}$ if and only if it satisfies the following two conditions.
    \begin{enumerate}[(i)]
        \item $F\ell$ is a linear combination of $col_1(\mathcal{N})$ and $col_2(\mathcal{N)}$. \label{3s cona}
        \item All the elements of $F\ell$ are in $\{-1,0,1\}$. \label{3s conb}
    \end{enumerate}
    We point out that $F\ell$ satisfies (i).
In fact, it is straightforward to check by \eqref{eq:3s N^* 8} that any vector in $\mathscr{C}(\mathcal{N}^*)$ is a linear combination of any two linearly independent vectors in $\mathscr{C}(\mathcal{N}^*)$.  
Notice that by Lemma \ref{lm:3s mathscrCN},  $col_1(\mathcal{N}^*), col_2(\mathcal{N}^*)\in \mathscr{C}(\mathcal{N}^*)$. 
So, there exist $r_1,\;r_2 \in \mathbb{R}$ such that $\ell=r_1col_1(\mathcal{N}^*)+r_2col_2(\mathcal{N}^*)$. 
Then, by \eqref{eq:3s F N^* N}, we have 
    \begin{align}
        F\ell=r_1col_1(\mathcal{N})+r_2col_2(\mathcal{N}). \nonumber
    \end{align} 
On the other hand, it is obvious that $F\ell$ satisfies the condition (ii) if and only if  $a\ell_1+b\ell_2 \in \{-1,0,1\}$ since the first coordinate of $F\ell$ is $a\ell_1+b\ell_2$. Thus, $F\ell$ is a column vector of  $\mathcal{N}$ if and only if $a\ell_1+b\ell_2 \in \{-1,0,1\}$.
\item In order to prove the conclusion, we prove by the following steps. 
 \begin{enumerate}[(1)]
\item  \label{lm:3s a0b1} We prove that if $a=0$ and $b \ne 0$, then $\vert b \vert\;= 1$, and if  $a \ne 0$ and $b=0$, then $\vert a \vert\;=1$. %\label{lm:3s a1b0}
In fact, if $a=0$, then by \eqref{eq:3s Ni_Ns_Ns-1}, we have $\mathcal{N}_1=b\mathcal{N}_3$.
%\begin{align} \label{eq:3s Ni bi Ns}
 %   \mathcal{N}_1=b\mathcal{N}_3.
%\end{align}
Note that all the elements in $\mathcal{N}_1$ and $\mathcal{N}_3$ are in $\{0,1,-1\}$ since the network is zero-one. So, if $b \ne 0$, then 
$\vert b \vert\;=1$.
The other half of the statement holds by symmetry. 
    \item \label{lm:3s ab1}
    We prove that  if $\vert a \vert \;\not\in \{0, 1\}$, then  $(1,0)^\top$ or $(-1,0)^\top$ is not a column vector of $\mathcal{N}^*$, 
        and if $\vert b \vert \;\not\in \{0, 1\}$, then  $(0,1)^\top$ or $(0,-1)^\top$ is not a column vector of $\mathcal{N}^*$. % \label{lm:3s ab2}
 In fact,   if $\vert a \vert\; \not\in \{0,1\}$, then
    $a = a\cdot 1 + b \cdot 0\not\in \{0,1,-1\}$.
    Thus, by (\ref{lm:3s exist column N^*}), we know that $(1,0)^\top$ is not a column vector of $\mathcal{N^*}$. 
    Similarly, one can argue that $(-1,0)^\top$ is not a column vector of $\mathcal{N}^*$.
     The other half of the statement holds by symmetry. 
    %\item[(iii)] We assume that the first column of $\mathcal{N}^*$ is $(1,1)^\top$. For any $i \in \{1,\dots,s-2\}$, if $a_i=b_i=1$ or $-1$, then by \eqref{eq:Ni_Ns_Ns-1}, the first element of $\mathcal{N}_i$ is $a_i+b_i=2$ or $-2$. It is contrary to the definition of the zero-one network. So, there is no column $(1,1)^\top$ in $\mathcal{N}^*$. Similarly, there is no column $(-1,-1)^\top$ in $\mathcal{N}^*$.
    %\item[(iv)] The proof is similar to (iii).
    \item \label{lm:3s ab3} We prove that if $\vert a \vert \;\not\in \{0, 1\}$ and $\vert b \vert \;\not\in \{0, 1\}$, then $a+b\in \{0, 1, -1\}$ and $a-b\in \{0,1,-1\}$.  In fact, if $\vert a \vert \;\not\in \{0, 1\}$ and $\vert b \vert \;\not\in \{0, 1\}$,  then by (\ref{lm:3s ab1}),  no column vectors in the set $\{(1,0)^\top,(-1,0)^\top,(0,1)^\top,(0,-1)^\top\}$  appear in $\mathcal{N}^*$. 
 Then, by Lemma \ref{lm:3s mathscrCN},  the column vectors of $\mathcal{N}^*$ can only be in the following set 
 \begin{align}\label{eq:3s CN-}
     &\mathscr{C}(\mathcal{N}^*) \setminus \{(1,0)^\top,(-1,0)^\top,(0,1)^\top,(0,-1)^\top\}\nonumber\\
     =&\{(1,1)^\top, (-1,-1)^\top,(1,-1)^\top, (-1,1)^\top\}.
 \end{align}
    Since the matrix $\mathcal{N}^*$ is rank-two, there exist two linearly independent column vectors in $\mathcal{N}^*$, which are from $\{(1,1)^\top,(-1,-1)^\top\}$ and $\{(1,-1)^\top,(-1,1)^\top\}$, respectively.  By the fact that the network $G$ is maximum, all the vectors generated by these two column vectors are column vectors of  $\mathcal{N}^*$. Thus, all the vectors in the set \eqref{eq:3s CN-} are the column vectors of $\mathcal{N}^*$.
 Without loss of generality,  assume that $col_1(\mathcal{N}^*)$ is $(1,1)^\top$ and $col_2(\mathcal{N}^*)$ is $(1,-1)^\top$. 
 By \eqref{eq:3s F N^* N}, we know that $col_1(\mathcal{N})$ and $col_2(\mathcal{N})$ are $(a+b, 1, 1)^\top$ and $(a-b,1,-1)^\top$, respectively. 
 So, since $G$ is a  zero-one network, both $a+b$ and $a-b$ are in $\{0, 1, -1\}$. 
 \item \label{lm:3s ab4}
 We prove that   if $\vert a \vert\; >1$ and $ b  \neq 0$, then $\vert b \vert\; = 1$,  and if  $ a  \neq 0$ and $\vert b \vert\; >1$, then $\vert a \vert \;= 1$. %\label{lm:3s Q2 ai bi no}
 We prove the conclusion by deducing a  contradiction. 
 We assume that $\vert a \vert\;>1$ and $\vert b \vert\; \not\in \{0, 1\}$. By  (\ref{lm:3s ab3}), if $\vert a \vert\;\not\in \{0, 1\}$ and $\vert b \vert \;\not\in \{0, 1\}$, then both $a+b$ and $a-b$ are in $\{0,1,-1\}$. 
 Since $\vert a \vert\; > 1$ and $\vert b \vert \;\not\in \{0, 1\}$, by $a+b\in\{0,1,-1\}$,  $a$ and $b$ have different signs, and by $a-b\in \{0,1,-1\}$,  $a$ and $b$ have the same sign. It is contrary.
        So, if $\vert a \vert\; >1$ and $\vert b \vert \;\neq 0$, then the only possibility is that $\vert b \vert = 1$. 
     The other half of the statement holds by symmetry.  
        \item  \label{lm:3s ab6} We prove that if $0<\vert a \vert\;<1$ and $ b  \neq 0$, then $0< \vert b \vert \;< 1$, and if  
       $ a  \neq 0$ and  $0<\vert b \vert\;<1$, then $0< \vert a \vert\; < 1$. %\label{lm:3s ab7} 
      In fact,  by (\ref{lm:3s ab4}), we know that if   
        $\vert a \vert\;\neq 0$ and  $\vert b \vert \;> 1$, then $\vert a \vert\; = 1$. 
       So, if $0<\vert a \vert\; < 1$ and $\vert b \vert \;\neq 0$, then  $0<\vert b \vert \;\leq 1$. 
        Below,  we prove that $\vert b\vert \;\neq 1$ by deducing a  contradiction. 
        Assume that $\vert b \vert\; = 1$. 
        Since $0<\vert a \vert\;<1$ and $ \vert b \vert\; = 1$, we get $\vert a+b \vert\not\in \{0, 1, -1\}$ and $\vert a-b\vert \not\in \{0, 1, -1\}$. 
          So, by (\ref{lm:3s exist column N^*}), no column vectors in $\{(1,1)^\top,(-1,-1)^\top, (1,-1)^\top,(-1,1)^\top\}$ appear in $\mathcal{N}^*$.  
          Note that $\vert a \vert\; \not\in  \{0, 1\}$. 
          Hence, by (\ref{lm:3s ab1}),  no column vectors in  the set $\{(1,0)^\top, (-1,0)^\top\}$ appear in $\mathcal{N}^*$. 
          So, by Lemma \ref{lm:3s mathscrCN},  all the column vectors of $\mathcal{N}^*$ can only be in $\{(0,1)^\top, (0,-1)^\top\}$. 
          Thus, the rank of $\mathcal{N}^*$ is one, which is contrary to the fact that $\mathcal{N}^*$ is rank-two.
          Hence, if $0<\vert a \vert\;<1$ and $ \vert b \vert \;\neq 0$, then $0< \vert b \vert\; < 1$. 
         The other half of the proof holds by symmetry.  
        \item \label{lm:3s dis cl para}    We prove that if $0<\vert a \vert\; <1$ and $0<\vert b \vert\;<1$, then $\vert a \vert\;=\vert b \vert\;=\frac{1}{2}$. 
        In fact, if $0<\vert a \vert\; <1$ and $0<\vert b \vert\;<1$, then
by (\ref{lm:3s ab3}), we know that both $a+b$ and  $a-b$  are in  $\{0, 1, -1\}$.
 \begin{itemize}
     \item If $a+b \in \{1,-1\}$ and $a-b=0$, then we get $\vert a \vert\;=\vert b \vert\;=\frac{1}{2}$.
     \item  If  
$a+b=0$ and $a-b \in \{1,-1\}$, then we get $\vert a \vert\;=\vert b \vert\;=\frac{1}{2}$. 
\item  If $a+b$ and $a-b$ are in $\{1,-1\}$, or $a+b=a-b=0$, then we can get $a \in \{0,1,-1\}$ and $b \in \{0,1,-1\}$, which is contrary to the assumption. (For instance, if $a+b=1$ and $a-b =1$, then we get $a=1$ and $b=0$.) 
 \end{itemize}
     \item \label{lm:3s Delta_3 2}
   We prove that  if $\vert a \vert \;>1$ and $ \vert b \vert\;=1$, then  $\vert a \vert\;=2$, 
       and if $ \vert a \vert\; =1$ and  $\vert b \vert \;>1$, then  $\vert b \vert\;=2$. %\label{lm:3s Delta_3 21}  
   In fact,  if $a\neq 0$ and $b\ne 0$, then by \eqref{eq:3s Ni_Ns_Ns-1}, $\mathcal{N}_1$ and $\mathcal{N}_3$ are linearly independent, and  
\begin{align}
    \mathcal{N}_2=\frac{1}{a}\mathcal{N}_1-\frac{b}{a}\mathcal{N}_3. \nonumber
\end{align}
Therefore, we get a new conservation law, in which $x_2$ can be written as
\begin{align}
    x_{2}=\frac{1}{a}x_1-\frac{b}{a}x_3+c, \nonumber
\end{align}
where $c\in \mathbb{R}$. 
By $\vert a \vert\;>1$ and $\vert b \vert\;=1$, we have $0<\vert \frac{1}{a} \vert\;<1$ and $0<\vert \frac{b}{a} \vert\;<1$. By (\ref{lm:3s dis cl para}), we have $\vert a \vert \;=2$.
The other half of the statement holds by symmetry. 
  \end{enumerate}
  By (\ref{lm:3s a0b1}), if $a=0$ or $b=0$, then we have $(\vert a \vert, \vert b \vert)\in \{(0,1),(1,0)\}$. 
 We assume that $a\neq 0$ and $b\neq 0$. By (\ref{lm:3s ab6})--(\ref{lm:3s dis cl para}), if $0<\vert a\vert\;<1$ or $0<\vert b\vert\;<1$, then we get $(\vert a \vert, \vert b \vert)=(\frac{1}{2},\frac{1}{2})$. If $\vert a\vert\;=1$ and $\vert b\vert\;=1$, then 
 obviously, the conclusion holds. By (\ref{lm:3s ab4}) and (\ref{lm:3s Delta_3 2}), if $\vert a\vert\;>1$ and $b\neq 0$, then we have 
 $(\vert a \vert, \vert b \vert)=(2,1)$, and if $\vert b\vert\;>1$ and $a\neq 0$, then we have 
 $(\vert a \vert, \vert b \vert)=(1,2)$.
 \end{enumerate}
\end{proof}
\begin{lemma}\label{lm:3s get I03}
       Consider a maximum three-species reaction network $G$ with a rank-two stoichiometric matrix $\mathcal{N}$. Suppose that the conservation law $x_1=ax_{2}+bx_3+c$ is defined as in \eqref{eq:3s 0110cl}. If $(\vert a\vert,\vert b\vert)\in\{(2,1),(1,2)\}$, then we can get another  conservation law by relabeling the species as $\Bar{X}_1,\Bar{X}_2,\Bar{X}_3$ (the corresponding concentration variables are $\Bar{x}_1,\Bar{x}_2,\Bar{x}_3$) such that the conservation law after relabeling is $$\Bar{x}_1=\Bar{a}\Bar{x}_{2}+\Bar{b}\Bar{x}_3+\Bar{c},$$ where $(\vert \Bar{a} \vert, \vert \Bar{b} \vert)=(\frac{1}{2},\frac{1}{2})$.
\end{lemma}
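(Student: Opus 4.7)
The plan is to prove Lemma~\ref{lm:3s get I03} by a direct relabeling trick: in each of the two cases one of the coefficients $a$, $b$ has magnitude $2$ and is therefore nonzero, so I can invert it and solve the conservation law for a different species. This moves the ``large'' coefficient out of the right-hand side, and after renormalization it resurfaces as a reciprocal of magnitude $\tfrac{1}{2}$.

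Concretely, for the case $(|a|,|b|)=(2,1)$, I would use $a\neq 0$ to rewrite the conservation law $x_1 = a x_2 + b x_3 + c$ by isolating $x_2$:
\begin{equation*}
x_2 \;=\; \tfrac{1}{a}\, x_1 \;-\; \tfrac{b}{a}\, x_3 \;-\; \tfrac{c}{a}.
\end{equation*}
I then relabel the species by $\bar{x}_1:=x_2,\;\bar{x}_2:=x_1,\;\bar{x}_3:=x_3$, which presents the same relation in the required form $\bar{x}_1 = \bar{a}\,\bar{x}_2 + \bar{b}\,\bar{x}_3 + \bar{c}$ with $\bar{a}=1/a$, $\bar{b}=-b/a$, and $\bar{c}=-c/a$. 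Plugging in $|a|=2$ and $|b|=1$ gives $|\bar{a}|=|\bar{b}|=\tfrac{1}{2}$, which is the stated conclusion.

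The case $(|a|,|b|)=(1,2)$ is handled symmetrically: here $b\neq 0$, so I solve the conservation law for $x_3$ and relabel via $\bar{x}_1:=x_3,\;\bar{x}_2:=x_2,\;\bar{x}_3:=x_1$. The resulting coefficients are $\bar{a}=-a/b$ and $\bar{b}=1/b$, so once more $|\bar{a}|=|\bar{b}|=\tfrac{1}{2}$.

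I do not foresee a serious obstacle; the lemma is essentially a one-line arithmetic observation. The only thing worth verifying is that the relabeling is legitimate at the level of the network itself rather than merely at the level of the conservation law. But permuting species is just permuting the rows of $\mathcal{N}$, which preserves the zero-one property, the rank, and maximality, so the relabeled network satisfies the same hypotheses and the new conservation law genuinely describes its stoichiometric subspace in the new coordinates. In effect this step absorbs the two ``nonstandard'' options $(2,1)$ and $(1,2)$ produced by Lemma~\ref{lm:3s no column in N*}(\ref{lm:3s ab9}) into the $(\tfrac{1}{2},\tfrac{1}{2})$ case already listed in Lemma~\ref{lm:3s aibi}.
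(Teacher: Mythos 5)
Your proposal is correct and matches the paper's own argument: the paper likewise solves the conservation law for $x_2$ (using $a\neq 0$), relabels to obtain $(\bar a,\bar b)=(\tfrac{1}{a},-\tfrac{b}{a})$ with $(\vert\bar a\vert,\vert\bar b\vert)=(\tfrac12,\tfrac12)$, and disposes of the $(1,2)$ case by the $X_2\leftrightarrow X_3$ symmetry rather than by a second explicit computation. The difference is purely cosmetic.
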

\begin{proof}
By the symmetry of $a$ and $b$ (i.e., we can always exchange the two species $X_2$ and $X_3$ in $G$ by relabeling the species), we only need to prove the conclusion when the hypothesis is that  $(\vert a\vert,\vert b\vert)=(2,1)$. 
Note that by \eqref{eq:3s 0110cl}, we have
\begin{align}\label{eq:3s lmdelta31}
    x_{1}=ax_{2}+bx_3+c.
\end{align}
    Note that $\vert a \vert \;=2 \;(\neq 0)$.  
So, we can solve $x_{2}$ from \eqref{eq:3s lmdelta31}, and we get
 \begin{align}
    x_{2}=\frac{1}{a}x_{1}-\frac{b}{a}x_3-\frac{c}{a}, \nonumber
\end{align}
where $\vert \frac{1}{a} \vert\;=\frac{1}{2}$.
Note also that $\vert b \vert\;=1$. Hence, we have $\vert \frac{b}{a} \vert\;=\frac{1}{2}$. Relabel $x_2$, $x_1$ and $x_3$ as  $\Bar{x}_1$, $\Bar{x}_2$ and $\Bar{x}_3$. 
Then, we have the conclusion. 
\end{proof}
\noindent
\textbf{proof of Lemma \ref{lm:3s aibi}}\;\;
By Lemma \ref{lm:3s no column in N*} (\ref{lm:3s ab9}), we have 
 \begin{align}
    (\vert a\vert,\vert b\vert)\in\{(1,0),(0,1),(0,0),(2,1),(1,2),(\frac{1}{2},\frac{1}{2}),(1,1)\}. \nonumber
 \end{align}
 Note that if $(\vert a\vert,\vert b\vert)\in\{(1,0),(0,1),(0,0),(\frac{1}{2},\frac{1}{2}),(1,1)\}$, then the conclusion naturally holds by keeping the original labels.   
 If $(\vert a\vert,\vert b\vert)\in\{(2,1),(1,2)\}$, then the conclusion follows from  Lemma \ref{lm:3s get I03}.
% \textcolor{red}{
% \begin{remark}
%     Consider an $s$-species maximum network $G$ with a rank-two stoichiometric matrix $\mathcal{N}$. 
%     Assume that the $(s-1)$-th and $s$-th row vector or $\mathcal{N}$ are linearly independent. 
%     Then, for any $i \in \{1,\dots,s-2\}$, the conservation law can be written as $x_i=a_ix_{s-1}+b_ix_s+c_i$. 
%     We can always get another conservation law by relabeling the species 
%  as $\Bar{x}_1,\cdots,\Bar{x}_s$ such that for any $i \in \{1,\dotsm,s-2\}$, the conservation law $\Bar{x}_1=\Bar{a_i}\Bar{x}_{s-1}+\Bar{b_i}\Bar{x}_s+\Bar{c}_i$ satisfies one of the following conclusions.
%  \begin{enumerate}[(I)]
%      \item $(\vert \Bar{a_i} \vert, \vert \Bar{b_i} \vert)\in \{(1,0),(0,1),(0,0),(\frac{1}{2},\frac{1}{2})\}$.
%      \item $(\vert \Bar{a_i} \vert, \vert \Bar{b_i} \vert)\in \{(1,0),(0,1),(0,0),(1,1)\}$.
%  \end{enumerate}
% The proof is in \textcolor{red}{ Appendix \ref{S2_Appendix}}.
% \end{remark}}
\subsubsection{The sign of ${\rm det}({\rm {Jac}}_h)$} \label{subsubsec:sign jach}
The goal of this section is to prove the following Lemma \ref{lm:jac>0}, which shows that the sign of ${\rm det}({\rm Jac}_h)$ at any positive steady state is generically staying the same for any two-dimensional zero-one network with three species. 
\begin{lemma}\label{lm:jac>0}
Consider a three-species zero-one reaction network $G$ with a rank-two stoichiometric matrix $\mathcal{N}$. Let $h$ be the steady-state system augmented by conservation laws defined as in \eqref{eq:h}. 
Then, either the network $G$ only admits degenerate positive steady states, or for any $\kappa \in \mathbb{R}_{>0}^m$, if there exists a  corresponding positive steady state $x\in \mathbb{R}_{>0}^3$, then we have ${\rm det}({\rm Jac}_h(\kappa,x))>0$. 
\end{lemma}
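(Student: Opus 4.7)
The plan is to express $\det({\rm Jac}_h)$ at a positive steady state as the value of a polynomial whose coefficients are nonnegative, and then use a cone-geometry argument to rule out accidental vanishing at specific steady states. Since $G$ is two-dimensional, by \cite[Proposition 5.3]{WiufFeliu_powerlaw} I will start from
\begin{equation*}
\det({\rm Jac}_h(\kappa,x)) \;=\; \sum_{|I|=2} \det({\rm Jac}_f(\kappa,x)[I,I]).
\end{equation*}
By Lemma \ref{lm:kxtohl}, at any positive steady state $x$ with rate vector $\kappa$, there exist $p = 1/x > 0$ and $\lambda \in \mathbb{R}^t_{\ge 0}$ with $\sum_i \lambda_i R^{(i)} = v(\kappa,x)$ such that ${\rm Jac}_f(\kappa,x) = J(p,\lambda)$. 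Setting
\begin{equation*}
P(p,\lambda) \;:=\; \sum_{|I|=2} \det(J(p,\lambda)[I,I]),
\end{equation*}
Lemma \ref{lm:trace} (\ref{lm:trace 2}) guarantees that each summand is either the zero polynomial or has all strictly positive coefficients as a polynomial in $(p,\lambda)$; hence $P$ itself has nonnegative coefficients, and $\det({\rm Jac}_h(\kappa,x)) = P(1/x,\lambda) \geq 0$.

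The dichotomy in the statement then splits cleanly into two cases. If $P$ is identically zero as a polynomial in $(p,\lambda)$, then $\det({\rm Jac}_h(\kappa,x))=0$ at every positive steady state, so by the definition of nondegeneracy every positive steady state of $G$ is degenerate, which is the first alternative. If $P$ is not identically zero, then $P$ carries at least one strictly positive coefficient on some monomial $c\,p^a \lambda^b$, and I will show this forces $\det({\rm Jac}_h(\kappa,x)) > 0$ at every positive steady state.

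To handle the nonzero case I will use two observations. First, $J(p,\lambda)$ depends on $\lambda$ only through $v(\lambda) = \sum_i \lambda_i R^{(i)}$, so the value $P(1/x,\lambda)$ is unchanged if I replace $\lambda$ by any $\lambda'$ satisfying $\sum_i \lambda'_i R^{(i)} = v(\kappa,x)$. Second, at a positive steady state the flux $v(\kappa,x)$ is strictly positive componentwise and satisfies $\mathcal{N}\,v(\kappa,x)=0$, so it lies in the relative interior of the flux cone $\mathcal{F}(\mathcal{N})$. The main obstacle will be the cone-geometric fact that every relative-interior point $v$ of a pointed polyhedral cone admits a representation by its extreme rays with \emph{all} coefficients strictly positive. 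I will verify this by perturbing using the barycentric combination $\bar R := \frac{1}{t}\sum_i R^{(i)}$: by the relative-interior property, $(1+\varepsilon)v - \varepsilon \bar R \in \mathcal{F}(\mathcal{N})$ for sufficiently small $\varepsilon > 0$, so it admits some nonnegative representation $\sum_i \mu_i R^{(i)}$; combining this with the representation of $\bar R$ yields $v = \sum_i \frac{1}{1+\varepsilon}\bigl(\mu_i + \tfrac{\varepsilon}{t}\bigr) R^{(i)}$, in which every coefficient is strictly positive. Replacing the $\lambda$ from Lemma \ref{lm:kxtohl} by such an all-positive representative, the distinguished monomial $c\,p^a\lambda^b$ evaluates strictly positively at $(1/x,\lambda)$ while every other monomial of $P$ contributes nonnegatively, giving $\det({\rm Jac}_h(\kappa,x)) = P(1/x,\lambda) > 0$. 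No network-by-network case analysis is required; the full strength of Lemma \ref{lm:trace} (\ref{lm:trace 2}) together with the cone-geometric observation does the job.
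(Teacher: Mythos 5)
Your proposal is correct, and it takes a genuinely different route from the paper. The paper proves this lemma by splitting into cases: it classifies the maximum two-dimensional three-species networks by their conservation laws into $\mathcal{G}_1$, $\mathcal{G}_2$, $\mathcal{G}_3$, computes ${\rm det}({\rm Jac}_h)$ explicitly for (subnetworks of) $\mathcal{G}_1$ (Lemma \ref{lm:jac_g1}), and for $\mathcal{G}_2\cup\mathcal{G}_3$ and their two-dimensional subnetworks verifies, network by network with computer assistance, the combinatorial sufficient condition of Lemma \ref{lm:jac B jac} (that every product in some $\tilde{\Lambda}_{ij}$ divides a term of $\tilde{B}(p,\lambda)$). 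From a nonnegative representation $\gamma^*=\sum_k\lambda_k^*R^{(k)}$ the paper only extracts that each reaction coordinate is covered by some active extreme ray, which is why it needs the extra per-network bookkeeping. Your key additional observation is that the positive steady-state flux $v(\kappa,x)$ lies in $\ker(\mathcal{N})\cap\mathbb{R}^m_{>0}={\rm ri}(\mathcal{F}(\mathcal{N}))$ and therefore admits an extreme-ray representation with \emph{all} coefficients strictly positive (your barycentric perturbation argument for this is a correct instance of the standard relative-interior line-extension property, and the pointedness of $\mathcal{F}(\mathcal{N})\subseteq\mathbb{R}^m_{\geq 0}$ guarantees the Minkowski representation you invoke). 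Combined with Lemma \ref{lm:trace} (\ref{lm:trace 2}) and the invariance of $J(p,\lambda)$ under changes of representation with the same $\sum_i\lambda_iR^{(i)}$, this turns the dichotomy into a purely structural statement: $P\equiv 0$ gives only degenerate steady states, and $P\not\equiv 0$ forces strict positivity at every positive steady state because one monomial with positive coefficient is evaluated at a strictly positive point. What your approach buys is the elimination of the $\mathcal{G}_1/\mathcal{G}_2/\mathcal{G}_3$ case analysis, the enumeration in Appendix \ref{S1_Appendix}, and all machine verification; it also proves the statement for two-dimensional zero-one networks with any number of species, not just three. What the paper's approach buys is an explicit, checkable certificate ($\tilde{B}$ and $\tilde{\Lambda}_{ij}$) for each network, but at the cost of generality and of a computation the authors themselves note is delicate (Remark \ref{lm:jac B_remark}). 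The only point worth making fully explicit in a final write-up is the identity ${\rm ri}(\ker(\mathcal{N})\cap\mathbb{R}^m_{\geq0})=\ker(\mathcal{N})\cap\mathbb{R}^m_{>0}$ used to place $v(\kappa,x)$ in the relative interior.
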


Consider a three-species zero-one network $G$.
Recall that by \eqref{eq:3s 0110cl}, the conservation law can be written as 
\begin{align}\label{eq:3s claw}
    x_1=ax_2+bx_3+c,
\end{align}
 where $c\in\mathbb{R}$.
Define 
\begin{align}
    \label{eq:maximum networks}\mathcal{G}\;:=\;&\{\text{all the two-dimensional maximum three-species networks}\}.
\end{align}
According to Lemma \ref{lm:3s aibi}, we classify the networks of $\mathcal{G}$ into three classes according to the values of $(a,b)$ as follows.
\begin{align}  \label{eq:condition2}\mathcal{G}_1\;:=\;&\{G \mid (a,b)=(\frac{1}{2},\frac{1}{2}),\;G\in\mathcal{G}\},\\  \label{eq:condition1}\mathcal{G}_2\;:=\;&\{G \mid (a,b)\in\{(1,0),(0,1),(0,0)\},\;G\in\mathcal{G}\},\\  \label{eq:condition3}\mathcal{G}_3\;:=\;&\mathcal{G}\setminus\{\mathcal{G}_1\cup \mathcal{G}_2\}. 
\end{align}
A \defword{subnetwork} of the network $G$ consists of some reactions in $G$ 
\cite{Joshi:Shiu:Multistationary}.
Notice that any two-dimensional three-species zero-one network is a subnetwork of a certain maximum network in 
${\mathcal G}$. If this maximum network is in ${\mathcal G}_1$, then we can in fact directly compute  ${\rm det}({\rm Jac}_h(\kappa,x))$ and check its sign, see Lemma \ref{lm:jac_g1}.  
However, if this maximum network is in the set ${\mathcal G}_2\cup {\mathcal G}_3$, then checking the sign  becomes more challenging. Here, we will apply the criterion (Lemma \ref{lm:jac B jac}) and the corresponding algorithm  (Algorithm \ref{alg:checksign}) developed in Section \ref{sec:trans} for determining the sign of ${\rm det}({\rm Jac}_h(\kappa,x))$ at a positive steady state. Based on this computational method, we can prove the conclusion for all the two-dimensional subnetworks  of any network in $\mathcal{G}_2\cup\mathcal{G}_3$, see Lemma \ref{lm:jac g23 sub}. 
Above all, Lemma \ref{lm:jac>0} follows from Lemma \ref{lm:jac_g1} and Lemma \ref{lm:jac g23 sub}.
% \textcolor{red}{First, we give the form of the network in $\mathcal{G}_1$, see Lemma \ref{lm:special_form}.}
\begin{lemma}\label{lm:0 set reaction}
    Consider a maximum network $G$ with a rank-two stoichiometric matrix $\mathcal{N}$. 
    Recall that $\mathcal{N}_{ij}$ denotes the $(i,j)$-th entry of $\mathcal{N}$.
    For any $j \in \{1,\dots,m\}$, we define 
    \begin{equation}\label{eq:zeta}
   \begin{aligned}
     \zeta_{1j}&\;:=\;\{i\mid \mathcal{N}_{ij}=0, \;\;\;\;1\leq i\leq s\},\\
      \zeta_{2j}&\;:=\;\{i\mid \mathcal{N}_{ij}=-1, \;1\leq i\leq s\},\\
     \zeta_{3j}&\;:=\;\{i\mid \mathcal{N}_{ij}=1,\;\;\;\; 1\leq i\leq s\}.
  \end{aligned}  \nonumber
 \end{equation}
    Then,  the network 
    $G$ must contain the following reaction \begin{equation}\label{eq:reaction zeta 123}
        \sum\limits_{i\in \zeta_{1j} \cup \zeta_{2j}}X_i\xrightarrow[]{}\sum\limits_{i\in \zeta_{1j} \cup \zeta_{3j}}X_i.
    \end{equation}
 \end{lemma}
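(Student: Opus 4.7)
The plan is to exploit the maximality of $G$: I would write down the candidate zero-one reaction \eqref{eq:reaction zeta 123} explicitly, check that its stoichiometric vector equals $col_j(\mathcal{N})$, and then conclude via Definition \ref{def:maximum network} that the reaction must already belong to $G$ (otherwise adjoining it would keep the dimension at $2$, contradicting maximality).

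First I would verify that \eqref{eq:reaction zeta 123} is a bona fide zero-one reaction on the $s$ species of $G$. Its reactant complex has coefficient $1$ on $X_i$ for $i\in\zeta_{1j}\cup\zeta_{2j}$ and $0$ elsewhere, and its product complex has coefficient $1$ on $X_i$ for $i\in\zeta_{1j}\cup\zeta_{3j}$ and $0$ elsewhere; all coefficients lie in $\{0,1\}$. Since $G$ is zero-one, $\mathcal{N}_{ij}\in\{-1,0,1\}$ for every $i$, so by \eqref{eq:zeta} the three sets $\zeta_{1j},\zeta_{2j},\zeta_{3j}$ partition $\{1,\dots,s\}$. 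Moreover, $col_j(\mathcal{N})$ is a non-zero vector (it comes from an actual reaction of $G$), so $\zeta_{2j}\cup\zeta_{3j}\neq\emptyset$, which forces the reactant and product complexes of \eqref{eq:reaction zeta 123} to differ; hence \eqref{eq:reaction zeta 123} satisfies the non-triviality requirement of \eqref{eq:network}.

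Next I would compute the stoichiometric column of \eqref{eq:reaction zeta 123} coordinatewise as (product coefficient) minus (reactant coefficient): the $i$-th entry equals $0$ when $i\in\zeta_{1j}$, $-1$ when $i\in\zeta_{2j}$, and $+1$ when $i\in\zeta_{3j}$, which by \eqref{eq:zeta} is exactly $col_j(\mathcal{N})$. Consequently, if I adjoin \eqref{eq:reaction zeta 123} to $G$ to form an enlarged network $G'$, the new column appended to the stoichiometric matrix duplicates an existing column of $\mathcal{N}$, so the stoichiometric subspace is unchanged and the rank of the new matrix is still $2$. By Definition \ref{def:maximum network}, adjoining any zero-one $s$-species reaction not already in $G$ would raise the dimension to $3$; since adjoining \eqref{eq:reaction zeta 123} does not, this reaction must already belong to $G$. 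I do not foresee a substantive obstacle here: the only things that need care are verifying that the reactant and product complexes of \eqref{eq:reaction zeta 123} are distinct and that its stoichiometric column equals $col_j(\mathcal{N})$, both of which are handled above.
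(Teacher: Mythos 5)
Your proposal is correct and follows essentially the same route as the paper: identify the stoichiometric column of the candidate reaction \eqref{eq:reaction zeta 123} with $col_j(\mathcal{N})$, and then invoke maximality (Definition \ref{def:maximum network}) to conclude the reaction must already lie in $G$, since adjoining it would not raise the dimension. Your write-up merely supplies more of the routine verifications (that the candidate is a genuine zero-one reaction with distinct reactant and product complexes) than the paper's terser argument.
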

 \begin{proof} 
     %So, if a column vector is linearly dependent with the $i$-th column of $\mathcal{N}$ and all the elements of the column is in $\{0,1,-1\}$, then the column vector must appear in $\mathcal{N}$. 
   Note that the corresponding column vector of the reaction \eqref{eq:reaction zeta 123} is the same with the $j$-th column vector of $\mathcal{N}$. Note also that the network $G$ is maximum. Therefore, the network must contain  the reaction \eqref{eq:reaction zeta 123}.
 \end{proof}
\begin{lemma}\label{lm:special_form}
  Any network $G$ in $\mathcal{G}_1$ has the following form
  \begin{equation}\label{eq:3s6r}
    \begin{aligned}    X_1+X_2+X_3\xrightleftharpoons[\kappa_2]{\kappa_1}0, \quad X_1+X_2\xrightleftharpoons[\kappa_4]{\kappa_3}X_1+X_3,
     \quad X_2\xrightleftharpoons[\kappa_6]{\kappa_{5}}X_3.
    \end{aligned}
\end{equation}
And the corresponding steady-state system $f$ defined as in \eqref{eq:sys} is 
\begin{align}
    f_1&=-\kappa_1x_1x_2x_3+\kappa_2, \label{eq:3s6r_f 1} \\
    f_2&=-\kappa_1x_1x_2x_3-\kappa_3x_1x_2+\kappa_4x_1x_3-\kappa_5x_2+\kappa_6x_3+\kappa_2, \label{eq:3s6r_f 2}\\
    f_3&=-\kappa_1x_1x_2x_3+\kappa_3x_1x_2-\kappa_4x_1x_3+\kappa_5x_2-\kappa_6x_3+\kappa_2. \label{eq:3s6r_f 3}
\end{align}
\end{lemma}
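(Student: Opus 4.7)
The plan is to unfold the definition of $\mathcal{G}_1$ via the conservation law $x_1 = \tfrac{1}{2}x_2 + \tfrac{1}{2}x_3 + c$ into a concrete linear constraint on stoichiometric columns, enumerate the finitely many zero-one columns that satisfy this constraint, use the maximality of $G$ to list every zero-one reaction whose stoichiometric vector is one of those columns, and then read off $f_1, f_2, f_3$ from \eqref{eq:sys}.

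First I would note that since $G \in \mathcal{G}_1$, equation \eqref{eq:3s Ni_Ns_Ns-1} with $(a,b)=(\tfrac12,\tfrac12)$ forces every column $(n_1,n_2,n_3)^\top$ of $\mathcal{N}$ to satisfy $2n_1 = n_2 + n_3$; combined with the zero-one hypothesis $n_i\in\{-1,0,1\}$ and the fact that the column cannot be $(0,0,0)^\top$, a direct case analysis on the value of $n_1\in\{-1,0,1\}$ leaves exactly four admissible column vectors:
\begin{equation*}
(1,1,1)^\top,\ (-1,-1,-1)^\top,\ (0,1,-1)^\top,\ (0,-1,1)^\top.
\end{equation*}

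Next, since $G$ is a maximum network (Definition \ref{def:maximum network}), every zero-one reaction whose stoichiometric vector lies in the two-dimensional stoichiometric subspace generated by these four columns must already appear in $G$. So I would enumerate, for each of the four admissible columns $n = \beta - \alpha$, all pairs $(\alpha,\beta) \in \{0,1\}^3 \times \{0,1\}^3$ with $\beta - \alpha = n$. The columns $\pm(1,1,1)^\top$ each force $\alpha$ or $\beta$ to be $(0,0,0)^\top$, giving the unique reactions $0 \to X_1+X_2+X_3$ and $X_1+X_2+X_3 \to 0$. Each of $(0,\pm1,\mp1)^\top$ leaves $\alpha_1=\beta_1\in\{0,1\}$ free and fixes the other two coordinates, yielding exactly two reactions per column, which gives the pairs $X_2\rightleftharpoons X_3$ and $X_1+X_2\rightleftharpoons X_1+X_3$. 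Collecting all six reactions with the labelling of rate constants $\kappa_1,\ldots,\kappa_6$ reproduces \eqref{eq:3s6r}.

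Finally, substituting the reactant matrix $\mathcal Y$ and stoichiometric matrix $\mathcal N$ read off from \eqref{eq:3s6r} into $\dot x = \mathcal N v(\kappa,x)$ from \eqref{eq:sys} is a direct computation: the $X_1+X_2+X_3\rightleftharpoons 0$ pair contributes $-\kappa_1 x_1x_2x_3+\kappa_2$ to all three $f_i$'s (since the column is $\pm(1,1,1)^\top$), while the other two reversible pairs contribute only to $f_2$ and $f_3$ with opposite signs (since their columns have a zero first coordinate). This yields \eqref{eq:3s6r_f 1}--\eqref{eq:3s6r_f 3} verbatim.

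I do not anticipate a genuine obstacle here: once the conservation-law constraint is translated into the linear relation $2n_1=n_2+n_3$ on zero-one columns, everything is a finite enumeration and a mass-action bookkeeping check. The only subtle point is ensuring that, for each admissible stoichiometric column, maximality does give us \emph{every} zero-one reaction realizing that column (not merely the one produced by Lemma \ref{lm:0 set reaction}); this follows directly from Definition \ref{def:maximum network} rather than from that lemma, so I would invoke the definition explicitly at that step.
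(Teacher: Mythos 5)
Your proposal is correct and follows essentially the same route as the paper's proof: translate the conservation law $(a,b)=(\tfrac12,\tfrac12)$ into the linear constraint on stoichiometric columns, use maximality to conclude that every zero-one reaction realizing an admissible column appears in $G$, and read off $f$ from \eqref{eq:sys}. The only difference is organizational: the paper routes the argument through the submatrix $\mathcal{N}^*$, Lemma \ref{lm:3s no column in N*}~(\ref{lm:3s exist column N^*}) and Lemma \ref{lm:0 set reaction} to exhibit two generating reactions and then invokes maximality, whereas you directly enumerate the four admissible columns from $2n_1=n_2+n_3$ and all zero-one reactions realizing each; your closing remark that the full reaction list follows from Definition \ref{def:maximum network} rather than from Lemma \ref{lm:0 set reaction} alone is exactly the right point to make explicit.
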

\begin{proof}
 Recall that $\mathcal{N^*}$ \eqref{eq:3s N^*} denotes the
submatrix formed by the last two row vectors (linearly independent) of the stoichiometric matrix $\mathcal{N}$. 
We denote by $col_i(\mathcal{N^*})$ and $col_i(\mathcal{N})$ the $i$-th column vector of $\mathcal{N^*}$ and $\mathcal{N}$, respectively. 
For any $G\in\mathcal{G}_1$, by \eqref{eq:condition2}, we have $(a,b)=(\frac{1}{2},\frac{1}{2})$ (here, $a$ and $b$ are the coefficients in the conservation law). 
Notice that $-a-b=-1$ and $-a+b=0$. 
So, by Lemma \ref{lm:3s no column in N*} (\ref{lm:3s exist column N^*}),
there exist ${j,k \in \{1,\dots,m\}}$
such that $col_j(\mathcal{N}^*)=(-1,-1)^\top$ and $col_k(\mathcal{N}^*)=(-1,1)^\top$.  Recall that by \eqref{eq:3s F N^* N}, for any $i \in \{1,\dots,m\}$, 
\begin{align} 
col_i(\mathcal{N})=
\begin{pmatrix}
    \frac{1}{2}&\frac{1}{2}\\
    1&0\\
    0&1
\end{pmatrix}
col_i(\mathcal{N}^*). \nonumber
\end{align}
So, we have 
$col_j(\mathcal{N})=(-1,-1,-1)^\top$ and $col_k(\mathcal{N})=(0,-1,1)^\top$. 
Hence, by Lemma \ref{lm:0 set reaction},  the network $G$ must contain the following two reactions:
\begin{align}
    &X_1+X_2+X_3\xrightarrow{}0, \nonumber\\
   &X_1+X_2\xrightarrow{}X_1+X_3. \nonumber
\end{align}
Note that $col_j(\mathcal{N})$ and $col_k(\mathcal{N})$ are linearly independent, which generate all columns of $\mathcal{N}$ since $G$ is two-dimensional. 
Hence, by the fact that the network $G$ is maximum, the network $G$ can be written as \eqref{eq:3s6r}, and
by \eqref{eq:3s6r}, the corresponding steady-state system $f$ defined as in \eqref{eq:sys} has the form \eqref{eq:3s6r_f 1}--\eqref{eq:3s6r_f 3}.
\end{proof} 
\begin{lemma}\label{lm:jac_g1}
For any two-dimensional zero-one network $G$, let $h$ be the steady-state system augmented by conservation laws defined as in \eqref{eq:h}.
If $G$ is a subnetwork of certain network in $\mathcal{G}_1$,  then for any $\kappa \in \mathbb{R}_{>0}^m$ and for any corresponding positive steady state $x\in \mathbb{R}_{>0}^3$, we have $${\rm det}({\rm Jac}_h(\kappa,x))>0.$$
\end{lemma}
\begin{proof}
If $G$ is a  subnetwork of a network $\tilde{G}$, where  $\tilde{G}\in \mathcal{G}_1$, then the reactions in $G$ are some of the reactions in $\tilde{G}$.
Note that by Lemma \ref{lm:jac_g1}, $\tilde{G}$ has the form \eqref{eq:3s6r}.
Notice that there are $6$ reactions in $\tilde{G}$. 
We define 
\begin{align}
    M\;:=\;\{i\mid \text{the } i\text{-th reaction in }\tilde{G}\text{ also appears in }G, \;1 \le i \le 6 \}. \nonumber
\end{align}
Let the corresponding steady-state system of $G$ and $\tilde{G}$ are $f$ and $\tilde{f}$, respectively. 
Then, we have 
\begin{align}\label{eq:jach g1 sub f}
    f(\kappa,x)=\tilde{f}(\kappa,x)|_{\kappa_i=0, \text{ for any } i \in \{1,\dots,6\}\setminus M}.
\end{align}
Let the steady-state systems augmented by conservation laws of $G$ and $\tilde{G}$ are $h$ and $\tilde{h}$, respectively. 
Then, we have
\begin{align}\label{eq:g1 sub detjac}
   {\rm det}({\rm Jac}_h(\kappa,x))={\rm det}({\rm Jac}_{\tilde{h}}(\kappa,x))|_{\kappa_i=0, \text{ for any } i \in \{1,\dots,6\}\setminus M}.
\end{align}
 By Lemma \ref{lm:special_form}, the steady-state system $\tilde{f}$ is
   \eqref{eq:3s6r_f 1}--\eqref{eq:3s6r_f 3}.
By \eqref{eq:3s claw} and \eqref{eq:condition2}, we know that the conservation law of $\tilde{G}$ is $x_1=\frac{1}{2}x_2+\frac{1}{2}x_3+c$.
Therefore, the steady-state system augmented by conservation laws $\tilde{h}$ defined in \eqref{eq:h} is
\begin{equation}\label{eq:g1 h}
    \begin{aligned}
        \tilde{h}_1&=x_1-\frac{1}{2}x_2-\frac{1}{2}x_3-c,\\
       \tilde{h}_2&=-\kappa_1x_1x_2x_3-\kappa_3x_1x_2+\kappa_4x_1x_3-\kappa_5x_2+\kappa_6x_3+\kappa_2,\\
        \tilde{h}_3&=-\kappa_1x_1x_2x_3+\kappa_3x_1x_2-\kappa_4x_1x_3+\kappa_5x_2-\kappa_6x_3+\kappa_2. \nonumber
    \end{aligned}
\end{equation}
Hence, it is straightforward to check that  
\begin{equation}\label{eq:3s6rdet_h}
\begin{aligned}
    {\rm det}({\rm Jac}_{\tilde{h}})=&\kappa_1(2\kappa_3x_1^2x_2 + \kappa_3x_1x_2^2 + 2\kappa_4x_1^2x_3 + \kappa_4x_1x_3^2 + 2\kappa_5x_1x_2 + \\
    &\kappa_5x_2x_3 +2\kappa_6x_1x_3 + \kappa_6x_2x_3).
\end{aligned}
\end{equation}
Below, for the network $G$, we prove that for any $\kappa \in \mathbb{R}_{>0}^{m}$ and for any corresponding positive steady state $x \in \mathbb{R}_{>0}^3$, ${\rm det}({\rm Jac}_{h}(\kappa,x))>0$. 
Since $G$ is two-dimensional, by \eqref{eq:3s6r}, we know that at least one of the two reactions corresponding to $\kappa_1$ and $\kappa_2$ appears in $G$, and at least one of the four reactions corresponding to $\kappa_3$, $\kappa_4$, $\kappa_5$ and $\kappa_6$ appears in $G$.
Since $G$ admits  a positive steady state $x$, by \eqref{eq:3s6r_f 1} and \eqref{eq:jach g1 sub f}, we know that both reactions corresponding to $\kappa_1$ and $\kappa_2$ appear in $G$. 
So, by  \eqref{eq:g1 sub detjac} and \eqref{eq:3s6rdet_h}, ${\rm det}({\rm Jac}_{h}(\kappa,x))>0$.
\end{proof}
\begin{lemma}\label{lm:jac_g23}
For any $G\in\mathcal{G}_2\cup \mathcal{G}_3$, let $h$ be the steady-state system augmented by conservation laws defined as in \eqref{eq:h}. 
Then, for any $\kappa \in \mathbb{R}_{>0}^m$ and for any corresponding positive steady state $x\in \mathbb{R}_{>0}^3$, we have ${\rm det}({\rm Jac}_h(\kappa,x))>0$. 
\end{lemma}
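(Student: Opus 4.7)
The plan is to reduce the statement to a finite computational check via Lemma~\ref{lm:jac B jac}. First, by Lemma~\ref{lm:3s aibi} (combined with Lemma~\ref{lm:3s get I03}), after an appropriate relabeling of species every network $G\in\mathcal{G}_2\cup\mathcal{G}_3$ has conservation-law parameters $(|a|,|b|)$ lying in the finite set $\{(1,0),(0,1),(0,0),(1,1)\}$, and within the class $\mathcal{G}_3$ one further has $(a,b)\in\{(1,1),(1,-1),(-1,1),(-1,-1)\}$. Together with the maximality condition and Lemma~\ref{lm:3s mathscrCN}, this yields a finite (and explicitly enumerable) list of maximum networks in $\mathcal{G}_2\cup\mathcal{G}_3$; the enumeration is the content of Appendix~\ref{S1_Appendix}.

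Second, for each such network $G$ on this finite list, I would carry out the following routine computation. Compute the extreme rays $R^{(1)},\dots,R^{(t)}$ of the flux cone $\mathcal{F}(\mathcal{N})$, form the transformed Jacobian $J(p,\lambda)$ from \eqref{eq:defjhl}, then assemble the sum of $2{\times}2$ principal minors
\begin{equation*}
B(p,\lambda)\;=\;\sum_{\substack{I\subset\{1,2,3\}\\|I|=2}}{\rm det}(J(p,\lambda)[I,I]),
\end{equation*}
and finally reduce to $\tilde{B}(p,\lambda)$ by killing those $\lambda_k$ that appear to the second power in some term (the set $\Theta$ in \eqref{eq:detjac_h}). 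For each network, I would then search among the index pairs $(i,j)\in\{1,\dots,m\}^2$ for one satisfying the hypothesis of Lemma~\ref{lm:jac B jac}: namely that every monomial $\lambda_k\lambda_\ell\in\tilde{\Lambda}_{ij}$ divides some term of $\tilde{B}(p,\lambda)$. Once such a pair is exhibited, Lemma~\ref{lm:jac B jac} yields ${\rm det}({\rm Jac}_h(\kappa,x))>0$ for that network and we are done.

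The main obstacle is twofold and is essentially bookkeeping rather than conceptual. (a) The enumeration in Appendix~\ref{S1_Appendix} must be complete; this is ensured by the structural restriction on column vectors of $\mathcal{N}^*$ coming from Lemmas~\ref{lm:3s mathscrCN}--\ref{lm:3s no column in N*} together with the maximality hypothesis. (b) For each enumerated network the polynomial $B(p,\lambda)$ can be quite large (hundreds of terms for typical networks in $\mathcal{G}_3$, cf.\ Remark~\ref{lm:jac B_remark}); the computation would be infeasible without the reduction $B\rightsquigarrow\tilde{B}$, which drops terms quadratic in some individual $\lambda_k$ and in practice cuts the size by nearly an order of magnitude. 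Working with $\tilde{B}$ and the correspondingly trimmed $\tilde{\Lambda}_{ij}$, the divisibility search becomes a straightforward symbolic task that can be discharged network by network.

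Once each maximum network in $\mathcal{G}_2\cup\mathcal{G}_3$ has been checked, the conclusion ${\rm det}({\rm Jac}_h(\kappa,x))>0$ follows for every positive steady state of every such network, completing the proof of Lemma~\ref{lm:jac_g23}.
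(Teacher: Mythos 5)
Your proposal follows essentially the same route as the paper: enumerate the finitely many maximum networks in $\mathcal{G}_2\cup\mathcal{G}_3$, and for each one compute the extreme rays of $\mathcal{F}(\mathcal{N})$, form $B(p,\lambda)$, pass to $\tilde{B}(p,\lambda)$ by deleting the variables in $\Theta$, and exhibit a pair $(i,j)$ for which every element of $\tilde{\Lambda}_{ij}$ divides some term of $\tilde{B}(p,\lambda)$, so that Lemma~\ref{lm:jac B jac} applies. This is exactly what the paper does (it works out the network \eqref{eq:g_35} in detail and treats the remaining networks by the same computational check).

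One correction to your setup: the finite parameter list you state is incomplete. By Lemma~\ref{lm:3s aibi} the possible values after relabeling are $(\vert a\vert,\vert b\vert)\in\{(1,0),(0,1),(0,0),(\tfrac{1}{2},\tfrac{1}{2}),(1,1)\}$, and since $\mathcal{G}_1$ consists only of the networks with $(a,b)=(\tfrac{1}{2},\tfrac{1}{2})$ exactly, the class $\mathcal{G}_3$ also contains the sign variants with $(\vert a\vert,\vert b\vert)=(\tfrac{1}{2},\tfrac{1}{2})$ (networks \eqref{eq:g_32} and \eqref{eq:g_34}) as well as those with $(a,b)\in\{(0,-1),(-1,0)\}$ (network \eqref{eq:g_33}); your claimed set $\{(1,0),(0,1),(0,0),(1,1)\}$ and the further restriction of $\mathcal{G}_3$ to $(\pm 1,\pm 1)$ would omit these networks from the enumeration. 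Since you defer the actual list to Appendix~\ref{S1_Appendix}, this is easily repaired and does not affect the method, but as written the case analysis is not exhaustive.
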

\begin{proof}
Notice that there  are nine networks in the set $\mathcal{G}_2\cup \mathcal{G}_3$, see Appendix \ref{S1_Appendix}.  For each network, we apply Algorithm \ref{alg:checksign} to check whether ${\rm det}({\rm Jac}_h(\kappa,x))$ changes sign at the positive steady states. Like what we have done in Example \ref{ex:jac_g23}, we find that for  all the nine networks,  for any $\kappa \in \mathbb{R}_{>0}^m$ and for any corresponding positive steady state $x\in \mathbb{R}_{>0}^3$, ${\rm det}({\rm Jac}_h(\kappa,x))>0$. 
The supporting codes  are available online (\href{https://github.com/YueJ13/network/blob/main/jach_positve}{https://github.com/\\ \noindent YueJ13/network/blob/main/jach\_positve}).
\end{proof}
\begin{lemma}\label{lm:jac g23 sub}
For any two-dimensional zero-one network $G$, let $h$ be the steady-state system augmented by conservation laws defined as in \eqref{eq:h}.
If  $G$ is a  subnetwork  of a certain network in $\mathcal{G}_2\cup \mathcal{G}_3$,  
then either $G$ only admits degenerate positive steady states, or for any $\kappa \in \mathbb{R}_{>0}^m$, if the network $G$ has a positive steady state $x\in \mathbb{R}_{>0}^3$, then  we have ${\rm det}({\rm Jac}_h(\kappa,x))>0$.
\end{lemma}
\begin{proof}
Recall that we have listed all the nine networks in $\mathcal{G}_2\cup \mathcal{G}_3$ in Appendix \ref{S1_Appendix}. For each network in $\mathcal{G}_2\cup \mathcal{G}_3$,  we can enumerate all its two-dimensional subnetworks. For each such subnetwork $G$, we apply the following computational procedure to prove the conclusion.  
First, we compute ${\mathcal F}({\mathcal N})$. If ${\mathcal F}({\mathcal N})=\emptyset$, then  $G$ admits no positive steady states. If ${\mathcal F}({\mathcal N})\neq\emptyset$, then we compute the polynomial $B(\lambda, p)$ defined as in \eqref{eq:jac B}. If $B(\lambda, p)\equiv 0$, then by \cite[Proposition 5.3]{WiufFeliu_powerlaw} and by Lemma \ref{lm:kxtohl}, 
for any $\kappa \in \mathbb{R}_{>0}^m$ and for any positive steady state $x\in \mathbb{R}_{>0}^3$, we get ${\rm det}({\rm Jac}_h(\kappa,x))=0$. That means $G$ only admits degenerate positive steady states.
If $B(\lambda, p)$ is not the zero polynomial, then we apply Algorithm
\ref{alg:checksign} to check if for any $\kappa \in \mathbb{R}_{>0}^m$ and for any corresponding positive steady state $x\in \mathbb{R}_{>0}^3$, ${\rm det}({\rm Jac}_h(\kappa,x))>0$. Our computational results show that the conclusion holds. The supporting codes are available online, see the link provided in the proof of Lemma \ref{lm:jac_g23}.
\end{proof}
\begin{remark}
Comparing the statements of Lemma \ref{lm:jac_g23} and Lemma \ref{lm:jac g23 sub}, we see that if $G$ is a maximum network in ${\mathcal G}_2\cup {\mathcal G}_3$, then once it admits a positive steady state, the steady state must be nondegenerate. However, if  $G$ is not maximum but is a subnetwork of a certain network in ${\mathcal G}_2\cup {\mathcal G}_3$, then it is possible for it to only admits degenerate positive steady states. For instance, consider the following network:
\begin{align}
    &X_1+X_2+X_3\xrightarrow{\kappa_1}X_1+X_3, 
    &&0\xrightarrow{\kappa_2}X_2, \nonumber \\
    &X_1+X_2+X_3\xrightarrow{\kappa_3}X_2, 
    &&0\xrightarrow{\kappa_4}X_1+X_3. \nonumber
\end{align}
This is a subnetwork of the network \eqref{eq:g_21} in $\mathcal{G}_2$, see Appendix \ref{S1_Appendix}. Then, similarly to Example \ref{ex:d2degenerate}, it is straightforward to check that the network only admits degenerate positive steady states.   
\end{remark}
\subsubsection{The monostationarity of the maximum networks }\label{subsubsection g1g2g3}
Recall that $\mathcal{G}$ is the set of all two-dimensional maximum three-species networks, see \eqref{eq:maximum networks}. The main result of this section is the following lemma. 
\begin{lemma}\label{lm:3-species_maximum}
For any network $G\in\mathcal{G}$, for any $c \in \mathbb{R}$, either $\mathcal{P}_c^+= \emptyset$ or for any $\kappa \in \mathbb{R}_{>0}^m$, the network $G$ has exactly one  nondegenerate positive steady state in $\mathcal{P}_c$.
\end{lemma}
Recall that $\mathcal{G}_1$, $\mathcal{G}_2$ and $\mathcal{G}_3$ defined in \eqref{eq:condition2}--\eqref{eq:condition3} give a partition of $\mathcal{G}$. In order to prove Lemma \ref{lm:3-species_maximum}, we will prove the conclusion for the networks in $\mathcal{G}_1$, $\mathcal{G}_2$ and $\mathcal{G}_3$ in Lemma \ref{lm:3s6r}, Lemma  \ref{lm:two dim 0110 at most} and Lemma \ref{lm:3s 2d}, respectively. The proof of Lemma \ref{lm:3-species_maximum} will be naturally completed based on these lemmas,  see the end of this section. 
We apply the theory  of real algebraic geometry (see Appendix \ref{subsec:real}) to prove Lemma \ref{lm:3s6r} and Lemma  \ref{lm:two dim 0110 at most}. And, We apply Theorem \ref{thm:deter_sign} to complete the proof of  Lemma \ref{lm:3s 2d}, where the challenging/
technical  part is to  prove that the networks in $\mathcal{G}_3$ are dissipative.
\begin{remark}\label{re:g1g2 not dissipative}
We remark that 
for some networks in $\mathcal{G}_1\cup \mathcal{G}_2$,  we can not check whether these networks are dissipative by Lemma \ref{lm:conservative-dissipative} or Lemma \ref{lm:Dissipative}. Hence, we can not prove the monostationarity of these networks by applying Theorem \ref{thm:deter_sign}. 
 For instance, 
 by \eqref{eq:condition2}, the conservation law of any network $G$ in   $\mathcal{G}_1$ is $x_1=\frac{1}{2}x_2+\frac{1}{2}x_3$. 
Thus, the network $G$ is not conservative. 
%So, we can not check whether the network $G$ is dissipative by Lemma \ref{lm:conservative-dissipative}.
By Lemma \ref{lm:special_form}, the steady-state system $f$ is given in \eqref{eq:3s6r_f 1}--\eqref{eq:3s6r_f 3}. 
%Note that $f_1=\frac{1}{2}f_2+\frac{1}{2}f_3$. 
Hence, for each $c$ with $\mathcal{P}_c^+ \ne \emptyset$, for any $\omega=(\omega_1,\omega_2,\omega_3)\in \mathbb{R}_{>0}^3$ and for any $M>0$, the sign of 
$$\omega\cdot f=(\omega_1+\omega_2+\omega_3)(-\kappa_1x_1x_2x_3+\kappa_2)+(\omega_2-\omega_3)(-\kappa_3x_1x_2+\kappa_4x_1x_3-\kappa_5x_2+\kappa_6x_3)$$
is uncertain for all $x\in\mathcal{P}_c$ with $\Vert x \Vert_{\infty}>M$.  
So, we can not check whether the network  is dissipative by Lemma \ref{lm:conservative-dissipative} or Lemma \ref{lm:Dissipative}.
 \end{remark}
% \subsubsection{The monostationarity of networks in $\mathcal{G}_1$}\label{subsubsec:g1}
% \textcolor{red}{
% In this subsection, we consider the networks in $\mathcal{G}_1$.
% The main result of this section is Lemma \ref{lm:3s6r}.}
% \begin{lemma}\label{lm:3r6s_no_infinity}
% Consider a network $G\in \mathcal{G}_1$. Then, for any $c\in\mathbb{R}$ and for any $\kappa\in\mathbb{R}^m_{>0}$, the network $G$ admits no positive steady state at infinity.
% \end{lemma}
% \begin{proof}
% By Lemma \ref{lm:special_form}, the steady-state system $f$ defined as in \eqref{eq:sys} has the form \eqref{eq:3s6r_f 1}-\eqref{eq:3s6r_f 3}.
%  For any $i \in \{1,\;2,\;3\}$, if $x_i^*$ is large enough, then by \eqref{eq:3s6r_f 1}, we have $$f_1(x^*)=-\kappa_1x_1^*x_2^*x_3^*+\kappa_3<0.$$
%  So, for any $\kappa\in\mathbb{R}^m_{>0}$ and for any $c\in\mathbb{R}$, the network $G$ admits no positive steady state at infinity.
% \end{proof}
\begin{lemma}\label{lm:3s6r}
   For any network $G\in\mathcal{G}_1$, for any $c \in\mathbb{R}$ and for any $\kappa\in\mathbb{R}^m_{>0}$, the network $G$ has exactly one nondegenerate positive steady state in $\mathcal{P}_{c}$.
\end{lemma}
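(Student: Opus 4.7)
The plan is to exploit the explicit form of $G$ provided by Lemma \ref{lm:special_form}, reduce the three-variable steady-state system to a single univariate polynomial equation, and then combine Lemma \ref{lm:jac_g1} with an elementary Rolle-type argument to count its positive roots. Note that, as observed in Remark \ref{re:g1g2 not dissipative}, Theorem \ref{thm:deter_sign} cannot be applied to networks in $\mathcal{G}_1$ because dissipativity is unavailable, so a direct elimination approach is needed.

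First I would carry out the elimination. The equation $f_1=0$ gives $x_1 x_2 x_3 = \kappa_2/\kappa_1$, while $f_3 - f_2 = 0$ gives $(\kappa_3 x_1 + \kappa_5)x_2 = (\kappa_4 x_1 + \kappa_6)x_3$; combined with the conservation law $x_2+x_3 = 2(x_1-c)$, these express $x_2$ and $x_3$ as rational functions of $x_1$ with common positive denominator $w(x_1) := (\kappa_3+\kappa_4)x_1 + \kappa_5+\kappa_6$. Substituting back into $x_1 x_2 x_3 = \kappa_2/\kappa_1$ and clearing $w(x_1)^2$ produces the degree-$5$ polynomial
\[
F(x_1) \;:=\; 4\kappa_1 x_1 (x_1-c)^2 (\kappa_3 x_1 + \kappa_5)(\kappa_4 x_1 + \kappa_6) - \kappa_2\, w(x_1)^2,
\]
whose roots in the interval $I := (\max(c,0), \infty)$ correspond bijectively to the positive steady states in $\mathcal{P}_c$ (positivity of $x_2+x_3 = 2(x_1-c)$ forces $x_1 > c$, and $x_1 > 0$ is automatic). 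At the left endpoint $F = -\kappa_2\, w(\max(c,0))^2 < 0$, and as $x_1\to\infty$ the quintic leading term $4\kappa_1\kappa_3\kappa_4\, x_1^5 > 0$ dominates, so by the intermediate value theorem $F$ has at least one root in $I$.

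For uniqueness, I would invoke Lemma \ref{lm:jac_g1} through the implicit function theorem. Writing $\Phi := (f_1, h_1, f_3-f_2)$ and observing that $\Phi = T\, h$ for a constant linear map $T$ with $\det T = -1$, we get $\det J(\Phi) = -\det(\mathrm{Jac}_h) < 0$ at every positive steady state. The $2\times 2$ Jacobian minor $A$ used to solve the last two components of $\Phi=0$ for $(x_2,x_3)$ in terms of $x_1$ has $\det A = w(x_1) > 0$, so Cramer's rule yields $F'(x_1^*) > 0$ at every root $x_1^* \in I$, once one tracks the sign flip introduced by clearing the denominator $w(x_1)^2$ (the raw rational function $f_1(x_1, x_2(x_1), x_3(x_1))$ has negative derivative at its roots, and multiplying by $-w(x_1)^2$ to obtain the polynomial $F$ reverses this sign).

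With $F<0$ at the left endpoint of $I$, $F\to+\infty$ at the right, and $F'(x_1^*) > 0$ at every root in $I$, uniqueness follows from a standard Rolle argument: if $F$ had two consecutive roots $x_1^{(1)} < x_1^{(2)}$ in $I$, then $F>0$ immediately after $x_1^{(1)}$ and $F(x_1^{(2)})=0$ would force $F$ to be non-increasing as $x_1\nearrow x_1^{(2)}$, contradicting $F'(x_1^{(2)}) > 0$. Hence $F$ has exactly one root in $I$, giving exactly one positive steady state in $\mathcal{P}_c$, which is nondegenerate by Lemma \ref{lm:jac_g1}. The main technical obstacle I anticipate is the sign bookkeeping in the Jacobian-to-$F'$ translation --- the $-1$ from $\det T$, the positivity of $\det A = w(x_1)$, and the sign flip from clearing $w(x_1)^2$ must all be composed consistently to yield $F'(x_1^*) > 0$ rather than the opposite --- but this amounts to a direct computation on the explicit system \eqref{eq:3s6r_f 1}--\eqref{eq:3s6r_f 3}.
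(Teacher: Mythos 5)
Your proof is correct, but it takes a genuinely different route from the paper's. The paper also starts from Lemma \ref{lm:special_form} and Lemma \ref{lm:jac_g1}, but then verifies that there are no boundary steady states and no steady states at infinity, computes the steady-state count at a single witness parameter $(\kappa^*,c^*)=(1,\dots,1,0)$ (finding the unique state $(1,1,1)^\top$), and transfers that count to all $(\kappa,c)$ via the real-root-classification result Lemma \ref{lm:same number a section}. You instead eliminate down to the explicit quintic $F$ and convert the Jacobian sign condition into the transversality statement $F'(x_1^*)>0$ at every root in $I$; combined with the endpoint signs this forces exactly one root. I checked the elimination: $f_3-f_2=0$ does give $(\kappa_3x_1+\kappa_5)x_2=(\kappa_4x_1+\kappa_6)x_3$, the identity $f_2+f_3=2f_1$ guarantees that recovering $f_1=0$ and $f_3-f_2=0$ suffices, $\det A=w(x_1)>0$ as claimed, and since $g:=f_1(x_1,x_2(x_1),x_3(x_1))=-F/w^2$ vanishes at each root, $F'=-w^2g'>0$ there follows from $g'=\det(\mathrm{Jac}_\Phi)/\det A=-\det(\mathrm{Jac}_h)/w<0$; all the signs compose as you say. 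Your endpoint analysis plays the role of the paper's ``no boundary steady states'' and ``no steady states at infinity'' checks. What your approach buys is a self-contained, elementary argument that avoids the parametric root-counting machinery and the witness computation, and it makes transparent why the positivity of $\det(\mathrm{Jac}_h)$ is exactly the ingredient forcing uniqueness; what the paper's approach buys is uniformity with the treatment of the class $\mathcal{G}_2$ (Lemma \ref{lm:two dim 0110 at most} reuses the identical template) at the cost of invoking Lemma \ref{lm:same number a section}.
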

\begin{proof}
By Lemma \ref{lm:special_form}, the steady-state system $f$ defined in \eqref{eq:sys} has the form \eqref{eq:3s6r_f 1}--\eqref{eq:3s6r_f 3}.
 For any 
 $x=(x_1, x_2,x_3)\in \mathbb{R}_{>0}^3$, if  any $x_i$ $(i\in \{1, 2,3\})$ is large enough, then by \eqref{eq:3s6r_f 1}, we have $$f_1(\kappa, x)=-\kappa_1x_1x_2x_3+\kappa_2<0.$$
 Hence, the network $G$ admits no positive steady states at infinity.
Since $\kappa \in \mathbb{R}_{>0}^m$, for any $x\in \mathbb{R}_{\ge0}^3$, 
if there exists $i\in \{1, 2,3\}$ such that $x_i=0$,  then by \eqref{eq:3s6r_f 1}, $f_1(\kappa, x)>0$. 
So, the network $G$ admits no boundary steady states.  
By Lemma \ref{lm:jac_g1}, for any $\kappa \in \mathbb{R}_{>0}^m$ and for any corresponding positive steady state $x\in \mathbb{R}_{>0}^3$, ${\rm det}({\rm Jac}_h(\kappa,x))>0$. 
By \eqref{eq:condition2}, the conservation law is 
 $x_1=\frac{1}{2}x_2+\frac{1}{2}x_3+c$ ($c\in\mathbb{R}$). 
 So, the steady-state system augmented by conservation laws $h$ defined in \eqref{eq:h} is 
 \begin{align}
     \label{eq:g1 h1}h_1&=x_1-\frac{1}{2}x_2-\frac{1}{2}x_3-c,\\
     \label{eq:g1 h2}h_2&=f_2=-\kappa_1x_1x_2x_3-\kappa_3x_1x_2+\kappa_4x_1x_3-\kappa_5x_2+\kappa_6x_3+\kappa_2,\\
     \label{eq:g1 h3}h_3&=f_3=-\kappa_1x_1x_2x_3+\kappa_3x_1x_2-\kappa_4x_1x_3+\kappa_5x_2-\kappa_6x_3+\kappa_2.
 \end{align}
Let every coordinate of $\kappa^*$ be $1$. Let $c^*=0$.
Substitute $\kappa^*$ and $c^*$ into \eqref{eq:g1 h1}--\eqref{eq:g1 h3}, we have
        \begin{align}
        h_1&= x_1-\frac{1}{2}x_2-\frac{1}{2}x_3,\nonumber\\
    h_2&=-x_1x_2x_3-x_1x_2+x_1x_3-x_2+x_3+1,\nonumber\\
        h_3&= -x_1x_2x_3+x_1x_2-x_1x_3+x_2-x_3+1. \nonumber
        \end{align}
We solve $x_1$, $x_2$ and  $x_3$ from $h_1=h_2=h_3=0$, 
% Substitute $f_1=0$ and \eqref{eq:3s6r_con_c} into \eqref{eq:3s6r_f2}, we have $(x_3-x_2)(x_1+1)=0$. 
% Note that $x_1\in\mathbb{R}_{>0}$. 
% So, we have $x_2^*=x_3^*$. Then, by  \eqref{eq:3s6r_con_c}, we have $x_1^*=x_2^*=x_3^*$. 
% Thus, by $f_1=0$ and \eqref{eq:3s6r_f1}, we have $x_1^*=x_2^*=x_3^*=1$. 
and it is straightforward to check that the network $G$ has exactly one positive steady state $x^*=(1,\;1,\;1)^\top$ in $\mathcal{P}_{c^*}$. 
Note that the conservation law of the network $G$ is $x_1=\frac{1}{2}x_2+\frac{1}{2}x_3+c$. Hence, for any $c\in \mathbb{R}$, $\mathcal{P}_c^+ \ne \emptyset$. 
By Lemma \ref{lm:same number a section} (see Appendix \ref{subsec:real}), for any $c \in \mathbb{R}$ and for any $\kappa \in \mathbb{R}_{>0}^m$, the network $G$ has exactly one nondegenerate positive steady state in $\mathcal{P}_c$. 
\end{proof}
% \subsubsection{The monostationarity of networks in $\mathcal{G}_2$}\label{subsubsec:g2}
In the next lemma, we deal with the networks in $\mathcal{G}_2$.
\begin{lemma}\label{lm:two dim 0110 at most}
For any $G\in\mathcal{G}_2$,   for any $c \in \mathbb{R}$, either $\mathcal{P}_c^+= \emptyset$ or for any $\kappa \in \mathbb{R}_{>0}^m$, the network $G$ has exactly one nondegenerate 
 positive steady state in $\mathcal{P}_c$.
\end{lemma}
\begin{proof}
In order to prove the conclusion, first we consider the conservation law. 
By the definition of $\mathcal{G}_2$ in \eqref{eq:condition1}, the conservation law of any network in $\mathcal{G}_2$ is one of the following three expressions:
\begin{align}
       \label{eq:double_con1}x_1&=x_3+c,\\
       \label{eq:double_con2}x_1&=x_2+c,\\
       \label{eq:double_con3}x_1&=c.
\end{align}
\begin{enumerate}[(I)]
    \item Assume that the conservation law of the network $G$ is \eqref{eq:double_con1}. 
First, we claim that the network $G$ has the following form:
    \begin{align}
&X_1+X_2+X_3\xrightleftharpoons[\kappa_2]{\kappa_1}X_1+X_3, &&    X_1+X_2\xrightleftharpoons[\kappa_4]{\kappa_3} X_1,&&   X_2+X_3\xrightleftharpoons[\kappa_{6}]{\kappa_{5}} X_3,\notag\\
&X_2\xrightleftharpoons[\kappa_{8}]{\kappa_{7}} 0, &&    X_1+X_2+X_3\xrightleftharpoons[\kappa_{10}]{\kappa_9}X_2,&&   X_1+X_3\xrightleftharpoons[\kappa_{12}]{\kappa_{11}}0, \label{eq:double_network}\\
&X_2\xrightleftharpoons[\kappa_{14}]{\kappa_{13}} X_1+X_3, &&
     X_1+X_2+X_3\xrightleftharpoons[\kappa_{16}]{\kappa_{15}}0.\notag
    \end{align}
\noindent And, the steady-state system $f$ defined as in \eqref{eq:sys} is
\begin{align}
    f_1=&-\kappa_9x_1x_2x_3-\kappa_{15}x_1x_2x_3-\kappa_{11}x_1x_3-\kappa_{14}x_1x_3+\kappa_{10}x_2+\kappa_{13}x_2\notag\\
&+\kappa_{12}+\kappa_{16}, \label{eq:double_f1}\\
    f_2=&-\kappa_1x_1x_2x_3-\kappa_{15}x_1x_2x_3-\kappa_3x_1x_2+\kappa_2x_1x_3+\kappa_{14}x_1x_3
    -\kappa_5x_2x_3\notag\\
    &+\kappa_4x_1-\kappa_{7}x_2
    -\kappa_{13}x_2 +\kappa_6x_3+\kappa_{8}+\kappa_{16},\label{eq:double_f2}\\
    f_3=&-\kappa_9x_1x_2x_3-\kappa_{15}x_1x_2x_3-\kappa_{11}x_1x_3-\kappa_{14}x_1x_3+\kappa_{10}x_2+\kappa_{13}x_2\notag\\
&+\kappa_{12}+\kappa_{16}\label{eq:double_f3}.
\end{align}
In fact, recall that $\mathcal{N^*}$ \eqref{eq:3s N^*} denotes the
submatrix formed by the last two linearly independent row vectors  of the stoichiometric matrix  $\mathcal{N}$. We denote by $col_i(\mathcal{N^*})$ and $col_i(\mathcal{N})$ the $i$-th column vector of $\mathcal{N^*}$ and $\mathcal{N}$, respectively. 
By \eqref{eq:double_con1}, we can get $(a,b)=(0,1)$ (here, $a$ and $b$ are the two coefficients in the conservation law $x_1=ax_2+bx_3+c$). 
Note that $a\cdot 1 + b\cdot 0=0$ and $a\cdot 0 + b\cdot 1=1$. Hence, by Lemma \ref{lm:3s no column in N*} (\ref{lm:3s exist column N^*}), there exist indices $j,k\in \{1,\dots,m\}$ such that $col_j(\mathcal{N}^*)=(1,0)^\top$ and
$col_k(\mathcal{N}^*)=(0,1)^\top$. 
Recall that by \eqref{eq:3s F N^* N}, 
\begin{align} \label{eq:double N^* N}   col_i(\mathcal{N})=
\begin{pmatrix}
    0&1\\
    1&0\\
    0&1
\end{pmatrix}
col_i(\mathcal{N}^*).
\end{align}
So, by \eqref{eq:double N^* N}, 
$col_j(\mathcal{N})$ is $(0,1,0)^\top$, and $col_k(\mathcal{N})$ is $(1,0,1)^\top$. 
Hence, by Lemma \ref{lm:0 set reaction}, there exist two reactions in the network $G$ as follows.
\begin{align}
&X_1+X_3\xrightarrow{}X_2+X_1+X_3,  \nonumber\\
&X_2\xrightarrow{}X_1+X_3+X_2.\nonumber
\end{align}
Note that $col_j(\mathcal{N})$ and $col_k(\mathcal{N})$ are linearly independent. Hence, all columns of $\mathcal{N}$ can be generated by $col_j(\mathcal{N})$ and $col_k(\mathcal{N})$. 
So, by the fact that $G$ is maximum, the network $G$ can be written as \eqref{eq:double_network}.
By \eqref{eq:double_network}, the steady-state system $f$ defined as in \eqref{eq:sys} has the form \eqref{eq:double_f1}--\eqref{eq:double_f3}.

\hspace{1em} For any $x=(x_1, x_2, x_3)\in \mathbb{R}_{>0}^3$,
if $x_{2}$ is large enough, then by \eqref{eq:double_f2}, we can get $f_{2}(x)<0$, and if $x_{1}$ or $x_{3}$ is large enough, then by \eqref{eq:double_f3}, we have $f_{3}(x)<0$. 
So, the network $G$ admits no positive steady states at infinity.
Since $\kappa\in\mathbb{R}_{>0}^{16}$, for any $x\in\mathbb{R}_{>0}^3$, if $x_1=0$ or $x_3=0$, then by \eqref{eq:double_f1}, we get $f_1(x)>0$, and if $x_2=0$, then by \eqref{eq:double_f2}, $f_2(x)>0$. 
Thus, the network $G$ admits no boundary steady states.  
By Lemma \ref{lm:jac_g23}, for any $\kappa\in \mathbb{R}^{m}_{>0}$ and for any  corresponding positive steady state $x\in\mathbb{R}^3_{>0}$, ${\rm det}({\rm Jac}_h(\kappa, x))>0$.
Recall that the conservation law is \eqref{eq:double_con1}. 
 Hence, the steady-state system augmented by conservation laws $h$ defined in \eqref{eq:h} is 
$$(h_1,h_2,h_3)=(x_1-x_3-c,f_2,f_3).$$ 
Let every coordinate of $\kappa^*$ be $1$. Let $c^*=0$. Substitute $\kappa^*$ and $c^*$ into $h$, we have
 \begin{align}
    h_1=&x_1-x_3, \nonumber\\
    h_2=&(-x_2+1)(2x_1x_3+x_1+x_3+2),\nonumber\\
    h_3=&2(-x_1x_3+1)(x_2+1).\nonumber
    \end{align}
We solve $x_1$, $x_2$ and $x_3$ from $h_1=h_2=h_3=0$, and we get 
 exactly one positive steady state $x^*=(1,\;1,\;1)^\top$ in $\mathcal{P}_{c^*}$.  Note that by \eqref{eq:double_con1}, for any $c \in \mathbb{R}$, $\mathcal{P}_c^+\neq \emptyset$. Therefore, by Lemma \ref{lm:same number a section} (see Appendix \ref{subsec:real}), for any $c \in \mathbb{R}$ and for any $\kappa \in \mathbb{R}_{>0}^m$, the network $G$ has exactly one nondegenerate positive steady state in $\mathcal{P}_c$. 
 \item Assume that the conservation law of the network $G$ is \eqref{eq:double_con2}. 
    Note that we can exchange the two species $X_2$ and $X_3$ in the network $G$ by relabeling the species. Hence, by the proof (I), we get the conclusion.
\item Assume that the conservation law of the network $G$ is 
 \eqref{eq:double_con3}.  If $c\le0$, then by \eqref{eq:double_con3},  $x_1=c\le0$ (i.e., $\mathcal{P}_c^+=\emptyset$).
 If $c>0$, then for any $c \in \mathbb{R}_{>0}$, we have $x_1=c>0$ is a constant. 
 Thus, we can consider the network $G$ as a two-dimensional zero-one networks with two species.
 By the proof of Lemma \ref{lm:twospecies}, the network $G$ has exactly one nondegenerate positive steady state in $\mathcal{P}_c$.
\end{enumerate}
\end{proof}

%\begin{lemma}\label{lm:g2 reac f}
 %   For any network $G\in \mathcal{G}_2$, 
  %  if the conservation law is \eqref{eq:double_con1}, 
   % then the network $G$ has the following form:
    %\begin{align}
%&X_1+X_2+X_3\xrightleftharpoons[\kappa_2]{\kappa_1}X_1+X_3, &&    X_1+X_2\xrightleftharpoons[\kappa_4]{\kappa_3} X_1,&&   X_2+X_3\xrightleftharpoons[\kappa_{6}]{\kappa_{5}} X_3,\notag\\
%&X_2\xrightleftharpoons[\kappa_{8}]{\kappa_{7}} 0, &&    X_1+X_2+X_3\xrightleftharpoons[\kappa_{10}]{\kappa_9}X_2,&&   X_1+X_3\xrightleftharpoons[\kappa_{12}]{\kappa_{11}}0, \label{eq:double_network}\\
%&X_2\xrightleftharpoons[\kappa_{14}]{\kappa_{13}} X_1+X_3, &&
     %X_1+X_2+X_3\xrightleftharpoons[\kappa_{16}]{\kappa_{15}}0.\notag
    %\end{align}
%\noindent And, the steady-state system $f$ defined as in \eqref{eq:sys} is
%\begin{align}
   % f_1=&-\kappa_9x_1x_2x_3-\kappa_{15}x_1x_2x_3-\kappa_{11}x_1x_3-\kappa_{14}x_1x_3+\kappa_{10}x_2+\kappa_{13}x_2\notag\\
%&+\kappa_{12}+\kappa_{16}, \label{eq:double_f1}\\
 %   f_2=&-\kappa_1x_1x_2x_3-\kappa_{15}x_1x_2x_3-\kappa_3x_1x_2+\kappa_2x_1x_3+\kappa_{14}x_1x_3
   % -\kappa_5x_2x_3\notag\\
    %&+\kappa_4x_1-\kappa_{7}x_2
    %-\kappa_{13}x_2 +\kappa_6x_3+\kappa_{8}+\kappa_{16},\label{eq:double_f2}\\
    %f_3=&-\kappa_9x_1x_2x_3-\kappa_{15}x_1x_2x_3-\kappa_{11}x_1x_3-\kappa_{14}x_1x_3+\kappa_{10}x_2+\kappa_{13}x_2\notag\\
%&+\kappa_{12}+\kappa_{16}\label{eq:double_f3}.
%\end{align}
% \end{lemma}

%\noindent \textbf{proof of Lemma \ref{lm:two dim 0110 at most}}\;\; 

% \subsubsection{The monostationarity of networks in $\mathcal{G}_3$}\label{subsubsec:g3}
In the rest of the section, we study the networks in $\mathcal{G}_3$. The last bigger result is the following Lemma \ref{lm:3s 2d}.
\begin{lemma}
    \label{lm:3s 2d}
     For any $G\in\mathcal{G}_3$,  for any $c \in \mathbb{R}$, either $\mathcal{P}_c^+= \emptyset$ or for any $\kappa \in \mathbb{R}_{>0}^m$, the network $G$ has exactly one nondegenerate positive steady state in $\mathcal{P}_{c}$. 
\end{lemma}

In order to prove Lemma \ref{lm:3s 2d}, 
 we first give the possible conservation laws in Lemma \ref{lm:g3 claw}, and we provide two useful lemmas for proving the dissipativity. 
 Second, based on these lemmas, we prove that any network  in $\mathcal{G}_3$ is dissipative  in Lemma \ref{lm:g3 compact}.
 Third, we prove in Lemma \ref{lm:G3 bound} that any network in $\mathcal{G}_3$ admits no boundary steady states. Finally, we prove Lemma \ref{lm:3s 2d} by applying Theorem \ref{thm:deter_sign}.
\begin{lemma}\label{lm:g3 claw}
    For any $G\in \mathcal{G}_3$,  the conservation law  has one of the following forms:
    \begin{align}
     \label{eq:g3 -1-1}x_1&=-x_2-x_3+c,\\
      \label{eq:g3 -1/2-1/2}x_1&=-\frac{1}{2}x_2-\frac{1}{2}x_3+c,\\
        \label{eq:g3 0-1}x_1&=-x_3+c, (\text{or } x_1=-x_2+c),\\       
        \label{eq:g3 1/2-1/2}x_1&=\frac{1}{2}x_2-\frac{1}{2}x_3+c, (\text{or }x_1=-\frac{1}{2}x_2+\frac{1}{2}x_3+c),\\
        \label{eq:g3 1-1}x_1&=x_2-x_3+c, (\text{or}\; x_1=-x_2+x_3+c),\\
        \label{eq:g3 11}x_1&=x_2+x_3+c.  
    \end{align}
\end{lemma}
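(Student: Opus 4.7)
The plan is to reduce Lemma \ref{lm:g3 claw} to a bookkeeping deduction from the classification already established in Lemma \ref{lm:3s aibi}, combined with the explicit exclusions imposed by the definitions \eqref{eq:condition2}--\eqref{eq:condition3} of $\mathcal{G}_1$, $\mathcal{G}_2$, and $\mathcal{G}_3$.

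First, I would apply Lemma \ref{lm:3s aibi} to the conservation law $x_1 = a x_2 + b x_3 + c$ of $G$. This lemma (proved by combining Lemma \ref{lm:3s no column in N*} (\ref{lm:3s ab9}) with Lemma \ref{lm:3s get I03}, where the latter absorbs the cases $(|a|,|b|) \in \{(2,1),(1,2)\}$ into $(1/2,1/2)$ via a second relabeling) guarantees that, after a suitable relabeling of the three species, we may assume
\[
(|a|,\, |b|) \;\in\; \{(1,0),\; (0,1),\; (0,0),\; (\tfrac{1}{2},\tfrac{1}{2}),\; (1,1)\}.
\]
Since $G \in \mathcal{G}_3 = \mathcal{G} \setminus (\mathcal{G}_1 \cup \mathcal{G}_2)$, the signed value $(a,b) = (1/2,1/2)$ is forbidden by \eqref{eq:condition2} and the signed values $(a,b) \in \{(1,0),(0,1),(0,0)\}$ are forbidden by \eqref{eq:condition1}.

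Next, I would enumerate, for each of the five allowed absolute-value pairs, the signed tuples $(a,b)$ consistent with that pair that survive the $\mathcal{G}_1 \cup \mathcal{G}_2$ exclusion, and record the resulting conservation law. Concretely, $(|a|,|b|)=(1,0)$ leaves only $(a,b)=(-1,0)$, contributing $x_1=-x_2+c$; symmetrically $(|a|,|b|)=(0,1)$ leaves only $(0,-1)$, contributing $x_1=-x_3+c$, which together produce \eqref{eq:g3 0-1}. The case $(|a|,|b|)=(0,0)$ leaves no surviving tuple. The case $(|a|,|b|)=(1/2,1/2)$ retains the three signed choices $(1/2,-1/2)$, $(-1/2,1/2)$, $(-1/2,-1/2)$, contributing precisely \eqref{eq:g3 1/2-1/2} and \eqref{eq:g3 -1/2-1/2}. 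Finally $(|a|,|b|)=(1,1)$ retains all four sign choices $(\pm 1,\pm 1)$, contributing \eqref{eq:g3 11}, \eqref{eq:g3 1-1}, and \eqref{eq:g3 -1-1}. Gathering the nine resulting conservation laws reproduces exactly the list \eqref{eq:g3 -1-1}--\eqref{eq:g3 11}.

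There is no substantive obstacle beyond careful sign tracking; the one point that could easily be misread is that the classes $\mathcal{G}_1$ and $\mathcal{G}_2$ in \eqref{eq:condition2}--\eqref{eq:condition1} are defined by the signed values of $(a,b)$, not by their absolute values, so sign-flipped tuples such as $(-1/2,-1/2)$, $(-1,0)$, and $(0,-1)$ are \emph{not} excluded and must be retained in the $\mathcal{G}_3$ list. Once this distinction is kept in mind, the argument is purely combinatorial and the enumeration above is exhaustive.
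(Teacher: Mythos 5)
Your proposal is correct and follows essentially the same route as the paper's proof: apply Lemma \ref{lm:3s aibi} to restrict $(\vert a\vert,\vert b\vert)$ to the five listed pairs, then discard the signed tuples defining $\mathcal{G}_1$ and $\mathcal{G}_2$ and read off the remaining conservation laws. Your explicit sign-by-sign enumeration (including the observation that $(0,0)$ leaves no surviving tuple) is just a more detailed write-up of the same bookkeeping the paper performs.
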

\begin{proof}
Recall that the conservation law has the form $x_1=ax_2+bx_3+c$, see \eqref{eq:3s claw}. 
By Lemma \ref{lm:3s aibi}, we have $(\vert a \vert, \vert b\vert)\in \{(1,0)$, $(0,1)$, $(0,0)$, $(\frac{1}{2},\frac{1}{2})$, $(1,1)$\}. 
% Recall that $I_0=\{i \in \{1,\dots,s-2\} \;|\; (\vert a_i \vert,\vert b_i \vert )=(1,0),(0,1) \text{ or } (0,0)\}$, $I_3==\{i \in \{1,\dots,s-2\} \;|\; (\vert a_i \vert,\vert b_i \vert )=(\frac{1}{2},\frac{1}{2})\}$ and $I_4=$ (defined as in \eqref{eq:Delta_5}, \eqref{eq:ab91} and \eqref{eq:ab92}, respectively).
% By Lemma \ref{lm:aibi}, $1\in I_0$, $1 \in I_3$ or $1\in I_4$. 
Recall that the set $\mathcal{G}_3=\mathcal{G} \setminus \{\mathcal{G}_2\cup \mathcal{G}_1\}$  (see \eqref{eq:condition3}), where the sets $\mathcal{G}_1=\{G\mid(a,b)=(\frac{1}{2},\frac{1}{2})\}$ (see \eqref{eq:condition2})
and $\mathcal{G}_2=\{G\in \mathcal{G}\mid (a,b)\in \{(1,0),(0,1),(0,0)\}\}$, 
 (see \eqref{eq:condition1}). 
So, we have the conclusion.  
\end{proof}

% \textcolor{red}{
% Below we give two properties in Lemma \ref{lm:g3 omega} and \ref{lm:xs-1 large} to prepare for the proof of the other three cases.}
\begin{lemma}\label{lm:g3 omega}
    For any  $G\in\mathcal{G}_3$, 
    let $f=(f_1, f_2, f_3)$ be the steady-state system defined as in \eqref{eq:sys}.  Assume that the conservation law has the form $x_1=ax_2+bx_3+c$. If $b<0$, then there exists $\omega\in \mathbb{R}^3_{>0}$ such that $\omega \cdot f=f_2$. 
\end{lemma}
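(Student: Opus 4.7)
The plan is to reduce the problem to a small linear system in the components of $\omega$ and then verify that positivity can be arranged using the sign hypothesis on $b$. Since the conservation law $x_1 = ax_2 + bx_3 + c$ records a row dependence $\mathcal{N}_1 = a\mathcal{N}_2 + b\mathcal{N}_3$ of the stoichiometric matrix, equation \eqref{eq:3s f1_fs} gives the identity $f_1 = af_2 + bf_3$. Writing $\omega = (\omega_1, \omega_2, \omega_3)$ and substituting, the requirement $\omega\cdot f = f_2$ becomes
\begin{equation*}
(a\omega_1 + \omega_2)\,f_2 + (b\omega_1 + \omega_3)\,f_3 \;=\; f_2.
\end{equation*}

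The next step is to set coefficients equal, yielding the two linear constraints
\begin{equation*}
a\omega_1 + \omega_2 \;=\; 1, \qquad b\omega_1 + \omega_3 \;=\; 0.
\end{equation*}
Treating $\omega_1 > 0$ as a free parameter, I would solve $\omega_2 = 1 - a\omega_1$ and $\omega_3 = -b\omega_1$.

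Finally, I would check that $\omega_1$ can be chosen so that all three components lie in $\mathbb{R}_{>0}$. Since $b < 0$ by hypothesis, $\omega_3 = -b\omega_1 > 0$ for any $\omega_1 > 0$. For $\omega_2 > 0$, if $a \le 0$ any $\omega_1 > 0$ works, while if $a > 0$ I can simply pick $\omega_1 \in (0, 1/a)$, e.g. $\omega_1 = 1/(2a)$. This yields an explicit $\omega \in \mathbb{R}^3_{>0}$ satisfying $\omega \cdot f = f_2$, completing the proof. The argument is essentially a one-line linear algebra verification, so I do not anticipate any real obstacle; the only subtlety is keeping track of the sign cases for $a$, which the freedom in $\omega_1$ handles immediately.
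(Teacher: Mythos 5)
Your proposal is correct and follows essentially the same route as the paper: both exploit the identity $f_1=af_2+bf_3$ from \eqref{eq:3s f1_fs}, write $\omega\cdot f=(a\omega_1+\omega_2)f_2+(b\omega_1+\omega_3)f_3$, and use $b<0$ to make the $f_3$-coefficient vanish with $\omega_3=-b\omega_1>0$ while keeping the $f_2$-coefficient positive. The paper picks $y_1=-1/b$, $y_3=1$, $y_2=\lvert -a/b\rvert+1$ and rescales at the end, whereas you normalize the $f_2$-coefficient to $1$ directly and split into sign cases for $a$; the two are interchangeable.
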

\begin{proof}
By \eqref{eq:3s claw}, we have $f_1=af_2+bf_3$. 
So, for any $y=(y_1,y_2,y_3)^\top\in\mathbb{R}^3_{>0}$,
 we have
\begin{equation}\label{eq:y1 y3 fx 2}
    y \cdot f=(y_1a+y_2)f_2+ (y_1b+y_3)f_3.
\end{equation}
Let $y_1=-\frac{1}{b}$. 
Since $b<0$, we have $y_1>0$. 
Let $y_3=1$, and let $y_2=\vert -\frac{a}{b} \vert \;+1$. 
So, by \eqref{eq:y1 y3 fx 2}, we have
\begin{equation}\label{eq:specialyf}
\begin{aligned}
   y \cdot f&=(-\frac{a}{b}+\vert -\frac{a}{b} \vert +1)f_2. 
\end{aligned} \nonumber
\end{equation}
Let $\omega=\frac{y}{-\frac{a}{b}+\vert -\frac{a}{b} \vert +1}$. 
So, there exists $\omega\in\mathbb{R}^3_{>0}$ such that $\omega \cdot f=f_{2}$.
\end{proof}
\begin{lemma}\label{lm:xs-1 large}
Consider a network $G\in\mathcal{G}_3$. 
% Assume that the conservation law has the form $x_1=-x_3+c$.
Assume that the conservation law is \eqref{eq:g3 0-1}, \eqref{eq:g3 1/2-1/2} or \eqref{eq:g3 1-1}. 
For any $c\in \mathbb{R}$, if $\mathcal{P}_c^+\ne \emptyset$, then for any $R>0$, there exists $M>0$ such that for any $x=(x_1, x_2, x_3)\in \mathcal{P}_{c}$ satisfying $\Vert x\Vert_{\infty}>M$, we have $x_{2}>R$.
%For any $c\in \mathbb{R}^{s-2}$, we assume that $x^*\in \mathcal{P}^+_{c}$ and $\Vert x^*\Vert$ is \textcolor{red}{large enough}. 
%Suppose that for any $i \in \{1,\dots,s-2\}$, $i \in \Lambda_1\cup\Gamma_1\cup\Gamma_2\cup\Gamma_3\cup\Lambda_6\cup\Lambda_8$. 
%If $\Lambda_1 \ne \emptyset$, then $x_{s-1}^*$ is \textcolor{red}{large enough}. 
\end{lemma}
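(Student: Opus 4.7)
The plan is to handle the three allowed conservation-law forms by a direct case analysis, exploiting the symmetry between $X_2$ and $X_3$ in each pair of options. First I would observe that in each of \eqref{eq:g3 0-1}, \eqref{eq:g3 1/2-1/2}, \eqref{eq:g3 1-1}, the two listed conservation laws differ only by swapping $x_2$ and $x_3$. Hence, after relabeling $X_2 \leftrightarrow X_3$ if necessary, it suffices to treat the three representative forms
\begin{align*}
    x_1 \;=\; -x_3 + c, \qquad x_1 \;=\; \tfrac{1}{2}x_2 - \tfrac{1}{2}x_3 + c, \qquad x_1 \;=\; x_2 - x_3 + c.
\end{align*}
In each of these the coefficient of $x_3$ is non-positive, while solving for $x_2$ produces a formula with non-negative coefficients in front of $x_1$ and $x_3$.

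Next, in each case I would solve the conservation law for $x_2$ and derive a lower bound of the shape $x_2 \geq \Vert x \Vert_{\infty} - C$, where $C \geq 0$ depends only on $c$. For $x_1 = -x_3 + c$, the identity $x_1 + x_3 = c$ together with $x_1, x_3 \geq 0$ forces $x_1, x_3 \leq c$ (and $c > 0$ since $\mathcal{P}_c^+ \neq \emptyset$), so $\Vert x \Vert_{\infty} > c$ implies $\Vert x \Vert_{\infty} = x_2$, and $C = c$ works. For $x_1 = \tfrac{1}{2}x_2 - \tfrac{1}{2}x_3 + c$, rearranging gives $x_2 = 2x_1 + x_3 - 2c$, and a subcase split according to which coordinate realizes $\Vert x \Vert_{\infty}$ yields $x_2 \geq \Vert x \Vert_{\infty} - 2|c|$. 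For $x_1 = x_2 - x_3 + c$, the same argument applied to $x_2 = x_1 + x_3 - c$ gives $x_2 \geq \Vert x \Vert_{\infty} - |c|$.

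Finally I would take $M := R + C$ in each case. Then any $x \in \mathcal{P}_c$ with $\Vert x \Vert_{\infty} > M$ automatically satisfies $x_2 \geq \Vert x \Vert_{\infty} - C > R$, which is the claim. I do not foresee a genuine obstacle here: the key structural point is that, because the coefficient of $x_3$ in the conservation law is non-positive (after relabeling), the linear constraint prevents $x_1$ and $x_3$ from both being large while $x_2$ stays bounded, so $x_2$ must absorb any growth in $\Vert x \Vert_{\infty}$ up to an additive constant controlled by $|c|$.
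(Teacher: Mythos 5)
Your proposal is correct and follows essentially the same route as the paper: reduce to one representative conservation law per pair by the $X_2\leftrightarrow X_3$ symmetry, use $x_1+x_3=c$ to bound $x_1,x_3$ in the first case, and in the other two cases solve for $x_2$ and split according to which coordinate attains $\Vert x\Vert_{\infty}$ to get $x_2\geq\Vert x\Vert_{\infty}-C$ with $C$ depending only on $c$. The only difference is the cosmetic choice of the constant $M$ (the paper takes $M=R+2\vert c\vert+2c+1$ where you take $M=R+2\vert c\vert$), which does not affect correctness.
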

\begin{proof}
  \begin{enumerate}[(I)]
        \item Assume that the conservation law has the form \eqref{eq:g3 0-1}. 
        Note that we can exchange the two species $X_2$ and $X_3$ in the network $G$ by relabeling the species. 
        Hence, we only need to consider the conservation law with the form $x_1=-x_3+c$. 
        Since $ \mathcal{P}_{c}^+\ne \emptyset$, $c> 0$. 
        Then, for any $x\in \mathcal{P}_{c}$, we have $x_1\le c$ and $x_3\le c$. 
    For any $R>0$, we let $M=R+c$. 
    Then for any $x\in \mathcal{P}_{c}$ satisfying $\Vert x\Vert_{\infty}>M$, we have  $x_{2}=\Vert x\Vert_{\infty}>M>R$.
    % Since $\Vert x^*\Vert$ is large enough, by Lemma \ref{lm:xs-1 or xs large}, we have $x_{s-1}^*$ is large enough. 
    \item Assume that the conservation law has the form \eqref{eq:g3 1/2-1/2}. 
   We only need to consider the conservation law with the form $x_1=\frac{1}{2}x_2-\frac{1}{2}x_3+c$.
        For any $c\in \mathbb{R}$ and for any $R>0$, we let $M = R + 2\vert c\vert + 2c + 1$. Notice that $M>0$. 
For any $x\in \mathcal{P}_{c}$ satisfying $\Vert x\Vert_{\infty}>M$, we have the following  three cases.
\begin{enumerate}[(i)]
    \item Assume that $x_1=\Vert x\Vert_{\infty}>M$. Note that $x_3\ge0$ since $x\in \mathcal{P}_{c}$. 
    Note that the conservation law is $x_1=\frac{1}{2}x_2-\frac{1}{2}x_3+c$.
    Hence, $x_2+2c=2x_1+x_3>M$. Thus, we have $x_2>R$. 
 \item If $x_2=\Vert x\Vert_{\infty}>M$, then obviously, $x_2>R$.   
    \item If $x_3=\Vert x\Vert_{\infty}>M$, then the proof is similar to the proof of case (i). 
    \end{enumerate}
    \item Assume that the conservation law has the form \eqref{eq:g3 1-1}. The proof is similar to the proof of  case (II).  
    \end{enumerate}
\end{proof}
\begin{lemma}\label{lm:g3 compact}
     Any network in $\mathcal{G}_3$  is dissipative.
\end{lemma}
\begin{proof}
By Lemma \ref{lm:g3 claw}, for any network $G$ in $\mathcal{G}_3$, the conservation law will be one of the $6$ forms \eqref{eq:g3 -1-1}--\eqref{eq:g3 11}. Below, we prove the conclusion for each of them. 
\begin{enumerate}[(I)]
  \item  If the conservation law has the form \eqref{eq:g3 -1-1} or \eqref{eq:g3 -1/2-1/2}, then  $\mathcal{P}_c$ is a compact set. Thus, the network  is conservative. Hence, by Lemma \ref{lm:conservative-dissipative}, the network  is dissipative. 
  \item Assume that the conservation law has the form \eqref{eq:g3 0-1}.
We recall that $\mathcal{N^*}$ \eqref{eq:3s N^*} is the
submatrix formed by the last two linearly independent row vectors of the stoichiometric matrix  $\mathcal{N}$. 
We denote by $col_i(\mathcal{N^*})$ and $col_i(\mathcal{N})$ the $i$-th column vector of matrices $\mathcal{N^*}$ and $\mathcal{N}$, respectively. 
In \eqref{eq:g3 0-1}, we only need to consider the conservation law $$x_1=-x_3+c.$$ 
Note here, $(a,b)=(0,-1)$ ($a$ and $b$ are the two coefficients  in the conservation law $x_1=ax_2+bx_3+c$). 
Note that $a\cdot1+b\cdot0=0$ and $a\cdot0+b\cdot1=0$. 
Hence, by Lemma \ref{lm:3s no column in N*} (\ref{lm:3s exist column N^*}), there exist $j,k \in\{1,\dots,m\}$ such that $col_j(\mathcal{N}^*)=(1,0)^\top$ and
$col_k(\mathcal{N}^*)=(0,1)^\top$. 
% Below we prove that all column vectors in $\mathscr{C}(\mathcal{N}^*)$ (defined as in \eqref{eq:N^* 8}) are in $\mathcal{N}^*$. 
% By $(a,b)=(0,-1)$ and \eqref{eq:N^* 8}, we have $(a,b)\cdot(\ell_1,\ell_2)=-\ell_2\in \{0,1,-1\}$, for any $(\ell_1,\ell_2)$ in $\mathscr{C}(\mathcal{N}^*)$. 
% Since the network is maximum, by Lemma \ref{lm:exist column N^*}, all column vectors in $\mathscr{C}(\mathcal{N}^*)$ are in $\mathcal{N}^*$. \\
% Assume that $col_1(\mathcal{N}^*)$ and $col_2(\mathcal{N}^*)$ are $(1,0)^\top$ and $(0,1)^\top$ respectively. 
Recall that by \eqref{eq:3s F N^* N}, 
\begin{align} \label{eq:3s 0-1 N^* N}   col_i(\mathcal{N})=
\begin{pmatrix}
    0&-1\\
    1&0\\
    0&1
\end{pmatrix}
col_i(\mathcal{N}^*).
\end{align}
So, by \eqref{eq:3s 0-1 N^* N}, we have  
$col_j(\mathcal{N})=$$(0,1,0)^\top$ and $col_k(\mathcal{N})=$$(-1,0,1)^\top$.
Hence, by Lemma \ref{lm:0 set reaction},  the network $G$ contains the  following two reactions:
\begin{align*}  X_1+X_3\xrightarrow{}&X_2+X_1+X_3, \\
X_1+X_2\xrightarrow{}&X_3+X_2.
\end{align*}
Note that $col_j(\mathcal{N})$ and $col_k(\mathcal{N})$ are linearly independent. Hence, all columns of $\mathcal{N}$ can be generated by $col_j(\mathcal{N})$ and $col_k(\mathcal{N})$.
Thus, by the fact that the network $G$ is maximum, the network $G$ can be written as follows:
\begin{table}[!htbp]
    \centering
    \label{eq:3s lambda1 network}
    \begin{tabular}{lll} $X_1\xrightleftharpoons[\kappa_2]{\kappa_1}X_2+X_3$, & $X_1+X_2\xrightleftharpoons[\kappa_4]{\kappa_3}X_3$, & $X_1\xrightleftharpoons[\kappa_6]{\kappa_5}X_3$, \\ $X_1+X_2\xrightleftharpoons[\kappa_8]{\kappa_7}X_3+X_2$,&
$0\xrightleftharpoons[\kappa_{10}]{\kappa_9}X_2$,&
$X_1\xrightleftharpoons[\kappa_{12}]{\kappa_{11}}X_2+X_1$,\\
$X_3\xrightleftharpoons[\kappa_{14}]{\kappa_{13}}X_2+X_3$,&
$X_1+X_3\xrightleftharpoons[\kappa_{16}]{\kappa_{15}}X_2+X_1+X_3$.
\end{tabular}  
\end{table}\\
\noindent
Hence, for the steady-state system $f=(f_1, f_2, f_3)$ defined as in \eqref{eq:sys}, we have 
\begin{align}
\label{eq:3s lambda1 f2}
    f_2=&-\kappa_{16}x_1x_2x_3-\kappa_3x_1x_2-\kappa_{12}x_1x_2+\kappa_{15}x_1x_3-\kappa_2x_2x_3\notag\\
    &-\kappa_{14}x_2x_3+\kappa_1x_1+\kappa_{11}x_1-\kappa_{10}x_2+\kappa_4x_3+\kappa_{13}x_3+\kappa_{9}.
\end{align}
Recall that the conservation law is $x_1=-x_3+c$. 
So, for any $c\in \mathbb{R}$, if $\mathcal{P}_c^+\ne \emptyset$, we have $c>0$.
For any $x^* \in \mathcal{P}_c$, since $c>0$, we have $x_1^*\le c$ and $x_3^*\le c$. 
So, by \eqref{eq:3s lambda1 f2},
there exists $R>0$ such that
$f_2(x^*)<0$ when $x_{2}^*>R$. 
By Lemma \ref{lm:xs-1 large}, for any $c\in \mathbb{R}$ and for any $R>0$, there exists a number $M>0$ such that for any $x^*\in \mathcal{P}_{c}$ satisfying $\Vert x^*\Vert_{\infty}>M$, we have $x_{2}^*>R$. 
So, for any $x^*\in \mathcal{P}_{c}$ satisfying $\Vert x^*\Vert_{\infty}>M$, we have $f_2(x^*)<0$. 
Note that $b=-1<0$. 
Thus, by Lemma \ref{lm:g3 omega} and by Lemma \ref{lm:Dissipative}, $G$ is dissipative. 
\item Assume that the conservation law has the form \eqref{eq:g3 1/2-1/2}.  In \eqref{eq:g3 1/2-1/2}, we only need to consider the conservation law with the form $x_1=\frac{1}{2}x_2-\frac{1}{2}x_3+c$. 
Note here, $(a,b)=(\frac{1}{2},-\frac{1}{2})$. 
Note that $a\cdot1+b\cdot1=0$ and $a\cdot1+b\cdot(-1)=1$. 
Hence, by Lemma \ref{lm:3s no column in N*} (\ref{lm:3s exist column N^*}), there exist $j,k\in \{1, \dots,m\}$ such that  $col_j(\mathcal{N}^*)=(1,1)^\top$ and  
$col_k(\mathcal{N}^*)=(1,-1)^\top$.
Recall that by \eqref{eq:3s F N^* N}, 
\begin{align} \label{eq:3s 1/2-1/2 N^* N}   col_i(\mathcal{N})=
\begin{pmatrix}
    \frac{1}{2}&-\frac{1}{2}\\
    1&0\\
    0&1
\end{pmatrix}
col_i(\mathcal{N}^*).
\end{align}
So, by \eqref{eq:3s 1/2-1/2 N^* N}, 
$col_j(\mathcal{N})=$$(0,1,1)^\top$ 
and $col_k(\mathcal{N})=$$(1,1,-1)^\top$.
Hence, by Lemma \ref{lm:0 set reaction}, the network $G$ contains the  follow two reactions.
\begin{align*}
X_1&\xrightarrow{}X_2+X_3+X_1, \\
   X_3&\xrightarrow{}X_1+X_2.
\end{align*}
Note that $col_j(\mathcal{N})$ and $col_k(\mathcal{N})$ are linearly independent. Hence, all columns of $\mathcal{N}$ can be generated by $col_j(\mathcal{N})$ and $col_k(\mathcal{N})$.  So, by the fact that the network is maximum, the network can be written as follows:
\begin{table}[!htbp]
    \centering
        %\label{eq:3s lambda7 network}
    \begin{tabular}{ccc}
        $X_3\xrightleftharpoons[\kappa_2]{\kappa_1}X_1+X_2$, &  $X_1\xrightleftharpoons[\kappa_4]{\kappa_3} X_2+X_3+X_1$,
         & $0\xrightleftharpoons[\kappa_6]{\kappa_5} X_2+X_3$.
    \end{tabular}
\end{table}\\
\noindent
Hence, for the steady-state system $f=(f_1, f_2, f_3)$ defined as in \eqref{eq:sys}, we have 
\begin{align}\label{eq:3s lambda7 f2}
    f_2=-\kappa_4x_1x_2x_3-\kappa_2x_1x_2-\kappa_6x_2x_3+\kappa_3x_1+\kappa_1x_3+\kappa_5.
\end{align}
Below, for any $c\in {\mathbb R}$, we prove that there exists a number $R>0$ such that for any $x^*=(x_1^*, x_2^*, x_3^*)\in \mathcal{P}_{c}$ satisfying $x_2^*>R$, we have $f_2(x^*)<0$. 
Note that the conservation law is $x_1=\frac{1}{2}x_2-\frac{1}{2}x_3+c$. So, 
there exists $R>0$ such that for any $x^*\in \mathcal{P}_{c}$ satisfying $x_2^*>R$, we have 
$x_1^*+\frac{1}{2}x_3^*=\frac{1}{2}x_2^*+c>0$. Hence, $x_1^*$ and $x_3^*$ can not be $0$ simultaneously. Thus, by \eqref{eq:3s lambda7 f2},  if $R$ is large enough, then  we have $f_2(x^*)<0$. 
By Lemma \ref{lm:xs-1 large},  for any $R>0$, there exists $M>0$ such that for any $x^*\in \mathcal{P}_{c}$ satisfying $\Vert x^*\Vert_{\infty}>M$, we have $x_{2}^*>R$. 
Hence, there exists $M>0$ such that for any 
     $x^*\in \mathcal{P}_{c}$ satisfying $\Vert x^*\Vert_{\infty}>M$, we can get $f_2(x^*)<0$. 
Notice that $b=-\frac{1}{2}<0$. 
Thus, by Lemma \ref{lm:g3 omega} and by Lemma \ref{lm:Dissipative}, $G$ is dissipative. 
\item  Assume that the conservation law has the form \eqref{eq:g3 1-1}. We only need to consider the conservation law ${x_1=x_2-x_3+c}$. Here, we have $(a,b)=(1,-1)$. 
We notice that $a\cdot1+b\cdot0=1$ and 
$a\cdot0+b\cdot1=-1$. 
Hence, by Lemma \ref{lm:3s  no column in N*} (\ref{lm:3s exist column N^*}), there exist $j,k \in\{1,\dots,m\}$  such that $col_j(\mathcal{N}^*)=(1,0)^\top$ and $col_k(\mathcal{N}^*)=(0,1)^\top$.
Recall that by \eqref{eq:3s F N^* N}, 
\begin{align} \label{eq:3s 1-1 N^* N}   col_i(\mathcal{N})=
\begin{pmatrix}
    1&-1\\
    1&0\\
    0&1
\end{pmatrix}
col_i(\mathcal{N}^*).
\end{align}
So, by \eqref{eq:3s 1-1 N^* N}, 
$col_j(\mathcal{N})=$$(1,1,0)^\top$ and $col_k(\mathcal{N})=$$(-1,0,1)^\top$. 
Hence, by Lemma \ref{lm:0 set reaction}, the network $G$ contains the following two reactions:
\begin{align*}
X_3&\xrightarrow{}X_1+X_2+X_3,\\
X_1+X_2&\xrightarrow{}X_3+X_2.
\end{align*}
Note that $col_j(\mathcal{N})$ and $col_k(\mathcal{N})$ are linearly independent. Hence, all columns of $\mathcal{N}$ can be generated by $col_j(\mathcal{N})$ and $col_k(\mathcal{N})$.  So, by the fact that the network is maximum, the network can be written as follows:
\begin{table}[H]
    \centering
        \label{eq:3s lambda7 network}
    \begin{tabular}{lll}
        $X_3\xrightleftharpoons[\kappa_2]{\kappa_1}X_1+X_2+X_3$, &  $X_1+X_2\xrightleftharpoons[\kappa_6]{\kappa_5} X_3+X_2$,
         & $X_1\xrightleftharpoons[\kappa_{10}]{\kappa_{9}} X_2+X_3+X_1$,\\
         $0\xrightleftharpoons[\kappa_4]{\kappa_3}X_1+X_2$,&
         $X_1\xrightleftharpoons[\kappa_8]{\kappa_7} X_3$,&
         $0\xrightleftharpoons[\kappa_{12}]{\kappa_{11}} X_2+X_3$.       
    \end{tabular}
\end{table}
\noindent
Hence, for the steady-state system $f$ defined as in \eqref{eq:sys}, we have 
\begin{align}
    f_2=-\kappa_2x_1x_2x_3-\kappa_{10}x_1x_2x_3-\kappa_4x_1x_2-\kappa_{12}x_2x_3+\kappa_9x_1+\kappa_1x_3+\kappa_3+\kappa_{11}. \nonumber
\end{align}
Below, for any $c\in {\mathbb R}$, we prove that there exists a number $R>0$ such that for any $x^*=(x_1^*, x_2^*, x_3^*)\in \mathcal{P}_{c}$ satisfying $x_2^*>R$, we have $f_2(x^*)<0$. 
Note that the conservation law is $x_1=x_2-x_3+c$. 
Thus, there exists $R>0$ such that for any $x^*\in \mathcal{P}_{c}$ satisfying $x_2^*>R$, we have 
$x_1^*+x_3^*=x_2^*+c>0$. Hence, $x_1^*$ and $x_3^*$ can not be $0$ simultaneously. 
Thus, by \eqref{eq:3s lambda7 f2},  if the number $R$ is large enough, then  we have $f_2(x^*)<0$. 
By Lemma \ref{lm:xs-1 large},  for any $R>0$, there exists $M>0$ such that for any $x^*\in \mathcal{P}_{c}$ satisfying $\Vert x^*\Vert_{\infty}>M$, we have $x_{2}^*>R$. 
 So, there exists $M>0$ such that for any 
     $x^*\in \mathcal{P}_{c}$ satisfying $\Vert x^*\Vert_{\infty}>M$, we have $f_2(x^*)<0$. 
Note that $b=-1<0$. 
Thus, by Lemma \ref{lm:g3 omega} and by Lemma \ref{lm:Dissipative}, $G$ is dissipative.

\item Assume the conservation law has the form \eqref{eq:g3 11}. So, by \eqref{eq:3s Ni_Ns_Ns-1}, $\mathcal{N}_1=\mathcal{N}_2+\mathcal{N}_3$. 
Recall that we have assumed that $\mathcal{N}_2$ and $\mathcal{N}_3$ are linearly independent. 
Hence, we get $\mathcal{N}_1$ and $\mathcal{N}_3$ are linearly independent. 
% By \eqref{eq:g3 11}, we have $x_2=x_1-x_3-c_1$. 
Thus, we can exchange the two species $X_1$ and $X_2$ in the network $G$ by relabeling the species, and we call the new concentration variables as $\tilde{x}_1, \tilde{x}_2, \tilde{x}_3$. 
Hence, by \eqref{eq:g3 11}, the new conservation law is  $\tilde{x}_1=\tilde{x}_2-\tilde{x}_3-c$. 
Therefore, the conclusion follows from  the  case (IV).
    \end{enumerate}
\end{proof}
%\begin{lemma}\label{lm:3s I_0}
 %   Consider a network $G\in \mathcal{G}_3$. 
  %  Assume that the conservation law has the form \eqref{eq:g3 0-1}. 
   % Then, the network $G$ is dissipative.
%\end{lemma}

%\begin{lemma}\label{lm:3s lambda7 9}
 %  Consider a network $G\in \mathcal{G}_3$. 
  %  Assume that the conservation law has the form \eqref{eq:g3 1/2-1/2}. 
   % Then, the network $G$ is dissipative.
%\end{lemma}

% For any network $G\in \mathcal{G}_3$, we have Lemma \ref{lm:3s lambda6 8 10}. The proof is similar to the proof of Lemma \ref{lm:3s lambda7 9}, see Appendix \ref{S3_Appendix}.  
%\begin{lemma}\label{lm:3s lambda6 8 10}
 %     Consider a network $G\in \mathcal{G}_3$. 
  %  Assume that the conservation law has the form \eqref{eq:g3 1-1} or \eqref{eq:g3 11}. 
   % Then, the network $G$ is dissipative.
%\end{lemma}

\begin{lemma}\label{lm:G3 bound}
   For any $G \in \mathcal{G}_3$,  for any $c\in\mathbb{R}$, if $\mathcal{P}_c^+ \neq \emptyset$, then for any $\kappa \in \mathbb{R}_{>0}^m$, the network $G$ has no boundary steady states in $\mathcal{P}_c$.
\end{lemma}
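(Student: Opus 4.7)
The plan is to follow the same case-analysis template used already in Lemma~\ref{lm:two dim 0110 at most}, Lemma~\ref{lm:3s lambda7 9}, and Lemma~\ref{lm:3s lambda6 8 10}, but with the goal of ruling out boundary steady states rather than establishing dissipativity. By Lemma~\ref{lm:g3 claw}, a network $G \in \mathcal{G}_3$ has a conservation law of one of six forms \eqref{eq:g3 -1-1}--\eqref{eq:g3 11}. Since the species $X_2$ and $X_3$ can always be relabeled, it suffices to treat the representative cases
$x_1 = -x_2 - x_3 + c$, $x_1 = -\tfrac{1}{2}x_2 - \tfrac{1}{2}x_3 + c$, $x_1 = -x_3 + c$, $x_1 = \tfrac{1}{2}x_2 - \tfrac{1}{2}x_3 + c$, $x_1 = x_2 - x_3 + c$, and $x_1 = x_2 + x_3 + c$.

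For each case I would carry out the same structural construction that has already been used three times in the paper: using Lemma~\ref{lm:3s no column in N*}\,(\ref{lm:3s exist column N^*}) together with Lemma~\ref{lm:0 set reaction}, identify the two base reactions (one per extreme column of $\mathcal{N}^*$) that must occur, and then, because $G$ is a maximum network, list the full collection of six reversible reactions comprising $G$. This yields an explicit form for each polynomial $f_1,f_2,f_3$ in the steady-state system~\eqref{eq:sys} (exactly as was done, say, in Lemma~\ref{lm:3s lambda7 9} where $f_2$ was written out explicitly). Once these explicit expressions are available, I would fix $c \in \mathbb{R}$ with $\mathcal{P}_c^+ \neq \emptyset$, pick any $x \in \mathcal{P}_c \setminus \mathbb{R}_{>0}^3$, and inspect which among $x_1,x_2,x_3$ can simultaneously vanish in $\mathcal{P}_c$. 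For each admissible vanishing pattern I would exhibit one of $f_1,f_2,f_3$ whose expression reduces to a sum of terms of a single sign, all with strictly positive $\kappa_j$ coefficients and at least one nonzero term, thereby contradicting $f(x)=0$.

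The reason this strategy works is the same as in the dissipativity proofs: the conservation law constrains which coordinates are allowed to be zero on $\mathcal{P}_c$. For example, when the law is $x_1 = -x_3+c$ with $c>0$, one has $x_3 \le c$ and $x_1 \le c$ automatically; the only new zero patterns to check are $x_1=0$, $x_2=0$, or $x_3=0$. In each subcase the reaction structure derived above forces some $f_i$ to be a nonzero linear combination of positive rate constants (for instance, if $x_1=0$ then $f_3$ collapses to its ``production'' part which is strictly positive, etc.). The laws of the form $x_1=x_2+x_3+c$ are handled by first swapping $x_1,x_2$ to reduce to $\tilde{x}_1=\tilde{x}_2-\tilde{x}_3-c$ (as in Lemma~\ref{lm:3s lambda6 8 10} case (II)), thereby reducing to a case already addressed.

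The main obstacle I anticipate is purely bookkeeping: for each of the six conservation-law forms there are up to three single-coordinate vanishing subcases and up to three double-coordinate vanishing subcases to check, and one must be careful that the ``witness'' $f_i$ chosen to be nonzero does indeed have at least one surviving term with a positive coefficient on the boundary stratum (in particular, one must confirm that the zero-one reactant structure of the corresponding maximum network does supply such a term, rather than annihilating all candidate monomials). Since the reaction systems in each case were already written down explicitly in Section~\ref{subsubsec:g3}, this amounts to a finite verification of the same flavor as the arguments already given in Lemmas~\ref{lm:3s lambda7 9} and \ref{lm:3s lambda6 8 10}, and does not require new techniques.
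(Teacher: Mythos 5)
Your proposal matches the paper's own proof essentially step for step: the paper also reduces to the six conservation-law forms of Lemma~\ref{lm:g3 claw}, uses the explicit maximum networks \eqref{eq:g_31}--\eqref{eq:g_36} already derived in Section~\ref{subsubsec:g3} and Appendix~\ref{S1_Appendix} to write out $f_1,f_2,f_3$, and then checks each admissible vanishing pattern on $\mathcal{P}_c$ by exhibiting an $f_i$ that is single-signed and nonzero there (the paper carries this out in full only for the case $x_1=-x_2-x_3+c$ and declares the others similar, exactly the finite bookkeeping you describe). The only trivial inaccuracy is your claim that each maximum network consists of six reversible reactions --- the count actually varies across $\mathcal{G}_3$ (e.g.\ network \eqref{eq:g_34} has three reversible reactions and \eqref{eq:g_33} has eight) --- but this does not affect the argument.
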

\begin{proof} 
We can list all networks in ${\mathcal G}_3$, see \eqref{eq:g_31}--\eqref{eq:g_36} in Appendix \ref{S1_Appendix}. 
Below, we prove the conclusion for the network \eqref{eq:g_31}. The proof for any other network  is similar. 
Note that the corresponding conservation law is  $x_1=-x_2-x_3+c$.
Since $\mathcal{P}_c^+\neq \emptyset$, we have $c>0$. Hence, $(0,0,0)$ can not be a boundary steady state.
Notice that for the network in \eqref{eq:g_31}, the steady-state system $f$ defined  in \eqref{eq:sys} is given by the following polynomials: 
\begin{align}
f_1&=-\kappa_5x_1x_2-\kappa_3x_1x_3+\kappa_4x_2x_3+\kappa_6x_2x_3-\kappa_9x_1-\kappa_{11}x_1+\kappa_{10}x_2+\kappa_{12}x_3,\label{eq:g3 1 f1}\\
f_2&=-\kappa_1x_1x_2+\kappa_2x_1x_3+\kappa_3x_1x_3-\kappa_4x_2x_3 +\kappa_9x_1-\kappa_7x_2-\kappa_{10}x_2+\kappa_8x_3,\label{eq:g3 1 f2}\\
f_3&=\kappa_1x_1x_2+\kappa_5x_1x_2-\kappa_2x_1x_3-\kappa_6x_2x_3+\kappa_{11}x_1+\kappa_7x_2-\kappa_8x_3-\kappa_{12}x_3. \label{eq:g3 1 f3}
\end{align}
%Note that for any $i \in \{1,\dots,12\}$, $\kappa_i\in \mathbb{R}_{>0}$. 
Assume that $x=(x_1,x_2,x_3)\in\mathbb{R}_{\ge0}^3$. 
If $x_1=0$, $x_2>0$ and $x_3>0$, then by \eqref{eq:g3 1 f1}, we get $f_1(x)>0$. 
%Thus, $(0,x_2,x_3)$ can not be a boundary steady state. 
Similarly, if $x_1>0$, $x_2=0$ and $x_3>0$, then by \eqref{eq:g3 1 f2}, $f_2(x)>0$,
and if $x_1>0$, $x_2>0$ and $x_3=0$, then by \eqref{eq:g3 1 f3}, $f_3(x)>0$. 
If $x_1=x_2=0$ and $x_3>0$, then by \eqref{eq:g3 1 f2}, $f_2(x)>0$. 
%Then, $(0,0,x_3)$ can not be a boundary steady state. 
Similarly, if $x_1=x_3=0$ and $x_2>0$, then by \eqref{eq:g3 1 f2}, $f_2(x)<0$, 
and if $x_2=x_3=0$ and $x_1>0$, then, by \eqref{eq:g3 1 f2}, $f_2(x)>0$. 
Hence, for any $c\in\mathbb{R}$, if $\mathcal{P}_c^+ \neq \emptyset$, then the network $G$ admits no boundary steady states in $\mathcal{P}_c$. 
\end{proof}
\noindent
\textbf{proof of Lemma \ref{lm:3s 2d}}\;\; For any  $G \in \mathcal{G}_3$, by Lemma \ref{lm:g3 compact}, $G$ is dissipative. 
For any $c\in \mathbb{R}$, if $\mathcal{P}_c=\emptyset$, then the conclusion holds, and if $\mathcal{P}_c\neq\emptyset$, then we prove as follows. 
Since $\mathcal{P}_c\neq\emptyset$, by Lemma \ref{lm:G3 bound}, $G$ admits no boundary steady states. Let $h$ be the steady-state system augmented by conservation laws defined in \eqref{eq:h}. 
By Lemma \ref{lm:jac_g23}, for any $\kappa\in \mathbb{R}^{m}_{>0}$, if $G$ has a  corresponding positive steady state $x$ in ${\mathcal P}^{+}_{c}$, then ${\rm sign}({\rm det}({\rm Jac}_h(\kappa, x)))=1$.
Thus, by Theorem \ref{thm:deter_sign}, the network has exactly one nondegenerate positive steady state in $\mathcal{P}_c$. 

\noindent
\textbf{proof of Lemma \ref{lm:3-species_maximum}}\;\; Recall that $\mathcal{G}$ is the set of all the two-dimensional maximum three-species networks, see \eqref{eq:maximum networks}, and  $\mathcal{G}_1$, $\mathcal{G}_2$ and $\mathcal{G}_3$ defined in \eqref{eq:condition2}--\eqref{eq:condition3} give a partition of $\mathcal{G}$. So, the conclusion follows from Lemma \ref{lm:3s6r}, Lemma \ref{lm:two dim 0110 at most} and Lemma \ref{lm:3s 2d}. 
\subsubsection{Proof of Lemma \ref{lm:3-species_2-dimensional} by inheritance}\label{subsec:2d 3s main pro}
% Consider a three-species zero-one network $G$ with a rank-two stoichiometric matrix. Let $h$ be the steady-state system augmented by conservation laws defined as in \eqref{eq:h}. 
% Recall that in Section \ref{subsubsec:sign jach}, we prove that either $G$ only admits degenerate positive steady states or for any corresponding positive steady state $x\in \mathbb{R}_{>0}^3$, ${\rm det}({\rm Jac}_h(\kappa,x))> $\\\noindent$0$.
% Note that in Section \ref{subsubsection g1g2g3}, we prove the monostationarity of the networks in $\mathcal{G}$. 
% Then, we prove Lemma \ref{lm:3-species_2-dimensional} using these conclusions.
\noindent 
\textbf{proof of Lemma \ref{lm:3-species_2-dimensional}}\;\;
By Lemma \ref{lm:3-species_maximum}, all the two-dimensional maximum three-species networks admit no nondegenerate multistationarity/multistability. 
For any three-species two-dimensional network $G$, by Lemma \ref{lm:jac>0}, either the network $G$ only admits degenerate positive steady states, or for any $\kappa \in \mathbb{R}_{>0}^m$, if the network $G$ has a corresponding positive steady state $x\in \mathbb{R}_{>0}^3$, then we get ${\rm det}({\rm Jac}_h(\kappa,x))>0$. Hence,
if the network $G$ admits a nondegenerate positive steady state, then by Lemma \ref{lm:3-species_maximum} and by the inheritance of nondegenerate multistationarity \cite[Theorem 1]{BP16}, we get $G$ admits no nondegenerate multistationarity/multistability.  In conclusion, either $G$ only admits degenerate positive steady states, or $G$ admits at most one nondegenerate positive steady state.

% \section{Discussion}

\section{Supporting information}\label{sec:appendix}

% \paragraph*{S1 Video.}
% \label{S1_Video}
% {\bf Lorem ipsum.} 
\subsection{S1 Appendix.}\label{S2_Appendix}
In this section,  we first present a necessary and sufficient condition for  a nondegenerate steady state of a two-dimensional network to be stable, see Lemma \ref{lm:Hurwitz}. 
After that,  based on Hurwitz criterion (see Lemma \ref{lm:criterion_Hurwitz}), we prove a necessary condition for  a steady state of a three-dimensional network with three species to be stable, see Lemma \ref{lm:33stable}. 
Remark that Lemma \ref{lm:Hurwitz} and Lemma \ref{lm:33stable}  are used in the proofs of Lemma \ref{lm:zero-one stable} in Section \ref{sec:trans} and Theorem \ref{thm:multistab} in Section \ref{sec:results}.

\begin{lemma}\label{lm:Hurwitz}
   Consider a two-dimensional network $G$. Let $f_1,\dots,f_s$ be the polynomials defined in \eqref{eq:sys}. Let $h$ be the steady-state system augmented by conservation laws defined in \eqref{eq:h}. For any $\kappa\in \mathbb{R}^{m}_{>0}$, and for any corresponding nondegenerate  steady state $x \in \mathbb{R}^s$, $x$ is stable if and only if 
   \begin{align} \sum^s_{i=1}\frac{\partial f_i}{\partial x_i}(\kappa,x)<0, \text{ and }  {\rm det}({\rm Jac}_h(\kappa,x))>0. \nonumber
   \end{align}
\end{lemma}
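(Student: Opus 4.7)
The plan is to reduce the stability test to the Routh--Hurwitz criterion for degree-two characteristic polynomials, exploiting that the network is two-dimensional. I would first observe that since $G$ is two-dimensional and $x$ is a nondegenerate positive steady state, the Jacobian ${\rm Jac}_f(\kappa,x)$ has rank exactly two: the restriction to $S$ is surjective onto $S$ by nondegeneracy, so it has two nonzero eigenvalues $\lambda_1,\lambda_2$, while the remaining $s-2$ eigenvalues are $0$ and correspond to directions in $S^\perp$ picked out by the conservation laws. By the definition of (exponential) stability, $x$ is stable iff $\operatorname{Re}(\lambda_1)<0$ and $\operatorname{Re}(\lambda_2)<0$.

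Next, I would express the two elementary symmetric functions of $\lambda_1,\lambda_2$ in terms of the data appearing in the statement. The zero eigenvalues contribute nothing to the trace, so
\[
\lambda_1+\lambda_2 \;=\; \operatorname{tr}({\rm Jac}_f(\kappa,x)) \;=\; \sum_{i=1}^s \frac{\partial f_i}{\partial x_i}(\kappa,x).
\]
For the product, the characteristic polynomial of the rank-two matrix ${\rm Jac}_f(\kappa,x)$ factors as $\lambda^{s-2}(\lambda^2 - e_1\lambda + e_2)$, where $e_k$ denotes the sum of $k\times k$ principal minors; in particular $\lambda_1\lambda_2 = e_2 = \sum_{|I|=2}\det({\rm Jac}_f[I,I])$. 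By \cite[Proposition 5.3]{WiufFeliu_powerlaw} (the very identity invoked in the proof of Lemma~\ref{lm:jach}), this sum equals $\det({\rm Jac}_h(\kappa,x))$. Thus $\lambda_1+\lambda_2 = \sum_{i=1}^s \tfrac{\partial f_i}{\partial x_i}(\kappa,x)$ and $\lambda_1\lambda_2 = \det({\rm Jac}_h(\kappa,x))$.

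Finally, I would apply the Routh--Hurwitz criterion in the degree-two case: the two roots of $\lambda^2 - (\lambda_1+\lambda_2)\lambda + \lambda_1\lambda_2$ both have strictly negative real parts if and only if $\lambda_1+\lambda_2<0$ and $\lambda_1\lambda_2>0$. Substituting the identifications from the previous paragraph yields the stated equivalence. The whole argument is short; the only non-routine ingredient is the identification of $\det({\rm Jac}_h(\kappa,x))$ with $\lambda_1\lambda_2$, but this is already packaged in the cited proposition, so no serious obstacle is expected.
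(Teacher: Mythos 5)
Your proposal is correct and follows essentially the same route as the paper: factor the characteristic polynomial of ${\rm Jac}_f(\kappa,x)$ as $\lambda^{s-2}(\lambda^2 - e_1\lambda + e_2)$ using the rank-two property, identify $e_1$ with the trace and $e_2$ with $\det({\rm Jac}_h(\kappa,x))$ via \cite[Proposition 5.3]{WiufFeliu_powerlaw}, and apply the degree-two Routh--Hurwitz (equivalently Vieta) criterion. The only cosmetic difference is that you phrase the argument in terms of the two nonzero eigenvalues rather than the coefficients of the quadratic factor.
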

\begin{proof}
We denote by $E$ an $s\times s$ identity matrix.
The characteristic polynomial of ${\rm Jac}_f$ can be written as $
        {\rm det}(\lambda E-{\rm Jac}_f)=\lambda^s-a_1\lambda^{s-1}+a_2\lambda^{s-2}+\cdots+(-1)^{s}a_{s}$,
where $a_i$ $(i\in\{1,\dots,s\})$ is the sum of all $i$-th order principal subformulas of ${\rm Jac}_f$. Note that the rank of ${\rm Jac}_f$ is two. Thus, for any $i\in\{3,\dots,s\}$, we have $a_i=0$. Then, we have
\begin{equation}
    \begin{aligned}
        {\rm det}(\lambda E-{\rm Jac}_f)&=\lambda^s-\lambda^{s-1}\sum^s_{i=1}\frac{\partial f_i}{\partial x_i}+\lambda^{s-2}\sum_{I\subseteq\{1,\dots,s\},\vert I\vert=2}{\rm det}({\rm Jac}_f[I,I]).
    \end{aligned} \nonumber
\end{equation}
By \cite[Proposition 5.3]{WiufFeliu_powerlaw},  we have 
\begin{equation}\label{eq:det_char_jacf}
     {\rm det}(\lambda E-{\rm Jac}_f)=\lambda^{s-2}(\lambda^2-\lambda \sum^s_{i=1}\frac{\partial f_i}{\partial x_i}+ {\rm det}({\rm Jac}_h)). \nonumber
\end{equation}
So, for any $\kappa^*\in \mathbb{R}^{m}_{>0}$, and for any corresponding nondegenerate   steady state $x^* \in \mathbb{R}^s$,
all
non-zero eigenvalues of ${\rm Jac}_f(\kappa^*, x^*)$  have negative real parts (i.e., $x^*$ is stable) if and only if 
$\sum^s_{i=1}\frac{\partial f_i}{\partial x_i}(\kappa^*,x^*)<0$ and ${\rm det}({\rm Jac}_h(\kappa^*,x^*))>0$.
\end{proof}
\begin{lemma} \label{lm:criterion_Hurwitz}\cite[Criterion 1]{AE2021}
Let $q(z)=b_sz
^s + b_{s-1}z
^{s-1}\cdots + b_0$ be a real polynomial with $b_s>0$ and
$b_0\neq 0$. The Hurwitz matrix ${\mathcal H} = (H_{ij})$ associated with $q(z)$ has entries $H_{ij} = b_{s - 2i+j }$ for
$i, j \in\{ 1, \dots , s\}$ by letting $b_k = 0$ if $k \notin  \{0,\dots, s\}$:
\begin{equation}
\begin{aligned}
{\mathcal H}=\begin{pmatrix}
b_{s-1}&b_s&0&0&\cdots&0\\
b_{s-3}&b_{s-2}&b_{s-1}&b_s&\cdots&0\\
\vdots&\ddots&\vdots&\vdots&\ddots&\vdots\\
0&0&0&b_{6-s}&\cdots&b_2\\
0&0&0&0&\cdots&b_0\
\end{pmatrix}\in\mathbb{R}^{s\times s}.
\end{aligned} \nonumber
\end{equation}
The $i$-th Hurwitz determinant is defined to be ${\mathcal H}_i = {\rm det}({\mathcal H}[I,I])$, where $I = \{ 1, \dots, i\}$. Then, all
roots of $q(z)$ have negative real parts if and only if ${\mathcal H}_i > 0$ for all $i \in\{ 1,\dots , s\}$. 
\end{lemma}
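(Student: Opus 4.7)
The plan is to prove the Routh--Hurwitz criterion by induction on the degree $s$ via the classical Routh reduction, which passes from $q(z)$ of degree $s$ to an auxiliary polynomial $\tilde q(z)$ of degree $s-1$ whose stability is equivalent to that of $q$, and whose leading principal Hurwitz minors are tied to those of $q$ by an explicit determinantal identity. The base cases $s=1$ and $s=2$ I would verify directly: the single root of $a_1z+a_0$ is $-a_0/a_1$, which has negative real part iff $a_0>0=H_1$; for $s=2$, Vi\`ete's formulas show both roots of $a_2z^2+a_1z+a_0$ lie in the open left half-plane iff $a_1>0$ and $a_0>0$, matching $H_1=a_1$ and $H_2=a_1a_0$. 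These anchor the induction.

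For the inductive step, writing $q(z)=q_e(z^2)+z\,q_o(z^2)$ for the even/odd decomposition and assuming first that $a_{s-1}>0$, I would define the Routh-reduced polynomial
\[
\tilde q(z)\;:=\;a_{s-1}\,q(z)\;-\;a_s\,z\cdot\bigl(a_{s-1}z^{s-1}+a_{s-3}z^{s-3}+\cdots\bigr),
\]
which has degree $s-1$ and whose coefficients are, up to a uniform factor of $a_{s-1}$, the third row of the classical Routh array. Two things then have to be verified: (a) $q$ and $\tilde q$ have the same number of roots in the open right half-plane, and (b) the Hurwitz minors are related by an identity of the shape $\tilde H_i = a_{s-1}^{\,c_i}\cdot H_{i+1}$ for explicit exponents $c_i$. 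Combining (a) and (b) with the inductive hypothesis applied to $\tilde q$ gives the stated equivalence for $q$; the final sentence, that $H_i<0$ forces a root in the right half-plane, follows by tracking the number of sign changes produced along the recursion, since a negative minor propagates to an odd number of right half-plane roots of the reduced polynomial.

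Part (a) I would obtain either from a homotopy/argument-principle argument, continuously deforming $q$ into $\tilde q\cdot(a_sz+a_{s-1})/a_{s-1}$ and checking that no root crosses the imaginary axis under the deformation, or by invoking the Hermite--Biehler interlacing characterization of Hurwitz stability applied to $(q_e,q_o)$ and to the corresponding even/odd parts of $\tilde q$. The case $a_{s-1}=0$ is handled by a standard perturbation: replace $a_{s-1}$ by $\varepsilon>0$, apply the argument, and let $\varepsilon\downarrow0$, observing that whenever $a_{s-1}=0$ we automatically have $H_1=0$, so the criterion's ``only if'' direction holds vacuously, while for the ``if'' direction one checks that the hypothesis $H_i>0$ for all $i$ prevents this degenerate case.

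The main obstacle will be the determinantal identity (b). It has to be established by identifying the rows of the Hurwitz matrix $H$ of $q$ with rows of $\tilde H$ after a sequence of elementary row operations that mirror the Routh reduction; the exponent $c_i$ records the scalings introduced along the way, and one must verify the bookkeeping for both even and odd $i$ separately because the shift pattern $h_{ij}=a_{s-2i+j}$ interacts differently with rows indexed by the two parities. Once this identity is in place, the induction closes cleanly and the full biconditional, together with the strict sign statement, follows as stated in \cite[Criterion~1]{AE2021}.
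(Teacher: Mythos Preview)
The paper does not prove this lemma at all: it is stated with a citation to \cite[Criterion~1]{AE2021} and used as a black box in the proof of Lemma~\ref{lm:33stable}. There is therefore no ``paper's own proof'' to compare against.

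Your outline is the standard Routh-reduction proof of the Routh--Hurwitz criterion, and the strategy is sound. A few remarks. First, what you wrote is a plan rather than a proof: the two load-bearing facts, namely (a) that $q$ and $\tilde q$ have the same number of open-right-half-plane roots and (b) the determinantal identity $\tilde H_i = a_{s-1}^{c_i} H_{i+1}$, are asserted but not established. For (b) you correctly identify that it reduces to row operations on $H$, but the bookkeeping is nontrivial and is where most written-out proofs spend their effort; you would need to actually exhibit the row operations and compute the scalar factors. For (a), the homotopy argument you sketch requires checking that no root meets the imaginary axis along the deformation, which in turn uses that $q$ has no purely imaginary roots when all $H_i>0$; this is not automatic and is usually handled together with the degenerate case $a_{s-1}=0$. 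Second, the last sentence of the lemma (some $H_i<0$ forces a root with positive real part) is genuinely stronger than the negation of the biconditional and needs the sign-change count in the Routh array, which you mention but do not carry out. If you intend to supply a full proof rather than cite the result as the paper does, these are the gaps to fill.
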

\begin{lemma}\label{lm:33stable}
Consider a three-dimensional network $G$ 
 with three species. Let $f$ be the steady-state system defined as in \eqref{eq:sys}. For any $\kappa\in \mathbb{R}^{m}_{>0}$ and for any corresponding  steady state $x \in \mathbb{R}^3$, if $x$ is stable, then ${\rm det}({\rm Jac}_f(\kappa,x))<0$.
\end{lemma}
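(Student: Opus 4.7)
The plan is to apply the Hurwitz criterion (Lemma \ref{lm:criterion_Hurwitz}) to the characteristic polynomial of ${\rm Jac}_f(\kappa, x)$, and to read off the sign of $\det({\rm Jac}_f(\kappa,x))$ from the sign of the constant term of that polynomial.

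First I would observe that since $G$ is three-dimensional with three species, the stoichiometric subspace $S$ equals $\mathbb{R}^3$, so there are no conservation laws (i.e., $d = s - r = 0$). Consequently the steady-state system augmented by conservation laws $h$ defined in \eqref{eq:h} coincides with $f$, and ${\rm Jac}_h(\kappa,x) = {\rm Jac}_f(\kappa,x)$. Stability of $x$ therefore means, by the definition given in Section \ref{sec:mm}, that $x$ is nondegenerate (so ${\rm Jac}_f(\kappa,x)$ is invertible, i.e., all its eigenvalues are non-zero) and every eigenvalue of ${\rm Jac}_f(\kappa,x)$ has strictly negative real part.

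Next, let $E$ be the $3 \times 3$ identity matrix and write the characteristic polynomial of ${\rm Jac}_f(\kappa,x)$ as
\begin{equation*}
q(\lambda) \;=\; \det(\lambda E - {\rm Jac}_f(\kappa,x)) \;=\; \lambda^3 + c_2 \lambda^2 + c_1 \lambda + c_0,
\end{equation*}
where
\begin{equation*}
c_2 = -\operatorname{tr}({\rm Jac}_f(\kappa,x)), \quad c_1 = \sum_{I\subseteq\{1,2,3\},\, |I|=2}\det({\rm Jac}_f(\kappa,x)[I,I]), \quad c_0 = -\det({\rm Jac}_f(\kappa,x)).
\end{equation*}
By the previous paragraph, all roots of $q(\lambda)$ have negative real parts. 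Applying the Hurwitz criterion (Lemma \ref{lm:criterion_Hurwitz}) to $q(\lambda)$ with $a_3 = 1,\ a_2 = c_2,\ a_1 = c_1,\ a_0 = c_0$, the third Hurwitz determinant satisfies $H_3 = c_0\,(c_2 c_1 - c_0) > 0$, and in particular $c_0 > 0$.

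Therefore $-\det({\rm Jac}_f(\kappa,x)) = c_0 > 0$, which is exactly $\det({\rm Jac}_f(\kappa,x)) < 0$. No step here is delicate: once one recognises that the three-dimensional, three-species setting forces $h = f$ with a nonsingular Jacobian at a stable steady state, the conclusion is a direct computation from the sign of the constant term in the characteristic polynomial, and no real obstacle is anticipated.
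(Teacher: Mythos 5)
Your proposal is correct and follows essentially the same route as the paper's own proof: both write down the characteristic polynomial of ${\rm Jac}_f(\kappa,x)$, apply the Hurwitz criterion (Lemma \ref{lm:criterion_Hurwitz}), and deduce ${\rm det}({\rm Jac}_f(\kappa,x))<0$ from the positivity of the last Hurwitz determinant. Your preliminary remark that nondegeneracy in the three-dimensional, three-species case forces ${\rm Jac}_f(\kappa,x)$ to be invertible (so that all eigenvalues, not just the non-zero ones, have negative real part) is a small clarification the paper leaves implicit, but it does not change the argument.
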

    \begin{proof} 
Assume that the characteristic polynomial of ${\rm Jac}_f(\kappa,x)$ is $$
\lambda^3+b_1\lambda^{2}+b_2\lambda+b_{3}.$$
By Lemma \ref{lm:criterion_Hurwitz}, the Hurwitz matrix ${\mathcal H}$ is equal to
\begin{equation}
 \begin{aligned}
     \begin{pmatrix}
       b_1&1&0\\
       b_3&b_2&b_1\\
         0&0&b_3
     \end{pmatrix}.
 \end{aligned}  \nonumber
\end{equation}
By Lemma \ref{lm:criterion_Hurwitz}, all non-zero eigenvalues of ${\rm Jac}_f(\kappa,x)$ have negative real parts if and only if $b_1>0$, $b_1b_2-b_3>0$, and $b_3(b_1b_2-b_3)>0$. 
Note that $b_{3}=-{\rm det}({\rm Jac}_f(\kappa,x))$. 
Thus, if $x$ is stable, then $b_3>0$ (i.e., ${\rm det}({\rm Jac}_f(\kappa,x))<0$).
\end{proof}
\subsection{S2 Appendix} \label{subsec:real}
In this section, we introduce the theoretical foundation (see Lemma \ref{lm:same number a section}) for the method of real root classification in computational real algebraic geometry.  
We remark that Lemma \ref{lm:same number a section} is a corollary of Ehresmann’s theorem \cite{CS1992} for which there exist semi-algebraic statements since 1992. We remark that   Lemma 
\ref{lm:same number a section} is used to prove the monostationarity of two classes of zero-one networks in Section \ref{subsubsection g1g2g3}.

First, we review some useful notions in computational algebraic geometry. 
Let $u:=(u_1,\ldots,u_n)$ and $x:=(x_1,\ldots,x_s)$. 
We define $\pi$ as the \defword{canonical projection}:
 ${\mathbb C}^{n}\times {\mathbb
C}^{s}\rightarrow{\mathbb C}^{n}$ such that
for every $(u, x)\in {\mathbb C}^{n}\times {\mathbb
C}^{s}$, 
$\pi(u, x)=u$.
For any finite polynomial set  $g:=\{g_1, \ldots, g_s\}\subseteq\Q[u,x]$, we define
\begin{align}
    {\mathcal V}(g)\;:=\;\{(u, x)\in {\mathbb C}^{n}\times {\mathbb C}^{s}\mid g_1(u, x)=\cdots=g_s(u,x)=0\}. \nonumber
\end{align}  
A finite polynomial set $g\subseteq\Q[u,x]$
is called a \defword{general zero-dimensional system} if there exists an affine variety $V\subsetneq {\mathbb C}^n$ such that for any
$u\in {\mathbb C}^n\backslash V$, the cardinality  of the set $\pi^{(-1)}(u)\cap \mathcal{V}(g)$ is a finite number (i.e., 
the equations $g_1(u, x)=\cdots=g_s(u,x)=0$ have finitely many common solutions for $x$ in ${\mathbb C}^s$). 

The \defword{set of nonproperness} of $g$, denoted by ${\mathcal V}_{\infty}(g)$, is defined  as the set of the $u\in \overline{\pi({\mathcal V}(g))}$ such that there does not exist a compact neighborhood $U$ of $u$ where
$\pi^{-1}(U)\cap {\mathcal V}(g)$ is  compact \cite{DV2005}.
 Geometrically,   ${\mathcal V}_{\infty}(g)$ is the set of parameters $u$ such that the  equations $g_1=\dots=g_s=0$ have some solution $x$ with coordinates tending to infinity.
 By \cite[Lemma 2 and Theorem 2]{DV2005}, ${\mathcal V}_{\infty}(g)$ is an algebraically closed set and can be computed by Gr\"obner bases. 
 We define
$\mathcal{V}_{J}(g)\;:=\;
\overline{\pi(\mathcal{V}(g)\cap \mathcal{V}(Jac_g))}$.
Geometrically, ${\mathcal V}_{J}(g)$ is the closure of the union of the projection of the singular
locus of ${\mathcal V}(g)$ and the set of
critical values of the restriction of $\pi$ to the regular locus of ${\mathcal V}(g)$ \cite[Definition 2]{DV2005}. Also, we define
$\mathcal{V}_{0}(g)\;:=\;
\overline{\pi(\mathcal{V}(g)\cap \mathcal{V}(\Pi_{i=1}^sx_i))}.$
 Geometrically, $\mathcal{V}_{0}(g)$ is  the algebraic closure of the  parameters such that the equations $g_1=\cdots=g_s=0$ have the solutions with  zero coordinates.  Based on the above notions, we are prepared to present the following lemma. 
\begin{lemma}\cite[Theorem 2]{XT2017}\label{lm:same number a section}
For any general zero-dimensional system  $g\subseteq\Q[u,x]$, if $\mathcal{C}$ is an open connected component of 
$\mathbb{R}^{n}\setminus(\mathcal{V}_{\infty}(g)\cup \mathcal{V}_{J}(g))$,
then over $u\in\mathcal{C} $, the cardinality of the  set $\pi^{(-1)}(u)\cap \mathcal{V}(g)\cap \mathbb{R}^{n+s}$ is constant. Moreover, if $\mathcal{C}$ is an open connected component of
$\mathbb{R}^{n}\setminus(\mathcal{V}_{\infty}(g)\cup \mathcal{V}_{J}(g)\cup \mathcal{V}_{0}(g)),$
then over $u\in\mathcal{C} $, the cardinality of the  set $\pi^{(-1)}(u)\cap \mathcal{V}(g)\cap (\mathbb{R}^{n}\times \mathbb{R}^{s}_{>0}) $ is constant.
\end{lemma}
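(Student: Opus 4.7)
\textbf{Proof proposal for Lemma \ref{lm:same number a section}.}
The plan is to view the statement as a semi-algebraic version of Ehresmann's theorem: over the open connected component $\mathcal{C}$, the projection $\pi$ restricted to the real points of $\mathcal{V}(g)$ should be a proper local diffeomorphism, hence a finite covering map, hence has locally (and therefore globally on $\mathcal{C}$) constant fiber cardinality. Concretely, I would organize the argument into three ingredients, matching the three ``bad'' loci appearing in the statement.

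First, I would establish \emph{properness} of $\pi|_{\mathcal{V}(g)\cap \pi^{-1}(\mathcal{C})}$. By the very definition of $\mathcal{V}_{\infty}(g)$, for every $u\in \mathcal{C}$ (so $u\notin \mathcal{V}_{\infty}(g)$) there is a compact neighborhood $U\subseteq \mathcal{C}$ of $u$ such that $\pi^{-1}(U)\cap \mathcal{V}(g)$ is compact; this is precisely the condition that $\pi$ is proper over $\mathcal{C}$. Consequently, the real fiber $\pi^{-1}(u)\cap \mathcal{V}(g)\cap \mathbb{R}^{n+s}$ is bounded (and closed), and since $g$ is a general zero-dimensional system, this fiber is also discrete; hence it is a \emph{finite} set.

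Second, I would use the avoidance of $\mathcal{V}_J(g)$ to show that $\pi$ restricted to the real locus of $\mathcal{V}(g)$ over $\mathcal{C}$ is a local diffeomorphism. For each real point $(u,x)\in \mathcal{V}(g)\cap \mathbb{R}^{n+s}$ with $u\in \mathcal{C}$, the condition $u\notin \overline{\pi(\mathcal{V}(g)\cap \mathcal{V}(\mathrm{Jac}_g))}$ forces the Jacobian matrix of $g$ with respect to $x$ at $(u,x)$ to be nonsingular. The implicit function theorem then provides, locally around $(u,x)$, a smooth section $x=\varphi(u)$ of $\pi$ parameterizing $\mathcal{V}(g)$, so $\pi$ is a local diffeomorphism near $(u,x)$. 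Combined with the properness from the previous paragraph, the restriction
\[
\pi\colon \mathcal{V}(g)\cap \pi^{-1}(\mathcal{C})\cap \mathbb{R}^{n+s}\;\longrightarrow\;\mathcal{C}
\]
is a proper local diffeomorphism, hence a finite covering map. Because $\mathcal{C}$ is connected, the cardinality of the fiber is independent of $u\in \mathcal{C}$, proving the first assertion.

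Third, for the refined ``positive'' statement, I would note that removing $\mathcal{V}_0(g)$ in addition guarantees that for $u\in \mathcal{C}$ no real solution $x$ has a zero coordinate. Hence the map
\[
\pi\colon \mathcal{V}(g)\cap \pi^{-1}(\mathcal{C})\cap (\mathbb{R}^{n}\times \mathbb{R}^{s}_{>0})\;\longrightarrow\;\mathcal{C}
\]
differs from the full real-fiber cover above only by discarding, over each $u$, those fibers that either lie in a different connected component of the complement of the coordinate hyperplanes or had zero coordinates; but avoiding $\mathcal{V}_0(g)$ means the strata never mix along a path in $\mathcal{C}$, so this restriction is again a finite covering map and the fiber cardinality is constant on $\mathcal{C}$. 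The main obstacle I expect is the careful justification of properness from the defining property of $\mathcal{V}_\infty(g)$ in \cite{DV2005}, and of using the sign (positivity) stratification in the second part: one must verify that, because $\mathcal{V}_0(g)$ is avoided, no branch of the cover can cross a coordinate hyperplane $\{x_i=0\}$ as $u$ varies in $\mathcal{C}$, so that the ``positive'' sub-cover is itself a connected-components union of covering maps over $\mathcal{C}$.
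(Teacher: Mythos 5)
The paper gives no proof of this lemma: it is quoted verbatim from \cite[Theorem 2]{XT2017}, with only the surrounding remark that it is a corollary of (semi-algebraic versions of) Ehresmann's theorem \cite{CS1992}. Your argument --- properness of $\pi$ over $\mathcal{C}$ from the definition of $\mathcal{V}_{\infty}(g)$, local invertibility from avoiding $\mathcal{V}_{J}(g)$ via the implicit function theorem, hence a finite covering of the connected set $\mathcal{C}$ with constant fiber cardinality, and sign-preservation of the branches from avoiding $\mathcal{V}_{0}(g)$ --- is exactly that standard route and is correct; the one imprecision is that discreteness of the fibers should be deduced from the nonsingular Jacobian (or from compactness of the complex fiber, which properness already gives), not from the ``general zero-dimensional'' hypothesis, which only controls $u$ outside a proper subvariety.
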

\begin{remark}\label{re:discriminant}
    \cite[Definition 1]{DV2005} The variety $\mathcal{V}_{\infty}(g)\cup \mathcal{V}_{J}(g)\cup \mathcal{V}_{0}(g)$ stated in Lemma \ref{lm:same number a section} is called a \defword{discriminant variety} when one wants to classify positive solutions for the general zero-dimensional system $g$.
\end{remark}
\subsection{S3 Appendix.}\label{S1_Appendix}
In this section, we list all the networks in $\mathcal{G}_2\cup \mathcal{G}_3$ (recall that $\mathcal{G}_2$ and $\mathcal{G}_3$ are two classes of maximum three-species networks defined in  \eqref{eq:condition1}--\eqref{eq:condition3}). 
We remark that in the proof of Lemma \ref{lm:jac_g23} in Section \ref{subsubsec:sign jach}, we have applied a computational method to go over all networks in  $\mathcal{G}_2\cup \mathcal{G}_3$. 
 \\
\subparagraph*{The set $\mathcal{G}_2$ consists of the following  networks \eqref{eq:g_21}--\eqref{eq:g_23}.}
\begin{align} 
% \small
&X_1+X_2+X_3\xrightleftharpoons[\kappa_2]{\kappa_1}X_1+X_3  
&&X_1+X_2\xrightleftharpoons[\kappa_4]{\kappa_3} X_1 
&&X_2+X_3\xrightleftharpoons[\kappa_{6}]{\kappa_{5}} X_3\label{eq:g_21}\\    &X_2\xrightleftharpoons[\kappa_{8}]{\kappa_{7}} 0 
&&X_1+X_2+X_3\xrightleftharpoons[\kappa_{10}]{\kappa_9}X_2  
&&X_1+X_3\xrightleftharpoons[\kappa_{12}]{\kappa_{11}}0 \notag\\
&X_2\xrightleftharpoons[\kappa_{14}]{\kappa_{13}} X_1+X_3 
&&X_1+X_2+X_3\xrightleftharpoons[\kappa_{16}]{\kappa_{15}}0\notag\\
\notag\\[-0.5\baselineskip]
&X_1+X_2+X_3\xrightleftharpoons[\kappa_2]{\kappa_1}X_1+X_2  
&&X_1+X_3\xrightleftharpoons[\kappa_4]{\kappa_3} X_1 
&&X_2+X_3\xrightleftharpoons[\kappa_{6}]{\kappa_{5}} X_2\label{eq:g_22}\\   &X_3\xrightleftharpoons[\kappa_{8}]{\kappa_{7}} 0    &&X_1+X_2+X_3\xrightleftharpoons[\kappa_{10}]{\kappa_9}X_3 
&&X_1+X_2\xrightleftharpoons[\kappa_{12}]{\kappa_{11}}0  \notag\\ &X_3\xrightleftharpoons[\kappa_{14}]{\kappa_{13}} X_1+X_2 
&&X_1+X_2+X_3\xrightleftharpoons[\kappa_{16}]{\kappa_{15}}0\notag\\
\notag\\[-0.5\baselineskip]
&X_1+X_2+X_3\xrightleftharpoons[\kappa_{2}]{\kappa_{1}}X_1+X_3
&&X_1+X_2\xrightleftharpoons[\kappa_4]{\kappa_3}X_1
&&X_1+X_3\xrightleftharpoons[\kappa_{6}]{\kappa_{5}}X_1 \label{eq:g_23}\\       &X_1+X_2+X_3\xrightleftharpoons[\kappa_{8}]{\kappa_{7}}X_1+X_2
&&X_2+X_3\xrightleftharpoons[\kappa_{10}]{\kappa_9}X_2       
&&X_3\xrightleftharpoons[\kappa_{12}]{\kappa_{11}}0 \notag \\
&X_1+X_2\xrightleftharpoons[\kappa_{14}]{\kappa_{13}}X_1+X_3
&&X_2+X_3\xrightleftharpoons[\kappa_{16}]{\kappa_{15}}X_3
&&X_2\xrightleftharpoons[\kappa_{18}]{\kappa_{17}}X_3\notag\\   
&X_2\xrightleftharpoons[\kappa_{20}]{\kappa_{19}}0\notag
&&X_2+X_3\xrightleftharpoons[\kappa_{22}]{\kappa_{21}}0\notag
&&X_1+X_2+X_3\xrightleftharpoons[\kappa_{24}]{\kappa_{23}}X_1\notag
\end{align}
\subparagraph*{The set $\mathcal{G}_3$ consists of the following networks \eqref{eq:g_31}--\eqref{eq:g_36}.}
\begin{align}
&X_1+X_2\xrightleftharpoons[\kappa_2]{\kappa_1}X_1+X_3
&&X_1+X_3\xrightleftharpoons[\kappa_{4}]{\kappa_3}X_2+X_3
&&X_1+X_2\xrightleftharpoons[\kappa_{6}]{\kappa_{5}}X_2+X_3\label{eq:g_31}\\
&X_2\xrightleftharpoons[\kappa_8]{\kappa_7}X_3
&&X_1\xrightleftharpoons[\kappa_{10}]{\kappa_9}X_2 
&&X_1\xrightleftharpoons[\kappa_{12}]{\kappa_{11}}X_3\notag
%\notag\\[-0.5\baselineskip]
\end{align}
\begin{align}
&X_1+X_2\xrightleftharpoons[\kappa_2]{\kappa_1} X_1+X_3        &&X_1\xrightleftharpoons[\kappa_4]{\kappa_3} X_2+X_3
&&X_2\xrightleftharpoons[\kappa_6]{\kappa_5}X_3 \label{eq:g_32} \\
\notag \\[-0.5\baselineskip]
&X_1\xrightleftharpoons[\kappa_2]{\kappa_1}X_2+X_3
&&X_1+X_2\xrightleftharpoons[\kappa_4]{\kappa_3}X_3
&&X_1\xrightleftharpoons[\kappa_{6}]{\kappa_{5}}X_3
\label{eq:g_33}\\ 
&X_1+X_2\xrightleftharpoons[\kappa_8]{\kappa_7}X_3+X_2 
&&0\xrightleftharpoons[\kappa_{10}]{\kappa_{9}}X_2
&&X_1\xrightleftharpoons[\kappa_{12}]{\kappa_{11}}X_2+X_1\notag \\
&X_3\xrightleftharpoons[\kappa_{14}]{\kappa_{13}}X_2+X_3
&&X_1+X_3\xrightleftharpoons[\kappa_{16}]{\kappa_{15}}X_2+X_1+X_3\notag\\
\notag\\[-0.5\baselineskip] 
&X_3\xrightleftharpoons[\kappa_2]{\kappa_1}X_1+X_2 
&&X_1\xrightleftharpoons[\kappa_4]{\kappa_3} X_2+X_3+X_1
&&0\xrightleftharpoons[\kappa_6]{\kappa_5} X_2+X_3\label{eq:g_34}\\
\notag\\[-0.5\baselineskip] 
&X_3\xrightleftharpoons[\kappa_2]{\kappa_1}X_1+X_2+X_3 
&&X_1+X_2\xrightleftharpoons[\kappa_6]{\kappa_5} X_3+X_2
&&X_1\xrightleftharpoons[\kappa_{10}]{\kappa_{9}} X_2+X_3+X_1\label{eq:g_35}\\  
&0\xrightleftharpoons[\kappa_4]{\kappa_3}X_1+X_2
&&X_1\xrightleftharpoons[\kappa_8]{\kappa_7} X_3
&&0\xrightleftharpoons[\kappa_{12}]{\kappa_{11}} X_2+X_3\notag\\
\notag\\[-0.5\baselineskip]        &X_1+X_2\xrightleftharpoons[\kappa_{2}]{\kappa_{1}} X_1+X_3 
&&X_1+X_2+X_3\xrightleftharpoons[\kappa_4]{\kappa_3} X_2
&&X_1+X_2+X_3\xrightleftharpoons[\kappa_{6}]{\kappa_{5}} X_3\label{eq:g_36}\\   
&X_2\xrightleftharpoons[\kappa_8]{\kappa_7}X_3
&&X_1+ X_3\xrightleftharpoons[\kappa_{10}]{\kappa_9}0         &&X_1+X_2\xrightleftharpoons[\kappa_{12}]{\kappa_{11}}0\notag 
\end{align}
\section{Acknowledgments}
We thank Professor Frank Sottile for his nice advice in the BIRS-IMAG workshop ``Positive Solutions of Polynomial Systems Arising From Real-life Applications".
We also thank Professor Balázs Boros for his suggestion on how to efficiently generate small networks in the Joint Annual Meeting KSMB-SMB 2024. 
Finally, we thank Professor Murad Banaji  for generating and providing all three-species three-dimensional five-reaction and six-reaction zero-one networks with considering 
isomorphism and some other equivalences by his own efficient procedure.

\nolinenumbers

% Either type in your references using
% \begin{thebibliography}{}
% \bibitem{}
% Text
% \end{thebibliography}
%
% or
%
% Compile your BiBTeX database using our plos2015.bst
% style file and paste the contents of your .bbl file
% here. See http://journals.plos.org/plosone/s/latex for 
% step-by-step instructions.
% 


\begin{thebibliography}{10}

% \bibitem{bib1}
% Conant GC, Wolfe KH.
% \newblock {{T}urning a hobby into a job: how duplicated genes find new
%   functions}.
% \newblock Nat Rev Genet. 2008 Dec;9(12):938--950.

% \bibitem{bib2}
% Ohno S.
% \newblock Evolution by gene duplication.
% \newblock London: George Alien \& Unwin Ltd. Berlin, Heidelberg and New York:
%   Springer-Verlag.; 1970.

% \bibitem{bib3}
% Magwire MM, Bayer F, Webster CL, Cao C, Jiggins FM.
% \newblock {{S}uccessive increases in the resistance of {D}rosophila to viral
%   infection through a transposon insertion followed by a {D}uplication}.
% \newblock PLoS Genet. 2011 Oct;7(10):e1002337.
\bibitem{FM1998}
Ferrell JE Jr, Machleder EM. 
\newblock The biochemical basis of an all-or-none cell fate switch in Xenopus oocytes. 
\newblock Science. 1998;280(5365):895--898. https://doi.org/10.1126/science.280.5365.895 PMID:9572732

\bibitem{BF2001}
Bagowski C, Ferrell JE Jr.
\newblock Bistability in the JNK cascade.
\newblock Curr Biol. 2001;11:1176--1182. https://doi.org/
10.1016/S0960-9822(01)00330-X PMID: 11516948

\bibitem{XF2003}
Xiong W, Ferrell JE Jr. 
\newblock A positive-feedback-based bistable `memory module' that governs a cell fate decision. \newblock Nature. 2003;426(6965):460--465. https://doi.org/10.1038/nature02089 PMID:14647386

\bibitem{CTF2006}
Craciun G, Tang Y, Feinberg M.
\newblock  Understanding bistability in complex enzyme-driven reaction networks.
\newblock PNAS. 2006;103(23):8697--8702. https://doi.org/10.1073/pnas.0602767103 PMID:16735474

\bibitem{OSTT2019}
Obatake N, Shiu A, Tang X, Torres A. 
\newblock Oscillations and bistability in a model of ERK regulation. 
\newblock J Math Biol. 2019;79(4):1515--1549. https://doi.org/10.1007/s00285-019-01402-y PMID:31346693

\bibitem{CF2005}
Craciun G, Feinberg M.
\newblock Multiple equilibria in complex chemical reaction networks: I. the injectivity property.
\newblock  SIAM J Appl Math. 2005;65:1526--1546. 
https://doi.org/10.1137/s0036139904440278

\bibitem{EF2014}
Feliu E. 
\newblock Injectivity, multiple zeros and multistationarity in reaction networks. 
\newblock Proc R Soc A. 2015;471:20140530.
http://doi.org/10.1098/rspa.2014.0530

\bibitem{BP}
Banaji M, Pantea C.
\newblock Some results on injectivity and multistationarity in chemical reaction networks.
\newblock SIAM J Appl Dyn Syst. 2016;15(2):807--869. https://doi.org/10.1137/15M1034441

\bibitem{CFMW}
Conradi C, Feliu E, Mincheva M, Wiuf C. 
\newblock Identifying parameter regions for multistationarity. 
\newblock PLoS Comput Biol. 2017;13(10):e1005751. 
https://doi.org/10.1371/journal.pcbi.1005751 PMID:28972969


\bibitem{CG2008}
Craciun G, Helton JW, Williams RJ. 
\newblock Homotopy methods for counting reaction network equilibria. 
\newblock Math Biosci. 2008;216(2):140--149. 
http://doi.org/10.1016/j.mbs.2008.09.001 PMID: 18824179

\bibitem{EG2013}
Enciso GA. 
\newblock Fixed points and convergence in monotone systems under positive or negative feedback.
\newblock Int J Control. 2013;87(2):301--311.
https://doi.org/10.1080/00207179.2013.830336

\bibitem{PD2012}
Pérez Millán M, Dickenstein A, Shiu A, Conradi C. 
\newblock Chemical reaction systems with toric steady states. 
\newblock Bull Math Biol. 2012;74(5):1027--1065. 
https://doi.org/10.1007/s11538-011-9685-x  PMID: 21989565

\bibitem{MD2016}
Pérez Millán M, Dickenstein A. 
\newblock The structure of MESSI biological systems. 
\newblock SIAM J  Appl  Dyn  Syst. 2018;17:1650--1682.
https://doi.org/10.1137/17M1113722

\bibitem{HF2013}
Holstein K, Flockerzi D, Conradi C. 
\newblock Multistationarity in sequential distributed multisite phosphorylation networks. 
\newblock Bull Math Biol. 2013;75(11):2028--2058. 
https://doi.org/10.1007/s11538-013-9878-6  PMID: 24048546


\bibitem{GB2019}
Giaroli M, Bihan F, Dickenstein A. 
\newblock Regions of multistationarity in cascades of Goldbeter-Koshland loops. 
\newblock J Math Biol. 2019;78(4):1115--1145.
https://doi.org/10.1007/s00285-018-1304-0  PMID: 30415316

\bibitem{BD2020}
Bihan F,  Dickenstein A,  Giaroli M.
\newblock Lower bounds for positive roots and regions of multistationarity in chemical reaction networks.
\newblock J Algebra. 2020;542:367--411.
https://doi.org/10.1016/j.jalgebra.2019.10.002


\bibitem{AA2024}
McClure A, Shiu A.
\newblock On the connectedness of multistationarity regions of small reaction networks.
\newblock J Symb Comput. 2024;125:102323.
https://doi.org/10.1016/j.jsc.2024.102323


\bibitem{AE2021}
Torres A, Feliu E.
\newblock Symbolic proof of bistability in reaction networks.
\newblock SIAM J Appl Dyn Syst. 2021;20(1):1--37. https://doi.org/10.1137/20m1326672

%\bibitem{HTX2015}
%Hong H, Tang X, Xia B.
%\newblock Special algorithm for stability analysis of multistable biological regulatory systems.
%\newblock J Symb Comput. 2015;70:112--135. https://doi.org/10.1016/j.jsc.2014.09.039

\bibitem{datta1978}
Datta BN.
\newblock An elementary proof of the stability criterion of Li\'enard and Chipart.
\newblock Linear Algebra Appl. 1978;22:89--96. https://doi.org/10.1016/0024-3795(78)90060-5





\bibitem{CF2012}
Conradi C, Flockerzi D.
\newblock  Switching in mass action networks based on linear inequalities.
\newblock  SIAM J Appl Dyn Syst. 2012;11(1):110--134. https://doi.org/10.1137/10081722x

\bibitem{ShinarFeinberg2012}
Shinar G, Feinberg M. 
\newblock Concordant chemical reaction networks. 
\newblock Math Biosci. 2012;240(2):92--113. https://doi.org/10.1016/j.mbs.2012.05.004 PMID:22659063

\bibitem{WiufFeliu_powerlaw}
Wiuf C, Feliu E.
\newblock Power-law kinetics and determinant criteria for the preclusion of
  multistationarity in networks of interacting species.
\newblock SIAM J Appl Dyn Syst. 2013;12:1685--1721. https://doi.org/10.1137/120873388

\bibitem{signs}
Müller S, Feliu E, Regensburger G, Conradi C, Shiu A, Dickenstein A.
\newblock Sign conditions for injectivity of generalized polynomial maps with
  applications to chemical reaction networks and real algebraic geometry.
\newblock 
Found Comut Math. 2016;16(1):69--97. https://doi.org/10.1007/s10208-014-9239-3

\bibitem{DMST2019}
Dickenstein A, Millán MP, Shiu A, Tang X. 
\newblock Multistationarity in structured reaction networks. 
\newblock Bull Math Biol. 2019;81(5):1527--1581. https://doi.org/10.1007/s11538-019-00572-6 PMID:30788691

\bibitem{TZ2022}
Tang X, Zhang Z.
\newblock Multistability of reaction networks with one-dimensional stoichiometric subspaces.
\newblock SIAM J Appl Dyn Syst. 2022;21(2):1426--1454. https://doi.org/10.1137/21m1424676

\bibitem{JA2013}
Joshi B, Shiu A.
\newblock Atoms of multistationarity in chemical reaction networks. 
\newblock J Math Chem. 2013;51:153--178. https://doi.org/10.1007/s10910-012-0072-0

\bibitem{BP16}
Banaji M, Pantea C.
\newblock The inheritance of nondegenerate multistationarity in chemical reaction networks.
\newblock SIAM J Appl Math. 2018;78:1105--1130. 
https://doi.org/10.1137/16m1103506


\bibitem{Joshi:Shiu:Multistationary}
Joshi B, Shiu A.
\newblock Which small reaction networks are multistationary?
\newblock SIAM J Appl Dyn Syst. 2017;16(2):802--833. https://doi.org/10.1137/16m1069705


\bibitem{shiu-dewolff}
Shiu A, de Wolff T.
\newblock Nondegenerate multistationarity in small reaction networks. 
\newblock Discrete Continuous Dyn Syst Ser B. 2019;24(6):2683--2700. https://doi.org/10.3934/dcdsb.2018270

\bibitem{PV2022}
Pantea C, Voitiuk G. 
\newblock Classification of multistationarity for mass action networks with one-dimensional stoichiometric subspace.
\newblock   arXiv:2208.06310 [Preprint]. 2022 [cited 2022 August 12]. Available from: https://doi.org/10.48550/arXiv.2208.06310


\bibitem{TL2022}
Tang X, Lin K, Zhang Z. 
\newblock Multistationarity of reaction networks with one-dimensional stoichiometric subspaces. 
\newblock CSIAM Transactions on Applied Mathematics. 2022;3(3):564--600. https://doi.org/10.4208/csiam-am.so-2021-0044

\bibitem{TX2021}
Tang X, Xu H.
\newblock Multistability of small reaction networks.
\newblock SIAM J Appl Dyn Syst. 2021;20(2):608--635. https://doi.org/10.1137/20m1358761


\bibitem{KN2024}
Kaihnsa N, Nguyen T, Shiu A.
\newblock Absolute concentration robustness and multistationarity in reaction networks: Conditions for coexistence. 
\newblock Eur J Appl Math. 2024;35(4):566--600. https://doi.org/10.1017/s0956792523000335

\bibitem{PG2024}
García Puente LD, Gross E, Harrington HA, Johnston M, Meshkat N, Pérez Millán M, et al.
\newblock Absolute concentration robustness: Algebra and geometry.
\newblock J Symb Comput. 2024;128:102398. https://doi.org/10.1016/j.jsc.2024.102398

\bibitem{HSP2008}
Hilioti Z, Sabbagh W Jr, Paliwal S, Bergmann A, Goncalves MD, Bardwell L, et al. 
\newblock Oscillatory phosphorylation of yeast Fus3 MAP kinase controls periodic gene expression and morphogenesis. 
\newblock Curr Biol. 2008;18(21):1700--1706. https://doi.org/10.1016/j.cub.2008.09.027 PMID:18976914

\bibitem{HGB2013}
Hu H, Goltsov A, Bown JL, Sims AH, Langdon SP, Harrison DJ, et al. 
\newblock Feedforward and feedback regulation of the MAPK and PI3K oscillatory circuit in breast cancer. 
\newblock Cell Signal. 2013;25(1):26--32. https://doi.org/10.1016/j.cellsig.2012.09.014 PMID:23000339

\bibitem{CMS2019}
Conradi C, Mincheva M, Shiu A. \newblock Emergence of oscillations in a mixed-mechanism phosphorylation system. 
\newblock Bull Math Biol. 2019;81(6):1829--1852. https://doi.org/10.1007/s11538-019-00580-6 PMID:30834485

\bibitem{STJ2007}
Sible JC, Tyson JJ. 
\newblock Mathematical modeling as a tool for investigating cell cycle control networks. 
\newblock Methods. 2007;41(2):238--247. 
https://doi.org/10.1016/j.ymeth.2006.08.003 PMID:17189866

\bibitem{TJN2022}
Tyson JJ, Novak B. 
 \newblock Time-keeping and decision-making in the cell cycle. 
 \newblock Interface Focus. 2022;12(4):20210075. 
https://doi.org/10.1098/rsfs.2021.0075 PMID:35860005

\bibitem{NTJ2022}
Novak B, Tyson JJ. 
\newblock Mitotic kinase oscillation governs the latching of cell cycle switches. 
\newblock Curr Biol. 2022;32(12):2780--2785.
https://doi.org/10.1016/j.cub.2022.04.016 PMID:35504285

\bibitem{JCW2005}
Janiak-Spens F, Cook PF, West AH. 
\newblock Kinetic analysis of YPD1-dependent phosphotransfer reactions in the yeast osmoregulatory phosphorelay system. 
\newblock Biochemistry. 2005;44(1):377--386. https://doi.org/10.1021/bi048433s PMID:15628880

\bibitem{KFC2015}
Kothamachu VB, Feliu E, Cardelli L, Soyer OS. \newblock Unlimited multistability and Boolean logic in microbial signalling.
\newblock J R Soc Interface. 2015;12(108):20150234. https://doi.org/10.1098/rsif.2015.0234 PMID:26040599


\bibitem{SRS2023}
Sankhe GD, Raja R, Singh DP, Bheemireddy S, Rana S, Athira PJ, et al. 
\newblock Sequestration of histidine kinases by non-cognate response regulators establishes a threshold level of stimulation for bacterial two-component signaling. 
\newblock Nat Commun. 2023;14(1):4483. https://doi.org/10.1038/s41467-023-40095-2 PMID:37491529



\bibitem{TF2023}
Telek ML, Feliu E. 
\newblock Topological descriptors of the parameter region of multistationarity: Deciding upon connectivity. 
\newblock PLoS Comput Biol. 2023;19(3):e1010970. https://doi.org/10.1371/journal.pcbi.1010970 PMID: 36961848

\bibitem{TW2022}
Tang X, Wang K.
\newblock Hopf bifurcations of reaction networks with zero-one stoichiometric coefficients.
\newblock SIAM J Appl Dyn Syst. 2023;22(3):2459--2489. https://doi.org/10.1137/22m1519754

\bibitem{AA2020}
Ali Al-Radhawi M, Angeli D, Sontag ED.
\newblock A computational framework for a Lyapunov-enabled analysis of biochemical reaction networks.
\newblock PLoS Comput Biol. 2020;16(2):e1007681. https://doi.org/10.1371/journal.pcbi.1007681 PMID: 32092050


\bibitem{FE2012}
Feliu E, Wiuf C. 
\newblock Enzyme-sharing as a cause of multi-stationarity in signalling systems. 
\newblock J R Soc Interface. 2012;9(71):1224--1232. https://doi.org/10.1098/rsif.2011.0664 PMID: 22048944

\bibitem{FS2016}
Feng S, Sáez M, Wiuf C, Feliu E, Soyer OS. 
\newblock Core signalling motif displaying multistability through multi-state enzymes. 
\newblock J R Soc Interface. 2016;13(123):20160524. https://doi.org/10.1098/rsif.2016.0524 PMID: 27733693

\bibitem{BB2023}
Banaji M, Boros B.
\newblock The smallest bimolecular mass action reaction networks admitting Andronov–Hopf bifurcation.
\newblock Nonlinearity. 2023;36(2):1398. https://doi.org/10.1088/1361-6544/acb0a8

\bibitem{CX2013}
Chen C, Davenport JH, Moreno Maza M, Xia B, Xiao R.
\newblock Computing with semi-algebraic sets: Relaxation techniques and effective boundaries.
\newblock J Symb Comput. 2013;52:72--96. https://doi.org/10.1016/j.jsc.2012.05.013

\bibitem{SR06}
Basu S, Pollack R, Coste-Roy MF.
\newblock Algorithms in real algebraic geometry.
\newblock 2nd ed. Heidelberg:Springer;2006.

\bibitem{GP12}
Blekherman G, Parrilo PA, Thomas RR. 
\newblock Semidefinite optimization and convex algebraic geometry.
\newblock SIAM;2012.


\bibitem{AT1998}
Tarski A.
\newblock A decision method for elementary algebra and geometry.
\newblock 1st ed. Berkeley:University of California Press;1951. 


\bibitem{GE1975}
Collins GE. 
\newblock Quantifier elimination for real closed fields by cylindrical algebraic decomposition. 
\newblock ACM SIGSAM Bulletin. 1974;8(3):80--90. https://doi.org/10.1145/1086837.1086852





\bibitem{GH1991}
Collins GE, Hong H.
\newblock Partial cylindrical algebraic decomposition for quantifier elimination.
\newblock J Symb Comput. 1991;12(3):299--328. https://doi.org/10.1016/S0747-7171(08)80152-6



% \bibitem{ME2003}
% Safey El Din M, Schost É. 
% \newblock Polar varieties and computation of one point in each connected component of a smooth real algebraic set.
% \newblock  ISSAC03: Proceedings of the 2003 international symposium on Symbolic and algebraic computation; 2003 Aug 3--6; Philadelphia PA USA. New York:ACM; 2003;224--231.
% https://doi.org/10.1145/860854.860901

\bibitem{AT1997}
Dolzmann A, Sturm T.
\newblock Redlog: Computer algebra meets computer logic.
\newblock  ACM SIGSAM Bulletin. 1997;31(2):2--9. https://doi.org/10.1145/261320.261324


\bibitem{CW2001}
Brown CW.  
\newblock Improved projection for cylindrical algebraic decomposition.
\newblock J Symb Comput. 2001;32(5):447--465. https://doi.org/10.1006/jsco.2001.0463

\bibitem{JJ1988}
Davenport JH, Heintz J.
\newblock Real quantifier elimination is doubly exponential.
\newblock J Symb Comput. 1988;5(1-2):29--35. https://doi.org/10.1016/s0747-7171(88)80004-x

\bibitem{LX2016}
Xia B, Yang L.
\newblock  Automated inequality proving and discovering.
\newblock World Scientific;2016.

\bibitem{maple}
Maple (2020)
\newblock Maplesoft, a division of Waterloo Maple Inc., Waterloo, Ontario.

\bibitem{P2001}
Perko L.
\newblock Differential equations and dynamical systems.
\newblock 3rd ed. New York:Springer;2001.

\bibitem{Conradi2019}
Conradi C, Feliu E, Mincheva M. \newblock On the existence of Hopf bifurcations in the sequential and distributive double phosphorylation cycle. 
\newblock Math Biosci Eng. 2019;17(1):494--513. https://doi.org/10.3934/mbe.2020027 PMID:31731363

% \bibitem{ConradiPantea_Multi}
% Carsten Conradi, and Casian Pantea.
% \newblock \textcolor{red}{Multistationarity in biochemical networks: Results, analysis, and examples.}
% \newblock {\em Algebraic and Combinatorial Computational Biology, Academic Press}, 279-317, 2019.

% \bibitem{CF2017}
% Carsten Conradi, Elisenda Feliu, Maya Mincheva, and Carsten Wiuf.
% \newblock  Identifying parameter regions for multistationarity.
% \newblock {\em  PLoS computational biology}, 13(10):e1005751, 2017.
\bibitem{CS1992}
Coste M, Shiota M. 
\newblock Nash triviality in families of Nash manifolds. 
\newblock  Invent Math. 1992;108(1):349--368. https://doi.org/10.1007/bf02100609

\bibitem{DV2005}
Daniel L, Rouillier F. 
\newblock Solving parametric polynomial systems.
\newblock J Symb Comput. 2007;42(6):636--667. https://doi.org/10.1016/j.jsc.2007.01.007

\bibitem{XT2017}
Rodriguez JI, Tang X.
\newblock A probabilistic algorithm for computing data-discriminants of likelihood equations.
\newblock J Symb Comput. 2017;83:342--364. https://doi.org/10.1016/j.jsc.2016.11.017

% \bibitem{MB2023}
% Banaji Murad, and Balázs Boros. 
% \newblock The smallest bimolecular mass action reaction networks admitting Andronov–Hopf bifurcation. 
% \newblock {\em Nonlinearity}, 36(2):1398, 2023.

% \bibitem{NS2022}
% Meshkat Nicolette, Anne Shiu, and Angelica Torres.
% \newblock  \textcolor{red}{Absolute concentration robustness in networks with low-dimensional stoichiometric subspace.}
% \newblock {\em  Vietnam Journal of Mathematics}, 50(3):623-651, 2022.

% \bibitem{TF2020}
% Ang\'elica Torres, and Elisenda Feliu.
% \newblock Detecting parameter regions for bistability in reaction networks.
% \newblock {\em SIAM J. Appl. Dyn. Syst.}, 20(1):1--37, 2021.

% \bibitem{CE2013}
% Carsten Wiuf, and Elisenda Feliu.
% \newblock Power-Law Kinetics and Determinant Criteria for the Preclusion of
% Multistationarity in Networks of Interacting Species.
% \newblock{\em SIAM J Appl Dyn Syst}, 12(4):1685-1721, 2013.
\end{thebibliography}
\end{document}